\newtheorem{thm}{Theorem}[section]
\newtheorem{lem}[thm]{Lemma}
\theoremstyle{definition}
\def\1{\hbox{\rm\rlap {1}\hskip .03in{\rm I}}}
\newcommand{\ds}{\displaystyle}
\newcommand{\co}{\colon}
\newcommand{\id}{\mathrm{id}}
\newcommand{\opp}{\mathrm{op}}
\newcommand{\rv}{{\otimes\mathrm{op}}}
\newcommand{\cc}{\mathcal{C}}
\newcommand{\cch}{\mathcal{C}_{\mathrm{hom}}}
\newcommand{\dd}{\mathcal{D}}
\newcommand{\aaa}{\mathcal{A}}
\newcommand{\ee}{\mathcal{E}}
\newcommand{\uu}{\mathcal{U}}
\newcommand{\ff}{\mathcal{F}}
\newcommand{\zz}{\mathcal{Z}}
                     \newcommand{\RR}{\mathbb{R}}
\newcommand{\Ker}{\mathrm{Ker}}
\newcommand{\iso}{\stackrel{\sim}{\longrightarrow}}
\newcommand{\kk}{\Bbbk}
\newcommand{\kt}{$\Bbbk$\nobreakdash-\hspace{0pt}}
\newcommand{\ti}{\mbox{-}\,}
\newcommand{\un}{\mathbb{1}}
\newcommand{\Aut}{\mathrm{Aut}}
\newcommand{\Endcm}{\mathrm{End}_{\mathrm{co}\otimes}}
\newcommand{\End}{\mathrm{End}}
\newcommand{\Hom}{\mathrm{Hom}}
\newcommand{\tr}{\mathrm{tr}}
\newcommand{\lev}{\mathrm{ev}}
\newcommand{\rev}{\widetilde{\mathrm{ev}}}
\newcommand{\lcoev}{\mathrm{coev}}
\newcommand{\rcoev}{\widetilde{\mathrm{coev}}}
\newcommand{\ldual}[1]{#1^{*}}
\newcommand{\scaledraw}[1]{A}
\newcommand{\scaleraisedraw}[2]{A}
\newcommand{\rsdraw}[3]{\raisebox{-#1\height}{\scalebox{#2}{\includegraphics{#3.eps}}}}
\newcommand{\labela}{\renewcommand{\labelenumi}{{\rm (\alph{enumi})}}}
\providecommand{\bysame}{\leavevmode\hbox to3em{\hrulefill}\thinspace}
\begin{document}

\title{On the graded center of graded categories}
\author[V. Turaev]{Vladimir Turaev}
 \address{%
 Vladimir Turaev\newline
  \indent            Department of Mathematics, \newline
\indent  Indiana University \newline
                     \indent Bloomington IN47405 \newline
                     \indent USA \newline
\indent e-mail: vtouraev@indiana.edu}
\author[A. Virelizier]{Alexis Virelizier}
\address{%
 Alexis Virelizier\newline
     \indent         Department of Mathematics, \newline
\indent    University of Montpellier 2\newline
                     \indent 34095 Montpellier Cedex 5r \newline
                     \indent France \newline
\indent e-mail:  virelizi@math.univ-montp2.fr} \subjclass[2000]{18D10, 18C20}
\date{\today}

\begin{abstract}
We study the $G$-centers of $G$-graded monoidal categories
where $G$ is an arbitrary group. We prove that for any spherical
$G$-fusion category $\cc$ over an algebraically closed field such
that the dimension of the neutral component of $\cc$ is non-zero,
the $G$-center of $\cc$ is a $G$-modular category. This generalizes
a theorem of M. M\"uger corresponding to   $G=\{1\}$. We also
  exhibit interesting objects of  the $G$-center.
\end{abstract}
\maketitle

\setcounter{tocdepth}{1} \tableofcontents

\section{Introduction}\label{sec-Intro}

 The study of  group-graded categories was initiated by the first author     \cite{Tu1} with the view towards constructing 3-dimensional
 Homotopy Quantum Field Theory (HQFT) generalizing the   3-dimensional Topological Quantum  Field Theory (TQFT)
 introduced by    E. Witten  and M. Atiyah. An HQFT applies to manifolds and cobordisms equipped with maps to a fixed target space.  HQFTs with target   the Eilenberg-MacLane space $K(G,1)$, where $G$ is a
  group, naturally arise from $G$-graded  categories via  two fundamental constructions based on state-sums  on triangulations
   and  on   surgery, see  \cite{Tu1},
  \cite{TVi2} and references therein.
 The present  paper is a part   of the authors' work on the   following claim: for any  group $G$,  the  state sum
 HQFT associated with a spherical $G$-fusion  category   is isomorphic to the surgery   HQFT associated with the $G$-center of  that category.
We provide here the algebraic background for this claim and
specifically   study the  $G$-centers.

A $G$-graded category is a  monoidal category whose   objects
are equipped with  a  multiplicative grading by elements of
$G$. The objects of $\cc$ graded by $\alpha\in G$ form a full
subcategory, $\cc_\alpha$, of $\cc$  called the $\alpha$-component
of $\cc$. The multiplicativity of the grading  means  that $X\otimes
Y\in \cc_{\alpha\beta}$ for any $X\in \cc_\alpha$, $Y\in \cc_\beta$
with $\alpha,\beta\in G$. The category $\cc_1$ corresponding to
$\alpha=1$ is called the neutral component of $\cc$.

A number of standard notions of the theory of monoidal categories
(corresponding to   $G=\{1\}$)   naturally generalize to this
setting. This leads, in particular, to a notion of a $G$-fusion
category. On the other hand, to define $G$-braidings   in a
$G$-graded category $\cc$, one needs an additional ingredient: an
action of $G$ on $\cc$ by  strong  monoidal auto-equivalences
$\{\varphi_\alpha:\cc\to \cc\}_{\alpha\in G}$ such that
$\varphi_\alpha(\cc_\beta)\subset \cc_{\alpha^{-1}\beta \alpha}$ for
all $\alpha, \beta \in \cc$.  Using this action, called a crossing,
we define $G$-braided and $G$-ribbon     categories. Under the
simplifying assumption that the   crossing is strict, these notions
were first introduced  in  \cite{Tu1}.

The   Drinfeld-Joyal-Street center construction   applies to  any monoidal category $\cc$  and produces a braided monoidal category $\zz(\cc)$, the {\it center} of $\cc$.   An analogue of the center in the setting of  $G$-graded categories was   considered  by Gelaki, Naidu, and Nikshych \cite{GNN}: for   a finite group $G$, they associate to any $G$-fusion category $\cc$  a $G$-braided category $\zz_G(\cc)$.
The construction of   $ \zz_G(\cc)$ as a monoidal category is rather straightforward and applies to an arbitrary group $G$ and any $G$-graded category $\cc$. The objects of $ \zz_G(\cc)$ are pairs $(A,\sigma)$ where $A$ is an object of $\cc$ and $\sigma$ is a  half-braiding in $\cc$ relative to $\cc_1$, that is a   system of isomorphisms $\sigma_Y:A\otimes Y\to Y\otimes A$ in $\cc$   permuting $A$ with  arbitrary objects $Y$ of $\cc_1$.
The morphisms in $ \zz_G(\cc)$ are the morphisms in $\cc$ commuting with the half-braidings. The monoidal product in
$ \zz_G(\cc)$ is essentially the composition of half-braidings.
The difficult part in the  construction of $ \zz_G(\cc)$, requiring additional assumptions on $\cc$,
 concerns the crossing and the $G$-braiding. We define a  crossing and  a $G$-braiding in $ \zz_G(\cc)$ for
 so-called   non-singular     $G$-graded categories $\cc$
  generalizing the Gelaki-Naidu-Nikshych construction (see
  Theorem~\ref{thm-G-center}).

For topological applications, it is   important to study   so-called
$G$-modular categories. A $G$-modular category is a  $G$-fusion
$G$-ribbon $G$-graded category  whose  neutral component
(which is a fusion ribbon   category in the usual sense) has an
invertible $S$-matrix. Our first   main result is the following
modularity theorem.

\begin{thm}\label{thm-G-center-ribbonIntro}
If $\cc $ is a spherical $G$-fusion category over an algebraically closed field and the dimension of the neutral component of $\cc$ is non-zero, then    $\zz_G(\cc)$   is a
$G$-modular  category.
\end{thm}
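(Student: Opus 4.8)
The plan is to deduce the $G$-modularity of $\zz_G(\cc)$ from M\"uger's classical modularity theorem applied to the neutral component, combined with a careful analysis of how the $G$-grading, crossing, and $G$-braiding on $\zz_G(\cc)$ interact with that neutral component. First I would invoke Theorem~\ref{thm-G-center} to equip $\zz_G(\cc)$ with its crossing and $G$-braiding; for this I must check that a spherical $G$-fusion category whose neutral component has nonzero dimension is non-singular in the required sense, which should follow from semisimplicity together with the dimension hypothesis. I would then verify that $\zz_G(\cc)$ is itself a $G$-fusion category: finitely many simple objects in each graded component, rigidity inherited from $\cc$, and a compatible spherical structure (the $G$-ribbon structure) built from that of $\cc$ and the half-braidings. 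The ribbon twist on $(A,\sigma)$ is defined as usual via the half-braiding and the pivotal structure, and one checks it is a crossed ribbon structure.

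The crux is identifying the neutral component $\zz_G(\cc)_1$ and showing it has an invertible $S$-matrix. The key observation is that $\zz_G(\cc)_1$ should be (braided) equivalent — or at least Morita/categorically close enough for the $S$-matrix computation — to the classical Drinfeld center $\zz(\cc_1)$ of the fusion category $\cc_1$, \emph{or} more precisely to a component of it adapted to the grading. Actually the correct statement is subtler: objects of $\zz_G(\cc)_1$ are pairs $(A,\sigma)$ with $A\in\cc_1$ and $\sigma$ a half-braiding only with respect to $\cc_1$, so $\zz_G(\cc)_1 = \zz(\cc_1)$ as a braided category. By M\"uger's theorem, since $\cc_1$ is a spherical fusion category over an algebraically closed field with $\dim\cc_1\neq 0$, its center $\zz(\cc_1)$ is modular, i.e.\ has invertible $S$-matrix. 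Therefore $\zz_G(\cc)_1$ has invertible $S$-matrix, which is exactly the defining condition for $\zz_G(\cc)$ to be $G$-modular once we know it is $G$-fusion and $G$-ribbon.

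So the proof structure is: (1) non-singularity, hence applicability of Theorem~\ref{thm-G-center}, giving the crossing and $G$-braiding; (2) $\zz_G(\cc)$ is $G$-fusion, with the spherical/$G$-ribbon structure coming from that of $\cc$; (3) the identification $\zz_G(\cc)_1 \cong \zz(\cc_1)$ as spherical braided categories; (4) M\"uger's theorem applied to $\cc_1$ to conclude invertibility of the $S$-matrix. The main obstacle I anticipate is step~(1)–(2): verifying that $\zz_G(\cc)$ genuinely satisfies \emph{all} the axioms of a $G$-fusion $G$-ribbon category — in particular that each component $\zz_G(\cc)_\alpha$ is nonzero and semisimple with finitely many simples, and that the crossing functors $\varphi_\alpha$ constructed in Theorem~\ref{thm-G-center} are compatible with the ribbon structure (the crossed-ribbon axioms). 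Establishing that the graded components are nonzero and finite is likely the heart of the matter, presumably requiring a dominance/generation argument showing every simple of $\cc_\alpha$ appears in some object carrying a half-braiding, for which the nonvanishing of $\dim\cc_1$ is essential (it makes the relevant induction functor $\cc_\alpha\to\zz_G(\cc)_\alpha$ nonzero on every simple). Step~(3) I expect to be essentially a tautology once the structures are set up, and step~(4) is a direct citation.
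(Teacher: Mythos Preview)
Your plan is correct and matches the paper's approach exactly: the paper reduces to Lemmas~\ref{lem-G-center-ribbon} (for $G$-ribbonness) and~\ref{lem-center-G-fusion} (for $G$-fusion), then uses the tautological identification $\zz_G(\cc)_1=\zz(\cc_1)$ and cites M\"uger. You have also correctly located the real work in step~(2)---the paper handles semisimplicity and finiteness via separability of a Hopf monad (where $\dim\cc_1\neq 0$ enters), and the $G$-ribbon axiom via a graphical criterion; one minor point is that non-singularity of $\cc$ already follows from $G$-pre-fusion over a field without invoking the dimension hypothesis.
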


This theorem is highly non-trivial already in the case  $G=\{1\}$
where it was first proved by   M. M\"uger, see \cite[Theorem
1.2]{Mu}. Our proof of the modularity theorem  heavily uses the
technique of Hopf monads introduced in \cite{BV2}.

In general, it is not easy to exhibit objects of  the
$G$-center  of a
   $G$-graded category~$\cc$.  When $\cc$ is non-singular and    $\cc_1$-centralizable (in the sense defined in this paper),
  the  forgetful functor   $   \zz_G(\cc) \to \cc, (A,\sigma) \mapsto  A$,  has a left adjoint functor $\ff\co\cc \to \zz_G(\cc)$. The objects of $ \zz_G(\cc)$ isomorphic to objects in the image of $\ff $  are said to be free.     Our second group of results yields  explicit computations of the  free objects. In particular, if
    $\cc$ is a $G$-fusion category  over a field, then $\cc$ is non-singular and $\cc_1$-centralizable, and  for any object $X$ of $\cc$,
 $$
 \ff(X) \simeq \bigl( A=\bigoplus_{i \in I_1} i^* \otimes X \otimes i, \,\,\,  \sigma=\{\sigma_Y\co A \otimes Y\to Y\otimes A\}_{Y\in\cc_1} \bigr)
 $$
where  $I_1$ is a representative set of isomorphism classes of simple objects of $\cc_1$ and $Y $ runs over objects of $ \cc_1$. The restriction of $\sigma_Y$ to the direct summand $i^* \otimes X \otimes i \otimes Y$ of $A\otimes Y$ with $i\in I_1$ is computed by
$$
\sigma_{Y}= \sum_{ \lambda }\;
 \psfrag{e}[Bc][Bc]{\scalebox{.9}{$i_\lambda$}}
 \psfrag{i}[Br][Bc]{\scalebox{.9}{$i$}}
 \psfrag{X}[Bc][Bc]{\scalebox{.9}{$X$}}
 \psfrag{Y}[Bc][Bc]{\scalebox{.9}{$Y$}}
 \psfrag{u}[Bc][Bc]{\scalebox{1}{$p_\lambda$}}
 \psfrag{q}[Bc][Bc]{\scalebox{1}{$q_\lambda^*$}}
\rsdraw{.6}{.9}{ex-thm}
$$
where $(p_\lambda \co i\otimes Y^* \to i_\lambda, q_\lambda \co
i_\lambda \to i\otimes Y^* )_{\lambda }$ are the  projections   and
the embeddings   determined by a splitting     of $i\otimes Y^*$ as
a direct sum of simple objects  $i_\lambda \in I_1$.  Similar
pictorial formulas compute the  crossing and the  $G$-braiding in $
\zz_G(\cc)$ on the free objects. For example,  for any $\alpha\in
G$,  the object $\varphi_\alpha (\ff(X))$ is computed by the formula
above with $I_1$   replaced everywhere by $I_\alpha$, a
representative set of isomorphism classes of simple objects of
$\cc_\alpha$. For   precise statements, see   Theorems~\ref{lemu}
and~\ref{cor-calc-tau}.

 Every $G$-modular category  gives rise to  a 3-dimensional
HQFT with target
  $K(G,1)$.   The modularity theorem above   allows us to derive such an HQFT from
   $\zz_G(\cc)$ under the conditions of this theorem.    Our computations with free objects  lead to a computation of
the vector spaces assigned by this HQFT to   surfaces equipped with maps to $K(G,1)$.
     These results are crucially used  in the proof (given elsewhere) of the claim stated at the beginning of the introduction.
     In the present paper   we   focus  on  the algebraic side of the theory and do not study HQFTs.

 The organization of the paper is as follows. In Section  \ref{sectprelim} we recall necessary notions  from the theory of monoidal categories.  In Section~\ref{sect-bigdef-G-cat} we introduce  the  key  notions of the theory of $G$-graded categories. In Section \ref{sect-center-Gcat} we construct  the   $G$-center. In Section \ref{sect-main}
 we introduce  spherical $G$-fusion categories, state the modularity theorem, and start its proof.
 In Sections~\ref{sect-ribonness-ZG-new}, \ref{sect-big-HM---}, and \ref{sect-big-HM} we discuss  $G$-ribbonness,
 Hopf monads,  and coends, respectively. In   Section~\ref{sect-bigproof-semiFULL} we finish the proof of the modularity  theorem.
  In Section~\ref{sect-free-functor-big} we   compute the crossing and the  $G$-braiding
   on the free objects and also discuss the $G$-fusion case.  The appendix is devoted to the computation
    of   certain objects of  $\zz_G(\cc)$ which will be instrumental in the study of   the associated HQFT.

  Throughout the paper,  we fix  a  (discrete) group $G$  and a commutative ring $\kk$.

  {\it Acknowledgements.}   The work of V.\ Turaev was partially supported
by the NSF grant DMS-1202335.

\section{Preliminaries on categories and functors}\label{sectprelim}

We recall here several standard notions and techniques of the theory
of monoidal categories referring for details to
\cite{ML1}.

\subsection{Conventions}\label{Conventions}  The   unit  object of   a monoidal category $\cc$ is denoted by~$\un=\un_\cc$.
Notation  $X\in \cc$   means    that $X$ is an object of $\cc$.
To simplify the formulas, we  will always pretend that  the monoidal categories at hand are
strict. Consequently, we  omit brackets in the tenor products and
suppress the associativity constraints $(X\otimes Y)\otimes Z\cong
X\otimes (Y\otimes Z)$ and the unitality constraints $X\otimes \un
\cong X\cong \un \otimes X$.    By the tensor product $X_1 \otimes
X_2 \otimes \cdots \otimes X_n$ of $n\geq 2$ objects $X_1,...,
X_n $ of a monoidal category we mean $(... ((X_1\otimes X_2) \otimes X_3) \otimes
\cdots \otimes X_{n-1}) \otimes X_n$.

\subsection{Monoidal  functors}\label{sect-monofunctor}
Let $\cc$ and $\dd$ be    monoidal categories. A \emph{monoidal
functor} from $\cc$ to $\dd$ is a triple $(F,F_2,F_0)$, where $F\co
\cc \to \dd$ is a functor, $$ F_2=\{F_2(X,Y) \co F(X) \otimes F(Y)
\to F(X \otimes Y)\}_{X,Y \in \cc} $$ is a natural transformation
from $F\otimes F$ to $F \otimes$, and $F_0\co\un_\dd \to F(\un_\cc)$ is a
morphism in~$\dd$  such that the following diagrams  commute for all  $X,Y,Z \in \cc$:
\begin{equation}\label{stmonoidal1}
\begin{split}
    \xymatrix@R=1cm @C=3cm { F(X) \otimes F(Y) \otimes F(Z) \ar[r]^-{\id_{F(X)} \otimes F_2(Y,Z)}  \ar[d]_-{F_2(X,Y) \otimes \id_{F(Z)}}
    & F(X) \otimes F(Y \otimes Z) \ar[d]^-{F_2(X,Y \otimes Z)} \\
    F(X \otimes Y) \otimes F(Z) \ar[r]_-{F_2(X \otimes Y, Z)}
    & F(X \otimes Y \otimes Z),
    }
\end{split}
\end{equation}
\begin{equation}\label{stmonoidal2}
\begin{split}
\xymatrix@R=1cm @C=2,5cm { F(X)  \ar[rd]^-{\id_{F(X)}} \ar[r]^-{\id_{F(X)} \otimes F_0}  \ar[d]_-{F_0 \otimes \id_{F(X)}}
    & F(X) \otimes F(\un_\cc) \ar[d]^-{F_2(X,\un_\cc)} \\
    F(\un_\cc) \otimes F(X) \ar[r]_-{F_2(\un_\cc,X)}
    & F(X).
    }
\end{split}
\end{equation}

 A monoidal functor $(F,F_2,F_0)$ is   \emph{strong}   if $F_2$ and
$F_0$ are isomorphisms and \emph{strict}
if $F_2$ and $F_0$ are   identity morphisms.

 A natural transformation $\varphi=\{\varphi_X \co F(X) \to
G(X)\}_{X \in \cc}$ from a monoidal
functor $F\co\cc \to \dd$ to a monoidal
functor $G\co\cc \to \dd$ is \emph{monoidal} if $G_0=\varphi_\un F_0$ and \begin{equation*}\label{monoidalnattrans}
   \varphi_{X \otimes Y} F_2(X,Y)= G_2(X,Y) (\varphi_X \otimes
\varphi_Y)
\end{equation*}
for all   $X,Y\in \cc$.  A \emph{monoidal natural
isomorphism} between $F$ and $G$ is a monoidal natural
transformation
 $\varphi$ from $F$ to $G$ which is an isomorphism in the sense that each $\varphi_X$ is an isomorphism. The inverse $\varphi^{-1}=\{\varphi_X^{-1} \co G(X) \to F(X)\}_{X \in \cc}$ is then a monoidal natural transformation from $G$ to $F$.


  If $F\co \cc \to \dd$ and $G\co \dd \to \ee$ are   monoidal
functors between monoidal categories, then their composition $GF\co
\cc \to \ee$
is a monoidal functor with 
$$(GF)_0=G(F_0)G_0 \quad \text{and} \quad
(GF)_2=\{G(F_2(X,Y))\, G_2(F(X),F(Y))\}_{X,Y \in \cc}
.$$
%

\subsection{Rigid categories}\label{rigid categories}
Let $\cc=(\cc,\otimes,\un)$ be a monoidal category. A \emph{left dual} of an object
  $X\in \cc$ is an object $\leftidx{^\vee}{\!X}{}\in \cc$ together
with morphisms $\lev_X \co \leftidx{^\vee}{\!X}{} \otimes X \to \un$
and $\lcoev_X \co \un \to X \otimes \leftidx{^\vee}{\! X}{}$ such that
$$
(\id_X \otimes \lev_X)(\lcoev_X \otimes \id_X)=\id_X \quad \text{and} \quad (\lev_X \otimes \id_{\leftidx{^\vee}{\! X}{}})(\id_{\leftidx{^\vee}{\! X}{}} \otimes \lcoev_X)=\id_{\leftidx{^\vee}{\! X}{}}.
$$
One calls $\cc$   \emph{left rigid}   if every object of $\cc$ has a left dual.
A choice of a left  dual  for each object  of   $\cc$ defines a {\it  left  dual functor} $\leftidx{^\vee}{?}{}\co \cc^\opp \to \cc$,  where $\cc^\opp$ is the  category opposite to $\cc$ with opposite monoidal structure.
The functor $\leftidx{^\vee}{?}{}$ carries a morphism $f:X\to Y$ in $\cc$ (i.e., a morphism $Y\to X$ in $\cc^\opp$) to
$$ \leftidx{^\vee}f= (\lev_Y \otimes  {\id_{ {^\vee} X}})({\id_{ { {^\vee} Y}}}  \otimes f \otimes {\id_{  {^\vee} X}})({\id_{  {^\vee} Y}}\otimes \lcoev_X) \co  \leftidx{^\vee}Y \to  \leftidx{^\vee} X.$$
The   functor $\leftidx{^\vee}{?}{}$ is strong monoidal with $(\leftidx{^\vee}{?})_0=\lcoev_\un:\un \to {^\vee}\un$ and  with $$(\leftidx{^\vee}{?})_2(X,Y) \co  \leftidx{^\vee} X\otimes  \leftidx{^\vee} Y\to  \leftidx{^\vee} (X\otimes^\opp  Y)= \leftidx{^\vee} (Y\otimes X)$$
 defined to be  equal to
$$  (\lev_X  \otimes \id_{ {^\vee} (X\otimes Y)} ) (\id_{ {^\vee} X}\otimes \lcoev_{   Y}\otimes \id_{  X  \otimes  {^\vee} (X\otimes Y)} ) (\id_{ {^\vee} X  \otimes {^\vee} Y} \otimes \lcoev_{X\otimes Y}).$$
The isomorphisms $(\leftidx{^\vee}{?})_0$ and $(\leftidx{^\vee}{?})_2(X,Y)$ are called {\it left monoidal constraints}.

Similarly, a \emph{right dual} of $X\in \cc$ is an object $X^\vee$ of $\cc$ equipped
with morphisms $\rev_X \co X \otimes X^\vee \to \un$ and
$\rcoev_X \co \un \to X^\vee \otimes X$ such that
$$
(\rev_X \otimes \id_X)(\id_X \otimes \rcoev_X)=\id_X \quad \text{and} \quad (\id_{X^\vee} \otimes \rev_X)(\rcoev_X \otimes \id_{X^\vee})=\id_{X^\vee}.
$$
One calls $\cc$   \emph{right rigid}   if every object of $\cc$ has a right dual. Similarly to the    above, for a right rigid category $\cc$, one defines  a strong monoidal {\it  right  dual functor} $\leftidx{}{?}{^\vee}\co \cc^\opp \to \cc$ and {\it right monoidal constraints}.

A monoidal category is \emph{rigid} if it is both left rigid and right rigid.  Note that the left and right duals of an object  are unique up to an isomorphism  preserving the (co)evaluation morphisms. Different choices of left/right  dual objects lead to   monoidally isomorphic left/right  dual
functors.

\subsection{Pivotal    categories}\label{pivotall}
By a {\it  pivotal} category we mean a rigid monoidal category $\cc$ such that  the left and   right dual functors are equal as monoidal functors.  Then for each
object $X\in \cc$, we have a \emph{dual
object}~$X^*={^\vee} X=X^\vee$ and   morphisms
\begin{align*}
& \lev_X \co X^*\otimes X \to\un,  \qquad \lcoev_X\co \un  \to X \otimes X^*,\\
&   \rev_X \co X\otimes X^* \to\un, \qquad   \rcoev_X\co \un  \to X^* \otimes X,
\end{align*}
such that $(X^*,\lev_X,\lcoev_X)$ is a left
dual for $X$ and  $(X^*, \rev_X,\rcoev_X)$ is a right
dual for~$X$.   The \emph{dual} $f^* \co Y^* \to X^*$ of any morphism $f\co X \to Y$ in $\cc$   is   computed by
\begin{align*}
f^*&= (\lev_Y \otimes  \id_{X^*})(\id_{Y^*}  \otimes f \otimes \id_{X^*})(\id_{Y^*}\otimes \lcoev_X)\\
 &= (\id_{X^*} \otimes \rev_Y)(\id_{X^*} \otimes f \otimes \id_{Y^*})(\rcoev_X \otimes \id_{Y^*}).
\end{align*}

Working with   a pivotal category $\cc$, we will suppress the duality constraints
$\un^* \cong \un$ and $X^* \otimes Y^*\cong (Y\otimes X)^* $. For
example, we write $(f \otimes g)^*=g^* \otimes f^*$ for
morphisms $f,g$ in~$\cc$.

  For an  endomorphism $g$ of an object
$X$ of a pivotal category $ \cc$, one   defines the {\it left} and  {\it right traces}
$$\tr_l(g)=\lev_X(\id_{\ldual{X}} \otimes g) \rcoev_X  \quad {\text {and}}\quad \tr_r(g)=  \rev_X( g \otimes
\id_{\ldual{X}}) \lcoev_X .$$ Both traces take values in $\End_\cc(\un)$ and are symmetric: $\tr_{l/r}
(fh)=\tr_{l/r} (hf)$ for any morphisms $f:X\to Y$, $h:Y\to X$ in $\cc$. Also $\tr_{l/r}(g)=\tr_{r/l}( {g}^*) $ for any
 endomorphism $g$ of an object. The  {\it left} and  {\it right
 dimensions} of an object $X\in \cc$ are defined by
 $\dim_{l/r} (X)=\tr_{l/r} (\id_X)$. Clearly,
 $\dim_{l/r}(X)=\dim_{r/l}(X^*) $ for all $X$.


\subsection{Penrose graphical calculus}\label{sect-penrose} We will represent morphisms in a category $\cc$ by plane   diagrams to be read from the bottom to the top.
The  diagrams are made of   oriented arcs colored by objects of
$\cc$  and of boxes colored by morphisms of~$\cc$.  The arcs connect
the boxes and   have no mutual intersections or self-intersections.
The identity $\id_X$ of $X\in  \cc$, a morphism $f\co X \to Y$,
and the composition of two morphisms $f\co X \to Y$ and $g\co Y \to
Z$ are represented as follows:
\begin{center}
\psfrag{X}[Bc][Bc]{\scalebox{.7}{$X$}} \psfrag{Y}[Bc][Bc]{\scalebox{.7}{$Y$}} \psfrag{h}[Bc][Bc]{\scalebox{.8}{$f$}} \psfrag{g}[Bc][Bc]{\scalebox{.8}{$g$}}
\psfrag{Z}[Bc][Bc]{\scalebox{.7}{$Z$}} $\id_X=$ \rsdraw{.45}{.9}{identitymorph}\,,\quad $f=$ \rsdraw{.45}{.9}{morphism} ,\quad \text{and} \quad $gf=$ \rsdraw{.45}{.9}{morphismcompo}\,.
\end{center}
  If $\cc$ is
monoidal, then the monoidal product of two morphisms $f\co X \to Y$
and $g \co U \to V$ is represented by juxtaposition:
\begin{center}
\psfrag{X}[Bc][Bc]{\scalebox{.7}{$X$}} \psfrag{h}[Bc][Bc]{\scalebox{.8}{$f$}}
\psfrag{Y}[Bc][Bc]{\scalebox{.7}{$Y$}}  $f\otimes g=$ \rsdraw{.45}{.9}{morphism} \psfrag{X}[Bc][Bc]{\scalebox{.8}{$U$}} \psfrag{g}[Bc][Bc]{\scalebox{.8}{$g$}}
\psfrag{Y}[Bc][Bc]{\scalebox{.7}{$V$}} \rsdraw{.45}{.9}{morphism3}\,.
\end{center}
 Suppose that $\cc$ is      pivotal. By convention,   if an arc colored by $X\in \cc$ is oriented upwards,
then the corresponding object   in the source/target of  morphisms
is $X^*$. For example, $\id_{X^*}$  and a morphism $f\co X^* \otimes
Y \to U \otimes V^* \otimes W$  may be depicted as
\begin{center}
 $\id_{X^*}=$ \, \psfrag{X}[Bl][Bl]{\scalebox{.7}{$X$}}
\rsdraw{.45}{.9}{identitymorphdual} $=$  \,
\psfrag{X}[Bl][Bl]{\scalebox{.7}{$\ldual{X}$}}
\rsdraw{.45}{.9}{identitymorph2}  \quad and \quad
\psfrag{X}[Bc][Bc]{\scalebox{.7}{$X$}}
\psfrag{h}[Bc][Bc]{\scalebox{.8}{$f$}}
\psfrag{Y}[Bc][Bc]{\scalebox{.7}{$Y$}}
\psfrag{U}[Bc][Bc]{\scalebox{.7}{$U$}}
\psfrag{V}[Bc][Bc]{\scalebox{.7}{$V$}}
\psfrag{W}[Bc][Bc]{\scalebox{.7}{$W$}} $f=$
\rsdraw{.45}{.9}{morphism2} \,.
\end{center}
The duality morphisms   are depicted as follows:
\begin{center}
\psfrag{X}[Bc][Bc]{\scalebox{.7}{$X$}} $\lev_X=$ \rsdraw{.45}{.9}{leval}\,,\quad
 $\lcoev_X=$ \rsdraw{.45}{.9}{lcoeval}\,,\quad
$\rev_X=$ \rsdraw{.45}{.9}{reval}\,,\quad
\psfrag{C}[Bc][Bc]{\scalebox{.7}{$X$}} $\rcoev_X=$
\rsdraw{.45}{.9}{rcoeval}\,.
\end{center}
The dual of a morphism $f\co X \to Y$ and the
  traces of a morphism $g\co X \to X$ can be depicted as
follows:
\begin{center}
\psfrag{X}[Bc][Bc]{\scalebox{.7}{$X$}} \psfrag{h}[Bc][Bc]{\scalebox{.8}{$f$}}
\psfrag{Y}[Bc][Bc]{\scalebox{.7}{$Y$}} \psfrag{g}[Bc][Bc]{\scalebox{.8}{$g$}}
$f^*=$ \rsdraw{.45}{.9}{dualmorphism2}$=$ \rsdraw{.45}{.9}{dualmorphism}\quad \text{and} \quad
$\tr_l(g)=$ \rsdraw{.45}{.9}{ltrace}\,,\quad  $\tr_r(g)=$ \rsdraw{.45}{.9}{rtrace}\,.
\end{center}
  It is easy to see that     the morphisms represented by such diagrams
are invariant under isotopies of the diagrams in $\RR^2$  keeping
fixed the bottom and   top endpoints.

\subsection{Pivotal   functors}\label{sect-monofunctorP}
    Given
  a strong monoidal functor $F\co \cc \to \dd$ between pivotal categories,  we define for each $X\in \cc$ a
  morphism
$$
F^l(X)=(F_0^{-1}F(\lev_X)F_2(X^*,X) \otimes \id_{F(X)^*})(\id_{F(X^*)} \otimes \lcoev_{F(X)}) : F(X^*) \to F(X)^*.
$$
It is well-known that $F^l=\{F^l(X) \co F(X^*) \to F(X)^*\}_{X \in
\cc}$ is a monoidal natural isomorphism.
Likewise, the morphisms $ \{F^r(X)
\co F(X^*) \to F(X)^*\}_{X \in \cc}$, defined by
$$
F^r(X)=(\id_{F(X)^*} \otimes F_0^{-1}F(\rev_X)F_2(X,X^*))(\rcoev_{F(X)} \otimes \id_{F(X^*)}),
$$
form a monoidal natural isomorphism $F^r$.
For all $X \in \cc$, we have
\begin{equation*}\label{FrFlcomp}
F^l(X^*)F(\phi_X)=F^r(X)^*\phi_{F(X)}
\end{equation*}
 where $ \{\phi_X \co X \to X^{**}\}_X $ is the
 {\it pivotal structure} in $\cc$ defined by
\begin{equation}\label{pivotal-struct}
\phi_X=(\rev_X \otimes \id_{X^{**}})(\id_X \otimes \lcoev_{X^*})\co X \to X^{**}.
\end{equation}
The monoidal functor $F\co \cc \to \dd$ is     \emph{pivotal} if
$F^l(X)=F^r(X)$ for any $X \in \cc$. In this case, $F^l=F^r$ is
denoted by $F^1$.

\subsection{Additive categories}\label{sect-semisimple-cat}
A  category $\cc$ is  \emph{\kt  additive} if the
$\Hom$-sets of $\cc$ are modules over the
 ring $\kk$,  the  composition of morphisms is \kt bilinear, and any finite family of objects has a direct sum. In particular, such a
   $\cc$ has a \emph{zero object},   that is, an object $\mathbf{0}\in \cc$ such
that $\End_\cc (\mathbf{0})=0$. A  monoidal category is
\emph{$\kk$-additive} if it is $\kk$-additive  as a category and the
monoidal product is \kt bilinear.

A functor $F\co \cc \to \dd$ between \kt additive
categories is \emph{\kt linear} if   the map from $ \Hom_\cc(X,Y) $ to $
\Hom_\dd(F(X),F(Y))$ induced by $F$ is \kt linear for all $X,Y \in \cc$. Such a
functor necessarily preserves direct sums.

\section{$G$-structures on monoidal categories}\label{sect-bigdef-G-cat}

 In this section we define the classes of $G$-graded, $G$-crossed, $G$-braided, and $G$-ribbon categories.

\subsection{$G$-graded categories}\label{G-cat-deb} By a \emph{$G$-graded category (over   $\kk$)}, we mean a
$\kk$-additive  monoidal category $\cc$ endowed with a system of
pairwise disjoint full $\kk$-additive subcategories $\{{\mathcal
C}_{\alpha}\}_{{\alpha}\in G}$ such that
\begin{enumerate}
  \labela
  \item
  each object $X \in \cc$ splits as a  direct sum
$\oplus_{\alpha} \, X_\alpha$ where $X_\alpha\in \cc_{\alpha}$ and
$\alpha$ runs over a finite subset of $G$;

 \item if $X\in {\mathcal C}_{\alpha}$ and  $Y\in {\mathcal C}_{\beta}$, then
$X\otimes Y\in {\mathcal C}_{\alpha\beta}$;

\item if $X\in {\mathcal C}_{\alpha}$ and $Y\in
{\mathcal C}_{\beta}$ with $\alpha\neq \beta$, then $\Hom_{\cc}
(X,Y)=0$;

\item   $\un_\cc\in  {\mathcal C}_1$.

\end{enumerate}

The category ${\mathcal C}_1$ corresponding to the
neutral element $1\in G$ is called the {\it neutral component}
of~${\mathcal C}$. Clearly, ${\mathcal C}_1$ is a $\kk$-additive
monoidal category.

%
%

\subsection{$G$-crossed categories}\label{sect-crossed-ded}

Given a monoidal \kt additive category $\cc$,   denote by
$\Aut(\cc)$ the category whose objects   are \kt linear strong monoidal functors
 $\cc\to \cc$ that are equivalences of categories. The  morphisms  in  $\Aut(\cc)$ are monoidal natural isomorphisms. The category $\Aut(\cc)$ has a canonical structure of a monoidal category, in which
the monoidal product is the composition of monoidal functors and the
monoidal unit is the identity endofunctor $1_\cc$ of~$\cc$.

Denote by $\overline{G}$ the category whose objects are elements of
$G$ and morphisms are identities. We turn $\overline{G}$ into a monoidal category with
monoidal product given by the opposite group multiplication in $G$, i.e.,
$\alpha\otimes \beta= \beta\alpha$ for all $\alpha, \beta \in G$.

 A \emph{$G$-crossed category} is a $G$-graded category $\cc$ (over $\kk$) endowed with a \emph{crossing}, that is, a
strong monoidal functor  $\varphi\co \overline{G} \to
\Aut(\cc) $ such that $ \varphi_\alpha(\cc_\beta) \subset
\cc_{\alpha^{-1} \beta \alpha} $ for all $\alpha,\beta \in G$.
Thus, for each $\alpha\in G$, the crossing $\varphi$ provides a
strong monoidal  equivalence $\varphi_\alpha:\cc\to \cc$  equipped with an isomorphism
$(\varphi_\alpha)_0 \co \un \iso \varphi_\alpha(\un)$ in $\cc$  and
with natural isomorphisms
\begin{align*}
&(\varphi_\alpha)_2=\{(\varphi_\alpha)_2(X,Y) \co \varphi_\alpha(X) \otimes \varphi_\alpha(Y) \iso \varphi_\alpha(X \otimes Y)\}_{X,Y \in \cc},\\
&\varphi_2=\bigl\{\varphi_2(\alpha,\beta)=\{\varphi_2(\alpha,\beta)_X  \co \varphi_\alpha\varphi_\beta(X)\iso \varphi_{\beta\alpha}(X)\}_{X \in \cc}\bigr\}_{\alpha,\beta \in G},\\
&\varphi_0=\{(\varphi_0)_X \co X \iso \varphi_1(X)\}_{X \in \cc}
\end{align*}
such that $(\varphi_0)_\un=(\varphi_1)_0$ and  for all $\alpha,\beta,\gamma \in G$ and all $X,Y,Z
\in \cc$, the following diagrams commute:
\begin{equation}\label{crossing1}
\begin{split}
\xymatrix@R=1cm @C=3cm {
\varphi_\alpha(X) \otimes \varphi_\alpha(Y) \otimes \varphi_\alpha(Z) \ar[r]^-{\id_{\varphi_\alpha(X)} \otimes (\varphi_\alpha)_2(Y,Z)} \ar[d]^{(\varphi_\alpha)_2(X,Y) \otimes \id_{\varphi_\alpha(Z)}} & \varphi_\alpha(X) \otimes \varphi_\alpha(Y \otimes  Z)
\ar[d]^{(\varphi_\alpha)_2(X,Y \otimes Z)}  \\
\varphi_\alpha(X \otimes Y) \otimes \varphi_\alpha(Z) \ar[r]_-{(\varphi_\alpha)_2(X \otimes Y, Z)} & \varphi_\alpha(X \otimes Y \otimes Y),
}
\end{split}
\end{equation}

\begin{equation}\label{crossing2}
\begin{split}
\xymatrix@R=1cm @C=3cm {
\varphi_\alpha(X) \ar[rd]^-{\id_{\varphi_\alpha(X)}} \ar[r]^-{\id_{\varphi_\alpha(X)} \otimes (\varphi_\alpha)_0} \ar[d]_{(\varphi_\alpha)_0
\otimes \id_{\varphi_\alpha(X)}} & \varphi_\alpha(X) \otimes \varphi_\alpha(\un) \ar[d]^{(\varphi_\alpha)_2(X,\un)}\\
\varphi_\alpha(\un) \otimes \varphi_\alpha(X) \ar[r]_-{(\varphi_\alpha)_2(\un,X)} & \varphi_\alpha(X),
}\end{split}
\end{equation}

\def\MyNode{\ifcase\xypolynode\or
      \varphi_{\beta\alpha}(X \otimes Y)
    \or
      \varphi_{\beta\alpha}(X) \otimes \varphi_{\beta\alpha} (Y)
    \or
      \varphi_\alpha\varphi_\beta(X) \otimes \varphi_\alpha\varphi_\beta(Y)
    \or
      \varphi_\alpha(\varphi_\beta(X) \otimes \varphi_\beta(Y))
    \or
      \varphi_\alpha\varphi_\beta(X \otimes Y),
    \fi
  }%
\begin{equation}\label{crossing3}
\begin{split}
  \xy/r10pc/: (0,.3)::
    \xypolygon5{~>{}\txt{\ \ \strut\ensuremath{\MyNode}}} 
    \ar "2";"1" ^-{\qquad (\varphi_{\beta\alpha})_2(X,Y)}
    \ar "3";"2" ^-{\varphi_2(\alpha,\beta)_X \otimes \varphi_2(\alpha,\beta)_Y \qquad\qquad}
    \ar "3";"4" _-{(\varphi_\alpha)_2(\varphi_\beta(X),\varphi_\beta(Y))}
    \ar "4";"5" _-{\varphi_\alpha((\varphi_\beta)_2(X,Y))}
    \ar "5";"1" _-{\varphi_2(\alpha,\beta)_{X\otimes Y}}
  \endxy
\end{split}
\end{equation}

\begin{equation}\label{crossing4}
\begin{split}
\xymatrix@R=1cm @C=2.5cm { \un \ar[d]_-{(\varphi_\alpha)_0} \ar[r]^-{(\varphi_{\beta\alpha})_0}  & \varphi_{\beta\alpha}(\un) \\
\varphi_\alpha(\un) \ar[r]_-{\varphi_\alpha\bigl((\varphi_\beta)_0\bigr) } & \varphi_\alpha\varphi_\beta(\un), \ar[u]_-{\varphi_2(\alpha,\beta)_\un}
}
\end{split}
\end{equation}

\begin{equation}\label{crossing7}
\begin{split}
\xymatrix@R=1cm @C=.5cm { X \otimes Y \ar[rd]_-{(\varphi_0)_X \otimes (\varphi_0)_Y}\ar[rr]^-{(\varphi_0)_{X \otimes Y}} && \varphi_1(X \otimes Y)\\
& \varphi_1(X) \otimes \varphi_1(Y), \ar[ru]_-{(\varphi_1)_2(X,Y)} &
}
\end{split}
\end{equation}

\begin{equation}\label{crossing5}
\begin{split}
\xymatrix@R=1cm @C=2.5cm {
\varphi_\alpha\varphi_\beta \varphi_\gamma(X) \ar[r]^-{\varphi_\alpha(\varphi_2(\beta, \gamma)_X)}\ar[d]_{\varphi_2(\alpha,\beta)_{\varphi_\gamma(X)}} & \varphi_\alpha\varphi_{\gamma\beta}(X)
\ar[d]^{\varphi_2(\alpha, \gamma \beta)_X} \\
\varphi_{\beta\alpha} \varphi_\gamma(X) \ar[r]_-{\varphi_2( \beta\alpha, \gamma)_X} & \varphi_{\gamma\beta\alpha}(X),}
\end{split}
\end{equation}

\begin{equation}\label{crossing6}
\begin{split}
\xymatrix@R=1cm @C=2.5cm {
\varphi_\alpha(X) \ar[rd]^-{\id_{\varphi_\alpha(X)}} \ar[r]^-{\varphi_\alpha((\varphi_0)_X)} \ar[d]_{(\varphi_0)_{\varphi_\alpha(X)}} & \varphi_\alpha\varphi_1(X)  \ar[d]^{\varphi_2(\alpha,1)_X}\\
\varphi_1\varphi_\alpha(X) \ar[r]_-{\varphi_2(1,\alpha)_X} & \varphi_\alpha(X).
}
\end{split}
\end{equation}
The commutativity of the diagrams  \eqref{crossing1} and
\eqref{crossing2} means that
$(\varphi_\alpha,(\varphi_\alpha)_2,(\varphi_\alpha)_0)$ is a
monoidal endofunctor of~$\cc$. The diagrams
\eqref{crossing3} and \eqref{crossing4} indicate  that the natural
transformation $\varphi_2(\alpha,\beta)$ is monoidal. The commutativity of
\eqref{crossing7} and the equality $(\varphi_0)_\un=(\varphi_1)_0$ mean that the natural
transformation $\varphi_0$ is monoidal. Finally, the diagrams
\eqref{crossing5} and \eqref{crossing6} indicate  that
$(\varphi,\varphi_2,\varphi_0)$ is a monoidal functor.

\subsection{$G$-braidings}\label{sect-braided-def}
 An object $X$ of a $G$-graded category $\cc$ is   {\it homogeneous} if   $X\in \cc_\alpha$ for some $\alpha\in
G$. Such an $\alpha$
 is then uniquely determined by $X$ and  denoted $|X|$. If
 two homogeneous objects $X, Y\in \cc $ are
 isomorphic, then either they are zero objects or $|X|=|Y|$.
 Let  $\cch=\amalg_{\alpha\in G}\,  \cc_\alpha$ denote
the full subcategory of homogeneous objects of $\cc$.

A \emph{$G$-braided category} is a $G$-crossed category $(\cc,\varphi)$ endowed with a \emph{$G$-braiding}, i.e., a family of isomorphisms
$$
\tau=\{\tau_{X,Y} \co X \otimes Y \to Y \otimes \varphi_{|Y|}(X)\}_{X \in \cc, Y \in \cch}
$$
 which is
natural in   $X$, $Y$ and satisfies the following three conditions:
\begin{enumerate}
\labela
  \item for all $X \in \cc$ and $Y,Z \in \cch$, the following diagram commutes:
\begin{equation}\label{eq-braiding1}
\begin{split}
\xymatrix@R=1cm @C=3cm {
X \otimes Y \otimes Z \ar[r]^-{\tau_{X, Y \otimes Z}}\ar[d]_{\tau_{X,Y} \otimes \id_Z} & Y \otimes Z \otimes \varphi_{|Y \otimes Z|}(X) \\
Y \otimes \varphi_{|Y|}(X) \otimes Z \ar[r]_-{\id_Y \otimes \tau_{\varphi_{|Y|}(X),Z}} & Y \otimes Z \otimes \varphi_{|Z|}\varphi_{|Y|}(X), \ar[u]_{\id_{Y \otimes Z} \otimes \varphi_2(|Z|,|Y|)_X}
}
\end{split}
\end{equation}
  \item for all $X,Y \in \cc$ and $Z \in \cch$, the following diagram commutes:
\begin{equation}\label{eq-braiding2}
\begin{split}
\xymatrix@R=1cm @C=3cm {
X \otimes Y \otimes Z \ar[r]^-{\tau_{X \otimes Y, Z}}\ar[d]_{\id_X\otimes \tau_{Y,Z}} & Z \otimes \varphi_{|Z|}(X \otimes Y) \\
X \otimes Z \otimes \varphi_{|Z|}(Y) \ar[r]_-{\tau_{X,Z} \otimes \id_{\varphi_{|Z|}(Y)}} & Z \otimes \varphi_{|Z|}(X) \otimes \varphi_{|Z|}(Y), \ar[u]_{\id_Z \otimes (\varphi_{|Z|})_2(X,Y)}
}
\end{split}
\end{equation}
   \item for all $\alpha \in G$, $X \in \cc$, and $Y \in \cch$, the following diagram commutes:
\begin{equation}\label{eq-braiding3}
\begin{split}
\xymatrix@R=1cm @C=3cm{
\varphi_\alpha(X) \otimes \varphi_\alpha(Y) \ar[r]^-{(\varphi_\alpha)_2(X,Y)}\ar[d]_{\tau_{\varphi_\alpha(X),\varphi_\alpha(Y)}} &\varphi_\alpha(X \otimes Y) \ar[d]^-{\varphi_\alpha(\tau_{X,Y})} \\
\varphi_\alpha(Y) \otimes \varphi_{\alpha^{-1}|Y|\alpha}\varphi_\alpha(X)\ar[d]^-{\id_{\varphi_\alpha(Y)} \otimes \varphi_2(\alpha^{-1}|Y|\alpha,\alpha)_X} & \varphi_\alpha(Y \otimes \varphi_{|Y|}(X)) \\
 \varphi_\alpha(Y) \otimes \varphi_{|Y|\alpha}(X) \ar[r]_-{\id_{\varphi_\alpha(Y)} \otimes \varphi_2(\alpha,|Y|)_X^{-1}} &\varphi_\alpha(Y) \otimes \varphi_{\alpha}\varphi_{|Y|}(X). \ar[u]_-{(\varphi_\alpha)_2(Y,\varphi_{|Y|}(X))} \\
}
\end{split}
\end{equation}
\end{enumerate}


For a   $G$-braided category $(\cc,\varphi,\tau)$, the category   $\cc_1$ is a braided category  (in the usual sense of the word) with braiding
\begin{equation}\label{eq-usualbraiding}
\{c_{X,Y}=(\id_Y \otimes (\varphi_0)^{-1}_X)\tau_{X,Y} \co X \otimes Y \to Y \otimes X\}_{X,Y\in \cc_1}\,. \end{equation}

\subsection{Pivotality and ribbonness}\label{sect-pivotality}
 A $G$-graded   category  $\cc$ is \emph{rigid} (resp., \emph{pivotal})   if its underlying monoidal category is rigid (resp.,  pivotal).  If $\cc$ is pivotal, then for   all $X \in \cc_\alpha$ with $\alpha\in
G$, we   always   choose $X^*$ to be in  $ \cc_{\alpha^{-1}}$.
If $\cc$ is   pivotal, then so is~$\cc_1$.

  A  crossing $\varphi\co \overline{G} \to
\Aut(\cc)$ in a pivotal  $G$-graded   category
$\cc$ is  \emph{pivotal} if the monoidal functor  $\varphi_\alpha$
  is pivotal for all $\alpha \in G$. A pivotal
crossing $\varphi$ gives  for each $\alpha \in G$    a natural
isomorphism
$$
\varphi_\alpha^1=\{\varphi_\alpha^1(X) \co \varphi_\alpha(X^*) \iso \varphi_\alpha(X)^*\}_{X \in \cc}
$$
which  preserves both left and right duality  (as in
Section~\ref{sect-monofunctorP})  and is monoidal:
$\bigl((\varphi_\alpha)_0^{-1}\bigr)^*=\varphi_\alpha^1(\un)(\varphi_\alpha)_0:\un^\ast=\un
\to \varphi_\alpha (\un )^\ast$  and for all $X,Y \in \cc$,
$$
\varphi_\alpha^1(X \otimes Y)(\varphi_\alpha)_2(Y^*,X^*)=  \bigl((\varphi_\alpha)_2(X,Y)\bigr)^*(\varphi_\alpha^1(Y) \otimes \varphi_\alpha^1(X)).
$$
A pivotal crossing $\varphi$ preserves the
trace  in the following sense:  for any $\alpha\in G$ and for any
endomorphism $g$ of an object of $\cc$,
$$\tr (\varphi_\alpha (g))= (\varphi_\alpha)_0^{-1} \varphi_\alpha( \tr (g)) (\varphi_\alpha)_0.$$
If  $   \End_\cc(\un)=\kk\, \id_\un$, then this formula implies that  $\tr (\varphi_\alpha (g))=\tr(g)$ for any   $g$.

     A pivotal $G$-braided  category $(\cc,\varphi,\tau)$
has a  \emph{twist} which is the family of morphisms
$\theta=\{\theta_X  \}_{X \in \cch}$ where
\begin{equation}\label{eq-def-twist}
\theta_X =(\lev_X \otimes \id_{\varphi_{|X|}(X)})(\id_{X^*} \otimes \tau_{X,X})(\rcoev_X \otimes \id_X) \co X\to \varphi_{|X|}(X).
\end{equation}
The naturality of $\tau$ implies that   $ \theta_X $ is natural
in $X$.

A \emph{$G$-ribbon category} is a  pivotal $G$-braided category $(\cc,\varphi,\tau)$  whose crossing  $\varphi$ is pivotal and whose  twist   is
\emph{self-dual} in the sense that for all $X \in \cch$,
\begin{equation}\label{eq-ribbon}
(\theta_X)^*= (\varphi_0)_X^* (\varphi_2(|X|^{-1},|X|)_X^{-1})^* \varphi_{|X|^{-1}}^1(\varphi_{|X|}(X)) \theta_{\varphi_{|X|}(X)^*}.
\end{equation}

For a   $G$-ribbon category $(\cc,\varphi,\tau)$, the category   $\cc_1$ is a ribbon category  (in the usual sense of the word) with braiding
\eqref{eq-usualbraiding}  and twist $\{ (\varphi_0)^{-1}_X \theta_X\co X \to X
\}_{X \in \cc_1}$.

%


\subsection{Example}\label{sect-ex-H-vect-rib}
The following example   of a $G$-ribbon category is  adapted from \cite{MNS}. Let $\pi \co H \to G$ be a group epimorphism with kernel $K$.
 Let $\dd$ be the category of $H$-graded finitely generated projective $\kk$-modules $M=\oplus_{h \in H} M_h$ endowed with a right action of   $K$  such that $M_h \cdot k\subset M_{k^{-1}hk}$ for all $h \in H$ and $k \in K$. Since $M$ is finitely generated, $M_h=0$ for all but a finite number of $h \in H$. Morphisms in $\dd$ are $H$-graded $K$-linear morphisms. The category $\dd$ is monoidal: the monoidal product of $M , N \in \dd$ is the  $\kk$-module $M \otimes N=M \otimes_\kk N $ with diagonal $K$-action and $H$-grading  $(M\otimes N)_h=\oplus_{h_1h_2=h} M_{h_1} \otimes_\kk N_{h_2}$ for $h\in  H$. The monoidal unit of $\dd$ is $\kk$ in degree $1\in H$  with  trivial action of $K$. The category $\dd$ is   $\kk$-additive in the obvious way and pivotal: the dual of $M \in \dd$ is the $\kk$-module $M^*=\Hom_\kk(M,\kk)$ with $H$-grading   $(M^*)_h=\Hom(M_{h^{-1}},\kk)$ for $h\in H$ and action of $K$ defined by $(f\cdot k)(m)=f(m\cdot k^{-1})$ for $k \in K$, $f\in M^*$,   $m \in M$. The left and right (co)evaluation morphisms are the usual ones (i.e., are inherited from the pivotal category of finitely generated projective $\kk$-modules). The category $\dd$ is $G$-graded as follows: for $\alpha \in G$,  $\dd_\alpha$ is the full subcategory of all $M\in \dd$ such that $M_h=0$ whenever $\pi(h)\neq \alpha$.


Any set-theoretic section $s$ of $\pi$, i.e., a  map $s\co G \to H$ such that $\pi s=\id_G$ defines a structure of a $G$-ribbon category on~$\dd$ as follows.  For  $\alpha \in G$ and  $M  \in \dd$,  set $\varphi_\alpha(M)=M$ as a $\kk$-module with $H$-grading $\varphi_\alpha(M)_h=M_{s(\alpha)^{-1}hs(\alpha)}$ for $h\in H$ and     right $K$-action     $m  k=m\cdot s(\alpha)ks(\alpha)^{-1}$ for  $m \in  \varphi_\alpha(M)$ and $k\in K$.  For a morphism $f$ in $\dd$, set $\varphi_\alpha(f)=f$. This defines a strict monoidal endofunctor $\varphi_\alpha$ of $\dd$. For $\alpha,\beta \in G$ and $M\in \dd$, the formulas $ m  \mapsto  m \cdot s(\beta)s(\alpha)s(\beta\alpha)^{-1}$ and $m  \mapsto m \cdot s(1)^{-1}$ define  isomorphisms, respectively,
 $$
\varphi_2(\alpha,\beta)_M \co  \varphi_\alpha\varphi_\beta(M)  \to  \varphi_{\beta\alpha}(M) \quad {\text {and}}\quad
(\varphi_0)_M \co   M  \to \varphi_1(M).
$$
This defines a pivotal crossing in $\dd$. Given  $M\in \dd$ and $N \in \dd_\alpha$ with $\alpha\in G$,  the $G$-braiding
$\tau_{M,N} \co  M \otimes N \to  N \otimes \varphi_{\alpha}(M)$
carries $m \otimes n$ to $ n \otimes (m \cdot hs(\alpha)^{-1})
$
for  $m \in M$, $h \in \pi^{-1}(\alpha)$, and $n \in N_h$.   For  $M \in \dd_\alpha$ with $\alpha \in G$,
 the (self-dual) twist $\theta_M\co M \to \varphi_\alpha(M)$ carries $m \in M_h$ with $h \in \pi^{-1}(\alpha)$ to
$ m \cdot hs(\alpha)^{-1}$.   In this way,  $\dd$ becomes a $G$-ribbon category.
 Though the structure of   a $G$-ribbon category on~$\dd$ depends on the choice of a section $s\co G \to H$, an appropriately  defined equivalence class of this structure is independent of  $s$, cf.\
  Section~\ref{sect-ex-Z-H-vect}.

\subsection{Remarks} \label{strictct}
1.  For   $G=\{1\}$,  the definition
of a  $G$-crossed category $\cc$  means  that $\cc$ is a
$\kk$-additive monoidal category   such
that   every object $X\in \cc$ gives rise to an object $X'\in
\cc$ and  an isomorphism $ X\approx X'$. Indeed, a strong
  monoidal functor $\varphi \co \overline{\{1\}}\to \Aut (\cc)$
yields an object  $X'=\varphi_1(X)$ and an isomorphism $
(\varphi_0)_X\co X\to X'$ for each $X\in \cc$. It is easy to see
that any system of objects and isomorphisms $\{(X'\in \cc, X\approx
X')\}_{X\in \cc}$ arises in this way from a unique   strong monoidal
functor $ \overline{\{1\}}\to \Aut (\cc)$. For
$G=\{1\}$, the notions of $G$-braided/$G$-ribbon
categories   are equivalent to the standard notions
of braided/ribbon  categories.

2.    The notions of   $G$-braided/$G$-ribbon  categories  were first introduced   in~\cite{Tu1} in a special case. Denoting by $G^\opp$ the group $G$ with opposite multiplication, a $G$-braided (resp.\@ $G$-ribbon) category in~\cite{Tu1} is a $G^\opp$-braided (resp.\@ $G^\opp$-ribbon) category in the  sense above whose crossing $\varphi$ is \emph{strict}, meaning that $\varphi$ is strict monoidal (i.e., $\varphi_2(\alpha,\beta)$  and $\varphi_0$ are  identity morphisms) and each $\varphi_\alpha$ is strict monoidal (i.e., $(\varphi_\alpha)_2$ and $(\varphi_\alpha)_0$ are identity morphisms). For instance, the   crossing in Example~\ref{sect-ex-H-vect-rib} is strict     if and only if    $s$ is a group homomorphism. Such an $s$ does not exist unless $H$ is a semidirect product of $K$ and $G$.

\section{Centers of $G$-graded categories} \label{sect-center-Gcat}

We  define and study   $G$-centers of $G$-graded categories. We begin
by recalling several   notions of the theory of
categories.

\subsection{Preliminaries}\label{sect-split-idem}
An \emph{idempotent} in a category $\cc$ is an
endomorphism $e$ of an object    $X\in \cc$  such that $e^2=e$. An
idempotent $e\co X \to X$ in $\cc$   \emph{splits} if there is
an object $E\in\cc$ and   morphisms $p\colon X \to E$ and
$q\colon E \to X$ such that $qp=e$ and $pq=\id_E$.
Such a {\it splitting triple} $(E,p,q)$ of $e$ is   unique up to   isomorphism:  if $(E',p' ,q' )$ is another splitting triple of $e$, then $\phi=p'q\co E \to E'$
is the (unique) isomorphism between $E$ and $E'$ such that $p'=\phi p$ and $q'=q \phi^{-1}$.
A
\emph{category with split idempotents} is a category in which
all idempotents split.

  A monoidal category $\cc$ is   \emph{pure} if $f
\otimes \id_X=\id_X \otimes f$  for all
$f \in \End_\cc(\un)$ and $X \in \cc$. For example, this condition is satisfied if $\End_\cc (\un)=\kk \, \id_\un$.

If a monoidal category $\cc$ is    pure and pivotal, then the  left and right traces  in   $\cc$ are
$\otimes$-multiplicative: $ \tr_{l/r} (f\otimes g)=\tr_{l/r} (f) \,
\tr_{l/r}(g)$  for any
endomorphisms $f,g$ of objects~of~$\cc$. In particular,
$\dim_{l/r} (X\otimes Y)= \dim_{l/r} (X) \dim_{l/r} (Y)$   for any $X,Y\in \cc$.

%
%
%

 We call a  pivotal $G$-graded category    $\cc$
\emph{non-singular} if it is pure, has split idempotents, and for all $\alpha \in G$, the subcategory
$\cc_\alpha$ of $\cc$ has  at least  one object whose left dimension is
invertible in $\End_\cc(\un)$.  Examples of such categories will be given in Section \ref{sect-ex-Z-H-vect}.

\subsection{Relative centers}\label{sect-rel-center}  Recall
the notion of a relative center of a monoidal category due to
P.\ Schauenburg \cite{Sch}.  Let $\cc$ be a monoidal category
and $\dd$ be a monoidal subcategory of $\cc$.  A (left)
\emph{half braiding of $\cc$ relative to $\dd$} is a pair $(A,\sigma)$, where  $A \in \cc$ and $ \sigma=\{\sigma_X \co {{A}} \otimes
X\to X \otimes {{A}}\}_{X \in \dd} $ is a family of isomorphisms
in $\cc$ which is natural with respect to $X  $  and
satisfies  for all $X,Y \in \dd$,
 \begin{equation}\label{axiom-half-braiding}
 \sigma_{X \otimes Y}=(\id_X \otimes
\sigma_Y)(\sigma_X \otimes \id_Y).
\end{equation} This implies that $\sigma_\un=\id_{{A}}$.

The (left) \emph{center of $\cc$ relative to $\dd$} is the
monoidal category $\zz(\cc;\dd)$ whose objects  are half
braidings of $\cc$ relative to $\dd$. A morphism
$({{A}},\sigma)\to ({{A}}',\sigma')$ in $\zz(\cc;\dd)$ is a
morphism $f \co {{A}} \to {{A}}'$ in $\cc$ such that $(\id_X
\otimes f)\sigma_X=\sigma'_X(f \otimes \id_X)$ for all
$X\in\dd$. The monoidal product in $\zz(\cc;\dd)$ is defined by
\begin{equation*}
({{A}},\sigma) \otimes ({{B}}, \rho)=\bigl({{A}} \otimes {{B}},(\sigma \otimes \id_{{B}})(\id_{{A}} \otimes \rho) \bigr),
\end{equation*}
and the  unit object of $\zz(\cc;\dd)$ is
$\un_{\zz(\cc;\dd)}=(\un,\{\id_X\}_{X \in \dd})$.    The {\it
forgetful functor} $  \zz(\cc;\dd)\to \cc$ carries $({{A}}, \sigma)$   to ${{A}}\in \cc$ and
acts in the obvious way on the morphisms.   This functor is  strict
monoidal and {\it conservative}   in the sense that a morphism in $  \zz(\cc;\dd)$ carried to an isomorphism in $\cc$ is itself an isomorphism.

The category $\zz(\cc;\dd)$   inherits most of the standard
properties of $\cc$. If   $\cc$ is a  category with split
idempotents, then so it   $\zz(\cc;\dd)$. If $\cc$ is pure, then
$\zz(\cc;\dd)$ is pure and
$\End_{\zz(\cc;\dd)}(\un_{\zz(\cc;\dd)})=\End_\cc(\un)$.
 If $\cc$ is rigid and $\dd$ is a rigid subcategory of $\cc$
(that is, a monoidal subcategory   stable under left and right
dualities), then  $\zz(\cc;\dd)$ is rigid. If $\cc$ is pivotal
and $\dd$ is a pivotal subcategory of $\cc$ (that is, a monoidal
subcategory  stable under duality), then $\zz(\cc;\dd)$ is
pivotal with $({{A}},\sigma)^*=({{A}}^*,\sigma^\dagger)$ for
$(A,\sigma) \in \zz(\cc;\dd)$, where
\begin{equation}\label{eq-inv-sigma}
 \psfrag{M}[Bc][Bc]{\scalebox{.9}{${{A}}$}}
 \psfrag{X}[Bc][Bc]{\scalebox{.9}{$X$}}
 \psfrag{s}[Bc][Bc]{\scalebox{.9}{$\sigma_{X^*}$}}
\sigma^\dagger_X= \rsdraw{.45}{1}{sigmadual}= \;\,\psfrag{s}[Bc][Bc]{\scalebox{.9}{$\sigma^{-1}_{X}$}} \rsdraw{.45}{1}{sigmadual2} \co {{A}}^* \otimes X \to X \otimes {{A}}^*,
\end{equation}
and $\lev_{({{A}},\sigma)}= \lev_{{A}}$, $\lcoev_{({{A}},\sigma)}= \lcoev_{{A}}$,
$\rev_{({{A}},\sigma)}= \rev_{{A}}$, $\rcoev_{({{A}},\sigma)}= \rcoev_{{A}}$. The
traces of morphisms and   dimensions of objects
in~$\zz(\cc;\dd)$ are the same as in~$\cc$.

If $\cc$ is   \kt additive, then so is $\zz(\cc;\dd)$, and the
forgetful functor $ \zz(\cc;\dd)\to \cc$ is \kt linear. If $\cc$ is
an abelian category, then so is $\zz(\cc;\dd)$.

The center $\zz(\cc;\cc)$ of $\cc$ relative to itself is   the
center $\zz(\cc)$ of~$\cc$ in the sense of A.\ Joyal, R.\ Street,
and V.\ Drinfeld. The center $\zz(\cc;*)$ of $\cc$ relative to its
trivial subcategory $*$ formed by the single object $  \un $ and the
single morphism $ \id_\un $  is canonically isomorphic to $\cc$.
When $\cc$ is pure,     $\zz(\cc;\{\un\})$ is also canonically
isomorphic to~$\cc$,   where $\{\un\}$ is the full subcategory of
$\cc$ having a single object $\un$. The canonical  isomorphism
$\cc\to \zz(\cc;\{\un\})$ carries any object $A$ of $\cc$ to the
half-braiding $(A,\id_A\co   A \otimes \un \to \un \otimes A$); the
naturality of $\id_A$ with respect to endomorphisms of $\un$ is
verified using the purity of $\cc$.

\subsection{The $G$-center}\label{sect-rel-center-G}
 For  a  $G$-graded
 category $\cc$ (over $\kk$), we set $\zz_G(\cc)=\zz(\cc;\cc_1)$   and call $\zz_G(\cc)$  the
\emph{$G$-center of $\cc$}.  By
Section~\ref{sect-rel-center}, the category $\zz_G(\cc )$
is  \kt additive.   For $\alpha \in G$, let
$\zz_\alpha(\cc)$ be the full subcategory of $\zz_G(\cc)$  formed by
the   half braidings $(A, \sigma)$ relative to $\cc_1$ with
$A\in \cc_\alpha$. This system of subcategories  turns $\zz_G(\cc)$ into a $G$-graded category (over $\kk$). By definition,    $|(A,\sigma)|=|A| =\alpha$ for any   $(A,\sigma) \in
\zz_\alpha(\cc) $. If $\cc$ is pivotal (or pure, or with split idempotents), then so is $\zz_G(\cc)$.
The main result of this section is the
following theorem.

\begin{thm}\label{thm-G-center}
Let $\cc$ be a non-singular pivotal
$G$-graded  category. Then  $\zz_G(\cc)$ has
a canonical structure of a pivotal $G$-braided category with pivotal crossing.
\end{thm}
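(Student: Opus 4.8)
The plan is to equip $\zz_G(\cc)$ with a crossing and a $G$-braiding by transporting the ambient data of $\cc$ (which is only $G$-graded, not $G$-crossed) through the structure of half-braidings, using non-singularity at the crucial moments. First I would recall that by Section~\ref{sect-rel-center-G} the category $\zz_G(\cc)$ is already a $\kk$-additive pivotal $G$-graded category with split idempotents, so only the crossing $\varphi$ and the $G$-braiding $\tau$ remain to be constructed. The key observation is that for a homogeneous object $Y\in\cc_\beta$ the conjugation functor ``$X\mapsto$ (some twisted version of $X$)'' should be modeled internally: for each $\alpha\in G$ one wants $\varphi_\alpha\co\zz_G(\cc)\to\zz_G(\cc)$ sending $(A,\sigma)$ with $A\in\cc_\gamma$ to a pair $(\varphi_\alpha(A),\sigma^{(\alpha)})$ lying in $\zz_{\alpha^{-1}\gamma\alpha}(\cc)$, where $\varphi_\alpha(A)$ is a new object of $\cc$ built from $A$ together with the half-braiding data. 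The natural candidate, following the Gelaki--Naidu--Nikshych picture, is to set $\varphi_\alpha(A)$ to be (a splitting of an idempotent on) $Z^*\otimes A\otimes Z$ summed over a generating object $Z$ of $\cc_\alpha$ of invertible left dimension; here non-singularity guarantees both that such $Z$ exists and that the relevant idempotents split. One then uses the half-braiding $\sigma$ of $A$ to make $Z^*\otimes A\otimes Z$ independent (up to canonical isomorphism) of the choice of $Z$ within $\cc_\alpha$, which is what makes $\varphi_\alpha$ well-defined and functorial.

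Next I would verify, in order: (i) that $\varphi_\alpha$ is a $\kk$-linear strong monoidal endofunctor of $\zz_G(\cc)$ and an equivalence, with monoidal constraints $(\varphi_\alpha)_2,(\varphi_\alpha)_0$ inherited from the rigidity isomorphisms in $\cc$ (this is essentially axioms \eqref{crossing1}--\eqref{crossing2}); (ii) that the family $\{\varphi_\alpha\}_{\alpha\in G}$ assembles into a strong monoidal functor $\varphi\co\overline{G}\to\Aut(\zz_G(\cc))$, i.e.\ one constructs the natural isomorphisms $\varphi_2(\alpha,\beta)\co\varphi_\alpha\varphi_\beta\iso\varphi_{\beta\alpha}$ and $\varphi_0\co 1\iso\varphi_1$ from the canonical identifications $W^*\otimes Z^*\otimes A\otimes Z\otimes W\cong (ZW)^*\otimes A\otimes(ZW)$ and checks the hexagon/pentagon-type coherences \eqref{crossing3}--\eqref{crossing6}; (iii) that $\varphi_\alpha(\zz_\beta(\cc))\subset\zz_{\alpha^{-1}\beta\alpha}(\cc)$, which is immediate from the grading conventions $Z\in\cc_\alpha$, $Z^*\in\cc_{\alpha^{-1}}$. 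Then I would define the $G$-braiding $\tau_{(A,\sigma),(B,\rho)}\co (A,\sigma)\otimes(B,\rho)\to(B,\rho)\otimes\varphi_{|B|}(A,\sigma)$ for homogeneous $(B,\rho)$: on underlying objects it should be built from the half-braiding $\rho$ of $B$ relative to $\cc_1$ together with the structural maps entering the definition of $\varphi_{|B|}$, and one checks it is a morphism in $\zz_G(\cc)$ and satisfies the three hexagon axioms \eqref{eq-braiding1}--\eqref{eq-braiding3}. Finally, pivotality of the crossing amounts to checking $\varphi_\alpha^l=\varphi_\alpha^r$ (Section~\ref{sect-monofunctorP}), which follows because each $\varphi_\alpha$ is built from the pivotal (two-sided) duality of $\cc$ in a left-right symmetric way.

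The main obstacle, as the introduction itself flags (``the difficult part $\ldots$ concerns the crossing and the $G$-braiding''), is the construction of $\varphi_\alpha$ on objects and, more pointedly, showing it is \emph{well-defined independently of the auxiliary object} $Z\in\cc_\alpha$ of invertible dimension and that it is \emph{monoidal} — the coherence diagram \eqref{crossing3} relating $(\varphi_\alpha)_2$ to $\varphi_2(\alpha,\beta)$ is the delicate one, since it intertwines the idempotent splittings for $\cc_\alpha$, $\cc_\beta$, and $\cc_{\alpha\beta}$. This is exactly where purity, split idempotents, and the invertibility of $\dim_l(Z)$ are used in concert: invertible dimension lets one normalize the relevant idempotent $e_Z$ on $Z^*\otimes A\otimes Z$ (roughly $\dim_l(Z)^{-1}$ times a ``bubble'' built from $\sigma$) so that $e_Z^2=e_Z$, and purity ensures the scalar $\End_\cc(\un)$ factors behave correctly under $\otimes$. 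I expect the bulk of the work to be the diagram chases verifying \eqref{crossing3}--\eqref{crossing6} and \eqref{eq-braiding1}--\eqref{eq-braiding3}; these are routine given the right definitions but lengthy, and are best organized via Penrose diagrams so that each axiom reduces to an isotopy of diagrams plus one application of the defining relation $e_Z^2=e_Z$.
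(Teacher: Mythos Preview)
Your outline for the crossing matches the paper's approach closely: for each $V\in\cc_\alpha$ of invertible left dimension one defines the idempotent $\pi^V_{(A,\sigma)}=d_V^{-1}(\text{bubble built from }\sigma_{V\otimes V^*})$ on $V^*\otimes A\otimes V$, splits it to obtain $\varphi_V(A,\sigma)$, constructs comparison isomorphisms $\delta^{U,V}\co\varphi_V\iso\varphi_U$ using $\sigma_{V\otimes U^*}$ (note $V\otimes U^*\in\cc_1$), and passes to the projective limit $\varphi_\alpha$. That part of your plan is sound.

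The genuine gap is in your description of the $G$-braiding. You write that $\tau_{(A,\sigma),(B,\rho)}$ ``should be built from the half-braiding $\rho$ of $B$'', but $\rho_Y$ is only defined for $Y\in\cc_1$, and $A$ is not in $\cc_1$ in general, so $\rho$ cannot be used to move $A$ past $B$. The correct construction uses $\sigma$, the half-braiding of the \emph{first} factor: the key trick is that for $V\in\ee_{|B|}$ one has $B\otimes V^*\in\cc_1$, so $\sigma_{B\otimes V^*}\co A\otimes B\otimes V^*\to B\otimes V^*\otimes A$ is available, and combining this with a coevaluation for $V$ and the projection $p^V_{(A,\sigma)}$ yields
\[
\Gamma^V_{(A,\sigma),B}\co A\otimes B\to B\otimes E^V_{(A,\sigma)}.
\]
One checks these are compatible with the $\delta^{U,V}$ and passes to the limit to get $\tau_{(A,\sigma),B}$. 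The half-braiding $\rho$ enters only afterwards, in verifying that this morphism of $\cc$ is actually a morphism in $\zz_G(\cc)$ (i.e.\ commutes with the half-braidings on source and target); it plays no role in the construction of the underlying map. Without this observation your proposed definition of $\tau$ does not type-check, and the hexagon verifications cannot begin.
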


 When $G$ is finite, $\kk$  is a field of characteristic zero, and
$\cc$ is a   $G$-fusion category (see Section \ref{Pre-fusion and fusion  categories} for the definition),
Theorem~\ref{thm-G-center} was first obtained by
Gelaki, Naidu, and Nikshych \cite{GNN}. In difference to \cite{GNN},
we give an explicit construction of  the crossing and the $G$-braiding in
$\zz_G(\cc)$.

Theorem~\ref{thm-G-center} is proved in
Sections~\ref{sect-action of-G} and~\ref{sect-braiding in-G}. Several lemmas are stated in these sections without proof which is postponed to  Section~\ref{sect-proof-lems-defdouble}.
Throughout the proof of Theorem~\ref{thm-G-center}  we keep the assumptions of
this theorem and use the following notation. For   $V\in \cc$, we set $d_V=\dim_l(V)\in \End_\cc (\un)$. For $\alpha \in G$, we  denote by   $\ee_\alpha$    the class of objects of
$\cc_\alpha$ with invertible left dimension.

\subsection{The crossing}\label{sect-action of-G}  The crossing in $\zz_G(\cc)$ is constructed  in three steps. At Step~1, we construct a family of monoidal endofunctors of $\zz_G(\cc)$ numerated by objects of $\cc$ belonging to $\amalg_{\alpha \in G}\, \ee_\alpha$. At Step 2, we construct a system of   isomorphisms between these endofunctors. At Step 3, we define the crossing as the limit of the resulting projective system of endofunctors and isomorphisms.

Step 1. For any
   $V \in \ee_\alpha$ with $\alpha\in G$, we define a  monoidal endofunctor $\varphi_V$ of $\zz_G(\cc)$. We begin with a   lemma.

\begin{lem}\label{lem-Pi-idempot} For  any $(A,\sigma) \in \zz_G(\cc)$, the morphism
$$
\pi^V_{(A,\sigma)}= d_V^{-1}\; \psfrag{A}[Bc][Bc]{\scalebox{.9}{$A$}}
 \psfrag{V}[Bc][Bc]{\scalebox{.9}{$V$}}
 \psfrag{s}[Bc][Bc]{\scalebox{.9}{$\sigma_{V \otimes V^*}$}}
 \rsdraw{.45}{1}{pMsigma} \in \End_\cc(V^* \otimes A \otimes V)
$$
  is an idempotent.
\end{lem}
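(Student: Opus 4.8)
The plan is to verify directly that $\pi^V_{(A,\sigma)} \circ \pi^V_{(A,\sigma)} = \pi^V_{(A,\sigma)}$ by manipulating the graphical calculus. First I would write down the diagram for $\pi^V_{(A,\sigma)}$ explicitly: it has the strand $A$ running through, with a copy of $V$ looping down from the top, going through the half-braiding box $\sigma_{V\otimes V^*}$ (which permutes $A$ past $V\otimes V^*\in\cc_1$ since $|V\otimes V^*|=\alpha\alpha^{-1}=1$), and then coming back up, with a scalar $d_V^{-1}=\dim_l(V)^{-1}$ in front. Composing two such pictures stacks two $\sigma$-boxes connected by a cap-cup on the $V$/$V^*$ strands in the middle, with an overall factor $d_V^{-2}$.

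The key step is to use the half-braiding axiom \eqref{axiom-half-braiding}, $\sigma_{X\otimes Y}=(\id_X\otimes\sigma_Y)(\sigma_X\otimes\id_Y)$, together with naturality of $\sigma$ in its $\cc_1$-argument, to slide one of the $\sigma$-boxes so that the two middle $V$-strands can be contracted. Concretely, the composite of $\sigma_{V\otimes V^*}$ with itself (separated by the appropriate cap/cup) should reduce, via naturality of $\sigma$ applied to the evaluation/coevaluation morphisms $\rev_V,\rcoev_V$ (which are morphisms in $\cc_1$ between $V\otimes V^*$, $V^*\otimes V$ and $\un$) and the functoriality/multiplicativity of $\sigma$, to a single $\sigma_{V\otimes V^*}$-box with an extra closed $V$-loop. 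That closed loop, by the definition of the left trace and the fact that the relevant strand sits in a region where one can apply $\tr_l(\id_V)=\dim_l(V)=d_V$, contributes exactly the scalar $d_V$. Since $\cc$ is pure, this scalar $d_V\in\End_\cc(\un)$ can be pulled out of the tensor product freely (this is where purity of $\cc$, hence of $\zz_G(\cc)$, is used to justify moving the scalar). Collecting factors gives $d_V^{-2}\cdot d_V\cdot(\text{one }\sigma\text{-box diagram})=d_V^{-1}\cdot\pi^V_{(A,\sigma)}\cdot d_V\cdot d_V^{-1}$... more carefully, $d_V^{-2}\cdot d_V = d_V^{-1}$, and the remaining diagram is exactly $\pi^V_{(A,\sigma)}$ up to the one factor of $d_V^{-1}$ already accounted for, so the composite equals $\pi^V_{(A,\sigma)}$.

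I expect the main obstacle to be bookkeeping the graphical manipulation correctly: one must be careful about which strand is $V$ versus $V^*$, the orientations, and the precise placement of the $\sigma_{V\otimes V^*}$ box relative to the $A$-strand, and then correctly identifying the contracted loop as producing $d_V$ rather than $\dim_r(V)$ or $d_{V^*}$. The conceptual content is light — it is essentially the standard ``averaging/projection'' idempotent argument familiar from the $G=\{1\}$ case (projecting onto the part of $V^*\otimes A\otimes V$ compatible with the half-braiding), adapted to the relative-center setting — but making every isotopy and every application of the half-braiding axiom \eqref{axiom-half-braiding} and of naturality precise in the diagram is the part requiring care. A secondary point worth stating explicitly is that $V^*\otimes A\otimes V$ and all the intermediate objects indeed lie in the appropriate graded components and that $\sigma$ is defined on $V\otimes V^*\in\cc_1$, which is where the hypothesis $V\in\ee_\alpha\subset\cc_\alpha$ and the convention $V^*\in\cc_{\alpha^{-1}}$ enter.
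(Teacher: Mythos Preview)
Your proposal is correct and follows essentially the same route as the paper: compose the two copies of $\pi^V_{(A,\sigma)}$ to get a $d_V^{-2}$-weighted diagram with two $\sigma_{V\otimes V^*}$-boxes, use the multiplicativity axiom~\eqref{axiom-half-braiding} to merge them into a single $\sigma_{V\otimes V^*\otimes V\otimes V^*}$-box, then apply naturality of $\sigma$ to produce a closed $V$-loop contributing the factor $d_V$, leaving precisely $\pi^V_{(A,\sigma)}$. Your remarks on purity and on $V\otimes V^*\in\cc_1$ are correct side observations; the only blemish is the momentarily garbled scalar bookkeeping, which you immediately fix.
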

\begin{proof}
Using \eqref{axiom-half-braiding} and the
naturality of $\sigma$, we obtain
\begin{center}
$\ds \left(\pi^V_{(A,\sigma)}\right)^2= d_V^{-2}\;
\psfrag{A}[Bc][Bc]{\scalebox{.8}{$A$}}
 \psfrag{V}[Bc][Bc]{\scalebox{.8}{$V$}}
 \psfrag{s}[Bc][Bc]{\scalebox{.9}{$\sigma_{V \otimes V^*}$}}
 \rsdraw{.45}{1}{pMsigma2}
 \!\!\!=\,d_V^{-2}\; \psfrag{A}[Bc][Bc]{\scalebox{.8}{$A$}}
 \psfrag{V}[Bc][Bc]{\scalebox{.8}{$V$}}
 \psfrag{s}[Bc][Bc]{\scalebox{.9}{$\sigma_{V \otimes V^*\otimes V \otimes V^*}$}}
 \rsdraw{.45}{1}{pMsigma3}$\\
 $\ds = \, d_V^{-2}\;\; \psfrag{A}[Bc][Bc]{\scalebox{.8}{$A$}}
 \psfrag{V}[Bc][Bc]{\scalebox{.8}{$V$}}
 \psfrag{s}[Bc][Bc]{\scalebox{.9}{$\sigma_{V \otimes V^*}$}}
 \rsdraw{.45}{1}{pMsigma4}= \pi^V_{(A,\sigma)}.$
\end{center}
\end{proof}
Since  all  idempotents in $\cc$   split,  there exist an
object $E^V_{(A,\sigma)} \in \cc$ and morphisms $p^V_{(A,\sigma)}\co
V^* \otimes A \otimes V \to E^V_{(A,\sigma)}$ and
$q^V_{(A,\sigma)}\co E^V_{(A,\sigma)} \to V^* \otimes A \otimes V$
such that
\begin{equation}\label{eq-idemp-split}
\pi^V_{(A,\sigma)}=q^V_{(A,\sigma)}p^V_{(A,\sigma)}\quad \text{and} \quad p^V_{(A,\sigma)}q^V_{(A,\sigma)}=\id_{E^V_{(A,\sigma)}}.
\end{equation}
We will depict the morphisms $p^V_{(A,\sigma)}$ and $q^V_{(A,\sigma)}$ as
$$
 \psfrag{A}[Bl][Bl]{\scalebox{.8}{$A$}}
 \psfrag{V}[Bl][Bl]{\scalebox{.8}{$V$}}
 \psfrag{E}[Bl][Bl]{\scalebox{.8}{$E_{(A,\sigma)}^V$}}
 p^V_{(A,\sigma)}=\rsdraw{.45}{1}{pVAs}\quad \text{and} \quad  q^V_{(A,\sigma)}=\rsdraw{.45}{1}{qVAs} \;.
$$

 We  can now define an  endofunctor $\varphi_V$ of $\zz_G(\cc)$ as follows.   For
$(A,\sigma) $, set  $\varphi_V(A,\sigma)=(E^V_{(A,\sigma)},\gamma^V_{(A,\sigma)}) \in \zz_G(\cc)$  where, for each $X \in \cc_1$,
$$
\gamma^V_{(A,\sigma),X}= d_V^{-1}\; \psfrag{A}[Bc][Bc]{\scalebox{.8}{$A$}}
 \psfrag{V}[Bc][Bc]{\scalebox{.8}{$V$}}
  \psfrag{X}[Bc][Bc]{\scalebox{.8}{$X$}}
   \psfrag{E}[Bl][Bl]{\scalebox{.8}{$E^V_{(A,\sigma)}$}}
 \psfrag{s}[Bc][Bc]{\scalebox{.9}{$\sigma_{V \otimes X \otimes V^*}$}}
 \rsdraw{.45}{1}{gammaMsigma} : E^V_{(A,\sigma)} \otimes X\to X\otimes E^V_{(A,\sigma)}.
$$
We show in Section~\ref{sect-proof-lems-defdouble}  that $\gamma^V_{(A,\sigma)}$ is a  half-braiding of $\cc$ relative to $\cc_1$ so that $\varphi_V(A,\sigma)  \in \zz_G(\cc)$.
 If $A  \in \cc_\beta $ with $\beta \in G$, then we
always   choose $E^V_{(A, \sigma)}$  in   $ \cc_{\alpha^{-1} \beta
\alpha}$ so that $\varphi_V(A,\sigma)  \in \zz_{\alpha^{-1} \beta
\alpha} (\cc)$.
For
  a  morphism $f\co (A,\sigma)
\to (B,\rho)$ in $\zz_G(\cc)$, set
$$\varphi_V(f)=
 \psfrag{V}[Bc][Bc]{\scalebox{.8}{$V$}}
 \psfrag{A}[Bc][Bc]{\scalebox{.8}{$A$}}
 \psfrag{B}[Bc][Bc]{\scalebox{.8}{$B$}}
 \psfrag{E}[Bl][Bl]{\scalebox{.8}{$E^V_{(A,\sigma)}$}}
 \psfrag{F}[Bl][Bl]{\scalebox{.8}{$E^V_{(B,\rho)}$}}
 \psfrag{f}[Bc][Bl]{\scalebox{.9}{$f$}}
 \rsdraw{.45}{1}{phiV-morph} \; \co \varphi_V(A,\sigma) \to \varphi_V(B,\rho).
 $$
 This defines $\varphi_V$ as a functor. To turn $\varphi_V$ into a monoidal functor, set for any $(A,\sigma),(B,\rho)\in  \zz_\beta(\cc)$,
$$(\varphi_V)_2\bigl((A,\sigma),(B,\rho)\bigr)=  \;\psfrag{A}[Bc][Bc]{\scalebox{.8}{$A$}}
 \psfrag{V}[Bc][Bc]{\scalebox{.8}{$V$}}
 \psfrag{B}[Bc][Bc]{\scalebox{.8}{$B$}}
 \psfrag{E}[Bl][Bl]{\scalebox{.8}{$E_{(A,\sigma)}^V$}}
 \psfrag{F}[Bl][Bl]{\scalebox{.8}{$E_{(B,\rho)}^V$}}
 \psfrag{G}[Bl][Bl]{\scalebox{.8}{$E_{(A,\sigma)\otimes (B,\rho)}^V$}}
 \rsdraw{.45}{1}{phiV2Msigma} \quad \text{and} \quad
 (\varphi_V)_0= \;
 \psfrag{V}[Bc][Bc]{\scalebox{.8}{$V$}}
 \psfrag{F}[Bl][Bl]{\scalebox{.8}{$E^V_{(\un,\id)}$}}
 \rsdraw{.45}{1}{phiV-0}\;.
$$

\begin{lem}\label{lem-phiV-def}
$(\varphi_V,(\varphi_V)_2,(\varphi_V)_0)$ is a well-defined  pivotal strong  monoidal $\kk$-linear  endofunctor of $\zz_G(\cc)$ such that
$\varphi_V\bigl(\zz_{\beta}(\cc)\bigr) \subset \zz_{\alpha^{-1}\beta \alpha}(\cc)$ for all $\beta \in G$.
\end{lem}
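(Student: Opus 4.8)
The plan is to check three things in succession: (a) that $\varphi_V$ is a well-defined $\kk$-linear endofunctor of $\zz_G(\cc)$ compatible with the $G$-grading; (b) that $(\varphi_V)_2$ and $(\varphi_V)_0$ are morphisms of $\zz_G(\cc)$ making $\varphi_V$ strong monoidal and satisfying the coherence axioms \eqref{stmonoidal1}--\eqref{stmonoidal2}; (c) that $\varphi_V$ is pivotal, i.e.\ $\varphi_V^l=\varphi_V^r$.

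For (a), the fact that each $(E^V_{(A,\sigma)},\gamma^V_{(A,\sigma)})$ is an object of $\zz_G(\cc)$ --- that is, $\gamma^V_{(A,\sigma)}$ is a natural family of isomorphisms satisfying \eqref{axiom-half-braiding} --- is exactly what is established in Section~\ref{sect-proof-lems-defdouble}, and I would quote it. The grading statement reduces to the observation that a retract of an object of $\cc_\gamma$ can be chosen in $\cc_\gamma$: if $Y=\oplus_\delta Y_\delta$ is a retract of $X\in\cc_\gamma$ via $p\co X\to Y$, $q\co Y\to X$ with $pq=\id_Y$, then each $Y_\delta$ is a retract of $X$ (precompose $p$ and $q$ with the injection and projection of the summand $Y_\delta$), so for $\delta\neq\gamma$ axiom~(c) of a $G$-graded category forces $\pi_\delta p=0$, hence $\id_{Y_\delta}=0$ and $Y_\delta$ is a zero object; thus $Y\cong Y_\gamma\in\cc_\gamma$. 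Since $V^*\otimes A\otimes V\in\cc_{\alpha^{-1}\beta\alpha}$ whenever $A\in\cc_\beta$, this permits the choice $E^V_{(A,\sigma)}\in\cc_{\alpha^{-1}\beta\alpha}$ made in the text, giving $\varphi_V(\zz_\beta(\cc))\subset\zz_{\alpha^{-1}\beta\alpha}(\cc)$. Functoriality is then formal: writing $\varphi_V(f)=p^V_{(B,\rho)}(\id_{V^*}\otimes f\otimes\id_V)q^V_{(A,\sigma)}$, the naturality of $\pi^V$ in its argument --- a one-line consequence of $f$ being a morphism of half-braidings applied to the object $V\otimes V^*\in\cc_1$ --- together with the relations $q^Vp^V=\pi^V$ and $\pi^Vq^V=q^V$ of \eqref{eq-idemp-split} collapses $\varphi_V(g)\varphi_V(f)$ to $\varphi_V(gf)$, while $\varphi_V(\id)=p^Vq^V=\id$; a short graphical computation from the naturality of $\sigma$ shows $\varphi_V(f)$ intertwines $\gamma^V_{(A,\sigma)}$ and $\gamma^V_{(B,\rho)}$, so it is a morphism of $\zz_G(\cc)$. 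The $\kk$-linearity of $f\mapsto\varphi_V(f)$ is immediate from the $\kk$-bilinearity of $\otimes$ and of composition.

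For (b), I would first verify that the displayed morphisms $(\varphi_V)_2\bigl((A,\sigma),(B,\rho)\bigr)$ and $(\varphi_V)_0$ intertwine the relevant half-braidings; this is a planar-diagram manipulation in which the boxes $\sigma_{V\otimes X\otimes V^*}$ are slid past one another by means of \eqref{axiom-half-braiding} and the naturality of $\sigma$, combined with \eqref{eq-idemp-split}. The coherence axioms \eqref{stmonoidal1} and \eqref{stmonoidal2} are verified in the same spirit, the point being that \eqref{axiom-half-braiding} plays the role of an associativity constraint for $\sigma$. Finally $(\varphi_V)_2$ and $(\varphi_V)_0$ are isomorphisms: their inverses are the mirror-image diagrams rescaled by $d_V^{-1}$ --- here the invertibility of $d_V=\dim_l(V)$, available exactly because $V\in\ee_\alpha$, is indispensable --- and both round-trip composites collapse to identities using \eqref{axiom-half-braiding}, naturality of $\sigma$, the relations $p^Vq^V=\id$ and $\pi^Vq^V=q^V$, and the identity $\tr_l(\id_V)=d_V$.

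For (c), by Section~\ref{sect-rel-center} the forgetful functor $U\co\zz_G(\cc)\to\cc$ is strict monoidal, faithful, and strictly preserves left and right duals as well as all (co)evaluation morphisms; hence it suffices to show $U(\varphi_V^l(A,\sigma))=U(\varphi_V^r(A,\sigma))$ in $\cc$. Expanding the formulas for $F^l$ and $F^r$ of Section~\ref{sect-monofunctorP} with $F=\varphi_V$, both sides become composites in $\cc$ built from $(\varphi_V)_0$, $(\varphi_V)_2$, the morphisms $U\varphi_V(\lev_{(A,\sigma)})=p^V(\id_{V^*}\otimes\lev_A\otimes\id_V)q^V$ and $U\varphi_V(\rev_{(A,\sigma)})=p^V(\id_{V^*}\otimes\rev_A\otimes\id_V)q^V$, and the (co)evaluations of $E^V_{(A,\sigma)}$; substituting the graphical definitions and simplifying by \eqref{eq-idemp-split} and the invertibility of $d_V$, a planar isotopy identifies both composites with one and the same morphism $E^V_{(A^*,\sigma^\dagger)}\to(E^V_{(A,\sigma)})^*$. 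This last verification --- bookkeeping of the $d_V^{\pm1}$ factors and the $p^V/q^V$ pairs while sliding the (co)evaluation arcs of $V$ and of $A$, with the half-braiding $\sigma^\dagger$ of the dual object entering --- is the step I expect to be the main obstacle; the remaining items are routine manipulations in the Penrose calculus.
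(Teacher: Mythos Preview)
Your proposal is correct and follows essentially the same graphical-calculus route as the paper's proof in Section~\ref{sect-proof-lems-defdouble}, including the use of the splitting relations \eqref{eq-idemp-split} and the invertibility of $d_V$ throughout. One small sharpening for part~(c): after simplification the two diagrams representing $\varphi_V^l(A,\sigma)$ and $\varphi_V^r(A,\sigma)$ differ by exchanging left for right (co)evaluations, and the paper invokes the pivotality of~$\cc$ (rather than bare planar isotopy) to identify them --- exactly the obstacle you flagged.
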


 We now study   the endofunctors
$\{ \varphi_V\}_{V}$. Pick
  any    $U \in \ee_\alpha$, $V \in \ee_\beta$,   $W \in
\ee_{\beta \alpha}$ with $\alpha,\beta \in G$. For each $(A,\sigma)\in \zz_G(\cc)$, consider the
morphism
$$
\zeta^{U,V,W}_{(A,\sigma)}=  d_U^{-1}d_V^{-1}\;\psfrag{A}[Bl][Bl]{\scalebox{.7}{$A$}}
 \psfrag{V}[Bl][Bl]{\scalebox{.7}{$U$}}
  \psfrag{R}[Bl][Bl]{\scalebox{.7}{$V$}}
    \psfrag{T}[Bl][Bl]{\scalebox{.7}{$W$}}
   \psfrag{E}[Bl][Bl]{\scalebox{.8}{$E_{\varphi_V(A,\sigma)}^U$}}
      \psfrag{F}[Bl][Bl]{\scalebox{.7}{$E_{(A,\sigma)}^V$}}
         \psfrag{G}[Bl][Bl]{\scalebox{.8}{$E_{(A,\sigma)}^W$}}
 \psfrag{t}[Bc][Bc]{\scalebox{.8}{$p_{(A,\sigma)}^W$}}
  \psfrag{s}[Bc][Bc]{\scalebox{.9}{$\sigma_{V \otimes U \otimes W^*}$}}
 \psfrag{u}[Bc][Bc]{\scalebox{.8}{$q_{(A,\sigma)}^V$}}
 \psfrag{e}[Bc][Bc]{\scalebox{.8}{$q_{\varphi_V(A,\sigma)}^U$}}
 \rsdraw{.45}{1}{phi2Msigma} \colon \varphi_U \varphi_V(A,\sigma)  \to \varphi_W (A,\sigma) .
$$

\begin{lem}\label{lem-quotient-varphi}
\begin{enumerate}
\labela
\item   The family  $\zeta^{U,V,W}=\{\zeta^{U,V,W}_{(A,\sigma)}\}_{(A,\sigma)\in
\zz_G(\cc)}$  is a monoidal natural isomorphism from
$\varphi_U\varphi_V$ to $\varphi_W$.

\item  For  all $U \in \ee_\alpha$, $V \in \ee_\beta$, $W \in \ee_{\gamma}$  with $\alpha, \beta, \gamma\in G$ and for all
$R \in \ee_{\beta \alpha}$, $S \in \ee_{\gamma \beta}$, and $T \in
\ee_{\gamma\beta\alpha}$, the following diagram   commutes:
$$
   { \xymatrix@R=1cm @C=2.5cm {
\varphi_U\varphi_V \varphi_W \ar[r]^-{\varphi_U\bigl(\zeta^{V,W,S}\bigr)}\ar[d]_{\zeta^{U,V,R}_{\varphi_W}} & \varphi_U\varphi_S
\ar[d]^{\zeta^{U,S,T}} \\
\varphi_R \varphi_W \ar[r]_-{\zeta^{R,W,T}} & \varphi_T.}}
$$
\end{enumerate}
\end{lem}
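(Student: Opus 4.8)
The proof of both parts is carried out directly in the Penrose graphical calculus of $\cc$, and rests on four facts already available: the splitting relations \eqref{eq-idemp-split} for the idempotents $\pi^V_{(A,\sigma)}$, $\pi^U_{\varphi_V(A,\sigma)}$, $\pi^W_{(A,\sigma)}$ (together with the explicit formulas of Lemma~\ref{lem-Pi-idempot} for these idempotents and for the half-braidings $\gamma^V_{(A,\sigma)}$); the half-braiding axiom \eqref{axiom-half-braiding}, which lets one fuse or split the isomorphisms $\sigma_{(-)}$ attached to tensor products of objects of $\cc_1$; the naturality of $\sigma$ in objects of $\cc_1$, used to slide (co)evaluations and the morphisms $p,q$ past half-braidings; and the $\otimes$-multiplicativity of $\dim_l$ in the pure pivotal category $\cc$ (Section~\ref{sect-split-idem}), i.e.\ $d_{X\otimes Y}=d_Xd_Y$, which is what makes the scalar factors $d_U^{-1},d_V^{-1},d_W^{-1}$ balance. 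Observe first that the $\zeta^{U,V,W}_{(A,\sigma)}$ are well defined: for $V\in\cc_\beta$, $U\in\cc_\alpha$, $W\in\cc_{\beta\alpha}$ the object $V\otimes U\otimes W^*$ lies in $\cc_1$, so $\sigma_{V\otimes U\otimes W^*}$ makes sense; one also checks from \eqref{axiom-half-braiding} and naturality of $\sigma$ that $\zeta^{U,V,W}_{(A,\sigma)}$ intertwines the half-braidings $\gamma^U_{\varphi_V(A,\sigma)}$ and $\gamma^W_{(A,\sigma)}$, hence is a morphism of $\zz_G(\cc)$.

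For part~(a), I would first produce a two-sided inverse $\bar\zeta^{U,V,W}_{(A,\sigma)}\co\varphi_W(A,\sigma)\to\varphi_U\varphi_V(A,\sigma)$ by the mirror-image diagram: the same normalization $d_U^{-1}d_V^{-1}$, now built from $q^W_{(A,\sigma)}$, the half-braiding isomorphism $\sigma_{W\otimes U^*\otimes V^*}$ (equivalently $\sigma_{V\otimes U\otimes W^*}^{-1}$ transported through the pivotal structure, cf.\ \eqref{eq-inv-sigma}), $p^V_{(A,\sigma)}$ and $p^U_{\varphi_V(A,\sigma)}$. To check $\bar\zeta^{U,V,W}_{(A,\sigma)}\zeta^{U,V,W}_{(A,\sigma)}=\id$, it suffices by $p^U_{\varphi_V(A,\sigma)}q^U_{\varphi_V(A,\sigma)}=\id$ to verify the corresponding identity of endomorphisms of $U^*\otimes E^V_{(A,\sigma)}\otimes U$, and then, descending once more through the splitting of $\pi^V_{(A,\sigma)}$, an identity on $U^*\otimes V^*\otimes A\otimes V\otimes U$. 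In that picture the two $W$-strands coming from $\zeta$ and $\bar\zeta$ close into a loop which, after merging the two $\sigma$-factors by \eqref{axiom-half-braiding} and a snake identity on $W$, reproduces $q^Wp^W=\pi^W_{(A,\sigma)}$ together with a factor $d_W$ cancelling the $d_W^{-1}$ inside $\pi^W$; the remaining $U$- and $V$-loops and (co)evaluations collapse, by further snake identities and naturality of $\sigma$, to $\pi^U_{\varphi_V(A,\sigma)}$, the surviving scalar $d_U^{-1}d_V^{-1}$ being supplied by the normalizations (here one uses $d_{V\otimes U}=d_Vd_U$). The computation of $\zeta^{U,V,W}_{(A,\sigma)}\bar\zeta^{U,V,W}_{(A,\sigma)}=\id$ is parallel. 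Since the forgetful functor $\zz_G(\cc)\to\cc$ is conservative, this shows each $\zeta^{U,V,W}_{(A,\sigma)}$ is an isomorphism in $\zz_G(\cc)$. Naturality of the family in $(A,\sigma)$ is a short diagram chase from the intertwining condition $(\id_X\otimes f)\sigma_X=\rho_X(f\otimes\id_X)$ defining morphisms of $\zz_G(\cc)$ and the formula for $\varphi_V(f)$; monoidality (compatibility with $(\varphi_U\varphi_V)_2$, $(\varphi_W)_2$ and the units, which are the composites and explicit morphisms recalled just before Lemma~\ref{lem-phiV-def}) follows by the same manipulations, again using \eqref{axiom-half-braiding} to recombine the half-braiding pieces.

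For part~(b), both composites in the square are natural transformations $\varphi_U\varphi_V\varphi_W\Rightarrow\varphi_T$, and I would check the equality on each $(A,\sigma)\in\zz_G(\cc)$ by expanding all four $\zeta$'s through their graphical formulas. Each side is then an explicit diagram on $U^*\otimes V^*\otimes W^*\otimes A\otimes W\otimes V\otimes U=(W\otimes V\otimes U)^*\otimes A\otimes(W\otimes V\otimes U)$ composed with the splitting maps into $E^T_{(A,\sigma)}$ and carrying the scalar $d_U^{-1}d_V^{-1}d_W^{-1}$ on both sides (using $d_{W\otimes V\otimes U}=d_Wd_Vd_U$). Repeated use of \eqref{axiom-half-braiding} fuses the several $\sigma$-factors on either side into the single isomorphism $\sigma_{W\otimes V\otimes U\otimes T^*}$ (note $W\otimes V\otimes U\otimes T^*\in\cc_1$); the intermediate loops over $R$ on the left, resp.\ over $S$ on the right, are removed by snake identities together with the splitting relations \eqref{eq-idemp-split} for $\pi^R_{(A,\sigma)}$, resp.\ $\pi^S_{(A,\sigma)}$; and naturality of $\sigma$ disposes of the remaining $p$'s and $q$'s. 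After these reductions both sides coincide with the same canonical morphism $\varphi_U\varphi_V\varphi_W(A,\sigma)\to\varphi_T(A,\sigma)$. This is precisely the coherence that will force the crossing built in Step~3 to satisfy \eqref{crossing5}.

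The only genuine difficulty is the combinatorics of the first computation in part~(a): writing down the correct mirror formula for $\bar\zeta^{U,V,W}_{(A,\sigma)}$ and tracking, through the two nested splittings (of $\pi^V_{(A,\sigma)}$ and of $\pi^U_{\varphi_V(A,\sigma)}$), that the $U$-, $V$- and $W$-loops and the scalars $d_U,d_V,d_W$ cancel exactly, so that the two composites become the respective identities. Once this is organized — peeling off one splitting at a time, fusing $\sigma$'s via \eqref{axiom-half-braiding}, and invoking multiplicativity of $\dim_l$ — the naturality and monoidality in part~(a) and the whole of part~(b) are variants of the same bookkeeping with no new conceptual input.
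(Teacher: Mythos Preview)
Your approach matches the paper's: it too proves this lemma by direct graphical computation in the pictorial formalism of Section~\ref{sect-proof-lems-defdouble}, writing down explicit diagrams for $\zeta^{U,V,W}$ and its inverse and reducing all the required identities via the relations of Lemma~\ref{lem-pict-demo} together with \eqref{eq-dim-sigma} (which package exactly the splitting relations \eqref{eq-idemp-split}, the half-braiding axiom \eqref{axiom-half-braiding}, and the naturality of $\sigma$ that you invoke). One small bookkeeping point: the paper's formula for $(\zeta^{U,V,W}_{(A,\sigma)})^{-1}$ carries the normalization $d_W^{-1}$, not $d_U^{-1}d_V^{-1}$ as you suggest, so you will want to recheck the scalar tracking in your verification of $\bar\zeta\zeta=\id$ and $\zeta\bar\zeta=\id$.
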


For $U \in \ee_1$ and each $(A,\sigma)\in \zz_G(\cc)$, consider the
morphism
$$
\eta^U_{(A,\sigma)}=  \;\psfrag{A}[Bl][Bl]{\scalebox{.8}{$A$}}
 \psfrag{V}[Bl][Bl]{\scalebox{.8}{$U$}}
   \psfrag{E}[Bl][Bl]{\scalebox{.8}{$E_{(A,\sigma)}^U$}}
  \psfrag{s}[Bc][Bc]{\scalebox{.9}{$\sigma_{U}$}}
 \rsdraw{.45}{1}{phi0Msigma} \; \colon (A,\sigma)  \to \varphi_U (A,\sigma).
$$

\begin{lem}\label{lem-quotient-varphi+}
\begin{enumerate}
\labela
\item  The family $\eta^U=\{\eta^U_{(A,\sigma)}\}_{(A,\sigma)\in
\zz_G(\cc)}$  is a monoidal natural isomorphism from
$1_{\zz_G(\cc)}$ to $\varphi_U$.

\item  For all $U \in \ee_1$ and $V \in \ee_\alpha$ with $\alpha\in G$, the following diagram    commutes:
$$
\xymatrix@R=1cm @C=2.5cm {
\varphi_V \ar[rd]^-{\id_{\varphi_V}} \ar[r]^-{\varphi_V\bigl(\eta^U\bigr)} \ar[d]_{\eta^U_{\varphi_V}} & \varphi_V\varphi_U  \ar[d]^{\zeta^{V,U,V}}\\
\varphi_U\varphi_V \ar[r]_-{\zeta^{U,V,V}} & \varphi_V.
}
$$
\end{enumerate}
\end{lem}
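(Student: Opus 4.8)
The plan is to reduce every assertion to an identity of morphisms in $\cc$ and then to carry out a (routine but lengthy) graphical computation. Write $\varphi_U(A,\sigma)=(E^U_{(A,\sigma)},\gamma^U_{(A,\sigma)})$, so that $E^U_{(A,\sigma)}$ is a retract of $U^{*}\otimes A\otimes U$ via the splitting morphisms $p^U_{(A,\sigma)},q^U_{(A,\sigma)}$ of the idempotent $\pi^U_{(A,\sigma)}$ of Lemma~\ref{lem-Pi-idempot}; by construction $\eta^U_{(A,\sigma)}=p^U_{(A,\sigma)}\,\iota_{(A,\sigma)}$, where $\iota_{(A,\sigma)}\co A\to U^{*}\otimes A\otimes U$ creates a $U$--$U^{*}$ pair with $\rcoev_U$ and pushes the resulting $U$-strand past $A$ using $\sigma$. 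First I would record, directly from the definitions of $\pi^U$ and $\iota$ together with \eqref{axiom-half-braiding}, the identities $\rho_{(A,\sigma)}\iota_{(A,\sigma)}=d_U\,\id_A$ and $d_U^{-1}\iota_{(A,\sigma)}\rho_{(A,\sigma)}=\pi^U_{(A,\sigma)}$, where $\rho_{(A,\sigma)}\co U^{*}\otimes A\otimes U\to A$ pushes $U$ back past $A$ and evaluates by $\lev_U$; in particular $\pi^U_{(A,\sigma)}\iota_{(A,\sigma)}=\iota_{(A,\sigma)}$. Since the forgetful functor $\zz_G(\cc)\to\cc$ is faithful and conservative and all objects of the form $E^W_{(B,\rho)}$ are retracts of objects of $\cc$, a morphism into (resp.\ out of) $E^U_{(A,\sigma)}$ is detected by its composite with $q^U_{(A,\sigma)}$ (resp.\ $p^U_{(A,\sigma)}$), and an endomorphism $g$ of $\varphi_V(A,\sigma)$ in $\zz_G(\cc)$ equals the identity iff $q^V_{(A,\sigma)}\,g\,p^V_{(A,\sigma)}=\pi^V_{(A,\sigma)}$ in $\cc$ (using $p^Vq^V=\id$). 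All the ensuing $\cc$-identities follow from \eqref{axiom-half-braiding}, naturality of the half-braidings, the zigzag identities and \eqref{eq-idemp-split}, with the scalars $d_U^{-1},d_V^{-1}$ always cancelling against the dimension $d_U$ or $d_V$ of a closed loop --- the single point where $U\in\ee_1$ and $V\in\ee_\alpha$ are used.

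For part (a) I would proceed in four steps. (i) To see that $\eta^U_{(A,\sigma)}$ is a morphism $(A,\sigma)\to\varphi_U(A,\sigma)$ in $\zz_G(\cc)$, post-compose the defining condition with $\id_X\otimes q^U_{(A,\sigma)}$: the resulting $\cc$-identity comes from splitting $\sigma_{U\otimes X\otimes U^{*}}$ by \eqref{axiom-half-braiding} and applying naturality of $\sigma$. (ii) For naturality of $\eta^U$ in $(A,\sigma)$, compose $\varphi_U(f)\,\eta^U_{(A,\sigma)}$ and $\eta^U_{(B,\rho)}\,f$ with $q^U_{(B,\rho)}$ and use $q^Up^U=\pi^U$, $\pi^U\iota=\iota$, and that $f$ intertwines $\sigma$ and $\rho$. (iii) For the isomorphism claim, conservativity of the forgetful functor lets me simply invert the $\cc$-morphism $A\to E^U_{(A,\sigma)}$; the inverse is $\bar\eta^U_{(A,\sigma)}:=d_U^{-1}\rho_{(A,\sigma)}\,q^U_{(A,\sigma)}$, since $\bar\eta^U_{(A,\sigma)}\,\eta^U_{(A,\sigma)}=d_U^{-1}\rho_{(A,\sigma)}\,\pi^U_{(A,\sigma)}\,\iota_{(A,\sigma)}=d_U^{-1}\rho_{(A,\sigma)}\,\iota_{(A,\sigma)}=\id_A$ and $\eta^U_{(A,\sigma)}\,\bar\eta^U_{(A,\sigma)}=p^U_{(A,\sigma)}\bigl(d_U^{-1}\iota_{(A,\sigma)}\rho_{(A,\sigma)}\bigr)q^U_{(A,\sigma)}=p^U_{(A,\sigma)}\,\pi^U_{(A,\sigma)}\,q^U_{(A,\sigma)}=\id_{E^U_{(A,\sigma)}}$. (iv) For monoidality, the unit condition $(\varphi_U)_0=\eta^U_{\un_{\zz_G(\cc)}}$ is immediate because for the unit object, where $A=\un$ and all $\sigma_X=\id$, the morphism $\iota$ reduces to $\rcoev_U$; the product condition $\eta^U_{(A,\sigma)\otimes(B,\rho)}=(\varphi_U)_2\bigl((A,\sigma),(B,\rho)\bigr)\bigl(\eta^U_{(A,\sigma)}\otimes\eta^U_{(B,\rho)}\bigr)$ reduces, after composing with $q^U_{(A,\sigma)\otimes(B,\rho)}$ and using $q^Up^U=\pi^U$, $\pi^U\iota=\iota$, to the zigzag identity $\iota_{(A,\sigma)\otimes(B,\rho)}=(\id_{U^{*}\otimes A}\otimes\rev_U\otimes\id_{B\otimes U})(\iota_{(A,\sigma)}\otimes\iota_{(B,\rho)})$ in $\cc$.

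For part (b), the two triangles assert identities between endomorphisms of $\varphi_V(A,\sigma)$, so by the criterion above it suffices to prove
$$q^V_{(A,\sigma)}\,\zeta^{V,U,V}_{(A,\sigma)}\,\varphi_V\bigl(\eta^U_{(A,\sigma)}\bigr)\,p^V_{(A,\sigma)}=\pi^V_{(A,\sigma)}\quad\text{and}\quad q^V_{(A,\sigma)}\,\zeta^{U,V,V}_{(A,\sigma)}\,\eta^U_{\varphi_V(A,\sigma)}\,p^V_{(A,\sigma)}=\pi^V_{(A,\sigma)}$$
in $\cc$. For the first, I would substitute $\varphi_V\bigl(\eta^U_{(A,\sigma)}\bigr)=p^V_{\varphi_U(A,\sigma)}\bigl(\id_{V^{*}}\otimes\eta^U_{(A,\sigma)}\otimes\id_V\bigr)q^V_{(A,\sigma)}$ and the defining formula for $\zeta^{V,U,V}_{(A,\sigma)}$; the splitting morphisms attached to $E^V_{\varphi_U(A,\sigma)}$ and to $E^U_{(A,\sigma)}$ collapse by \eqref{eq-idemp-split} and $\pi^U\iota=\iota$, and what remains is reduced by pushing $U$- and $V$-strands through $A$ (naturality of $\sigma$) and by splitting $\sigma_{U\otimes V\otimes V^{*}}$ via \eqref{axiom-half-braiding}; the scalars $d_U^{-1},d_V^{-1}$ cancel against the $U$- and $V$-loops thereby produced, leaving exactly $\pi^V_{(A,\sigma)}$. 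The second identity is handled the same way, now unwinding $\eta^U_{\varphi_V(A,\sigma)}=p^U_{\varphi_V(A,\sigma)}\iota_{\varphi_V(A,\sigma)}$ against $\zeta^{U,V,V}_{(A,\sigma)}$. I expect the main obstacle to be organizational --- keeping track of the nested splitting morphisms for $E^V_{\varphi_U(A,\sigma)}$ and $E^U_{\varphi_V(A,\sigma)}$ and of the several places where the normalizing scalars enter --- rather than anything conceptually delicate: once the diagrams are fully expanded, both identities (and the substantive input of part (a)(iii)) are forced by \eqref{axiom-half-braiding}, naturality of $\sigma$, and the invertibility of $d_U$ and $d_V$.
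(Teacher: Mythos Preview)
Your proposal is correct and is essentially the paper's own argument: the paper proves this lemma (together with Lemmas~\ref{lem-quotient-varphi}, \ref{lem-phiV-to-projlim}, and \ref{lem-quotient-Gamma}) by the pictorial calculus of Section~\ref{sect-proof-lems-defdouble}, recording the diagrams for $\eta^U$ and $(\eta^U)^{-1}=d_U^{-1}(\cdots)$ and appealing to the identities of Lemma~\ref{lem-pict-demo} and \eqref{eq-dim-sigma}, which is exactly what you carry out in algebraic form (your $\iota,\rho$ and the relations $\rho\iota=d_U\,\id$, $d_U^{-1}\iota\rho=\pi^U$, $\pi^U\iota=\iota$ are the content of \eqref{eq-Drel1}--\eqref{eq-Drel6}). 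The only cosmetic point is that your verbal description of $\iota_{(A,\sigma)}$ should be checked against the paper's picture for $\eta^U$ to get the orientation of the $\sigma$-crossing right, but this does not affect the argument.
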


Step 2. For each $\alpha\in G$, we   construct    isomorphisms between the  endofunctors  $\{\varphi_V\,\vert \, V\in \ee_\alpha\}$.    For   $U, V \in \ee_\alpha$, define
$\delta^{U,V}=\{\delta^{U,V}_{(A,\sigma)}\}_{(A,\sigma)\in
\zz_G(\cc)}$ by
$$
\delta^{U,V}_{(A,\sigma)}=d_V^{-1}\;
 \psfrag{A}[Bc][Bc]{\scalebox{.8}{$A$}}
 \psfrag{U}[Bc][Bc]{\scalebox{.8}{$V$}}
 \psfrag{V}[Bc][Bc]{\scalebox{.8}{$U$}}
 \psfrag{s}[Bc][Bc]{\scalebox{.9}{$\sigma_{V \otimes U^*}$}}
 \psfrag{E}[Bl][Bl]{\scalebox{.8}{$E_{(A,\sigma)}^V$}}
 \psfrag{F}[Bl][Bl]{\scalebox{.8}{$E_{(A,\sigma)}^U$}}
 \rsdraw{.45}{1}{pMsigmaUV} \; \co \varphi_V(A,\sigma) \to \varphi_U(A,\sigma).
$$
\begin{lem}\label{lem-phiV-to-projlim}
\begin{enumerate}
\labela
\item $\delta^{U,V}$ is a monoidal natural isomorphism from $\varphi_U$ to $\varphi_V$.
\item $\delta^{U,U}=\id_{\varphi_U}$ and $\delta^{U,V}\delta^{V,W}=\delta^{U,W}$ for any $U,V,W \in \ee_\alpha$.
 \item For all $U,U' \in \ee_\alpha$, $V,V' \in \ee_\beta$, and $W,W' \in \ee_{\beta \alpha}$, the following diagram of monoidal natural isomorphisms commutes:
$$
    \xymatrix@R=1cm @C=2.5cm {
\varphi_U\varphi_V \ar[r]^-{\zeta^{U,V,W}} \ar[d]_{\varphi_{U'}\bigl(\delta^{V',V}\bigr)\delta^{U',U}_{\varphi_{V}}} & \varphi_W
\ar[d]^{\delta^{W',W}} \\
\varphi_{U'}\varphi_{V'} \ar[r]_-{\zeta^{U',V',W'}} & \varphi_{W'}.}
$$

\item $\delta^{U',U}\eta^{U}=\eta^{U'}$ for all $U,U' \in \ee_1$.
\end{enumerate}
\end{lem}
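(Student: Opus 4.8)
The plan is to verify each of the four parts of Lemma~\ref{lem-phiV-to-projlim} by direct graphical computation, using the defining pictures for $\delta^{U,V}_{(A,\sigma)}$, $\zeta^{U,V,W}_{(A,\sigma)}$, $\eta^U_{(A,\sigma)}$ and the idempotent splitting morphisms $p^V_{(A,\sigma)}, q^V_{(A,\sigma)}$, together with the half-braiding axiom \eqref{axiom-half-braiding} and the naturality of $\sigma$. These are the same two tools used in the proof of Lemma~\ref{lem-Pi-idempot}, and in fact every identity here reduces to ``slide a strand past $\sigma$, merge two $\sigma$'s using \eqref{axiom-half-braiding}, then recognize a factor of $\pi^V_{(A,\sigma)}$ which can be absorbed by $p^V_{(A,\sigma)}q^V_{(A,\sigma)}=\id$ or the defining relation $q^V_{(A,\sigma)}p^V_{(A,\sigma)}=\pi^V_{(A,\sigma)}$.''

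For part~(b): $\delta^{U,U}=\id_{\varphi_U}$ is immediate since with $U=V$ the picture for $\delta^{U,U}_{(A,\sigma)}$ is $d_U^{-1}$ times $q^U_{(A,\sigma)}$ precomposed with a ``$\sigma_{U\otimes U^*}$ cap-cup'' which is exactly $d_U \pi^U_{(A,\sigma)}$, and $p^U q^U \pi^U p^U q^U = \id$ after using $q^Up^U=\pi^U$ and $(\pi^U)^2=\pi^U$. The composition law $\delta^{U,V}\delta^{V,W}=\delta^{U,W}$ follows by stacking the two pictures, merging the two half-braidings $\sigma_{V\otimes U^*}$ and $\sigma_{W\otimes V^*}$ via \eqref{axiom-half-braiding} and the naturality of $\sigma$ (the middle $q^V p^V=\pi^V$ gets absorbed), leaving a single $\sigma_{W\otimes U^*}$, i.e.\ $\delta^{U,W}_{(A,\sigma)}$. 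Part~(a) — that $\delta^{U,V}$ is a monoidal natural isomorphism $\varphi_U\to\varphi_V$ — then splits into: naturality in $(A,\sigma)$ (slide $f$ through, using that $f$ commutes with the half-braidings), compatibility with $\gamma^V$ versus $\gamma^U$ (the half-braiding structure on $\varphi_V(A,\sigma)$ resp.\ $\varphi_U(A,\sigma)$ — again \eqref{axiom-half-braiding} plus naturality), invertibility (with inverse $\delta^{V,U}$, by part~(b)), and compatibility with $(\varphi_V)_2, (\varphi_V)_0$ versus $(\varphi_U)_2,(\varphi_U)_0$.

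Parts~(c) and~(d) are compatibility diagrams relating $\delta$ with $\zeta$ and with $\eta$. For~(c), one stacks the pictures for $\zeta^{U,V,W}$ and the various $\delta$'s along both paths of the square; after repeatedly applying \eqref{axiom-half-braiding} to fuse the half-braiding strands and using naturality of $\sigma$ to move the splitting idempotents into position, both composites reduce to the same diagram with a single $\sigma_{V'\otimes U'\otimes W^*}$ (or its evident variant), sandwiched between the appropriate $p$'s and $q$'s. Part~(d) is the simplest: $\delta^{U',U}\eta^U$ stacks a ``$\sigma_U$'' box ($\eta^U$) below a ``$\sigma_{U'\otimes U^*}$'' box ($\delta^{U',U}$); merging by \eqref{axiom-half-braiding} gives $\sigma_{U'}$ up to the idempotent $\pi^U$ which is absorbed by $p^Uq^U$, i.e.\ exactly $\eta^{U'}_{(A,\sigma)}$.

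The main obstacle is purely bookkeeping: there is no conceptual difficulty, but each part is a genuine string-diagram manipulation in which one must correctly track where the scalar factors $d_V^{-1}$ land, which region the half-braiding box $\sigma_{(-)}$ crosses, and at which moment an occurrence of $q^Vp^V$ can be replaced by $\pi^V$ (and absorbed) versus when $p^Vq^V=\id$ is used. Since all these lemmas are stated without proof in this section — the text defers proofs to Section~\ref{sect-proof-lems-defdouble} — I would organize the Section~\ref{sect-proof-lems-defdouble} argument so that the graphical identities common to Lemmas~\ref{lem-quotient-varphi}, \ref{lem-quotient-varphi+}, and~\ref{lem-phiV-to-projlim} (``fusion of half-braidings along a splitting,'' ``naturality slide'') are isolated once as sublemmas, and then each of (a)--(d) is a two- or three-move deduction from them.
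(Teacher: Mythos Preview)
Your proposal is correct and matches the paper's approach essentially verbatim: in Section~\ref{sect-proof-lems-defdouble} the paper first isolates the common graphical identities as Lemma~\ref{lem-pict-demo} (the relations \eqref{eq-Drel1}--\eqref{eq-Drel6} and \eqref{eq-dim-sigma}), proves Lemma~\ref{lem-phiV-def} in detail, and then states that Lemmas~\ref{lem-quotient-varphi}--\ref{lem-quotient-Gamma} (including this one) ``follow the same lines'' via these identities --- exactly the organization you suggest. Your sketch of each of (a)--(d) is accurate, including the observation that (b) gives $(\delta^{U,V})^{-1}=\delta^{V,U}$, which the paper also notes.
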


 Note as a consequence of (a) and (b) that $ (\delta^{U,V}_{(A,\sigma)} )^{-1}=  \delta^{V,U}_{(A,\sigma)}$.

Step 3. By Lemmas~\ref{lem-phiV-def} and \ref{lem-phiV-to-projlim}(a),(b), the
family  $(\varphi_V, \delta^{U,V})_{U,V \in \ee_\alpha}$ is a
projective system in the category of pivotal strong
 monoidal $\kk$-linear endofunctors of $\zz_G(\cc)$. Since all  $\delta^{U,V}$'s are
 isomorphisms, this system has a well-defined
  projective limit
$$
\varphi_\alpha= \varprojlim (\varphi_V, \delta^{U,V})_{U,V \in \ee_\alpha}
$$
which is  a pivotal strong
 monoidal $\kk$-linear endofunctor  of $\zz_G(\cc)$. By Lemma~\ref{lem-phiV-def}, we can
assume that  $\varphi_\alpha\bigl(\zz_\beta(\cc)\bigr) \subset
\zz_{\alpha^{-1}\beta\alpha}(\cc)$ for all $ \beta \in G$.

Denote by $\iota^\alpha=\{\iota^\alpha_V\}_{V \in \ee_\alpha}$  the
 universal cone associated with the projective limit above: for $V \in \ee_\alpha$,
\begin{equation}\label{univ-cones}
\iota^\alpha_V=\{(\iota^\alpha_V)_{(A,\sigma)} \co
\varphi_\alpha(A,\sigma) \to \varphi_V(A,\sigma)\}_{(A,\sigma)\in
\zz_G(\cc)}
\end{equation} is a monoidal natural isomorphism from
$\varphi_\alpha$ to $\varphi_V$.

By Lemma \ref{lem-phiV-to-projlim}(c),(d), the transformations $\zeta$ and $\eta$
induce monoidal natural isomorphisms $\varphi_2(\alpha,\beta)\co
\varphi_\alpha\varphi_\beta \to \varphi_{\beta\alpha}$   and
$\varphi_0\co 1_{\zz_G(\cc)} \to \varphi_1$, respectively. These
isomorphisms are related to the universal cone as
follows: for $U \in \ee_\alpha$, $V \in \ee_\beta$, $W \in
\ee_{\beta\alpha}$, and $R \in \ee_1$, the following diagrams commute:
$$
    \xymatrix@R=1cm @C=2.5cm {
\varphi_\alpha\varphi_\beta \ar[r]^-{(\varphi_2)(\alpha,\beta)} \ar[d]_{\varphi_U(\iota^\beta_V)(\iota^\alpha_U)_{\varphi_\beta}} & \varphi_{\beta\alpha}
\ar[d]^{\iota^{\beta\alpha}_W} \\
\varphi_U \varphi_V \ar[r]_-{\zeta^{U,V,W}} & \varphi_W}
\qquad \qquad
\xymatrix@R=1cm @C=.5cm {
& 1_{\zz_G(\cc)} \ar[ld]_-{\varphi_0} \ar[dr]^{\eta^R}   \\
\varphi_1 \ar[rr]_-{\iota^1_R} && \varphi_R.}
$$
By   Lemmas~\ref{lem-quotient-varphi}(b) and
~\ref{lem-quotient-varphi+}(b),  $\varphi_2$ and $\varphi_0$ satisfy
\eqref{crossing5} and \eqref{crossing6}.   Note that $\varphi_2$ and
$\varphi_0$ induce natural isomorphisms  $\varphi_\alpha
\varphi_{\alpha^{-1}} \simeq \varphi_1 \simeq 1_{\zz_G(\cc)}$ and
$\varphi_{\alpha^{-1}} \varphi_\alpha \simeq \varphi_1 \simeq
1_{\zz_G(\cc)}$ for $\alpha \in G$.  Hence, the endofunctor
$\varphi_\alpha$ of $\zz_G(\cc)$ is an equivalence. Therefore
$$\varphi=(\varphi,\varphi_2,\varphi_0) \co \overline{G} \to
\Aut\bigl(\zz_G(\cc)\bigr), \,\, \alpha \mapsto
\varphi_\alpha$$ is a strong monoidal functor such that
$\varphi_\alpha\bigl(\zz_\beta(\cc)\bigr) \subset
\zz_{\alpha^{-1}\beta\alpha}(\cc)$ for all $\alpha,\beta \in G$.
Thus,  $\varphi$ is a crossing in $\zz_G(\cc)$. It is pivotal because all  $\varphi_\alpha$'s are pivotal.


\subsection{The $G$-braiding}\label{sect-braiding in-G}
We construct a $G$-braiding in  $\zz_G(\cc)$ following the   scheme of Section \ref{sect-action of-G}.
 For $(A,\sigma) \in \zz_G(\cc)$,  $V
\in \ee_\alpha$ with $\alpha \in G$, and $X \in \cch$, set
$$
\Gamma^{V}_{(A,\sigma),X}=  \;\psfrag{A}[Bl][Bl]{\scalebox{.8}{$A$}}
\psfrag{B}[Bl][Bl]{\scalebox{.8}{$X$}}
 \psfrag{V}[Bl][Bl]{\scalebox{.8}{$V$}}
   \psfrag{E}[Bl][Bl]{\scalebox{.8}{$E_{(A,\sigma)}^V$}}
  \psfrag{s}[Bc][Bc]{\scalebox{.9}{$\sigma_{X \otimes V^*}$}}
 \rsdraw{.45}{1}{tauMsigma}\; \colon A \otimes X  \to X \otimes E_{(A,\sigma)}^V.
$$
The next lemma shows that these morphisms are isomorphisms compatible with   the transformations introduced in Section \ref{sect-action of-G}.

\begin{lem}\label{lem-quotient-Gamma}
\begin{enumerate}
\labela
\item   $\Gamma^{V}_{(A,\sigma),X}$ is an isomorphism natural in $(A,\sigma) $ and in $X  $, and
$$
\bigl( \Gamma^{V}_{(A,\sigma),X} \bigr)^{-1}=d_V^{-1}
\;\psfrag{A}[Bl][Bl]{\scalebox{.8}{$A$}}
\psfrag{B}[Bl][Bl]{\scalebox{.8}{$X$}}
 \psfrag{V}[Bl][Bl]{\scalebox{.8}{$V$}}
   \psfrag{E}[Bl][Bl]{\scalebox{.8}{$E_{(A,\sigma)}^V$}}
  \psfrag{s}[Bc][Bc]{\scalebox{.9}{$\sigma_{X \otimes V^*}^{-1}$}}
 \rsdraw{.45}{1}{tauMsigmainv} \;.
$$
\item For any $U,V \in \ee_\alpha$, the following diagram commutes:
$$    \xymatrix@R=1cm @C=.5cm {
& A \otimes X \ar[ld]_-{\Gamma^{V}_{(A,\sigma),X}} \ar[dr]^{\Gamma^{U}_{(A,\sigma),X}}   \\
X \otimes E_{(A,\sigma)}^V \ar[rr]_-{\id_X \otimes \delta^{U,V}} && X \otimes E_{(A,\sigma)}^U.}
$$
\item For all $(A,\sigma) \in \zz_G(\cc)$, $U \in \ee_\alpha$, $V \in \ee_\beta$, $W \in \ee_{\alpha\beta}$, $X \in \cc_\alpha$, and $Y \in \cc_\beta$,
the following diagram commutes:
$$
\xymatrix@R=1cm @C=3cm {
A \otimes X \otimes Y \ar[r]^-{\Gamma^W_{(A,\sigma), X \otimes Y}}\ar[d]_{\Gamma^U_{(A,\sigma), X} \otimes \id_Y} & X \otimes Y \otimes E^W_{(A,\sigma)} \\
X \otimes E^U_{(A,\sigma)} \otimes Y \ar[r]_-{\id_X \otimes \Gamma^V_{\varphi_U(A,\sigma), Y}} & X \otimes Y \otimes E^V_{\varphi_U(A,\sigma)}.  \ar[u]_{\id_{X \otimes Y} \otimes \zeta^{V,U,W}_{(A,\sigma)}}
}
$$
\item For all $(A,\sigma),(B,\rho) \in \zz_G(\cc)$, $V \in \ee_\alpha$, and $X \in \cc_\alpha$,
the following diagram commutes:
$$
\xymatrix@R=1cm @C=3cm {
A \otimes B \otimes X \ar[r]^-{\Gamma^V_{(A,\sigma)\otimes (B,\rho), X}}\ar[d]_{\id_A\otimes \Gamma^V_{(B,\rho), X}} & X \otimes E^V_{(A,\sigma)\otimes (B,\rho)} \\
A \otimes X \otimes E^V_{(B,\rho)} \ar[r]_-{\Gamma^V_{(A,\sigma), X} \otimes \id_{E^V_{(B,\rho)}}} & X \otimes  E^V_{(A,\sigma)} \otimes E^V_{(B,\rho)}. \ar[u]_{\id_X \otimes (\varphi_V)_2((A,\sigma), (B,\rho))}
}
$$
\item  $\Gamma^V_{(A,\sigma), \un}=\eta^V_{(A,\sigma)}$ for
any $V \in \ee_1$.
\item $\Gamma^V_{(\un,\id), X}=\id_X \otimes (\varphi_V)_0$ for all $V \in \ee_\alpha$ and $X \in \cc_\alpha$.
\item For all $\alpha,\beta \in G$, $(A,\sigma) \in \zz_G(\cc)$, $(B,\rho) \in \zz_\beta(\cc)$, $U \in \ee_\alpha$, $V \in \ee_\beta$, $W \in \ee_{\beta \alpha}$, and $S \in \ee_{\alpha^{-1}\beta\alpha}$,
the following diagram commutes:
$$
\xymatrix@R=1cm @C=3cm{
E^U_{(A,\sigma)} \otimes E^U_{(B,\rho)} \ar[r]^-{(\varphi_U)_2((A,\sigma),(B,\rho))}\ar[d]^{\Gamma^S_{\varphi_U(A,\sigma), E^U_{(B,\rho)}}} &E^U_{(A,\sigma) \otimes (B,\rho)} \ar[d]^-{\varphi_U(\Gamma^V_{(A,\sigma), B})} \\
E^U_{(B,\rho)} \otimes E^S_{\varphi_U(A,\sigma)} \ar[d]^-{\id_{E^U_{(B,\rho)}} \otimes \, \zeta^{S,U,W}_{(A,\sigma)}} & E^U_{(B,\rho)  \otimes \varphi_V(A,\sigma)} \\
E^U_{(B,\rho)} \otimes E^W_{(A,\sigma)} \ar[r]_-{\id_{E^U_{(B,\rho)}} \otimes \, (\zeta^{U,V,W}_{(A,\sigma)})^{-1}} &  E^U_{(B,\rho)} \otimes E^U_{\varphi_V(A,\sigma)}. \ar[u]_-{(\varphi_U)_2((B,\rho),\varphi_V(A,\sigma))}
}
$$
\item For all $(A,\sigma) \in \zz_G(\cc)$, $\alpha \in G$, $(B,\rho) \in \zz_\alpha(\cc)$, $V \in \ee_\alpha$, and $X \in \cc_1$,
the following diagram commutes:
$$
\xymatrix@R=1cm @C=3cm{
A \otimes B \otimes X \ar[r]^-{\Gamma^V_{(A,\sigma),B}\otimes \id_X}\ar[d]^{\id_A \otimes \rho_X} &B \otimes E^V_{(A,\sigma)} \otimes X \ar[d]^-{\id_B \otimes \gamma^V_{(A,\sigma), X}} \\
A \otimes X \otimes B \ar[d]^-{\sigma_X \otimes \id_B} & B \otimes X \otimes E^V_{(A,\sigma)} \ar[d]^-{\rho_X \otimes \id_{E^V_{(A,\sigma)}}} \\
X \otimes A \otimes B \ar[r]_-{\id_X \otimes \Gamma^V_{(A,\sigma),B}}
& X \otimes B \otimes E^V_{(A,\sigma)}.
}
$$
\end{enumerate}
\end{lem}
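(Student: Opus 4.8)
The plan is to begin by making the diagram defining $\Gamma^V_{(A,\sigma),X}$ explicit, namely
$$\Gamma^V_{(A,\sigma),X}=(\id_X\otimes p^V_{(A,\sigma)})\,(\sigma_{X\otimes V^*}\otimes\id_V)\,(\id_{A\otimes X}\otimes\rcoev_V)\co A\otimes X\to X\otimes E^V_{(A,\sigma)},$$
which makes sense precisely because $|X|=|V|=\alpha$ forces $X\otimes V^*\in\cc_1$; the morphism claimed to be its inverse in part~(a) is read off the same way. All of (a)--(h) are then string-diagram identities, and I would prove them with a single uniform toolkit: (i)~the half-braiding axiom~\eqref{axiom-half-braiding}, used to split and recombine the various half-braidings whose subscripts are tensor products --- this is the workhorse, applied to $\sigma_{X\otimes V^*}$, $\sigma_{V\otimes X\otimes V^*}$, $\sigma_{V\otimes U\otimes W^*}$, and so on; (ii)~naturality of $\sigma$ with respect to morphisms of $\cc_1$; (iii)~the splitting relations~\eqref{eq-idemp-split}, which give $q^V_{(A,\sigma)}=\pi^V_{(A,\sigma)}q^V_{(A,\sigma)}$ and $p^V_{(A,\sigma)}=p^V_{(A,\sigma)}\pi^V_{(A,\sigma)}$, together with the explicit shape of $\pi^V_{(A,\sigma)}$ from Lemma~\ref{lem-Pi-idempot}, used to insert and absorb the $p^{\bullet}$'s and $q^{\bullet}$'s; and (iv)~the snake identities, whose spurious $V$-loops produce factors $d_V=\dim_l(V)$ that cancel the normalizations $d_V^{-1}$, $d_U^{-1}$.

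For part~(a), I would compose $\Gamma^V_{(A,\sigma),X}$ with the displayed candidate inverse in both orders: using (iii), the explicit form of $\pi^V_{(A,\sigma)}$, repeated applications of~\eqref{axiom-half-braiding} and of naturality of $\sigma$, and the snake identities, one composite collapses to $\id_{A\otimes X}$ and the other to $p^V_{(A,\sigma)}q^V_{(A,\sigma)}=\id_{E^V_{(A,\sigma)}}$, the leftover $V$-loops accounting for the normalizing $d_V^{-1}$. Naturality in $(A,\sigma)$ is immediate from the definition of $\varphi_V$ on morphisms together with the fact that a morphism of $\zz_G(\cc)$ commutes with half-braidings; naturality in $X$ follows from naturality of $\sigma$ applied to the $\cc_1$-morphism $g\otimes\id_{V^*}$. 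Parts~(e) and~(f) are pure bookkeeping: set $X=\un$, respectively $(A,\sigma)=(\un,\id)$, use $\sigma_\un=\id$, respectively $\sigma=\id$, and compare with the definitions of $\eta^V$ and $(\varphi_V)_0$. Part~(b) is a short chase: expand $\delta^{U,V}_{(A,\sigma)}$ and the two $\Gamma$'s, merge $\sigma_{V\otimes U^*}$ with the $\sigma_{X\otimes V^*}$-piece via~\eqref{axiom-half-braiding} to produce $\sigma_{X\otimes U^*}$, and absorb $\pi^V_{(A,\sigma)}=q^V_{(A,\sigma)}p^V_{(A,\sigma)}$ into the splitting defining $E^U_{(A,\sigma)}$; together with~(a), this is what will later allow one to pass to the projective limit over $V\in\ee_\alpha$.

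The substantive work is in (c), (d), (g), and (h). Parts~(c), (d), (g) encode, at the level of the chosen representatives $U,V,W\in\amalg_{\alpha\in G}\,\ee_\alpha$, the three $G$-braiding axioms~\eqref{eq-braiding1}, \eqref{eq-braiding2}, \eqref{eq-braiding3}, while~(h) encodes the compatibility of $\Gamma^V_{(A,\sigma),B}$ with the half-braidings of $(A,\sigma)\otimes(B,\rho)$ and of $(B,\rho)\otimes\varphi_V(A,\sigma)$, which is exactly what makes the braiding $\tau$, to be constructed later, a morphism of $\zz_G(\cc)$. I would treat each by drawing both sides as string diagrams, substituting the definitions of $\Gamma^{\bullet}$, $\zeta^{\bullet,\bullet,\bullet}$, $(\varphi_V)_2$, $\gamma^V$ from Section~\ref{sect-action of-G}, and then transforming one side into the other purely by steps (i)--(iv); the conceptual reason this works is that $\Gamma^V$ is simply a ``$V$-conjugated'' copy of the half-braiding $\sigma$, functorial in the splitting data, so these identities are all inherited from the one axiom~\eqref{axiom-half-braiding}. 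I expect the main obstacle to be the sheer bookkeeping in (c) and (g): one must track many half-braidings with composite subscripts such as $\sigma_{V\otimes U\otimes W^*}$, apply~\eqref{axiom-half-braiding} in precisely the right order for the two diagrams to line up, and keep verifying that every object occurring as a subscript of $\sigma$ genuinely lies in $\cc_1$ --- guaranteed by the grading hypotheses ($X\in\cc_\alpha$, $V\in\ee_\alpha$, $W\in\ee_{\alpha\beta}$, etc.) but needing to be checked so that the manipulations are legitimate. Once (c), (d), (g) are in hand, (h) is comparatively direct: expand the two composites $A\otimes B\otimes X\to X\otimes B\otimes E^V_{(A,\sigma)}$, push $\sigma_X$ and $\rho_X$ through using naturality and~\eqref{axiom-half-braiding}, and match with $\gamma^V_{(A,\sigma),X}$.
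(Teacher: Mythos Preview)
Your plan is correct and is essentially the paper's own approach: the authors set up the same pictorial toolkit (half-braiding axiom, naturality of $\sigma$, the splitting relations for $p^V,q^V,\pi^V$, and the snake identities), give the explicit expression for $\bigl(\Gamma^V_{(A,\sigma),X}\bigr)^{-1}$, and then verify each item by string-diagram chasing, working out part~(g) in full as the representative hard case. The only organizational difference is that the paper first isolates a short list of reusable identities---\eqref{eq-dim-sigma} and the six relations \eqref{eq-Drel1}--\eqref{eq-Drel6} of Lemma~\ref{lem-pict-demo}---which absorb most of the ``insert/absorb $\pi^V$ and cancel the $d_V$-loop'' bookkeeping you describe; you would inevitably rediscover these in the course of your computations, but extracting them once up front makes the long checks in (c), (d), (g), (h) considerably shorter.
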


By Lemma~\ref{lem-quotient-Gamma}(a),(b), the transformation
$\Gamma$ induces a family of  isomorphisms
\begin{equation}\label{eq-tau-center}
 \{\tau_{(A,\sigma),X} \co A \otimes X \to X \otimes \uu\bigl(\varphi_{|X|}(A,\sigma) \bigr)\}_{(A,\sigma)
  \in \zz_G(\cc), X \in \cch},
\end{equation}
where $\uu \co \zz_G(\cc) \to \cc$ is the forgetful functor.  This family is natural in $(A,\sigma)  $ and in $X  $
 and is related to the universal cones $\{\iota^\alpha\}_{\alpha\in G}$ as follows: for any $(A,\sigma) \in \zz_G(\cc)$, $X \in \cch$, and $V \in \ee_{|X|}$,
$$
\bigl(\id_X \otimes (\iota^{|X|}_V)_{(A,\sigma)} \bigr) \tau_{(A,\sigma),X}=\Gamma^V_{(A,\sigma),X}.
$$
We call the family \eqref{eq-tau-center} the \emph{enhanced $G$-braiding} in $\zz_G(\cc)$.

\begin{lem}\label{lem-weak-tau}
For all $(A,\sigma)  \in \zz_G(\cc)$ and $X,Y \in \cch$,
\begin{enumerate}
\labela
\item $\tau_{(A,\sigma), X \otimes Y} =(\id_{X \otimes Y} \otimes \varphi_2(|Y|,|X|)_{(A,\sigma)})(\id_X \otimes \tau_{\varphi_{|X|}(A,\sigma),Y})(\tau_{(A,\sigma),X} \otimes \id_Y)$;
\item  Given $(B,\rho) \in \zz_G(\cc)$,
\begin{align*} \tau_{(A,\sigma) \otimes (B,\rho), X} = &
\bigl(\id_X \otimes (\varphi_{|X|})_2((A,\sigma),(B,\rho))
\bigr)\bigl(\tau_{(A,\sigma),X} \otimes
\id_{\varphi_{|X|}(B,\rho)}\bigr) \circ\\
     &\quad \circ \bigl(\id_A \otimes \tau_{(B,\rho),X}\bigr);
     \end{align*}
\item $\tau_{(A,\sigma),\un} =(\varphi_0)_{(A,\sigma)}$;
\item $\tau_{(\un,\id),X}  =\id_X \otimes (\varphi_{|X|})_0$;
\item   The inverse of  $\tau_{(A,\sigma),X}$ is computed by
\begin{align*}
     \tau_{(A,\sigma),X}^{-1}=&
     (\rev_X \otimes (\varphi_0)_{(A,\sigma)}^{-1}\varphi_2(|X|^{-1},|X|)_{(A,\sigma)} \otimes \id_X )\circ\\
     &\quad \circ
      (\id_X \otimes \tau_{\varphi_{|X|}(A,\sigma),X^*} \otimes \id_X)(\id_{X \otimes A} \otimes \rcoev_X)\\
=& (\id_{A \otimes X} \otimes \lev_{\varphi_{|X|}(A,\sigma)}(\varphi_{|X|}^l(A,\sigma) \otimes \id_{\varphi_{|X|}(A,\sigma)}) ) \circ\\
&\quad \circ
   (\id_A \otimes \tau_{(A,\sigma)^*,X} \otimes \id_{\varphi_{|X|}(A,\sigma)})(\lcoev_A \otimes \id_{X \otimes \varphi_{|X|}(A,\sigma)}).
\end{align*}
\end{enumerate}
\end{lem}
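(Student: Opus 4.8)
The plan is to reduce all five statements to Lemma~\ref{lem-quotient-Gamma} via the defining property of the enhanced $G$-braiding: $\tau_{(A,\sigma),X}$ is the unique morphism satisfying $\bigl(\id_X\otimes\uu((\iota^{|X|}_V)_{(A,\sigma)})\bigr)\tau_{(A,\sigma),X}=\Gamma^V_{(A,\sigma),X}$ for every $V\in\ee_{|X|}$, where $\iota^\alpha$ is the universal cone of the projective limit defining $\varphi_\alpha$. Since each $\uu((\iota^{|X|}_V)_{(A,\sigma)})$ is an isomorphism, any identity between morphisms with target $X\otimes\uu(\varphi_{|X|}(A,\sigma))$ may be checked after post-composing with $\id_X\otimes\uu((\iota^{|X|}_V)_{(A,\sigma)})$ for one (hence every) $V\in\ee_{|X|}$. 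I will also freely use the two commuting triangles displayed at the end of Section~\ref{sect-action of-G} relating $\varphi_2$ to $\zeta$ and $\varphi_0$ to $\eta$ through the cones, together with the analogous relations expressing $(\varphi_{|X|})_0$ and $(\varphi_{|X|})_2$ as limits of the constraints $(\varphi_V)_0$ and $(\varphi_V)_2$.

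I would first dispatch (c) and (d). For (c), note $|\un|=1$; post-composing $\tau_{(A,\sigma),\un}$ with the cone component at $V\in\ee_1$ gives $\Gamma^V_{(A,\sigma),\un}=\eta^V_{(A,\sigma)}$ by Lemma~\ref{lem-quotient-Gamma}(e), and the $\varphi_0$-triangle rewrites this as $\uu((\iota^1_V)_{(A,\sigma)})\,(\varphi_0)_{(A,\sigma)}$; cancelling the cone isomorphism gives $\tau_{(A,\sigma),\un}=(\varphi_0)_{(A,\sigma)}$. For (d), Lemma~\ref{lem-quotient-Gamma}(f) gives $\Gamma^V_{(\un,\id),X}=\id_X\otimes(\varphi_V)_0$, and since $(\varphi_V)_0=\uu((\iota^{|X|}_V)_{(\un,\id)})\,(\varphi_{|X|})_0$ by construction of the limit constraint, cancelling the cone isomorphism yields $\tau_{(\un,\id),X}=\id_X\otimes(\varphi_{|X|})_0$.

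Next (a) and (b). For (a), with $X\in\cc_\alpha$, $Y\in\cc_\beta$ pick $U\in\ee_\alpha$, $V\in\ee_\beta$, $W\in\ee_{\alpha\beta}$ and post-compose both sides with $\id_{X\otimes Y}\otimes\uu((\iota^{\alpha\beta}_W)_{(A,\sigma)})$. The right-hand side becomes $\Gamma^W_{(A,\sigma),X\otimes Y}$ by definition. On the left-hand side I would move the cone component past $\varphi_2(|Y|,|X|)_{(A,\sigma)}$ using the $\varphi_2$-triangle (with indices matched so that $\zeta^{V,U,W}$ appears), then recognize $\id_X\otimes\tau_{\varphi_{|X|}(A,\sigma),Y}$ post-composed with the relevant cone as $\id_X\otimes\Gamma^V_{\varphi_U(A,\sigma),Y}$ (after using the cone $\iota^\alpha_U$ at $(A,\sigma)$ and the naturality of $\Gamma^V$ in its object slot), and $\tau_{(A,\sigma),X}$ post-composed with its cone as $\Gamma^U_{(A,\sigma),X}$. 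What remains is precisely the left-hand side of Lemma~\ref{lem-quotient-Gamma}(c), which equals $\Gamma^W_{(A,\sigma),X\otimes Y}$; this proves (a). Part (b) is identical, using Lemma~\ref{lem-quotient-Gamma}(d) and the triangle expressing $(\varphi_{|X|})_2$ as the limit of the $(\varphi_V)_2$ in place of the $\varphi_2$-triangle.

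Finally (e). Since $\tau_{(A,\sigma),X}$ is already known to be invertible (from Lemma~\ref{lem-quotient-Gamma}(a),(b)), it suffices to verify that each of the two displayed morphisms, composed on the appropriate side with $\tau_{(A,\sigma),X}$, is the identity; both then coincide with $\tau_{(A,\sigma),X}^{-1}$, which in particular shows the two formulas agree. For the first formula I would insert $X^*\in\cc_{|X|^{-1}}\subset\cch$ next to $X$ via $\rcoev_X$, apply the hexagon (a) with $(Y,Z)=(X,X^*)$ to turn $(\id_X\otimes\tau_{\varphi_{|X|}(A,\sigma),X^*})(\tau_{(A,\sigma),X}\otimes\id)$ into $\tau_{(A,\sigma),X\otimes X^*}$ up to the factor $\varphi_2(|X|^{-1},|X|)^{-1}_{(A,\sigma)}$, then use naturality of $\tau$ along $\rev_X\co X\otimes X^*\to\un$ in $\cc_1$ together with (c) to collapse the $X\otimes X^*$ strand; the residual constraint assembled this way is exactly $(\varphi_0)^{-1}_{(A,\sigma)}\varphi_2(|X|^{-1},|X|)_{(A,\sigma)}$, and the zig-zag identity for $X$ finishes the check. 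The second formula is obtained dually, inserting $A$ and $A^*=\uu((A,\sigma)^*)$ via $\lcoev_A$, applying (b) to $(A,\sigma)\otimes(A,\sigma)^*$ and the pivotal structure on $\zz_G(\cc)$ recalled in Section~\ref{sect-rel-center}, and using the pivotality of $\varphi_{|X|}$, which produces $\varphi_{|X|}^l(A,\sigma)$ and $\lev_{\varphi_{|X|}(A,\sigma)}$. The main obstacle throughout is the index bookkeeping: matching the conventions of Lemma~\ref{lem-quotient-Gamma} with those of the $\varphi_2$/$\varphi_0$ triangles and tracking where each monoidal constraint of the $\varphi_\alpha$'s and each cone component sits. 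This is most delicate in (e), where the hexagon must be applied with one leg collapsed to $\un$ and one must verify that the leftover constraint morphisms assemble into precisely the factors $(\varphi_0)^{-1}\varphi_2(|X|^{-1},|X|)$, respectively $\lev_{\varphi_{|X|}(A,\sigma)}(\varphi_{|X|}^l(A,\sigma)\otimes\id)$, that appear in the statement.
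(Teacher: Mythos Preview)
Your treatment of (a)--(d) is exactly the paper's: each is obtained by post-composing with the universal cone and invoking the corresponding part of Lemma~\ref{lem-quotient-Gamma}, namely (c), (d), (e), (f) respectively.

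For (e) you take a genuinely different route. The paper proves (e) directly from the explicit formula for $(\Gamma^V_{(A,\sigma),X})^{-1}$ in Lemma~\ref{lem-quotient-Gamma}(a), rewriting the $\sigma_{X\otimes V^*}^{-1}$ that appears there via \eqref{eq-inv-sigma} (once in terms of $\sigma_{(X\otimes V^*)^*}$, once in terms of $\sigma^\dagger$), and then passing to the limit. Your argument instead bootstraps from the parts already established: you compose each candidate inverse with $\tau_{(A,\sigma),X}$, collapse via the hexagon (a) (resp.\ (b)), use naturality in the second variable along $\rev_X$ (resp.\ the pivotal structure on $(A,\sigma)$), and finish with (c) and a zig-zag. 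This is correct and self-contained; it is arguably more conceptual since it shows (e) is a formal consequence of the braiding axioms (a)--(c) plus pivotality, independent of the particular model for $\Gamma^V$. The paper's approach is shorter on the page because the explicit inverse is already sitting in Lemma~\ref{lem-quotient-Gamma}(a), but it requires one more graphical manipulation with \eqref{eq-inv-sigma}. Both lead to the same two expressions.
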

\begin{proof}
Claims (a)-(d) follows respectively from Lemma~\ref{lem-quotient-Gamma}(c)-(f). Claim (e) follows from the expression of the inverse of  $\Gamma^V_{(A,\sigma),X}$ given in \ref{lem-quotient-Gamma}(a) and from the computation of $\sigma^{-1}_Y$ in terms of $\sigma_{Y^*}$ and $\sigma^\dagger_Y$ provided by \eqref{eq-inv-sigma} for any $Y \in \cc_1$.
\end{proof}

\begin{lem}\label{lem-braiding-ZG}
The family $
\tau=\{\tau_{(A,\sigma),(B,\rho)} \}_{(A,\sigma) \in \zz_G(\cc), (B,\rho) \in \zz_G(\cc)_{\mathrm{hom}}}
$
defined by
$$
\tau_{(A,\sigma),(B,\rho)}=\tau_{(A,\sigma),B} \co (A,\sigma) \otimes (B,\rho) \to
  (B,\rho) \otimes \varphi_{|(B,\rho)|}(A,\sigma).
$$  is a $G$-braiding in  $\zz_G(\cc)$.
\end{lem}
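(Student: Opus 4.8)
The plan is to check, one condition at a time, the requirements that Section~\ref{sect-braided-def} imposes on a $G$-braiding, using Lemmas~\ref{lem-quotient-Gamma} and~\ref{lem-weak-tau} as the principal inputs. First I would verify that $\tau_{(A,\sigma),(B,\rho)}$ is a well-defined morphism in $\zz_G(\cc)$ from $(A,\sigma)\otimes(B,\rho)$ to $(B,\rho)\otimes\varphi_{|B|}(A,\sigma)$. Its underlying morphism in $\cc$ is $\tau_{(A,\sigma),B}$, and the underlying objects of the source and target are $A\otimes B$ and $B\otimes\uu(\varphi_{|B|}(A,\sigma))$; so the point is to show that $\tau_{(A,\sigma),B}$ intertwines the half-braiding $(\sigma\otimes\id_B)(\id_A\otimes\rho)$ of $(A,\sigma)\otimes(B,\rho)$ with the half-braiding of $(B,\rho)\otimes\varphi_{|B|}(A,\sigma)$. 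Using the characterization $\bigl(\id_X\otimes(\iota^{|X|}_V)_{(A,\sigma)}\bigr)\tau_{(A,\sigma),X}=\Gamma^V_{(A,\sigma),X}$ and passing to the projective limit over $V\in\ee_{|B|}$, this reduces exactly to the commutativity of the diagram in Lemma~\ref{lem-quotient-Gamma}(h). Since the $\Gamma^V$ are isomorphisms by Lemma~\ref{lem-quotient-Gamma}(a) and the forgetful functor $\uu$ is conservative, each $\tau_{(A,\sigma),(B,\rho)}$ is then an isomorphism in $\zz_G(\cc)$.

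Next I would record that $\tau_{(A,\sigma),(B,\rho)}$ is natural in $(A,\sigma)$ and in $(B,\rho)$; this is immediate from the naturality of the family~\eqref{eq-tau-center} in both of its variables, together with the faithfulness of $\uu$ and the fact that $\varphi_{|B|}$ acts on any morphism of $\zz_{|B|}(\cc)$ through its underlying morphism in $\cc$. It then remains to verify the three hexagon-type axioms. Axiom~(a), i.e.\ the commutativity of~\eqref{eq-braiding1}, is precisely Lemma~\ref{lem-weak-tau}(a) read inside $\zz_G(\cc)$: every arrow occurring there lifts to a morphism of $\zz_G(\cc)$, so by faithfulness of $\uu$ it suffices to check the identity after applying $\uu$, which is the content of that lemma. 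In the same way, axiom~(b), the commutativity of~\eqref{eq-braiding2}, is Lemma~\ref{lem-weak-tau}(b).

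The last axiom~(c), the commutativity of~\eqref{eq-braiding3}, is the only one not contained in Lemma~\ref{lem-weak-tau}; I would derive it from Lemma~\ref{lem-quotient-Gamma}(g) by passing to the projective limit: rewrite each occurrence of $\tau$ through the universal cones $\iota^\alpha$, and express the structural isomorphisms $\varphi_2(\alpha,\beta)$, $(\varphi_\alpha)_2$, and $\varphi_\alpha$ on morphisms in terms of the $\zeta$, the $\delta$, and the $(\varphi_U)_2$ as in Section~\ref{sect-action of-G}. Under this translation, the hexagon~\eqref{eq-braiding3} becomes the diagram of Lemma~\ref{lem-quotient-Gamma}(g). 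This last step is the main obstacle: axiom~(c) genuinely involves the crossing, so its verification requires carefully tracking how the limit-level constraints are assembled from their $V$-level counterparts and matching, term by term, the several structural morphisms appearing on the two sides of~\eqref{eq-braiding3} against those in Lemma~\ref{lem-quotient-Gamma}(g). The remaining items are essentially bookkeeping once Lemmas~\ref{lem-quotient-Gamma} and~\ref{lem-weak-tau} are available.
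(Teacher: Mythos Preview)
Your proposal is correct and follows essentially the same route as the paper: use Lemma~\ref{lem-quotient-Gamma}(h) to show $\tau_{(A,\sigma),(B,\rho)}$ is a morphism in $\zz_G(\cc)$, invoke conservativity of $\uu$ for invertibility, obtain naturality from that of the enhanced $G$-braiding, and then deduce \eqref{eq-braiding1}, \eqref{eq-braiding2}, \eqref{eq-braiding3} from Lemma~\ref{lem-weak-tau}(a),(b) and Lemma~\ref{lem-quotient-Gamma}(g) respectively. The paper's proof text actually cites part ``(e)'' of Lemma~\ref{lem-quotient-Gamma} for \eqref{eq-braiding3}, but comparing with the worked example in Section~\ref{sect-proof-lems-defdouble} (also labeled ``(e)'' there, yet verifying the diagram stated as (g)) shows this is a labeling slip; the content you invoke is the intended one.
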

\begin{proof}
  Lemma~\ref{lem-quotient-Gamma}(h) implies that $\tau_{(A,\sigma),(B,\rho)}$ is   a morphism in $\zz_G(\cc)$. It is an isomorphism since $\uu(\tau_{(A,\sigma),(B,\rho)})=\tau_{(A,\sigma),B}$ is an isomorphism in $\cc$, and the forgetful functor $\uu$ is conservative.
The naturality of the enhanced braiding implies the naturality of $\tau$.   The formulas
\eqref{eq-braiding1}, \eqref{eq-braiding2}, \eqref{eq-braiding3}  follow respectively from Lemmas~\ref{lem-weak-tau}(a),  \ref{lem-weak-tau}(b), and  \ref{lem-quotient-Gamma}(e).
\end{proof}

Theorem~\ref{thm-G-center} is a direct consequence of Lemma~\ref{lem-braiding-ZG}.

\subsection{Proof of Lemmas~\ref*{lem-phiV-def} -- \ref*{lem-quotient-Gamma}}\label{sect-proof-lems-defdouble}
Let $(A,\sigma)\in\zz_G(\cc)$. For $X \in \cc_1$, we depict
 the morphism $\sigma_X \co A \otimes X \to X \otimes A$ and its inverse $\sigma_X^{-1} \co X \otimes A \to A \otimes X$ by
$$
 \psfrag{A}[Br][Br]{\scalebox{.8}{\textcolor{red}{$A$}}}
 \psfrag{X}[Bl][Bl]{\scalebox{.8}{$X$}}
\sigma_X=\rsdraw{.45}{.9}{sX} \qquad \text{and} \qquad  \psfrag{A}[Bl][Bl]{\scalebox{.8}{\textcolor{red}{$A$}}}
 \psfrag{X}[Br][Br]{\scalebox{.8}{$X$}} \sigma_X^{-1}=\rsdraw{.45}{.9}{sXinv}\;\;.
$$
Formula \eqref{eq-inv-sigma} implies that
$$
 \psfrag{A}[Bl][Bl]{\scalebox{.8}{\textcolor{red}{$A$}}}
 \psfrag{X}[Bl][Bl]{\scalebox{.8}{$X$}} \rsdraw{.45}{.9}{sig-ev2} \; = \;  \psfrag{A}[Br][Br]{\scalebox{.8}{\textcolor{red}{$A$}}}
 \psfrag{X}[Br][Br]{\scalebox{.8}{$X$}}
 \rsdraw{.45}{.9}{sig-ev1}
\qquad \text{and} \qquad
 \psfrag{A}[Bl][Bl]{\scalebox{.8}{\textcolor{red}{$A$}}}
 \psfrag{X}[Bl][Bl]{\scalebox{.8}{$X$}} \rsdraw{.45}{.9}{sig-ev1b} \; = \;  \psfrag{A}[Br][Br]{\scalebox{.8}{\textcolor{red}{$A$}}}
 \psfrag{X}[Br][Br]{\scalebox{.8}{$X$}}
 \rsdraw{.45}{.9}{sig-ev2b} \;.$$
These two morphisms are pictorially represented  respectively as
$$
 \psfrag{A}[Bl][Bl]{\scalebox{.8}{\textcolor{red}{$A$}}}
 \psfrag{X}[Bl][Bl]{\scalebox{.8}{$X$}} \rsdraw{.45}{.9}{sig-ev3} \qquad \text{and} \qquad
 \psfrag{A}[Bl][Bl]{\scalebox{.8}{\textcolor{red}{$A$}}}
 \psfrag{X}[Bl][Bl]{\scalebox{.8}{$X$}} \rsdraw{.45}{.9}{sig-ev3b}
 \;\;.
$$
Axiom~\eqref{axiom-half-braiding} implies that for any $X_1,\dots,X_n \in \cc_1$,
$$
 \psfrag{A}[Br][Br]{\scalebox{.8}{\textcolor{red}{$A$}}}
 \psfrag{X}[Bl][Bl]{\scalebox{.8}{$X_1\otimes \cdots \otimes X_n$}}
\rsdraw{.45}{.9}{sX}\qquad
  \psfrag{X}[Bl][Bl]{\scalebox{.8}{$X_1$}}
  \psfrag{Y}[Bl][Bl]{\scalebox{.8}{$X_n$}}
\qquad =\rsdraw{.45}{.9}{sXn}\;\;.
$$
 In generalization of this notation, if $X \in \cc_1$ decomposes as $X=X_1 \otimes \cdots \otimes X_n$ where $X_1,\dots,X_n$ are  any homogeneous  objects of $\cc$, then we will depict $\sigma_X$ as
$$
 \psfrag{A}[Br][Br]{\scalebox{.8}{\textcolor{red}{$A$}}}
 \psfrag{X}[Bl][Bl]{\scalebox{.8}{$X_1$}}
  \psfrag{Y}[Bl][Bl]{\scalebox{.8}{$X_n$}}
\sigma_X=\rsdraw{.45}{.9}{sXn}\;\;.
$$
As usual, if an arc colored by $X_i$ is oriented upwards,
then the corresponding object   in the source/target of  morphisms
is $X_i^*$. For example, if $X \in \cc_1$ decomposes as $X=X_1^* \otimes X_2 \otimes X_3^*$ where $X_1,X_2,X_3 \in \cc$, then we depict $\sigma_X$ as
$$
 \psfrag{A}[Br][Br]{\scalebox{.8}{\textcolor{red}{$A$}}}
 \psfrag{X}[Bl][Bl]{\scalebox{.8}{$X_1$}}
 \psfrag{Y}[Bl][Bl]{\scalebox{.8}{$X_3$}}
 \psfrag{Z}[Bl][Bl]{\scalebox{.8}{$X_3$}}
\sigma_X=\rsdraw{.45}{.9}{sX3}\;\;.
$$
In this pictorial formalism, for $V \in \cc$,
\begin{equation}\label{eq-dim-sigma}
 \psfrag{A}[Br][Br]{\scalebox{.8}{\textcolor{red}{$A$}}}
 \psfrag{V}[Bl][Bl]{\scalebox{.8}{$V$}}
\rsdraw{.45}{.9}{Trel13}\; \;= \; d_V \;\,  \psfrag{A}[Bl][Bl]{\scalebox{.8}{\textcolor{red}{$A$}}} \rsdraw{.45}{.9}{Trel14} \;\;.
\end{equation}
Indeed, by   the naturality of $\sigma$,
$$
 \psfrag{A}[Br][Br]{\scalebox{.8}{\textcolor{red}{$A$}}}
 \psfrag{V}[Bl][Bl]{\scalebox{.8}{$V$}}
  \psfrag{B}[Bl][Bl]{\scalebox{.8}{$V^*\otimes V$}}
 \psfrag{i}[Bc][Bc]{\scalebox{.8}{$\id_{V^* \otimes V}$}}
\rsdraw{.45}{.9}{Trel13b}\; = \; \rsdraw{.45}{.9}{Trel15}= \; \psfrag{A}[Bl][Bl]{\scalebox{.8}{\textcolor{red}{$A$}}} \rsdraw{.45}{.9}{Trel16}\;\,= \; \rsdraw{.45}{.9}{Trel17}
\;= \; d_V \;\,   \rsdraw{.45}{.9}{Trel18} .
$$
%

\begin{lem}\label{lem-pict-demo}
Let   $(A,\sigma)\in\zz_G(\cc)$ and  $V\in\ee_\alpha$ with $\alpha \in G$. Then
\begin{center}
\quad \begin{minipage}{0.45\linewidth}
\begin{equation}\label{eq-Drel1}
 \psfrag{A}[Bl][Bl]{\scalebox{.8}{\textcolor{red}{$A$}}}
 \psfrag{V}[Bl][Bl]{\scalebox{.8}{$V$}}
 \psfrag{E}[Bl][Bl]{\scalebox{.8}{$E_{(A,\sigma)}^V$}}
\rsdraw{.45}{.9}{Trel1}\; = \; d_V^{-1} \rsdraw{.45}{.9}{Trel2} \;,
\end{equation}
\end{minipage}
\quad
\begin{minipage}{0.45\linewidth}
\begin{equation}\label{eq-Drel2}
 \psfrag{A}[Bl][Bl]{\scalebox{.8}{\textcolor{red}{$A$}}}
 \psfrag{V}[Bl][Bl]{\scalebox{.8}{$V$}}
 \psfrag{E}[Bl][Bl]{\scalebox{.8}{$E_{(A,\sigma)}^V$}}
\rsdraw{.45}{.9}{Trel3}\; = \;  \rsdraw{.45}{.9}{Trel4} \qquad ,
\end{equation}
\end{minipage}\\[1em]
\quad \begin{minipage}{0.45\linewidth}
\begin{equation}\label{eq-Drel3}
 \psfrag{A}[Bl][Bl]{\scalebox{.8}{\textcolor{red}{$A$}}}
 \psfrag{V}[Bl][Bl]{\scalebox{.8}{$V$}}
 \psfrag{E}[Bl][Bl]{\scalebox{.8}{$E_{(A,\sigma)}^V$}}
\rsdraw{.45}{.9}{Trel5}= \; d_V^{-1} \;
\psfrag{A}[Br][Br]{\scalebox{.8}{\textcolor{red}{$A$}}} \rsdraw{.45}{.9}{Trel6} \;,
\end{equation}
\end{minipage}
\quad
\begin{minipage}{0.45\linewidth}
\begin{equation}\label{eq-Drel4}
 \psfrag{A}[Bl][Bl]{\scalebox{.8}{\textcolor{red}{$A$}}}
 \psfrag{V}[Bl][Bl]{\scalebox{.8}{$V$}}
 \psfrag{E}[Bl][Bl]{\scalebox{.8}{$E_{(A,\sigma)}^V$}}
\rsdraw{.45}{.9}{Trel7} = \; d_V^{-1} \;
\psfrag{A}[Br][Br]{\scalebox{.8}{\textcolor{red}{$A$}}} \rsdraw{.45}{.9}{Trel8} \;\,,
\end{equation}
\end{minipage}\\[1em]
\quad \begin{minipage}{0.45\linewidth}
\begin{equation}\label{eq-Drel5}
 \psfrag{A}[Br][Br]{\scalebox{.8}{\textcolor{red}{$A$}}}
 \psfrag{V}[Bl][Bl]{\scalebox{.8}{$V$}}
 \psfrag{E}[Bl][Bl]{\scalebox{.8}{$E_{(A,\sigma)}^V$}}
\rsdraw{.45}{.9}{Trel9}\; = \; d_V\;\,  \rsdraw{.45}{.9}{Trel10} \qquad ,
\end{equation}
\end{minipage}
\quad
\begin{minipage}{0.45\linewidth}
\begin{equation}\label{eq-Drel6}
 \psfrag{A}[Bl][Bl]{\scalebox{.8}{\textcolor{red}{$A$}}}
 \psfrag{V}[Bl][Bl]{\scalebox{.8}{$V$}}
 \psfrag{E}[Bl][Bl]{\scalebox{.8}{$E_{(A,\sigma)}^V$}}
\rsdraw{.45}{.9}{Trel11}\; \;= \; d_V \;\, \rsdraw{.45}{.9}{Trel12} \;\;.
\end{equation}
\end{minipage}
\end{center}
\end{lem}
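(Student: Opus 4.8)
The plan is to derive all six identities \eqref{eq-Drel1}--\eqref{eq-Drel6} from three ingredients already at hand: the splitting relations \eqref{eq-idemp-split}, the explicit form of the idempotent $\pi^V_{(A,\sigma)}$ given by Lemma~\ref{lem-Pi-idempot}, and the structural properties of half-braidings — the axiom \eqref{axiom-half-braiding}, the naturality of $\sigma$, and the trace relation \eqref{eq-dim-sigma}. The elementary observation driving everything is that \eqref{eq-idemp-split} yields
$$
\pi^V_{(A,\sigma)}\,q^V_{(A,\sigma)}=q^V_{(A,\sigma)}\,p^V_{(A,\sigma)}\,q^V_{(A,\sigma)}=q^V_{(A,\sigma)}
\quad\text{and}\quad
p^V_{(A,\sigma)}\,\pi^V_{(A,\sigma)}=p^V_{(A,\sigma)}\,q^V_{(A,\sigma)}\,p^V_{(A,\sigma)}=p^V_{(A,\sigma)},
$$
so that a factor $\pi^V_{(A,\sigma)}$ — equivalently, by Lemma~\ref{lem-Pi-idempot}, $d_V^{-1}$ times the diagram built from $\sigma_{V\otimes V^*}$ — may be freely inserted or absorbed next to $p^V_{(A,\sigma)}$ or $q^V_{(A,\sigma)}$.

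First I would dispose of the ``absorption'' identities, those in which a $\sigma_{V\otimes V^*}$-box sits directly against $q^V_{(A,\sigma)}$ or $p^V_{(A,\sigma)}$ with the adjacent strands running the same way: there one recognizes $\pi^V_{(A,\sigma)}$ up to the scalar $d_V^{-1}$, applies the displayed relations, and reads off the resulting $d_V^{\pm 1}$ (or trivial) prefactor according to whether the move has created or destroyed a closed $V$-loop, such a loop being evaluated to $d_V$ by \eqref{eq-dim-sigma}. Then I would handle the remaining identities, in which the box must be transported past the $A$-strand or past an auxiliary homogeneous strand: here the first step is to use \eqref{axiom-half-braiding} to decompose the relevant morphism of the form $\sigma_{V\otimes Z\otimes V^*}$ into its elementary constituents $\sigma_V$, $\sigma_{V^*}$ and the half-braiding on the intermediate object $Z$, then to move these pieces by naturality of $\sigma$ until a clean copy of $\pi^V_{(A,\sigma)}$ is exposed, and finally to conclude as in the absorption case. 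I expect the six identities to be proved in a fixed order, the prefactor-free ones being obtained by combining an absorption step with the cancellation $p^V_{(A,\sigma)}q^V_{(A,\sigma)}=\id_{E^V_{(A,\sigma)}}$ and reusing the identities established earlier.

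The main obstacle is not conceptual but organizational: one must track exactly which strands the half-braiding box crosses each time naturality of $\sigma$ is invoked, and match each diagrammatic manipulation in \eqref{eq-Drel1}--\eqref{eq-Drel6} with one of the algebraic facts $\pi^V_{(A,\sigma)}q^V_{(A,\sigma)}=q^V_{(A,\sigma)}$, $p^V_{(A,\sigma)}\pi^V_{(A,\sigma)}=p^V_{(A,\sigma)}$, and \eqref{eq-dim-sigma}. The subtlety is to avoid losing a $d_V^{\pm 1}$ factor as $V$-loops are opened and closed, and to order the six proofs so that later ones can invoke the earlier ones; no individual step is deep.
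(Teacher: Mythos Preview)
Your proposal is correct and follows essentially the same approach as the paper: derive \eqref{eq-Drel1} and \eqref{eq-Drel2} directly from \eqref{eq-idemp-split} and the definition of $\pi^V_{(A,\sigma)}$, obtain \eqref{eq-Drel3} and \eqref{eq-Drel4} by composing \eqref{eq-Drel1} with $q^V_{(A,\sigma)}$ (resp.\ $p^V_{(A,\sigma)}$) and using \eqref{eq-Drel2}, and deduce \eqref{eq-Drel5} and \eqref{eq-Drel6} from the earlier identities together with \eqref{eq-dim-sigma}. Your plan to invoke \eqref{axiom-half-braiding} to decompose $\sigma_{V\otimes Z\otimes V^*}$ is not needed for this lemma---the paper's proof uses only \eqref{eq-idemp-split}, the explicit form of $\pi^V_{(A,\sigma)}$, and \eqref{eq-dim-sigma}, with no appeal to the multiplicativity of the half-braiding.
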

Note that the left hand side of \eqref{eq-Drel6} does not
necessarily depict a morphism
  in itself  because $V$ may not  belong  to $\cc_1$. The equality  \eqref{eq-Drel6}  means that in
any diagram, a piece as in the left-hand side of
 \eqref{eq-Drel6}  may be replaced with the piece as in right-hand side of \eqref{eq-Drel6}  and vice
 versa.

\begin{proof}
Equalities \eqref{eq-Drel1} and \eqref{eq-Drel2} follow directly from  \eqref{eq-idemp-split} and the definition of $\pi^V_{(A,\sigma)}$. Composing  on the right \eqref{eq-Drel1} with $q^V_{(A,\sigma)}$ and then using \eqref{eq-Drel2} gives \eqref{eq-Drel3}. Similarly~\eqref{eq-Drel4} is obtained by composing  on the left \eqref{eq-Drel1} with $p^V_{(A,\sigma)}$. Composing \eqref{eq-Drel4} with~\eqref{eq-Drel3} and then using \eqref{eq-Drel2} and \eqref{eq-dim-sigma} gives \eqref{eq-Drel5}.
Finally \eqref{eq-Drel6} is a direct consequence of \eqref{eq-Drel1} and~\eqref{eq-dim-sigma}.
\end{proof}

We compute now the functor $\varphi_V$ in this pictorial formalism for    $V\in\ee_\alpha$ with  $\alpha \in G$.   For $(A,\sigma) \in\zz_G(\cc)$, we have  $\varphi_V(A,\sigma)=(E^V_{(A,\sigma)},\gamma^V_{(A,\sigma)})$ where, for  $X \in \cc_1$,
$$
 \psfrag{A}[Br][Br]{\scalebox{.8}{\textcolor{red}{$A$}}}
 \psfrag{B}[Bl][Bl]{\scalebox{.8}{\textcolor{red}{$A$}}}
 \psfrag{U}[Bl][Bl]{\scalebox{.8}{$V$}}
 \psfrag{V}[Br][Br]{\scalebox{.8}{$V$}}
 \psfrag{X}[Bl][Bl]{\scalebox{.8}{$X$}}
 \psfrag{E}[Bl][Bl]{\scalebox{.8}{$E_{(A,\sigma)}^V$}}
 \psfrag{F}[Bl][Bl]{\scalebox{.8}{$E_{(A,\sigma)}^V$}}
 \gamma^V_{(A,\sigma),X}= d_V^{-1}\;\rsdraw{.45}{.9}{sig-AX}\;.
$$
Using \eqref{eq-Drel3} twice, we obtain that
$$
 \psfrag{V}[Bc][Bc]{\scalebox{.8}{$V$}}
 \psfrag{A}[Bc][Bc]{\scalebox{.8}{\textcolor{red}{$A$}}}
 \psfrag{B}[Bc][Bc]{\scalebox{.8}{\textcolor{red}{$B$}}}
 \psfrag{E}[Bl][Bl]{\scalebox{.8}{$E^V_{(A,\sigma)}$}}
 \psfrag{F}[Bl][Bl]{\scalebox{.8}{$E^V_{(B,\rho)}$}}
 \psfrag{G}[Bl][Bl]{\scalebox{.8}{$E_{(A,\sigma)\otimes (B,\rho)}^V$}}
 (\varphi_V)_2\bigl((A,\sigma),(B,\rho)\bigr)=  \rsdraw{.45}{.9}{pV2-AB}
=  \,d_V^{-2}\rsdraw{.45}{.9}{pV2-AB2} \;\;.
$$

Let us prove Lemma~\ref{lem-phiV-def}. Firstly, $\gamma=\gamma^V_{(A,\sigma)}$ is a half-braiding. Indeed, the naturality of $\sigma$ implies that of $\gamma$. Also,   using~\eqref{eq-Drel5} we obtain $\gamma_\un=\id_A$ and
$$
 \psfrag{X}[Bl][Bl]{\scalebox{.8}{$X$}}
 \psfrag{Y}[Bl][Bl]{\scalebox{.8}{$Y$}}
(\gamma_X \otimes \id_Y ) (\id_X \otimes \gamma_Y)= d_V^{-2}\,\rsdraw{.45}{.9}{demi-braid-ver2}= d_V^{-1}\rsdraw{.45}{.9}{demi-braid-ver3}=\gamma_{X \otimes Y}.
$$
The category $\cc$ being rigid, these two equalities imply that $\gamma$ is invertible. Hence $\gamma$ is a half-braiding. Secondly, $\varphi_V$ is a functor since $\varphi_V(\id_{(A,\sigma)})=\id_{\varphi_V(A,\sigma)}$ by \eqref{eq-Drel2} and,  for two composable morphisms $g$, $f$ in $\zz_G(\cc)$, \eqref{eq-Drel1} and \eqref{eq-Drel3} give
$$
 \psfrag{f}[Bc][Bl]{\scalebox{.8}{\textcolor{red}{$f$}}}
 \psfrag{g}[Bc][Bc]{\scalebox{.8}{\textcolor{red}{$g$}}}
 \psfrag{h}[Bc][Bc]{\scalebox{.8}{\textcolor{red}{$gf$}}}
\varphi_V(g)\varphi_V(f)=\; \rsdraw{.45}{.9}{pV-morph2}=d_V^{-1}\; \rsdraw{.45}{.9}{pV-morph3}=d_V^{-1}\; \rsdraw{.45}{.9}{pV-morph4}=\;\rsdraw{.45}{.9}{pV-morph5}=\varphi_V(gf).
$$
Let us prove that $\varphi_V$ is strong monoidal. Let $(A,\sigma), (B,\rho), (C,\varrho) \in \zz_G(\cc)$.
Applying \eqref{eq-Drel6}, we obtain
$$
\psfrag{X}[Bl][Bl]{\scalebox{.8}{$X$}}
d_V^{-3}\rsdraw{.45}{.9}{pV2-AB5}=d_V^{-2}\;\rsdraw{.45}{.9}{pV2-AB4}=d_V^{-4}\;\rsdraw{.45}{.9}{pV2-AB3},
$$
that is,
\begin{gather*}
\gamma^V_{(A,\sigma) \otimes (B,\rho),X}\left((\varphi_V)_2\bigl((A,\sigma),(B,\rho)\bigr) \otimes \id_X \right)
\\
= \bigl( \id_X \otimes (\varphi_V)_2\bigl((A,\sigma),(B,\rho)\bigr)\bigr)
\bigl( \gamma^V_{(A,\sigma),X} \otimes \id_{\varphi_V(B,\rho)}\bigr)
\bigl(\id_{\varphi_V(A,\sigma)} \otimes \gamma^V_{(B,\rho),X} \bigr).
\end{gather*}
Thus $(\varphi_V)_2\bigl((A,\sigma),(B,\rho)\bigr)$ is a morphism in $\zz_G(\cc)$. Similarly, $(\varphi_V)_0$ is a morphism in $\zz_G(\cc)$ because, by  \eqref{eq-Drel1},
$$
\psfrag{X}[Bl][Bl]{\scalebox{.8}{$X$}}
\gamma^V_{(\un,\id),X} \bigl((\varphi_V)_0 \otimes \id_X \bigr)=
d_V^{-1}\;\, \rsdraw{.4}{.9}{pV0-morph}
=\id_X \otimes (\varphi_V)_0.
$$
Now $(\varphi_V)_2$ satisfies \eqref{stmonoidal1} since, by using \eqref{eq-Drel1}, we obtain that both
$$
(\varphi_V)_2\bigl((A,\sigma)\otimes (B,\rho),(C,\varrho)\bigr) \bigl( (\varphi_V)_2\bigl((A,\sigma),(B,\rho)\bigr) \otimes \id_{\varphi_V(C,\varrho)}\bigr)
$$
and
$$
(\varphi_V)_2\bigl((A,\sigma),(B,\rho)\otimes (C,\varrho)\bigr) \bigl(\id_{\varphi_V(A,\sigma)} \otimes  (\varphi_V)_2\bigl((B,\rho),(C,\varrho)\bigr) \bigr)
$$
are equal to
$$
d_V^{-3}\;\rsdraw{.45}{.9}{pV2-AB6}.
$$
Axiom \eqref{stmonoidal2} is a direct consequence of \eqref{eq-Drel1} applied to $(\un,\id)$. Hence $\varphi_V$ is a monoidal functor. It remains to prove that both $(\varphi_V)_2\bigl((A,\sigma),(B,\rho)\bigr)$ and $(\varphi_V)_0$ are isomorphisms in $\zz_G(\cc)$. Since the forgetful functor $\zz_G(\cc) \to \cc$ is conservative, we only need to verify that these morphisms  are isomorphisms in $\cc$. This can be done by verifying (using Lemma~\ref{lem-pict-demo}) that
$$
 \psfrag{V}[Bc][Bc]{\scalebox{.8}{$V$}}
 \psfrag{A}[Bc][Bc]{\scalebox{.8}{\textcolor{red}{$A$}}}
 \psfrag{B}[Bc][Bc]{\scalebox{.8}{\textcolor{red}{$B$}}}
 \psfrag{E}[Bl][Bl]{\scalebox{.8}{$E^V_{(A,\sigma)}$}}
 \psfrag{F}[Bl][Bl]{\scalebox{.8}{$E^V_{(B,\rho)}$}}
 \psfrag{G}[Bl][Bl]{\scalebox{.8}{$E_{(A,\sigma)\otimes (B,\rho)}^V$}}
(\varphi_V)_2\bigl((A,\sigma),(B,\rho)\bigr)^{-1}=d_V^{-1}\!\!  \rsdraw{.37}{.9}{pV2-AB2-inv}
\quad \text{and} \quad
 \psfrag{V}[Bc][Bc]{\scalebox{.8}{$V$}}
 \psfrag{F}[Bl][Bl]{\scalebox{.8}{$E^V_{(\un,\id)}$}}
(\varphi_V)_0^{-1}=d_V^{-1}\; \rsdraw{.37}{.9}{pV0-inv} \;.
$$
  Finally, let us prove that $\varphi_V$ is pivotal. For $(A,\sigma)\in\zz_G(\cc)$, we obtain that
$$
 \psfrag{F}[Bl][Bl]{\scalebox{.8}{$E^V_{(A,\sigma)^*}$}}
 \psfrag{E}[Bl][Bl]{\scalebox{.8}{$E^V_{(A,\sigma)}$}}
\varphi_V^l(A,\sigma)=d_V^{-3}\; \rsdraw{.37}{.9}{pV2-AB-pivot1} \; = d_V^{-3}\;\;\, \rsdraw{.37}{.9}{pV2-AB-pivot2}\;\;\,.
$$
A similar computation gives that
$$
 \psfrag{F}[Bl][Bl]{\scalebox{.8}{$E^V_{(A,\sigma)^*}$}}
 \psfrag{E}[Br][Br]{\scalebox{.8}{$E^V_{(A,\sigma)}$}}
\varphi_V^r(A,\sigma) = d_V^{-3}\;\;\, \rsdraw{.37}{.9}{pV2-AB-pivot3}\;.
$$
Thus $\varphi_V^l(A,\sigma)=\varphi_V^r(A,\sigma)$ by the pivotality of $\cc$. This concludes the proof of Lemma~\ref{lem-phiV-def}. The proof
 of Lemmas \ref{lem-quotient-varphi}-\ref{lem-braiding-ZG} 
follows the same  lines  depicting the  morphisms involved  as
\begin{align*}
 &\psfrag{V}[Bc][Bc]{\scalebox{.8}{$V$}}
 \psfrag{U}[Bc][Bc]{\scalebox{.8}{$U$}}
 \psfrag{A}[Bc][Bc]{\scalebox{.8}{\textcolor{red}{$A$}}}
 \psfrag{E}[Bl][Bl]{\scalebox{.8}{$E^V_{(A,\sigma)}$}}
 \psfrag{F}[Bl][Bl]{\scalebox{.8}{$E^U_{(A,\sigma)}$}}
\delta^{U,V}_{(A,\sigma)}=  \,d_V^{-1}\rsdraw{.45}{.9}{deltaUV} \;, &&
 \psfrag{V}[Bc][Bc]{\scalebox{.8}{$V$}}
 \psfrag{U}[Bc][Bc]{\scalebox{.8}{$U$}}
 \psfrag{W}[Bc][Bc]{\scalebox{.8}{$W$}}
 \psfrag{A}[Bc][Bc]{\scalebox{.8}{\textcolor{red}{$A$}}}
 \psfrag{E}[Bl][Bl]{\scalebox{.8}{$E^U_{\varphi_V(A,\sigma)}$}}
 \psfrag{F}[Bl][Bl]{\scalebox{.8}{$E^V_{(A,\sigma)}$}}
 \psfrag{G}[Bl][Bl]{\scalebox{.8}{$E^W_{(A,\sigma)}$}}
\zeta^{U,V,W}_{(A,\sigma)}=  d_U^{-1}d_V^{-1}\rsdraw{.45}{.9}{xiUVW} \;, \\
&
 \psfrag{E}[Bl][Bl]{\scalebox{.8}{$E^U_{(A,\sigma)}$}}
 \psfrag{U}[Bc][Bc]{\scalebox{.8}{$U$}}
 \psfrag{A}[Bc][Bc]{\scalebox{.8}{\textcolor{red}{$A$}}}
 \psfrag{F}[Bl][Bl]{\scalebox{.8}{$E^U_{(A,\sigma)}$}}
\eta^{U}_{(A,\sigma)}=  \; \rsdraw{.45}{.9}{etaU} \;\;, &&
 \psfrag{V}[Bc][Bc]{\scalebox{.8}{$V$}}
 \psfrag{X}[Bc][Bc]{\scalebox{.8}{$X$}}
 \psfrag{A}[Bc][Bc]{\scalebox{.8}{\textcolor{red}{$A$}}}
 \psfrag{E}[Bl][Bl]{\scalebox{.8}{$E^V_{(A,\sigma)}$}}
\Gamma^{V}_{(A,\sigma),X}=  \rsdraw{.45}{.9}{GammaVX} \;\;
\end{align*}
and depicting  the inverse  isomorphisms   by
\begin{gather*}
 \psfrag{V}[Bc][Bc]{\scalebox{.8}{$V$}}
 \psfrag{U}[Bc][Bc]{\scalebox{.8}{$U$}}
 \psfrag{W}[Bc][Bc]{\scalebox{.8}{$W$}}
 \psfrag{A}[Bc][Bc]{\scalebox{.8}{\textcolor{red}{$A$}}}
 \psfrag{E}[Bl][Bl]{\scalebox{.8}{$E^U_{\varphi_V(A,\sigma)}$}}
 \psfrag{F}[Bl][Bl]{\scalebox{.8}{$E^V_{(A,\sigma)}$}}
 \psfrag{G}[Bl][Bl]{\scalebox{.8}{$E^W_{(A,\sigma)}$}}
\bigl(\zeta^{U,V,W}_{(A,\sigma)}\bigr)^{-1}=  d_W^{-1}\rsdraw{.45}{.9}{xiUVWinv} \;, \\
 \psfrag{E}[Bl][Bl]{\scalebox{.8}{$E^U_{(A,\sigma)}$}}
 \psfrag{U}[Bc][Bc]{\scalebox{.8}{$U$}}
 \psfrag{A}[Bc][Bc]{\scalebox{.8}{\textcolor{red}{$A$}}}
 \psfrag{F}[Bl][Bl]{\scalebox{.8}{$E^U_{(A,\sigma)}$}}
\bigl(\eta^{U}_{(A,\sigma)}\bigr)^{-1}=  d_U^{-1}\; \rsdraw{.45}{.9}{etaUinv} \;\;, \qquad \;\,
 \psfrag{V}[Bc][Bc]{\scalebox{.8}{$V$}}
 \psfrag{X}[Bc][Bc]{\scalebox{.8}{$X$}}
 \psfrag{A}[Bc][Bc]{\scalebox{.8}{\textcolor{red}{$A$}}}
 \psfrag{E}[Bl][Bl]{\scalebox{.8}{$E^V_{(A,\sigma)}$}}
\bigl(\Gamma^{V}_{(A,\sigma),X}\bigr)^{-1}= d_V^{-1}\; \rsdraw{.45}{.9}{GammaVXinv} \;\;.
\end{gather*}
For example, let us check  Lemma~\ref{lem-quotient-Gamma}(e). Let $(A,\sigma) \in \zz_G(\cc)$, $(B,\rho) \in \zz_\beta(\cc)$, $U \in \ee_\alpha$, $V \in \ee_\beta$, $W \in \ee_{\beta \alpha}$, and $S \in \ee_{\alpha^{-1}\beta\alpha}$, with $\alpha,\beta \in G$. Then
\begin{center}
$ \displaystyle
(\varphi_U)_2\bigl((B,\rho),\varphi_V(A,\sigma)\bigr)\left(\id_{E^U_{(B,\rho)}} \otimes (\xi^{U,V,W}_{(A,\sigma)})^{-1}\xi^{S,U,W}_{(A,\sigma)}\right)\Gamma^S_{\varphi_U(A,\sigma),E^U_{(B,\rho)}}
$\\[.5em]
$ \displaystyle
\psfrag{E}[Bl][Bl]{\scalebox{.9}{\scriptsize $E^U_{(A,\sigma)}$}}
\psfrag{F}[Bl][Bl]{\scalebox{.9}{\scriptsize $E^U_{(B,\rho)}$}}
\psfrag{U}[Bc][Bc]{\scalebox{.9}{\scriptsize $U$}}
\psfrag{V}[Bc][Bc]{\scalebox{.9}{\scriptsize $V$}}
\psfrag{S}[Bc][Bc]{\scalebox{.9}{\scriptsize $S$}}
\psfrag{W}[Bc][Bc]{\scalebox{.9}{\scriptsize $W$}}
= d_S^{-1}d_U^{-4}d_W^{-1}\rsdraw{.45}{.9}{dem-Gamma1}\overset{(i)}{=} d_S^{-2}d_U^{-5}\rsdraw{.45}{.9}{dem-Gamma2}
\overset{(ii)}{=} d_S^{-2}d_U^{-5}d_V^{-1}\rsdraw{.45}{.9}{dem-Gamma3}
$\\[.8em]
$ \displaystyle
\psfrag{E}[Bl][Bl]{\scalebox{.9}{\scriptsize $E^U_{(A,\sigma)}$}}
\psfrag{F}[Bl][Bl]{\scalebox{.9}{\scriptsize $E^U_{(B,\rho)}$}}
\psfrag{U}[Bc][Bc]{\scalebox{.9}{\scriptsize $U$}}
\psfrag{V}[Bc][Bc]{\scalebox{.9}{\scriptsize $V$}}
\psfrag{S}[Bc][Bc]{\scalebox{.9}{\scriptsize $S$}}
\psfrag{W}[Bc][Bc]{\scalebox{.9}{\scriptsize $W$}}
\overset{(iii)}{=}d_U^{-2}\;\rsdraw{.45}{.9}{dem-Gamma4}= d_U^{-2}\;\;\rsdraw{.45}{.9}{dem-Gamma5}
\overset{(iv)}{=}d_U^{-2}\;\;\rsdraw{.45}{.9}{dem-Gamma5b}
$\\[.85em]
$ \displaystyle
\psfrag{E}[Bl][Bl]{\scalebox{.9}{\scriptsize $E^U_{(A,\sigma)}$}}
\psfrag{F}[Bl][Bl]{\scalebox{.9}{\scriptsize $E^U_{(B,\rho)}$}}
\psfrag{U}[Bc][Bc]{\scalebox{.9}{\scriptsize $U$}}
\psfrag{V}[Bc][Bc]{\scalebox{.9}{\scriptsize $V$}}
\psfrag{S}[Bc][Bc]{\scalebox{.9}{\scriptsize $S$}}
\psfrag{W}[Bc][Bc]{\scalebox{.9}{\scriptsize $W$}}
=d_U^{-2}\rsdraw{.45}{.9}{dem-Gamma6}
\overset{(v)}{=}d_U^{-3}\rsdraw{.45}{.9}{dem-Gamma7} \overset{(vi)}{=}d_U^{-2}\rsdraw{.45}{.9}{dem-Gamma8}
$\\[1em]
$ \displaystyle
=\varphi_U\bigl(\Gamma^V_{(A,\sigma),B}\bigr) (\varphi_U)_2\bigl((A,\sigma),(B,\rho)\bigr).
$
\end{center}\vspace{1em}
\noindent Here, the equality $(i)$ is obtained by applying \eqref{eq-Drel1} twice and \eqref{eq-Drel6}, $(ii)$ follows from the definition of the half-braidings of $\varphi_U(A,\sigma)$ and $\varphi_V(A,\sigma)$, $(iii)$ is obtained by applying \eqref{eq-Drel5} and then \eqref{eq-dim-sigma}, $(iv)$ follows from the naturality of $\sigma$ applied to the morphism  delimited by the dotted blue box (which is indeed a morphism in $\cc_1$),  $(v)$  is obtained by applying \eqref{eq-dim-sigma}, and $(vi)$  is obtained by applying \eqref{eq-Drel1}.

\subsection{Remarks} 1.
The   $G$-center $\zz_G(\cc)$ is the \emph{left $G$-center} of $\cc$ while the center studied in \cite{GNN} is the \emph{right $G$-center} $\zz^r_G(\cc) $. The objects of $\zz^r_G(\cc) $ are right half braidings of $\cc$ relative to $\cc_1$, i.e., pairs  $({{A}}  \in \cc,\sigma)$  where
$
\sigma=\{\sigma_X \co   X\otimes {{A}}\to {{A}} \otimes X \}_{X \in \cc_1}
$
is a natural isomorphism such that
$\sigma_{X \otimes Y}=(\sigma_X \otimes \id_Y)(\id_X \otimes
\sigma_Y)$ for all
$X,Y \in \cc_1$. The left and right $G$-centers are related as follows.
Given a $G$-graded  category  $\dd$,   denote by $\dd^\rv$ the $G^\opp$-graded   category obtained from $\dd$ by replacing   $\otimes$ with  the opposite product $\otimes^\opp$  defined by $X \otimes^\opp Y=Y
\otimes X$  and by setting $(\dd^\rv)_\alpha=\dd_\alpha$ for $\alpha \in G$. Then
$\zz^r_G(\cc)=(\zz_{G^\opp}(\cc^{\rv}))^{\rv} $.

2. Each $G$-graded pure pivotal category $\cc$ with split idempotents contains a maximal non-singular graded subcategory. Namely, let $H$ be the set of all  $\alpha\in G$ such that $\cc_\alpha$ contains an object with invertible left dimension and an object with invertible right dimension. Then $H$ is a subgroup of $G$ and
 the $H$-category $ \oplus_{\alpha \in H}
  \,\cc_\alpha \subset \cc$ is non-singular.

\section{The modularity theorem}\label{sect-main}
We state in this section our main result concerning the modularity
of the $G$-center. We first discuss several important conditions on
categories.

\subsection{Sphericity}  A   \emph{spherical} category   is
  a pivotal   category $\cc$ such that  the left and right traces of endomorphisms in
$\cc$ coincide.   For any
endomorphism  $g$ of an object of such a $\cc$  set  $\tr (g)=\tr_l(g)=\tr_r(g)$ and   for any object $X\in \cc$  set
$\dim (X)=\dim_l(X)=\dim_r(X)=\tr(\id_X)$.  For instance, all ribbon  categories are  spherical, see \cite{Tu0}.


For a spherical category, the graphical calculus of Section \ref{sect-penrose} has the following additional feature:    the morphisms represented by
  diagrams in $\RR^2$ are invariant under isotopies of    the diagrams  in the 2-sphere $S^2=\RR^2\cup
\{\infty\}$. In other words, these morphisms are preserved under isotopies of    diagrams  in  $\RR^2$ and under isotopies pushing   arcs of
  diagrams across~$\infty$.  For example, the diagrams in Section \ref{sect-penrose}
representing $\tr_l(g)$ and $\tr_r(g)$ are related by such an
isotopy. The  sphericity   condition $\tr_l(g)=\tr_r(g)$ for all $g$   ensures the isotopy invariance.

 A $G$-graded category   is {\it spherical}  if it is  spherical  as a
monoidal category.

\subsection{Split semisimplicity}\label{sec-ssss}   An object  $i$ of  a $\kk$-additive category $\cc$ is
\emph{simple} if $\End_\cc(i)$ is a free \kt module of rank 1.   Then the map $\kk \to
\End_\cc(i), k \mapsto k \, \id_i$  is a \kt algebra isomorphism
which we use  to identify $\End_\cc(i)=\kk$.   All objects
isomorphic to a
 simple object are  simple. If $\cc$ is rigid, then the   left/right duals  of a   simple object of $\cc$ are  simple.

A $\kk$-additive category $\cc$ is \emph{split semisimple} if  each object of $\cc$ is a finite direct sum of  simple objects of $\cc$ and $\Hom_\cc(i,j)=0$ for any non-isomorphic  simple objects $i,j  $ of $\cc$.
 A  set $I$ of   simple objects of a split semisimple category $\cc$
is \emph{representative} if  every  simple object of $\cc$ is
isomorphic to a unique element of~$I$.   Then any   $X \in \cc$
splits as a (finite) direct sum of objects of $I$. In other words,  there
exists a finite family of morphisms $(p_\alpha \co X \to i_\alpha
\in I, q_\alpha \co i_\alpha \to X)_{\alpha \in \Lambda }$ in~$\cc$
such that
\begin{equation}\label{eq-prefusion-sum}
\id_X=\sum_{\alpha \in \Lambda} q_\alpha p_\alpha  \quad
\text{and} \quad p_\alpha q_\beta=\delta_{\alpha,\beta} \, \id_{i_\alpha} \quad
\text{for all} \quad \alpha,\beta\in \Lambda.
\end{equation}
Such a family $( p_\alpha  , q_\alpha )_{\alpha \in \Lambda }$  is
called  an \emph{$I$-partition} of $ X$.

   A split
semisimple   category  $\cc$  is {\it finite} if  the set of isomorphism
classes of simple objects of $\cc$ is finite. If a finite split semisimple category $\cc$ is pivotal, then
the \emph{dimension}   of
$\cc$ is defined by
$$\dim(\cc)=\sum_{i\in I} \dim_l(i)\dim_r(i) \in \End_\cc(\un) ,$$
where $I$ is a representative set   of   simple objects of $\cc$.
The sum here is well defined because $I$ is
finite and does not depend on the choice of~$I$.

 \subsection{$G$-fusion}\label{Pre-fusion and fusion  categories}   A \emph{$G$-pre-fusion category}   is a  $G$-graded  category $\cc$ (over $\kk$) such that the unit object
$\un$ is  simple and
\begin{enumerate}
  \labela
\item   $\cc$ is pivotal and  split semisimple as a $\kk$-additive category;
\item  for
all $\alpha\in G$, the category~$\cc_\alpha$  has at least one simple object.
\end{enumerate}

A  set $I$  of simple  objects of a $G$-pre-fusion category $\cc$ is
\emph{$G$-representative} if  all elements of $I$ are
homogeneous and   every  simple object of $\cc$ is
isomorphic to a unique element of~$I$. Any such  set  $I$ splits  as a disjoint
union $I=\amalg_{\alpha\in G}\, I_\alpha
 $  where $I_{\alpha}$ is the (non-empty) set   of all elements of $I$
 belonging to $\cc_{\alpha}$. The existence of a $G$-representative set $I$  follows from the fact that
 any simple object   of   $\cc$ is isomorphic
to a simple object of $\cc_\alpha$ for a unique $\alpha \in G$. Note also that    $\cc_{\alpha}$ is split semisimple for all $\alpha\in G$.

Any $G$-pre-fusion category $\cc$ is pure and both the left and right
dimensions of   simple objects of  $\cc$ are
invertible (see Lemma 4.1 of \cite{TVi1}). If $\kk$ is a field (or, more generally,   a local ring), then $\cc$ has   split idempotents.
 Therefore   any  $G$-pre-fusion category $\cc$ over
a field   is non-singular. Such a    $\cc$ satisfies   the
hypothesis of Theorem~\ref{thm-G-center}, and  so, $\zz_G(\cc)$
is a  pivotal  $G$-braided category with pivotal crossing.

 A \emph{$G$-fusion category}   is a
 $G$-pre-fusion category~$\cc$ (over $\kk$)  such that   the set of isomorphism
classes of simple objects of~$\cc_\alpha$   is finite for all $\alpha \in G$.

\subsection{Modularity}\label{sect-modularity-center}
A {\it $G$-modular category}  is  a $G$-ribbon $G$-fusion category $ \dd$  whose neutral component  $\dd_1$  is modular in the sense of \cite{Tu0}, that is,  the {\it $S$-matrix}
 $
(\tr\left(c_{j,i} c_{i,j} \right))_{i,j }
$
is invertible over $\kk$. Here $i,j$ run over a representative set of simple objects of $\dd_1$  and $c_{i,j}: i\otimes j \to i\otimes j$ is the braiding \eqref{eq-usualbraiding} in $\dd_1$.

\begin{thm}\label{thm-center-G-modular}
Let $\cc$ be a  spherical $G$-fusion
category over an algebraically closed field such that
$\dim(\cc_1)\neq 0$. Then   $\zz_G(\cc)$   is a $G$-modular
category.
\end{thm}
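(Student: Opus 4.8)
The plan is to split the statement into three parts and then combine them: (i) $\zz_G(\cc)$ is a $G$-fusion category; (ii) $\zz_G(\cc)$ is $G$-ribbon; (iii) the neutral component $\zz_1(\cc)$ is modular. Since $\cc$ is a $G$-fusion category over a field it is non-singular, so Theorem~\ref{thm-G-center} already supplies a pivotal $G$-braided structure with pivotal crossing on $\zz_G(\cc)$, hence a twist $\theta$ via \eqref{eq-def-twist}; thus for (ii) only the self-duality \eqref{eq-ribbon} of $\theta$ remains to be checked. Moreover $\zz_1(\cc)$, whose underlying objects and half-braidings involve only $\cc_1$, is nothing but the Drinfeld center $\zz(\cc_1)$, and — tracing the construction of $\tau$ through the universal cones, the $\varphi_0$-correction in \eqref{eq-usualbraiding} cancelling the $\eta^V$-twist — it is $\zz(\cc_1)$ as a braided, indeed ribbon, category. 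As $\cc_1$ is a spherical fusion category over an algebraically closed field with $\dim(\cc_1)\neq 0$, part (iii) is exactly Müger's theorem \cite{Mu} applied to $\cc_1$.

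For (ii) I would verify \eqref{eq-ribbon} by a direct diagrammatic computation in $\cc$. The twist $\theta_{(A,\sigma)}$ and the enhanced $G$-braiding $\tau$ are given by explicit pictures built from the projections $p^V,q^V$, the half-braiding $\sigma$, and the crossing constraints $\varphi_0,\varphi_2$ coming from the cones. Using sphericity of $\cc$ — so that diagrams may be slid across $\infty$ and the traces on $\zz_G(\cc)$, which agree with those of $\cc$, are symmetric — one pushes the relevant strand around the sphere to turn $\theta_{(A,\sigma)}$ into $\theta_{\varphi_{|X|}(A,\sigma)^*}$ up to precisely the constraints appearing in \eqref{eq-ribbon}, with pivotality of the crossing producing $\varphi_{|X|^{-1}}^1$. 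This is the content of Section~\ref{sect-ribonness-ZG-new}; it is lengthy but essentially routine.

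For (i) the main tool is the theory of Hopf monads. The forgetful functor $\uu\co\zz_G(\cc)\to\cc$ has the induction functor $\ff$ of the introduction, $\ff(X)=\bigoplus_{i\in I_1}i^*\otimes X\otimes i$, as a left adjoint, and $\uu$ is monadic (Beck), so $\zz_G(\cc)\simeq\cc^T$ for the Hopf monad $T=\uu\ff$, which is represented by the coend $\int_{X\in\cc_1}X^*\otimes(-)\otimes X$. Its dimension is $\dim T(\un)=\dim(\cc_1)\neq 0$, so a Maschke-type averaging argument (the normalized integral $\dim(\cc_1)^{-1}\sum_{i\in I_1}(\cdots)$) shows that $T$ is a semisimple Hopf monad and hence that $\cc^T=\zz_G(\cc)$ is split semisimple. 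Then its unit is simple because $\End_{\zz_G(\cc)}(\un)=\End_\cc(\un)=\kk$ ($\cc$ being pure with simple unit); each component $\zz_\alpha(\cc)$ contains a simple object, namely a simple summand of the nonzero object $\ff(j)$ for a simple $j\in\cc_\alpha$; and, $\uu$ being conservative, every simple of $\zz_\alpha(\cc)$ is a summand of some $\ff(j)$ with $j\in I_\alpha$, so $\zz_\alpha(\cc)$ has only finitely many isomorphism classes of simples. Hence $\zz_G(\cc)$ is $G$-fusion, and it is spherical since its traces are inherited from the spherical category $\cc$.

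Assembling the pieces, $\zz_G(\cc)$ is a $G$-ribbon $G$-fusion category whose neutral component $\zz_1(\cc)\cong\zz(\cc_1)$ is modular, i.e.\ a $G$-modular category, which is Theorem~\ref{thm-center-G-modular}. I expect the main obstacle to lie in parts (i) and (iii) together with the precise Hopf-monad bookkeeping: establishing the equivalence $\zz_G(\cc)\simeq\cc^T$ and the semisimplicity of $\cc^T$, and then, for the modularity of $\zz(\cc_1)$, controlling the coend Hopf algebra $\inv=\int^{X\in\zz(\cc_1)}{}^\vee X\otimes X$ and showing that its canonical Hopf pairing is non-degenerate — equivalently, that the $S$-matrix is invertible — which depends on the existence of a two-sided integral, and that existence is exactly what $\dim(\cc_1)\neq 0$ guarantees. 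By contrast, the $G$-ribbon verification (ii), though technical, is a straightforward graphical check once sphericity is brought to bear.
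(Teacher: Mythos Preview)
Your proposal is correct and matches the paper's approach: the same three-part split into $G$-ribbon (Lemma~\ref{lem-G-center-ribbon}, proved by the spherical diagram manipulation of Section~\ref{sect-ribonness-ZG-new}), $G$-fusion (Lemma~\ref{lem-center-G-fusion}, proved via the separable Hopf monad $Z$ on $\cc$), and modularity of $\zz_1(\cc)\cong\zz(\cc_1)$ via M\"uger. The only notable difference is your finiteness argument for the simples of $\zz_\alpha(\cc)$: you argue directly via the adjunction that every simple is a summand of some $\ff(j)$ with $j\in I_\alpha$, whereas the paper takes a more roundabout route through the existence of the coend $\int^{(N,s)\in\cc^Z_\alpha}(N,s)^*\otimes(N,s)$ (Lemmas~\ref{lem-finite-semi} and~\ref{lem-HM-coend}); your version is shorter and equally valid once split semisimplicity of $\cc^Z$ is in hand.
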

The proof of Theorem~\ref{thm-center-G-modular} given below is based on  the following two key lemmas.

\begin{lem}\label{lem-G-center-ribbon}
Let $\cc$ be a spherical $G$-pre-fusion category with split idempotents. Then  $\zz_G(\cc)$   is a $G$-ribbon
category.
\end{lem}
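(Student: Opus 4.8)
The plan is to obtain this from Theorem~\ref{thm-G-center} together with the sphericity of $\cc$. First I would check that $\cc$ satisfies the hypotheses of Theorem~\ref{thm-G-center}: being $G$-pre-fusion, $\cc$ is pure and each component $\cc_\alpha$ contains a simple object, whose left dimension is invertible in $\End_\cc(\un)$ (Section~\ref{Pre-fusion and fusion  categories}); since idempotents split in $\cc$ by assumption, $\cc$ is non-singular in the sense of Section~\ref{sect-split-idem}. Theorem~\ref{thm-G-center} then equips $\zz_G(\cc)$ with the canonical structure of a pivotal $G$-braided category with pivotal crossing constructed in Section~\ref{sect-center-Gcat}. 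Since a $G$-ribbon category is by definition a pivotal $G$-braided category with pivotal crossing whose twist is self-dual, it remains to verify that the twist $\theta$ of $\zz_G(\cc)$, defined by \eqref{eq-def-twist} from the $G$-braiding of Section~\ref{sect-center-Gcat}, satisfies \eqref{eq-ribbon}.

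Second, I would record that $\zz_G(\cc)$ is spherical. By the discussion of relative centers in Section~\ref{sect-rel-center}, the left and right traces of any endomorphism in $\zz_G(\cc)$ coincide with the corresponding traces in $\cc$ of its image under the forgetful functor $\uu\co\zz_G(\cc)\to\cc$. As $\cc$ is spherical, $\tr_l=\tr_r$ in $\cc$, and hence $\tr_l=\tr_r$ in $\zz_G(\cc)$; thus $\zz_G(\cc)$ is spherical.

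The heart of the argument is then the following general statement, which I would establish in Section~\ref{sect-ribonness-ZG-new}: any \emph{spherical} pivotal $G$-braided category $(\dd,\varphi,\tau)$ whose crossing is pivotal has a self-dual twist, hence is $G$-ribbon. This is the $G$-graded analogue of the classical fact that a spherical braided pivotal category has a ribbon structure, and I would prove it with the Penrose calculus. For a homogeneous object $X$ with $\alpha=|X|$, the twist $\theta_X$ is the ``positive curl'' obtained by closing $\tau_{X,X}$ against $\rcoev_X$ and $\lev_X$; forming the dual $(\theta_X)^*$ amounts to rotating the corresponding diagram by $\pi$. Sphericity allows this diagram to be isotoped in the $2$-sphere---in particular, strands may be pushed across $\infty$---and, using the naturality of $\tau$ in its first variable and the rigidity of $\dd$, the rotated curl is transformed into the positive curl computing $\theta_{\varphi_\alpha(X)^*}$, up to the coherence data of the (non-strict) crossing. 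Tracking this data by means of the naturality and monoidality of $\varphi_{\alpha^{-1}}^1$, the crossing axioms \eqref{crossing3}--\eqref{crossing6}, and the $G$-braiding axiom \eqref{eq-braiding3}, one collects precisely the morphisms $(\varphi_0)_X^*$, $\bigl(\varphi_2(\alpha^{-1},\alpha)_X^{-1}\bigr)^*$ and $\varphi_{\alpha^{-1}}^1(\varphi_\alpha(X))$ occurring on the right-hand side of \eqref{eq-ribbon}. Applying this general statement to $\dd=\zz_G(\cc)$, which is spherical by the previous paragraph, then yields the lemma.

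The main obstacle is not topological---over the $2$-sphere the relevant diagrams are the familiar ones from the case $G=\{1\}$---but the systematic bookkeeping of coherence: since the crossing of $\zz_G(\cc)$ is genuinely non-strict, every isotopy move must be decorated by the appropriate coherence isomorphism, and the identity \eqref{eq-ribbon} is exactly the assertion that these reassemble correctly. A further technical point is that the $G$-braiding of $\zz_G(\cc)$ is built from the ``enhanced $G$-braiding'' \eqref{eq-tau-center}, whose explicit description is in terms of the universal cones $\iota^\alpha$; to convert the graphical computation into an identity internal to $\zz_G(\cc)$ one will rely on Lemmas~\ref{lem-weak-tau} and~\ref{lem-quotient-Gamma}.
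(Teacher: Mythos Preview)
Your plan diverges substantively from the paper's. You propose to prove the general statement that \emph{any} spherical pivotal $G$-braided category with pivotal crossing is $G$-ribbon, and then specialize to $\zz_G(\cc)$. The paper does not establish or invoke such a statement; it takes a more concrete route that uses the pre-fusion hypothesis on $\cc$ essentially.

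The paper's argument has two steps. First (Lemma~\ref{lem-rib-car-center}), the self-duality condition~\eqref{eq-ribbon} for the twist of $\zz_G(\cc)$ is unwound through the explicit construction of the crossing and $G$-braiding in Section~\ref{sect-center-Gcat}; the result is a criterion stated in $\cc$ rather than in $\zz_G(\cc)$: for each $(A,\sigma)\in\zz_\alpha(\cc)$ and each $U\in\ee_\alpha$, two specific morphisms $A\to U^*\otimes A\otimes U$ (built from $\sigma_{A\otimes U^*}$ and from $\sigma_{U^*\otimes A}$, respectively) must agree. Second (Section~\ref{sect-ribonness-ZG+}), this criterion is checked using the split semisimplicity of $\cc$: one takes $I$-partitions of $A\otimes U^*$ and $U^*\otimes A$, reducing to an equality of morphisms $i\to i$ for simple $i$, where it follows from $\dim_l(i)=\dim_r(i)$ (sphericity of $\cc$), $\End_\cc(i)=\kk$ (simplicity), isotopy in $S^2$, and naturality of $\sigma$.

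Your general claim is the natural $G$-graded analogue of the classical fact that a spherical braided pivotal category is ribbon, and is very plausibly true; if established, it yields a cleaner proof that makes no use of semisimplicity. But your proposal only sketches it, and the coherence bookkeeping you flag is precisely the work that remains. The paper bypasses this abstract coherence chase by descending to $\cc$ via Lemma~\ref{lem-rib-car-center} and then exploiting decompositions into simple objects---so the pre-fusion hypothesis, which plays no role in your outline, does genuine work in the paper's argument.
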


\begin{lem}\label{lem-center-G-fusion}
Let $\cc$ be a   $G$-fusion
category over an algebraically closed field such that
$\dim(\cc_1)\neq 0$. Then  $\zz_G(\cc)$   is a
$G$-fusion category.
\end{lem}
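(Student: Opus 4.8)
The plan is to verify directly that $\zz_G(\cc)$ fulfils the defining conditions of a $G$-fusion category (Section~\ref{Pre-fusion and fusion  categories}). Since a $G$-fusion category over a field is non-singular with split idempotents, Theorem~\ref{thm-G-center} already gives that $\zz_G(\cc)$ is a pivotal $G$-braided $\kk$-additive category with pivotal crossing, that it has split idempotents, and that it is $G$-graded by the subcategories $\zz_\alpha(\cc)$; it is pivotal because $\cc_1$ is a pivotal subcategory of $\cc$ (Section~\ref{sect-rel-center}). Moreover $\End_{\zz_G(\cc)}(\un_{\zz_G(\cc)})=\End_\cc(\un)=\kk$ by Section~\ref{sect-rel-center}, so $\un_{\zz_G(\cc)}$ is simple. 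What must still be shown is that $\zz_G(\cc)$ is split semisimple, that each $\zz_\alpha(\cc)$ has a simple object, and that each $\zz_\alpha(\cc)$ has only finitely many isomorphism classes of them.

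The tool I would bring in is the free functor. As $\cc$ is non-singular and $\cc_1$-centralizable (Section~\ref{Pre-fusion and fusion  categories} and the introduction), the forgetful functor $\uu\co\zz_G(\cc)\to\cc$ has a left adjoint $\ff\co\cc\to\zz_G(\cc)$ with $\uu\ff(X)\simeq\bigoplus_{i\in I_1}i^*\otimes X\otimes i$, where $I_1$ is a finite representative set of simple objects of $\cc_1$. Three elementary consequences: if $X\in\cc_\alpha$ then each summand $i^*\otimes X\otimes i$ lies in $\cc_\alpha$, so $\ff$ maps $\cc_\alpha$ into $\zz_\alpha(\cc)$; the summand $i=\un$ shows $X$ is a retract of $\uu\ff(X)$, so $\ff(X)\ne 0$ whenever $X\ne 0$; and $\Hom_{\zz_G(\cc)}(\ff(X),\ff(Y))\cong\Hom_\cc(X,\uu\ff(Y))$ is finite-dimensional over $\kk$ for all $X,Y\in\cc$ since the displayed direct sum is finite and $\cc$ is $G$-fusion.

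The heart of the matter is semisimplicity, and this is where $\dim(\cc_1)\ne 0$ and algebraic closedness are used. I would prove that $\uu$ is \emph{separable}: the counit $\epsilon\co\ff\uu\to\id_{\zz_G(\cc)}$ is a split epimorphism after applying $\uu$ (via the unit), and one promotes this to a natural section $s\co\id_{\zz_G(\cc)}\to\ff\uu$ inside $\zz_G(\cc)$ by an averaging over $I_1$ normalized by $\dim(\cc_1)^{-1}$ --- this is the relative-center incarnation of the fact that the central Hopf monad of $\cc_1$ carries a normalized cointegral exactly when $\dim(\cc_1)$ is invertible, and it is cleanest to extract it from the Hopf-monad/coend description of $\zz_G(\cc)$ (Sections~\ref{sect-big-HM---}--\ref{sect-big-HM}) through a Maschke-type theorem. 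Given $s$, every $(A,\sigma)$ is a direct summand of the free object $\ff\uu(A,\sigma)$; since $\cc$ is split semisimple, $\uu(A,\sigma)$ is a finite sum of simples, so $\ff\uu(A,\sigma)\simeq\bigoplus_k\ff(i_k)$ with $i_k$ simple, and (split idempotents) $(A,\sigma)$ is a finite sum of indecomposables, each a summand of some $\ff(i)$ with $i$ simple. Separability further forces $\End_{\zz_G(\cc)}(\ff(i))\cong\Hom_\cc(i,\uu\ff(i))$ to be a semisimple finite-dimensional algebra over the algebraically closed field $\kk$, hence a product of matrix algebras; therefore the indecomposable summands of each $\ff(i)$ are simple objects of $\zz_G(\cc)$ with endomorphism algebra $\kk$, and $\Hom$ vanishes between non-isomorphic ones. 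Thus $\zz_G(\cc)$ is split semisimple.

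It remains to handle the finiteness conditions. Fix $\alpha\in G$: since $\cc$ is $G$-pre-fusion, $I_\alpha\ne\varnothing$, and for $i\in I_\alpha$ the object $\ff(i)$ is nonzero and lies in $\zz_\alpha(\cc)$, so $\zz_\alpha(\cc)$ contains a simple object. Every simple $S\in\zz_\alpha(\cc)$ is a summand of $\ff\uu(S)$ with $\uu(S)$ a sum of simples of $\cc_\alpha$, hence $S$ is a summand of some $\ff(i)$ with $i\in I_\alpha$; as $I_\alpha$ is finite and each $\ff(i)$ has finite length (its endomorphism algebra being finite-dimensional), $\zz_\alpha(\cc)$ has only finitely many isomorphism classes of simple objects. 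This completes the verification that $\zz_G(\cc)$ is a $G$-fusion category. I expect the separability/semisimplicity step to be the main obstacle: constructing the natural section $s$ and establishing semisimplicity of the free endomorphism algebras is exactly where $\dim(\cc_1)\ne 0$ enters, paralleling the hardest part of M\"uger's proof \cite{Mu} for $G=\{1\}$; everything else is bookkeeping with the free functor and the grading.
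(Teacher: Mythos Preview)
Your proposal is correct and, for the semisimplicity step, follows essentially the same route as the paper: both arguments rest on the separability of the Hopf monad $Z=Z_{\cc_1}$, which the paper isolates as Lemma~\ref{lem-Z-separable} (the natural transformation $\gamma_X\co X\to Z^2(X)$ built by averaging over $I_1$ with weights $\dim_r(i)/\dim(\cc_1)$). Your ``section of the counit'' is the module-theoretic repackaging of this same $\gamma$. The paper then goes via Lemma~\ref{lem-mono-direct-sum} (every monomorphism in $\cc^Z$ has a retract) and the abelian-category Lemma~\ref{lem-morphism-zero-or-iso} to conclude that indecomposables are simple; your route through ``every object is a retract of a free object, hence projective, hence the abelian category is semisimple'' is an equivalent reorganization of the same input. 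One small clarification: the sentence ``separability further forces $\End(\ff(i))$ to be semisimple'' is better justified by first observing that $\zz_G(\cc)\simeq\cc^{Z}$ is abelian with all objects projective, whence it is semisimple as an abelian category and the claim on endomorphism algebras follows.

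Where your argument genuinely diverges from the paper is the finiteness of $\zz_\alpha(\cc)$. The paper proves it by a coend trick: Lemma~\ref{lem-finite-semi} reduces finiteness to the existence of $\int^{(N,s)\in\cc^Z_\alpha}(N,s)^*\otimes(N,s)$, Lemma~\ref{lem-HM-coend} lifts this from the existence of $\int^{Y\in\cc_\alpha}Z(Y)^*\otimes Y$ in $\cc$, and Lemma~\ref{lem-finite-semi--} supplies the latter. Your argument---every simple of $\zz_\alpha(\cc)$ is a summand of $\ff(i)$ for some $i$ in the finite set $I_\alpha$, and each $\ff(i)$ has finite length---is more elementary and gives a concrete bound on the number of simples. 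The paper's approach has the advantage of producing the coend object itself, which is reused later (cf.\ the appendix on $C_{\alpha,\beta}$); yours is quicker if one only wants the finiteness statement.
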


The proof of Lemma~\ref{lem-center-G-fusion} uses the following claim of independent interest.

\begin{lem}\label{thm-rel-center-semi}
Let $\cc$ be a    split semisimple  pivotal category over  an
algebraically closed field such that the unit object $\un_\cc$ is
simple. Let $\dd$ be a finite split semisimple pivotal subcategory
of $\cc$  (not necessarily full)  such that $\dim(\dd)\neq
0$. Then the relative center $\zz(\cc;\dd)$   is split semisimple.
\end{lem}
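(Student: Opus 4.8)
The plan is to prove that every object of $\zz:=\zz(\cc;\dd)$ is a finite direct sum of simple objects, via a Maschke-style averaging over the simple objects of $\dd$, the normalization being supplied by the hypothesis $\dim(\dd)\neq 0$. First I would record the formal properties of $\zz$. By Section~\ref{sect-rel-center} it is $\kk$-additive with split idempotents; moreover $\cc$ is abelian (being split semisimple), so $\zz$ is abelian as well. For all $(A,\sigma),(B,\rho)\in\zz$ the module $\Hom_\zz\bigl((A,\sigma),(B,\rho)\bigr)$ is a $\kk$-submodule of the finite-dimensional module $\Hom_\cc(A,B)$, hence every Hom-module of $\zz$ is finite-dimensional and every object of $\zz$ has finite length. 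As $\kk$ is algebraically closed, Schur's lemma gives $\End_\zz(i)=\kk$ for every simple object $i$ of $\zz$ and $\Hom_\zz(i,j)=0$ for non-isomorphic simple objects $i,j$. It therefore suffices to prove that every monomorphism in $\zz$ splits: an induction on length then exhibits any object of $\zz$ as a finite direct sum of simple objects, and together with the two Schur facts this is exactly the statement that $\zz$ is split semisimple.

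For the averaging, fix a representative set $\Lambda$ of the simple objects of $\dd$ (finite, since $\dd$ is finite split semisimple), so that $\dim(\dd)=\sum_{i\in\Lambda}\dim_l(i)\dim_r(i)$ lies in $\End_\cc(\un_\cc)=\kk$ and, by hypothesis, is invertible. Let $f\colon(N,\rho)\hookrightarrow(A,\sigma)$ be a monomorphism in $\zz$; since $\cc$ is semisimple there is a $\cc$-morphism $g\colon A\to N$ with $gf=\id_N$. For $i\in\Lambda$ I would form $g^{[i]}\colon A\to N$ by conjugating $g$ by the half-braidings around a loop colored by $i$, namely
\[
g^{[i]}=(\lev_i\otimes\id_N)\,(\id_{i^*}\otimes\rho_i)\,(\id_{i^*}\otimes g\otimes\id_i)\,(\id_{i^*}\otimes\sigma_i^{-1})\,(\rcoev_i\otimes\id_A),
\]
and then set $\bar g=\dim(\dd)^{-1}\sum_{i\in\Lambda}\dim_r(i)\,g^{[i]}\colon A\to N$. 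Since $f$ is a morphism of $\zz$ it commutes with the half-braidings, so it can be slid off the loop and cancelled against $g$; using $gf=\id_N$ and $\lev_i\rcoev_i=\dim_l(i)\,\id_{\un_\cc}$ one gets $g^{[i]}f=\dim_l(i)\,\id_N$, whence $\bar g f=\dim(\dd)^{-1}\bigl(\sum_{i\in\Lambda}\dim_r(i)\dim_l(i)\bigr)\id_N=\id_N$. What remains is to check that $\bar g$ is itself a morphism of $\zz$, i.e.\ that $(\id_X\otimes\bar g)\,\sigma_X=\rho_X\,(\bar g\otimes\id_X)$ for every $X\in\dd$.

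I expect this last identity to be the main obstacle. When $X$ is slid past the $i$-colored loop inside $g^{[i]}$, the loop changes color along the fusion rules of $\dd$: writing $i\otimes X$ and $X\otimes i$ as direct sums of simple objects of $\dd$ via \eqref{eq-prefusion-sum}, and using the half-braiding axiom \eqref{axiom-half-braiding} together with the naturality of $\sigma$ and of $\rho$ with respect to morphisms of $\dd$, one must show that the weighted family $\{\dim_r(i)\,g^{[i]}\}_{i\in\Lambda}$ is carried into itself; this is exactly where the sum must range over \emph{all} of $\Lambda$ and where the factor $\dim(\dd)^{-1}$ is forced. (Phrased in terms of monads, this is the statement that the monad on $\cc$ whose modules form $\zz(\cc;\dd)$ is separable precisely when $\dim(\dd)$ is invertible.) Granting this computation, $\bar g$ is a retraction of $f$ in $\zz$, so $f$ splits, and by the reductions of the first paragraph $\zz$ is split semisimple. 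That $\dd$ need not be full causes no trouble, since only the set $\Lambda$ and the $\dd$-naturality of the half-braidings are used; the remaining steps — abelianness of $\zz$, Schur's lemma, and the finite-length reduction — are routine.
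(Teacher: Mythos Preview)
Your approach is essentially the same as the paper's, only unpacked from the Hopf monad language. The paper realizes $\zz(\cc;\dd)$ as $\cc^{Z}$ for the centralizer Hopf monad $Z$, proves that $Z$ is separable (Lemma~\ref{lem-Z-separable}: there is $\gamma\colon 1\to Z^2$ with $\mu\gamma=\eta$ and $Z(\mu)\gamma_Z=\mu_Z Z(\gamma)$), and then derives exactly your Maschke retraction: given a mono $q\colon(N,s)\hookrightarrow(M,r)$ in $\cc^Z$ with $\cc$-retract $v$, the averaged map is $p=sZ(vr)\gamma_M$ (Lemma~\ref{lem-mono-direct-sum}). Translated through $\Psi$, this $p$ is precisely your $\bar g$, and the separability identity $Z(\mu)\gamma_Z=\mu_Z Z(\gamma)$ is precisely the statement that $\bar g$ intertwines the half-braidings. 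So the ``main obstacle'' you flag --- showing $(\id_X\otimes\bar g)\sigma_X=\rho_X(\bar g\otimes\id_X)$ --- is the content of the second equality of Lemma~\ref{lem-Z-separable}, and its proof hinges on the dimension-shift identity \eqref{eq-slidepict}; your sketch (``the loop changes color along the fusion rules and the weighted family is carried into itself'') is the right idea, but that identity is where the actual work lies and deserves to be written out. The remaining reductions (abelianness of $\zz$, finite-dimensional Homs, Schur over an algebraically closed field, induction on length) match the paper's concluding paragraph; the paper phrases the endgame via Lemma~\ref{lem-morphism-zero-or-iso} on indecomposables rather than via length, but the two wrappings are equivalent.
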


 Lemma~\ref{lem-G-center-ribbon} is proved in Section~\ref{sect-ribonness-ZG-new} and    Lemmas~\ref{lem-center-G-fusion} and~\ref{thm-rel-center-semi} are proved in Section~\ref{sect-bigproof-semiFULL}. The arguments  in Section  \ref{sect-bigproof-semiFULL}  use  the results of Sections \ref{sect-big-HM---} and \ref{sect-big-HM} concerned with   monads and coends, respectively.

\subsection{Proof of  Theorem~\ref*{thm-center-G-modular}}
By Lemmas~\ref{lem-G-center-ribbon} and~\ref{lem-center-G-fusion},   $\zz_G(\cc)$ is  a $G$-ribbon  $G$-fusion category.
The neutral component $\zz_1(\cc)$ of $\zz_G(\cc)$ is isomorphic to
the center $\zz(\cc_1)=\zz(\cc_1;\cc_1)$ of $\cc_1$. Since $\cc$ is    spherical,   so is~$\cc_1$. By \cite[Theorem 1.2]{Mu},
  $\zz(\cc_1) $ is modular.  Therefore $\zz_G(\cc)$ is
$G$-modular.

\subsection{Example}\label{sect-ex-Z-H-vect}    The $G$-ribbon category  $\dd =\dd(\pi)$   derived from
a  group epimorphism $\pi \co H \to G$   in
Section~\ref{sect-ex-H-vect-rib} can be realized as the $G$-center
of a non-singular   pivotal $G$-graded category. To   see this,
observe  that any  pivotal $H$-graded category $\cc$ gives rise to a
pivotal $G$-graded category      $\pi_*(\cc)$ which is equal to
$\cc$ as a pivotal category and has  the grading
$\pi_*(\cc)=\oplus_{\alpha \in G} \,\pi_*(\cc)_\alpha$ where
$\pi_*(\cc)_\alpha=\oplus_{h \in \pi^{-1}(\alpha)} \cc_h$. We apply
this observation to   the $H$-ribbon   category
$\cc=\dd(\id_H\co H\to
 H) $  of $H$-graded
finitely generated projective $\kk$-modules.
  It is easy to see that $\cc$ is spherical and has split
 idempotents.
 For $h\in H$, let $\kk[h]\in \cc$ be the $\kk$-module   which is $\kk$ in degree $h$ and   zero   in all other degrees.
 The category $\cc$    is  non-singular  since  its monoidal unit $\kk[1]$ is simple and for $h\in H$, the module $\kk[h]$ has categorical dimension 1.
This implies that the category $\cc^\pi=\pi_*(\cc)$ is non-singular.
By Theorem~\ref{thm-G-center},  the $G$-center    of $\cc^\pi$
 is a $G$-braided category.
 We claim that
  the $G$-braided categories $\dd$  and $\zz_G(\cc^\pi)$ are  equivalent. To see this, let
 $s\co G \to H$ be the map used in the definition of $\dd$ in Section~\ref{sect-ex-H-vect-rib}. Using
  the objects $(\kk[s(\alpha)])_{\alpha \in G}$  of $\cc $, one can explicitly describe  $\zz_G(\cc^\pi)$, cf.  Sections~\ref{sect-action of-G}--\ref{sect-ribonness-ZG}. In particular, a relative half braiding $(M,\sigma)\in \zz_\alpha(\cc^\pi)$ defines a right action of  $K=\Ker\, \pi$ on $M$ by  $m\cdot k=\sigma_{\kk[s(\alpha)]}(m \otimes 1_\kk)$ for $m \in M$ and $k \in K$. This determines the required  equivalence  $ \zz_G(\cc^\pi) \approx  \dd$.
Furthermore, if   $K$  is  finite
and    $\kk$ is an algebraically closed  field whose characteristic does not divide the order  $\# K$ of $K$, then it is
 easy to see that $\cc^\pi $ is a spherical $G$-fusion category and $\dim(\cc^\pi_1)=\# K $.   Theorem~\ref{thm-center-G-modular}  implies  that $\zz_G(\cc^\pi)$ is a $G$-modular category. We deduce that in this case  the category
   $\dd$ is $G$-modular.

\section{$G$-ribbonness re-examined}\label{sect-ribonness-ZG-new}

\subsection{A ribbonness criterion}\label{sect-ribonness-ZG}
Consider a non-singular $G$-graded pivotal category $\cc$. Let,  for
$\alpha \in G$, the symbol $\ee_\alpha$   denote the class of all
$V\in \cc_\alpha$ with invertible left dimension $d_V=\dim_l (V)\in
\kk$. By Theorem~\ref{thm-G-center}, the $G$-center $\zz_G(\cc)$ is
a  pivotal  $G$-braided category with pivotal crossing. The
corresponding twist $\theta$   in $\zz_G(\cc)$ is computed as
follows: if $(A,\sigma) \in \zz_\alpha(\cc)$ with $\alpha \in G$,
then  for any $U \in \ee_\alpha$, we have
$\theta_{(A,\sigma)}=(\iota^\alpha_U)^{-1}_{(A,\sigma)}\vartheta^{U}_{(A,\sigma)}$
where $\iota^\alpha$ is the universal cone  \eqref{univ-cones} and
$$
\vartheta^{U}_{(A,\sigma)}=  \;\psfrag{A}[Bl][Bl]{\scalebox{.7}{$A$}}
\psfrag{B}[Br][Br]{\scalebox{.8}{$A$}}
 \psfrag{V}[Bl][Bl]{\scalebox{.8}{$U$}}
   \psfrag{E}[Bl][Bl]{\scalebox{.8}{$E_{(A,\sigma)}^U$}}
 \psfrag{t}[Bc][Bc]{\scalebox{.8}{$p_{(A,\sigma)}^U$}}
  \psfrag{s}[Bc][Bc]{\scalebox{.8}{$\sigma_{U \otimes U^*}$}}
 \rsdraw{.45}{1}{thetaMsigma}\; .
$$
 By definition, $\zz_G(\cc)$ is  $G$-ribbon if
$\theta$  is self-dual, see Section~\ref{sect-pivotality}.
The following lemma gives a
necessary and sufficient  condition for $\zz_G(\cc)$ to be $G$-ribbon.
\begin{lem}\label{lem-rib-car-center}
$\zz_G(\cc)$ is $G$-ribbon if and only if
\begin{center}
\psfrag{M}[Br][Br]{\scalebox{.8}{$A$}}
\psfrag{s}[Bc][Bc]{\scalebox{1}{$\sigma_{A \otimes U^*}$}}
\psfrag{A}[Bl][Bl]{\scalebox{.8}{$A$}}
\psfrag{V}[Bl][Bl]{\scalebox{.8}{$U$}}
\rsdraw{.45}{.9}{ZG-rib4} \; $=$ \;
\psfrag{s}[Bc][Bc]{\scalebox{1}{$\sigma_{U^* \otimes A}$}}
\rsdraw{.45}{.9}{ZG-rib3}
\end{center}
for all $\alpha \in G$, $(A,\sigma) \in \zz_\alpha(\cc)$, and $U \in \ee_\alpha$.
\end{lem}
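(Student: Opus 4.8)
The plan is to reduce the statement to the self-duality condition \eqref{eq-ribbon} for the twist and then unfold everything in the graphical calculus of Section~\ref{sect-proof-lems-defdouble}. By Theorem~\ref{thm-G-center}, $\zz_G(\cc)$ is already a pivotal $G$-braided category with pivotal crossing, so by definition it is $G$-ribbon precisely when its twist $\theta$ is self-dual, i.e.\ when \eqref{eq-ribbon} holds for every homogeneous object. Every homogeneous object of $\zz_G(\cc)$ lies in some $\zz_\alpha(\cc)$, so I would fix $\alpha\in G$ and $(A,\sigma)\in\zz_\alpha(\cc)$ and analyze \eqref{eq-ribbon} for $X=(A,\sigma)$. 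Here $|X|=\alpha$, the dual $X^*=(A^*,\sigma^\dagger)$ lies in $\zz_{\alpha^{-1}}(\cc)$ with $\sigma^\dagger$ given by \eqref{eq-inv-sigma}, and $\varphi_\alpha(A,\sigma)\in\zz_\alpha(\cc)$. Since the forgetful functor $\uu\co\zz_G(\cc)\to\cc$ is faithful, \eqref{eq-ribbon} holds in $\zz_G(\cc)$ if and only if its image under $\uu$ holds in $\cc$; thus the whole computation can be carried out inside $\cc$.

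Next I would substitute the explicit descriptions of all the ingredients. Fixing $U\in\ee_\alpha$ (together with a representative of $\ee_{\alpha^{-1}}$, which may be chosen compatibly with $U$), one has $\theta_{(A,\sigma)}=(\iota^\alpha_U)^{-1}_{(A,\sigma)}\vartheta^U_{(A,\sigma)}$ with $\vartheta^U$ as displayed above the lemma, and similarly $\theta_{\varphi_\alpha(A,\sigma)^*}$ unfolds into a $\sigma$-diagram through $\vartheta$ for $\ee_{\alpha^{-1}}$. The structural isomorphisms $(\varphi_0)_{(A,\sigma)}$, $\varphi_2(\alpha^{-1},\alpha)_{(A,\sigma)}$, and $\varphi_{\alpha^{-1}}^1(\varphi_\alpha(A,\sigma))$ occurring in the right-hand side of \eqref{eq-ribbon} are replaced by their constructions from Section~\ref{sect-action of-G} in terms of the transformations $\eta$, $\zeta$, $\delta$ and the universal cones $\iota$. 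By Lemmas~\ref{lem-quotient-varphi}, \ref{lem-quotient-varphi+}, and~\ref{lem-phiV-to-projlim}, all the universal-cone morphisms and all passages between auxiliary objects cancel against one another, so \eqref{eq-ribbon} becomes an equation between two morphisms expressible purely via $\sigma$, the (co)evaluations of $A$ and $U$, and the idempotent $\pi^U_{(A,\sigma)}$.

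The final step is to simplify this equation with Lemma~\ref{lem-pict-demo} and \eqref{eq-dim-sigma}: rewriting $\sigma^\dagger$ through $\sigma_{(-)^*}$ by \eqref{eq-inv-sigma}, using naturality of $\sigma$ and rigidity isotopies to slide the caps and cups of $A$ and $U$, and peeling off the idempotent $\pi^U_{(A,\sigma)}$ (absorbed on both sides via \eqref{eq-Drel1}--\eqref{eq-Drel4}). What survives is exactly the equality of the two diagrams displayed in the statement. Running the same chain of rewritings backwards shows that, conversely, that displayed equality for $(A,\sigma)$ and a single $U\in\ee_\alpha$ implies \eqref{eq-ribbon} for $X=(A,\sigma)$; quantifying over $\alpha$ and $(A,\sigma)$ then gives $G$-ribbonness. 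I expect the main obstacle to be the bookkeeping of the middle step — correctly expanding $\varphi_{\alpha^{-1}}^1$, $(\varphi_0)$, and $\varphi_2$, which are themselves limits of the graphically defined data $\delta,\zeta,\eta$, and checking that every auxiliary object and every cone morphism really drops out, leaving a self-contained identity in $\cc$. The graphical manipulations proper (Lemma~\ref{lem-pict-demo}, naturality of $\sigma$, isotopy invariance) are routine once that untangling is done; note that $\cc$ is only assumed pivotal here, not spherical, so one must track left versus right traces throughout.
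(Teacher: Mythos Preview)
Your proposal is correct and follows essentially the same route as the paper: reduce to the self-duality condition \eqref{eq-ribbon}, replace $\theta$, $\varphi_0$, $\varphi_2(\alpha^{-1},\alpha)$, and $\varphi_{\alpha^{-1}}^1$ by their concrete representatives $\vartheta^U$, $\eta^R$, $\zeta^{V,U,R}$ (with $U\in\ee_\alpha$, $V\in\ee_{\alpha^{-1}}$, $R\in\ee_1$) via the universal cones, compute both sides graphically using Lemma~\ref{lem-pict-demo} and \eqref{eq-dim-sigma}, and strip away the $p^V$, $q^V$ data to reach the displayed identity. The paper carries this out by first composing \eqref{eq-ribbon} with $(\iota^\alpha_U)^*$ to land in a form \eqref{eq-dem-Z-rib2} involving only $\vartheta$, $\eta$, $\zeta$, $\varphi_V^1$, and then runs a short chain of graphical equivalences---exactly the ``middle step'' bookkeeping you flag as the main obstacle.
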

\begin{proof}
Recall that
$\theta$ is self-dual if for all $\alpha \in G$ and $(A,\sigma) \in \zz_\alpha(\cc)$,
\begin{equation}\label{eq-dem-Z-rib1}
(\theta_{(A,\sigma)})^*= \bigl((\varphi_0)_{(A,\sigma)}\bigr)^* \bigl(\varphi_2(\alpha^{-1},\alpha)_{(A,\sigma)}^{-1}\bigr)^* \varphi_{\alpha^{-1}}^1\bigl(\varphi_{\alpha}(A,\sigma)\bigr) \theta_{(\varphi_{\alpha}(A,\sigma))^*}.
\end{equation}
Pick any  $U \in \ee_\alpha$, $V \in \ee_{\alpha^{-1}}$, and $R \in \ee_1$. Composing \eqref{eq-dem-Z-rib1} on the right with $(\iota^\alpha_U)^*$, we rewrite   \eqref{eq-dem-Z-rib1} in the equivalent form
\begin{equation}\label{eq-dem-Z-rib2}
(\vartheta^{U}_{(A,\sigma)})^*= \bigl(\eta^R_{(A,\sigma)}\bigr)^* \bigl((\xi^{V,U,R}_{(A,\sigma)})^{-1}\bigr)^* \varphi_{V}^1\bigl(\varphi_{U}(A,\sigma)\bigr) \vartheta^V_{(\varphi_{U}(A,\sigma))^*}.
\end{equation}
Now,  using   the pictorial formalism of Section~\ref{sect-proof-lems-defdouble}, Lemma~\ref{lem-pict-demo}, and
the definition of $\vartheta$, $\eta$, $\xi$, $\varphi_{V}^1$,
we obtain that the left-hand side of \eqref{eq-dem-Z-rib2} is equal to
\begin{center}
\vspace{.5em}
$ \displaystyle
\psfrag{E}[Br][Br]{\scalebox{.9}{$E^U_{(A,\sigma)}$}}
\psfrag{A}[Bl][Bl]{\scalebox{.9}{\textcolor{red}{$A$}}}
\psfrag{U}[Bc][Bc]{\scalebox{.9}{\scriptsize $U$}}
\rsdraw{.45}{.9}{dem-Zrib5}
$
\end{center}
while the right-hand  side of \eqref{eq-dem-Z-rib2} is equal to
\begin{center}
\vspace{.5em}
$ \displaystyle
\psfrag{E}[Br][Br]{\scalebox{.9}{$E^U_{(A,\sigma)}$}}
\psfrag{F}[Bl][Bl]{\scalebox{.9}{\scriptsize $E^U_{(A,\sigma)}$}}
\psfrag{H}[Br][Br]{\scalebox{.9}{\scriptsize $E^V_{\varphi_U(A,\sigma)^*}$}}
\psfrag{G}[Bl][Bl]{\scalebox{.9}{\scriptsize $E^V_{\varphi_U(A,\sigma)}$}}
\psfrag{K}[Bl][Bl]{\scalebox{.9}{\scriptsize $E^Z_{(A,\sigma)}$}}
\psfrag{A}[Bl][Bl]{\scalebox{.9}{\textcolor{red}{$A$}}}
\psfrag{U}[Bc][Bc]{\scalebox{.9}{\scriptsize $U$}}
\psfrag{V}[Bc][Bc]{\scalebox{.9}{\scriptsize $V$}}
\psfrag{Z}[Bc][Bc]{\scalebox{.9}{\scriptsize $Z$}}
d_U^{-1}d_V^{-1}d_Z^{-1}\rsdraw{.45}{.9}{dem-Zrib1}
$\\[1em]
$ \displaystyle
\psfrag{E}[Br][Br]{\scalebox{.9}{$E^U_{(A,\sigma)}$}}
\psfrag{F}[Bl][Bl]{\scalebox{.9}{\scriptsize $E^U_{(A,\sigma)}$}}
\psfrag{H}[Br][Br]{\scalebox{.9}{\scriptsize $E^V_{\varphi_U(A,\sigma)^*}$}}
\psfrag{G}[Bl][Bl]{\scalebox{.9}{\scriptsize $E^V_{\varphi_U(A,\sigma)}$}}
\psfrag{K}[Bl][Bl]{\scalebox{.9}{\scriptsize $E^Z_{(A,\sigma)}$}}
\psfrag{A}[Bl][Bl]{\scalebox{.9}{\textcolor{red}{$A$}}}
\psfrag{U}[Bc][Bc]{\scalebox{.9}{\scriptsize $U$}}
\psfrag{V}[Bc][Bc]{\scalebox{.9}{\scriptsize $V$}}
\psfrag{Z}[Bc][Bc]{\scalebox{.9}{\scriptsize $Z$}}
= \,d_U^{-1}d_V^{-2}\; \rsdraw{.45}{.9}{dem-Zrib2}= \,d_U^{-2}d_V^{-2}\; \rsdraw{.45}{.9}{dem-Zrib3}
$\\[.8em]
$ \displaystyle
\psfrag{E}[Br][Br]{\scalebox{.9}{$E^U_{(A,\sigma)}$}}
\psfrag{A}[Bl][Bl]{\scalebox{.9}{\textcolor{red}{$A$}}}
\psfrag{U}[Bc][Bc]{\scalebox{.9}{\scriptsize $U$}}
= \; \rsdraw{.45}{.9}{dem-Zrib4b}\,.
$
\end{center}
Therefore
\begin{center}
\vspace{.5em} \eqref{eq-dem-Z-rib2} $\quad \Longleftrightarrow \quad $
$ \displaystyle
\psfrag{E}[Br][Br]{\scalebox{.9}{$E^U_{(A,\sigma)}$}}
\psfrag{A}[Bl][Bl]{\scalebox{.9}{\textcolor{red}{$A$}}}
\psfrag{U}[Bc][Bc]{\scalebox{.9}{\scriptsize $U$}}
\rsdraw{.45}{.9}{dem-Zrib6b} \; \,= \; \,\rsdraw{.45}{.9}{dem-Zrib6} \quad \Longleftrightarrow \quad \rsdraw{.45}{.9}{dem-Zrib7b} \; = \; \rsdraw{.45}{.9}{dem-Zrib7}
$\\[1em]
$ \displaystyle
\psfrag{A}[Bl][Bl]{\scalebox{.9}{\textcolor{red}{$A$}}}
\psfrag{B}[Br][Br]{\scalebox{.9}{\textcolor{red}{$A$}}}
\psfrag{U}[Bl][Bl]{\scalebox{.9}{$U$}}
\Longleftrightarrow \quad \rsdraw{.45}{.9}{dem-Zrib8b} \; = \; \rsdraw{.45}{.9}{dem-Zrib8}
\quad \Longleftrightarrow \quad \rsdraw{.45}{.9}{dem-Zrib9b} \; = \; \rsdraw{.45}{.9}{dem-Zrib9}.
$
\end{center}
\vspace{.5em}
The last equality is exactly the condition of the lemma.
\end{proof}

\subsection{Proof of Lemma~\ref*{lem-G-center-ribbon}}\label{sect-ribonness-ZG+}  We   check the criterion of
Lemma~\ref{lem-rib-car-center} for any
    $(A,\sigma) $  and $U $. Pick a representative set $I$ of simple objects of
$\cc$. Let $$( p_\lambda\co A \otimes U^* \to i_\lambda ,
q_\lambda\co i_\lambda \to  A \otimes U^*)_{\lambda \in \Lambda }
\,,  \, ( p'_\omega\co U^*\otimes A \to i_\omega , q'_\omega\co
i_\omega \to  U^*\otimes A)_{\omega \in \Omega }$$ be
  $I$-partitions of $ A \otimes U^*$ and
  $ U^*\otimes A$, respectively. For any
$\lambda \in \Lambda$  and $\omega\in \Omega$ such that
$i_\lambda=i_\omega=i \in I$,   we have
\begin{center}
\psfrag{M}[Br][Br]{\scalebox{.8}{$A$}}
\psfrag{s}[Bc][Bc]{\scalebox{1}{$\sigma_{A \otimes U^*}$}}
\psfrag{A}[Bl][Bl]{\scalebox{.8}{$A$}}
\psfrag{V}[Bl][Bl]{\scalebox{.8}{$U$}}
\psfrag{i}[Bc][Bc]{\scalebox{.8}{$i$}}
\psfrag{p}[Bc][Bc]{\scalebox{1}{$p'_\omega$}}
\psfrag{q}[Bc][Bc]{\scalebox{1}{$q_\lambda$}}
\rsdraw{.45}{.9}{ribproof1}
\psfrag{M}[Br][Br]{\scalebox{.8}{$A$}}
\psfrag{s}[Bc][Bc]{\scalebox{1}{$\sigma_{A \otimes U^*}$}}
\psfrag{A}[Bl][Bl]{\scalebox{.8}{$A$}}
\psfrag{V}[Bl][Bl]{\scalebox{.8}{$U$}}
\psfrag{i}[Bc][Bc]{\scalebox{.8}{$i$}}
\psfrag{p}[Bc][Bc]{\scalebox{1}{$p'_\omega$}}
\psfrag{q}[Bc][Bc]{\scalebox{1}{$q_\lambda$}}
  $\,= \,d_i^{-1}$  \rsdraw{.45}{.9}{ribproof2}\, $\id_i$ \\[.5em]
  \psfrag{M}[Br][Br]{\scalebox{.8}{$A$}}
\psfrag{s}[Bc][Bc]{\scalebox{1}{$\sigma_{i}$}}
\psfrag{A}[Bl][Bl]{\scalebox{.8}{$A$}}
\psfrag{V}[Bl][Bl]{\scalebox{.8}{$U$}}
\psfrag{i}[Bc][Bc]{\scalebox{.8}{$i$}}
\psfrag{p}[Bc][Bc]{\scalebox{1}{$p'_\omega$}}
\psfrag{q}[Bc][Bc]{\scalebox{1}{$q_\lambda$}}
 $ = \,d_i^{-1} $ \rsdraw{.45}{.9}{ribproof24}\, $\id_i$ \;
  \psfrag{V}[Br][Br]{\scalebox{.8}{$U$}}
  $= \,d_i^{-1}$  \rsdraw{.45}{.9}{ribproof245} $\id_i$ \\[.5em]
\psfrag{M}[Br][Br]{\scalebox{.8}{$A$}}
\psfrag{s}[Bc][Bc]{\scalebox{1}{$\sigma_{U^* \otimes A}$}}
\psfrag{A}[Bl][Bl]{\scalebox{.8}{$A$}}
\psfrag{V}[Bl][Bl]{\scalebox{.8}{$U$}}
\psfrag{i}[Bc][Bc]{\scalebox{.8}{$i$}}
\psfrag{p}[Bc][Bc]{\scalebox{1}{$p'_\omega$}}
\psfrag{q}[Bc][Bc]{\scalebox{1}{$q_\lambda$}} $= \,d_i^{-1}$
\rsdraw{.45}{.9}{ribproof3}  \,$\id_i$ \;$=$\;
\psfrag{M}[Br][Br]{\scalebox{.8}{$A$}}
\psfrag{s}[Bc][Bc]{\scalebox{1}{$\sigma_{U^* \otimes A}$}}
\psfrag{A}[Bl][Bl]{\scalebox{.8}{$A$}}
\psfrag{V}[Bl][Bl]{\scalebox{.8}{$U$}}
\psfrag{i}[Bc][Bc]{\scalebox{.8}{$i$}}
\psfrag{p}[Bc][Bc]{\scalebox{1}{$p'_\omega$}}
\psfrag{q}[Bc][Bc]{\scalebox{1}{$q_\lambda$}}
\rsdraw{.45}{.9}{ribproof4}\,.
\end{center}
Here the first and the last equalities follow   from the  simplicity
of $i$ and the formulas $d_i=\dim_l(i)=\dim_r(i)$. The other
equalities follow from the   isotopy invariance in $S^2$ and the
naturality of $\sigma$. We conclude using that any morphism $f\co A
\otimes U^* \to U^*\otimes A$   expands as
$f=\sum_{\omega\in\Omega,\lambda\in\Lambda} q'_\omega (p'_\omega f
q_\lambda) p_\lambda$ where $p'_\omega f q_\lambda=0$ for
$i_\lambda\neq i_\omega$.

\section{Monads and Hopf monads}\label{sect-big-HM---}

Monads,  bimonads, and  Hopf monads generalize respectively
algebras, bialgebras, and Hopf algebras to the categorical setting.
The concept of a monad originated  in Godement's work on sheaf
cohomology in the 1950s.
Bimonads were introduced by Moerdijk \cite{Moer}  in 2002. Hopf
monads were introduced   by A. Brugui\`eres and the second author
\cite{BV2}  in 2006, see also   \cite{BV3,BLV}. We recall here the
basics of the theory of monads, bimonads, and Hopf monads needed in
the sequel.

\subsection{Monads and   modules}\label{Monads}
Given  a category $\cc$, we denote by $\End(\cc)$   the category
whose objects are endofunctors of $\cc$ (that is, functors   $\cc\to
\cc$) and morphisms are natural transformations between
the endofunctors.  The category $\End(\cc)$  is a  strict  monoidal category
with tensor product being composition of endofunctors and unit
object being the identity functor $1_\cc\co \cc\to \cc$. A
\emph{monad} on $\cc$  is a monoid in the category $\End(\cc)$, that
is, a triple $(T \in \End(\cc),\mu,\eta)$ consisting of a functor $T\co \cc \to
\cc$ and two natural transformations $$\mu=\{\mu_X\co T^2(X) \to
T(X)\}_{X \in \cc}\quad {\text {and}} \quad  \eta=\{\eta_X\co X \to
T(X)\}_{X \in \cc}$$  called the \emph{product} and the \emph{unit}
of $T$, such that for all $X\in\cc$, $$\mu_X
T(\mu_X)=\mu_X\mu_{T(X)} \quad {\text {and}} \quad
\mu_X\eta_{T(X)}=\id_{T(X)}=\mu_X T(\eta_X).$$ For example, the
identity functor $1_\cc\co \cc \to \cc $ is a monad on $\cc$ with
identity as product and unit. This  is   the
{\it trivial monad}.

Given a monad $T=(T, \mu, \eta)$ on $\cc$, a  {\it $T$\ti module} is a pair $(M \in\cc,r)$ where  $r\co T(M) \to M$ is a
morphism
  in $\cc$ such that $r T(r)= r \mu_M$ and $r \eta_M= \id_M$. We
  call such a morphism $r$ \emph{an action of $T$ on $M$}.
  A morphism from a $T$\ti module  $(M,r)$ to a $T$\ti module $(N,s)$ is a morphism $f \co M \to N$ in $\cc$ such that $f r=s T(f)$.
  This defines the {\it category  of $T$-modules}, $\cc^T$, with composition induced by that in $\cc$. We
  define
 a \emph{forgetful functor}  $U_T:\cc^T \to \cc$    by $U_T(M,r)=M$ and
 $U_T(f)=f$. We also define  the \emph{free module functor} $F_T\co\cc \to
\cc^T$ by
$F_T(X)=(T(X),\mu_X)$ for   $X\in \cc$ and $F_T(f)=T(f)$ for any morphism $f$ in $\cc$. The functors $F_T$ and $U_T$ are adjoint: there is a system of bijections $\Hom_{\cc^T} (F(X), Y)\cong \Hom_{\cc}(X, U_T(Y))$ natural in $X\in \cc$, $Y\in \cc^T$.  For an endofunctor $Q$ of
$\cc^T$,   the functor $Q\rtimes T=U_TQF_T\co \cc\to \cc $ is
 called the \emph{cross product} of $Q$ with $T$. For example, $1_{\cc^T}\rtimes T=T$. If  $T$ is the trivial monad, then
 $\cc^T=\cc$,  $F_T=U_T=1_\cc$, and $Q\rtimes T=Q$ for all $Q\in \End(\cc)$.

If $\cc$ is $\kk$-additive and $T$ is $\kk$-linear, then the category $\cc^T$
is $\kk$-additive and the   functors $U_T$, $F_T$ are $\kk$-linear.

\subsection{Comonoidal functors}\label{sect-comonofunctor} To introduce bimonads and Hopf monads we  ought to replace endofunctors
in the definitions above by comonoidal endofunctors. We recall here the relevant definitions.

 Let
$\cc$ and $\dd$ be  monoidal categories. A \emph{comonoidal functor}
from $\cc$ to $\dd$ is a triple $(F,F_2,F_0)$, where $F\co \cc \to
\dd$ is a functor, $$ F_2=\{F_2(X,Y) \co  F(X \otimes Y)\to F(X)
\otimes F(Y)\}_{X,Y \in \cc} $$ is a natural transformation from $F
\otimes$ to $F\otimes F$, and $F_0\co F(\un) \to \un$ is a morphism
in~$\dd$, such that
\begin{align*}
& \bigl(\id_{F(X)} \otimes F_2(Y,Z)\bigr) F_2(X,Y \otimes Z)= \bigl(F_2(X,Y) \otimes \id_{F(Z)}\bigr) F_2(X \otimes Y, Z) ;\\
& (\id_{F(X)} \otimes F_0) F_2(X,\un)=\id_{F(X)}=(F_0 \otimes \id_{F(X)}) F_2(\un,X) ;
\end{align*}
for all objects $X,Y,Z$ of $\cc$.   A comonoidal functor
$(F,F_2,F_0)$ is   \emph{strong} (resp.\@ \emph{strict}) if $F_0$
and all the morphisms $F_2 (X,Y)$ are isomorphisms (resp.\@
identities). The formula $(F,F_2,F_0)\mapsto (F,F^{-1}_2,F^{-1}_0)$
establishes a bijective correspondence between strong (resp.\@
 {strict}) comonoidal functors
  and strong (resp.\@  {strict}) monoidal functors.

 A natural transformation $\varphi=\{\varphi_X \co F(X) \to
G(X)\}_{X \in \cc}$ from a comonoidal
functor  $F\co\cc \to \dd$ to a comonoidal
functor  $G\co\cc \to \dd$ is  \emph{comonoidal} if $
G_0 \varphi_\un= F_0$ and $G_2(X,Y) \varphi_{X
\otimes Y}= (\varphi_X \otimes \varphi_Y) F_2(X,Y)
$
for all   $X,Y\in \cc$.

If $F\co \cc \to \dd$ and $G\co \dd \to \ee$ are   comonoidal
functors between monoidal categories, then their composition $GF\co
\cc \to \ee$
is a comonoidal functor with 
$$(GF)_0=G_0G(F_0)  \quad \text{and} \quad
(GF)_2=\{ G_2(F(X),F(Y))G(F_2(X,Y)\}_{X,Y \in \cc}
.$$


\subsection{Bimonads}
For a monoidal category $\cc$, denote by $\Endcm(\cc)$ the
 category   whose objects
are comonoidal endofunctors of $\cc$  and   morphisms are comonoidal natural
transformations. The category $\Endcm(\cc)$ is
strict monoidal with composition of comonoidal endofunctors as
tensor product and the identity functor $1_\cc $ as
monoidal unit. A \emph{bimonad} on    $\cc$ is a monoid in the
category $\Endcm(\cc)$. In other words, a bimonad on $\cc$ is a
monad $(T,\mu,\eta)$ on $\cc$ such that the  functor $T\co \cc \to
\cc$ and the natural transformations $\mu$ and $\eta$ are
comonoidal. For example, the trivial  monad   on $\cc$ with
identity morphisms for comonoidal structure and for $\mu$ and $\eta$ is a bimonad  called the {\it
trivial bimonad}.

  Let $T=((T,T_2,T_0),\mu,\eta)$ be a bimonad on a monoidal
category $\cc$. The category  of $T$\ti modules $\cc^T$  has a
monoidal structure with unit object $(\un,T_0)$ and with tensor
product
$$(M,r) \otimes (N,s)=\bigl(M \otimes N, (r \otimes s) \,
T_2(M,N)\bigr). $$ By \cite[Sect.\ 3.3]{BV3}, the forgetful functor
$U_T\co \cc^T \to \cc$ is strict monoidal while the free module
functor $F_T\co \cc\to \cc^T$ is comonoidal  with   $(F_T)_0=T_0$
and  $(F_T)_2(X,Y)=T_2(X,Y)$ for any $X,Y\in \cc$. By \cite[Sect.\
3.7]{BV3}, for any   $Q \in \Endcm(\cc^{T})$, the   cross product
   $Q \rtimes {T} =U_TQF_T \in \End(\cc)$   is
comonoidal with
  $(Q\rtimes {T})_0=Q_0Q((F_{T})_0)$ and
 $
(Q\rtimes {T})_2=Q_2Q((F_{T})_2)  $. The formula $Q\mapsto
Q\rtimes {T}$ defines a monoidal functor
\begin{equation}\label{rhorho+} ?\rtimes {T} \co \Endcm(\cc^{T}) \to
\Endcm(\cc) \end{equation}   with monoidal structure
$$
 ( (?\rtimes {T})_0)_X=\eta_X:X\to T(X) \quad {\text {and}} \quad  ((?\rtimes {T})_2(Q,R))_X=U_{T}Q(\varepsilon_{R F_{T}(X)})
$$
for any $X\in \cc$ and $Q,R \in \Endcm(\cc^{T})$, where
$\varepsilon$ is the counit of the adjunction $(F_{T},U_{T})$,
that is, the natural transformation $ F_{T}U_{T} \to 1_{\cc^{T}}$
carrying $(M,r) \in \cc^{T}$ to~$r$.

\subsection{Hopf monads} Given a  bimonad $(T,\mu,\eta)$ on a monoidal category $\cc$ and objects $X, Y\in \cc$,
one defines  the \emph{left fusion morphism}
$$H^l_{X,Y} =(T(X) \otimes \mu_Y)\, T_2(X,T(Y)) \colon T(X\otimes T(Y)) \to T(X) \otimes
T(Y)$$
 and the \emph{right fusion morphism}
$$
 H^r_{X,Y}=(\mu_X \otimes
T(Y))\, T_2(T(X),Y)\colon T(T(X)\otimes Y) \to T(X)\otimes T(Y),$$ see
\cite{BLV}. A \emph{Hopf monad} on   $\cc$ is a bimonad on $\cc$
whose left and right fusions are isomorphisms for all $X, Y\in \cc$.
For example, the trivial  bimonad   on $\cc$ is a Hopf monad  called the {\it
trivial Hopf monad}.

  When $\cc$ is a rigid  category (see Section~\ref{rigid categories}) and     $T$  is a Hopf monad on
$\cc$, the monoidal category  $\cc^T$ has a canonical   structure of a rigid category. This structure can be
computed from the natural transformations
$$s^l=\{s^l_X\co T(\leftidx{^\vee}{T}{}(X)) \to \leftidx{^\vee}{X}{}\}_{X \in \cc}
\quad {\text {and}} \quad s^r=\{s^r_X\co T(T(X)^\vee) \to
X^\vee\}_{X \in \cc}$$ called the \emph{left and right antipodes}
and determined by the   fusion morphisms:
$$
s^l_X= \bigl(T_0T(\lev_{T(X)})(H^l_{\leftidx{^\vee}{T}{}(X),X})^{-1} \otimes \leftidx{^\vee}{\eta}{_X}\bigr)
\bigl(\id_{T(\leftidx{^\vee}{T}{}(X))} \otimes \lcoev_{T(X)}\bigr)
$$
and
$$
s^r_X= \bigl(\eta_X^\vee \otimes T_0T(\rev_{T(X)})(H^r_{X,T(X)^\vee})^{-1}\bigr)
\bigl(\rcoev_{T(X)} \otimes \id_{T(T(X)^\vee)}\bigr).
$$
Then the  left and right duals of any $T$\ti module $(M,r)\in \cc^T$ are defined
   by
   $$\leftidx{^\vee}{(}{} M,r)=(\leftidx{^\vee}{M}{}, s^l_M T(\leftidx{^\vee}{r}{}) \colon T(\leftidx{^\vee}{M}  ) \to
   \leftidx{^\vee}{M}),$$
   $$  (M,r)^\vee=(M^\vee, s^r_M T(r^\vee) \colon T(M^\vee) \to M^\vee).$$
Though we shall not need it, note that, conversely,   a bimonad $T$ on
$\cc$  such that  $\cc^T$ is rigid is a Hopf monad.

%
%

\section{Coends, centralizers, and free objects}\label{sect-big-HM}

We outline  the theory of coends \cite{ML1} and discuss   connections with Hopf monads and relative centers.

\subsection{Coends}\label{sect-coend}
Let $\cc$, $\dd$ be categories and  $F\co \dd^\opp \times \dd \to \cc$ be a functor.    A \emph{dinatural
transformation} from  $F$
to an object $A$ of $\cc$  is a family $d=\{d_Y \co F(Y,Y) \to A\}_{Y \in \dd}$  of morphisms in~$\cc$
such that for every morphism $f\co X \to Y$ in~$\dd$ (viewed also as a morphism $Y\to X$ in $\dd^\opp$), the following diagram commutes:
\begin{equation*}\label{dinadina}
\begin{split}
    \xymatrix@R=1cm @C=3cm { F(Y,X) \ar[r]^-{F(\id_Y,f)}  \ar[d]_-{F(f, \id_X)}
    & F(Y,Y) \ar[d]^-{d_Y} \\
    F(X ,X) \ar[r]_-{d_X}
    & A.
    }
\end{split}
\end{equation*}    The {\it composition}  of such a $d$ with a morphism $\phi\co A\to B$ in $\cc$
is the dinatural transformation $ \phi\circ d= \{\phi \circ d_X \co
F(Y,Y) \to B\}_{Y \in \dd}$ from $F$ to $B$. A \emph{coend} of~$F$
is a pair $(C \in \cc,\rho)$ where $\rho$ is  a
dinatural transformation  from $F$ to $C$ satisfying the
following universality condition:   every dinatural transformation
$d$ from $F$ to  an object   of $\cc$ is the composition of $\rho$
with a morphism   in $\cc$  uniquely determined by $d$. If $F$ has a
coend $(C,\rho)$, then  it is unique up to (unique) isomorphism. One
writes $ C=\int^{Y \in \dd}F(Y,Y)$. In particular, if the category $\cc$ is monoidal, then for any
  functors $F_1 \colon \dd^\opp \to \cc$, $F_2\colon \dd \to \cc$,   we write $  \int^{Y \in \dd}F_1(Y)
\otimes F_2(Y)$ for the coend (if it exists) of the functor $  \dd^\opp \times
\dd \to \cc$ defined  on objects and morphisms by  $ (X,Y)\mapsto F_1(X)\otimes F_2(Y)$.

The following lemma gives  a sufficient condition for the existence of coends.

\begin{lem}\label{lem-finite-semi--}
Let $\cc$ and $\dd$ be   $\kk$-additive   categories.   If $\dd$ is
  finite split semisimple, then
 any \kt linear functor $F\co \dd^\opp \times \dd \to \cc$   has a
coend.
\end{lem}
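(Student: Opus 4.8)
The plan is to construct the coend explicitly as a finite direct sum, using a representative set of simple objects of $\dd$ and the $I$-partitions \eqref{eq-prefusion-sum}. Fix a representative set $I$ of simple objects of $\dd$; it is finite by hypothesis. Since $\cc$ is $\kk$-additive, the object $C=\bigoplus_{i\in I} F(i,i)$ exists in $\cc$; denote by $\iota_i\co F(i,i)\to C$ and $\pi_i\co C\to F(i,i)$ its structure morphisms, so that $\sum_{i\in I}\iota_i\pi_i=\id_C$. For each $Y\in\dd$ fix an $I$-partition $(p^Y_\alpha\co Y\to i^Y_\alpha,\ q^Y_\alpha\co i^Y_\alpha\to Y)_{\alpha\in\Lambda_Y}$, choosing the trivial partition $(\id_i,\id_i)$ when $Y=i\in I$, and set
$$
\rho_Y=\sum_{\alpha\in\Lambda_Y}\iota_{i^Y_\alpha}\,F(q^Y_\alpha,p^Y_\alpha)\co F(Y,Y)\to C .
$$
I claim that $(C,\rho)$ is a coend of $F$.

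First I would check that $\rho=\{\rho_Y\}_{Y\in\dd}$ is dinatural. Given a morphism $f\co X\to Y$ in $\dd$, let $(p_\alpha,q_\alpha)_{\alpha\in\Lambda}$ and $(p'_\beta,q'_\beta)_{\beta\in\Omega}$ be $I$-partitions of $Y$ and $X$. To compare $\rho_Y\,F(\id_Y,f)$ and $\rho_X\,F(f,\id_X)$, both morphisms $F(Y,X)\to C$, expand each side by its partition and insert $\id_X=\sum_\beta q'_\beta p'_\beta$ into the first and $\id_Y=\sum_\alpha q_\alpha p_\alpha$ into the second. Each side then becomes a double sum over $\alpha,\beta$ whose $(\alpha,\beta)$-term is governed by the morphism $p_\alpha f q'_\beta\co i'_\beta\to i_\alpha$ between simple objects of $I$. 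By split semisimplicity this morphism is $0$ unless $i_\alpha=i'_\beta$, in which case it is a scalar multiple of $\id_{i_\alpha}$; since $F$ is $\kk$-linear that scalar pulls out of $F$, and the two double sums are seen to agree. (Applying the same computation to two $I$-partitions of a single object $Y$ with $f=\id_Y$ shows, incidentally, that $\rho_Y$ is independent of the chosen partition.)

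Next I would verify the universal property. Let $d=\{d_Y\co F(Y,Y)\to A\}_{Y\in\dd}$ be a dinatural transformation from $F$ to an object $A$ of $\cc$, and put $\phi=\sum_{i\in I}d_i\,\pi_i\co C\to A$, so that $\phi\,\iota_i=d_i$. Writing $F(q_\alpha,p_\alpha)=F(q_\alpha,\id_{i_\alpha})\,F(\id_Y,p_\alpha)$ and applying dinaturality of $d$ to $q_\alpha\co i_\alpha\to Y$ gives $d_{i_\alpha}F(q_\alpha,p_\alpha)=d_Y\,F(\id_Y,q_\alpha p_\alpha)$; summing over $\alpha$ and using $\sum_\alpha q_\alpha p_\alpha=\id_Y$ yields $\phi\,\rho_Y=d_Y$ for all $Y$. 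For uniqueness, note that $\rho_i=\iota_i$ for $i\in I$ by the choice of partition, so any $\phi'$ with $\phi'\rho=d$ satisfies $\phi'\iota_i=d_i=\phi\,\iota_i$ for all $i$; since the coprojections $\iota_i$ are jointly epic, $\phi'=\phi$. This exhibits $(C,\rho)$ as $\int^{Y\in\dd}F(Y,Y)$.

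The only substantial point is the dinaturality computation: the bookkeeping with two $I$-partitions and with the covariant/contravariant slots of $F$ is where care is needed, although conceptually it is just a Schur-type argument. Finiteness of $I$ enters solely to guarantee that $C$ is an honest (finite) direct sum in $\cc$, while split semisimplicity enters through the $I$-partitions together with the fact that $\Hom_\dd(i,j)$ is $0$ for non-isomorphic $i,j\in I$ and $\kk\,\id_i$ for $i=j$.
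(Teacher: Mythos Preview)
Your proposal is correct and follows exactly the same approach as the paper: define $C=\bigoplus_{i\in I}F(i,i)$, set $\rho_Y=\sum_\alpha F(q_\alpha,p_\alpha)$ using an $I$-partition, and factor any dinatural $d$ through $\phi=\oplus_i d_i$. The paper's proof is terse (it simply asserts that $\rho$ is well-defined and universal), whereas you have spelled out the dinaturality via the Schur-type argument and the factorization $d_{i_\alpha}F(q_\alpha,p_\alpha)=d_Y F(\id_Y,q_\alpha p_\alpha)$ from dinaturality of $d$; these are exactly the checks the paper leaves implicit.
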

\begin{proof}
Pick a (finite)  representative set $I$ of simple objects of $\dd$
and  set $C=\oplus_{i \in I} F(i,i) \in \cc$. For each object $Y\in \dd$, set $\rho_Y=\sum_{\alpha } F(q_\alpha,p_\alpha)\co F(Y,Y) \to C$ where
$(p_\alpha:Y\to i_\alpha,q_\alpha:i_\alpha\to Y )_{\alpha  }$ is an arbitrary $I$-partition of~$Y$.
 It
is easy to check that  $\rho_Y$ does not depend on the choice of the $I$-partition and $(C, \rho=\{\rho_Y \}_{Y  })$ is a coend of $F$. Indeed, each
dinatural transformation $d$ from $F$ to  any $A\in \cc$ is the
composition of $\rho$ with    $ \oplus_{i \in I}\, d_i \co C \to A$.
\end{proof}

The next lemma is a partial inverse to Lemma~\ref{lem-finite-semi--}. It shows that a  finiteness condition   is necessary to for the existence of coends.

\begin{lem}\label{lem-finite-semi}
Let $\cc$ be a $\kk$-additive pivotal category whose Hom-spaces are
projective $\kk$-modules of finite rank, and let $\dd$ be a  split
semisimple full subcategory of~$\cc$.   If the coend $ \int^{Y \in
\dd} Y^* \otimes Y$ exists in $\cc$, then $\dd$ is   finite.
\end{lem}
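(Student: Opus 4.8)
The plan is to extract from the coend a direct sum decomposition of a fixed object of $\cc$ indexed by a representative set of simple objects of $\dd$, and then use the finite rank hypothesis on Hom-spaces to force that index set to be finite. Let $I$ be a representative set of simple objects of $\dd$; since $\dd$ is split semisimple, $I$ is well defined (possibly infinite), and for each $i\in I$ we have $\End_\dd(i)=\kk$ and $\Hom_\dd(i,j)=0$ for $i\neq j$. Because $\dd$ is a \emph{full} subcategory of $\cc$, these equalities also hold in $\cc$: $\Hom_\cc(i,j)=\delta_{i,j}\,\kk$ for $i,j\in I$. Write $(C,\rho)=\int^{Y\in\dd} Y^*\otimes Y$ for the assumed coend, where $\rho=\{\rho_Y\co Y^*\otimes Y\to C\}_{Y\in\dd}$ is the universal dinatural transformation. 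Here $Y^*$ denotes the dual in the ambient pivotal category $\cc$; note that the functor $\dd^\opp\times\dd\to\cc$, $(X,Y)\mapsto X^*\otimes Y$, is indeed $\kk$-linear, so the statement makes sense.

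First I would show that the family $\{\rho_i\co i^*\otimes i\to C\}_{i\in I}$ realizes $C$ as a direct sum $\bigoplus_{i\in I} i^*\otimes i$ when $I$ is finite, and more importantly that in general the morphisms $\rho_i$ are ``jointly split'': for each $i\in I$, using the dinaturality of $\rho$ against the identity of $i$ one builds a candidate retraction $\rho_i'\co C\to i^*\otimes i$ out of the coevaluation and evaluation of $i$ in $\cc$, exactly as in the proof of Lemma~\ref{lem-finite-semi--}, so that $\rho_i'\rho_j=\delta_{i,j}\,\id_{i^*\otimes i}$. Concretely, $\rho_i'$ is defined by testing the universal property of the coend against the dinatural transformation $d^i$ from $(X,Y)\mapsto X^*\otimes Y$ to $i^*\otimes i$ given on $Y\in\dd$ by
$$
d^i_Y=(\id_{Y^*}\otimes\,(\text{projection }Y\to i)\text{ paired with coprojection})\colon Y^*\otimes Y\to i^*\otimes i,
$$
i.e.\ the natural "component onto the $i$-isotypic part" using split semisimplicity of $\dd$; dinaturality is the same computation as in Lemma~\ref{lem-finite-semi--}. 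The key orthogonality $\rho_i'\rho_j=\delta_{i,j}\id$ then follows because $d^i_j$ is $\id$ if $i=j$ and factors through a zero object if $i\neq j$ (as $\Hom_\dd(i,j)=0$).

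Next I would derive the finiteness. Fix any single nonzero simple object $j\in I$ (such an object exists, e.g.\ $\un_\cc$, which is simple and lies in $\dd$ only if $\dd$ contains it — if not, pick any $j\in I$). Applying $\Hom_\cc(j^*\otimes j,-)$ to the splittings above: each $\rho_i\co i^*\otimes i\to C$ composed with $\rho_j'\co C\to j^*\otimes j$ gives $\delta_{i,j}\id_{j^*\otimes j}$. In particular the morphisms $\{\rho_j'\circ\rho_i\}_{i\in I}$ are $\kk$-linearly independent elements of the $\kk$-module $\bigoplus_{i}\Hom_\cc(i^*\otimes i, j^*\otimes j)$... better: consider instead the single $\kk$-module $M:=\Hom_\cc(j^*\otimes j,\,C)$. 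For each $i\in I$ let $u_i:=\rho_i\circ(\text{some fixed iso }j^*\otimes j\to i^*\otimes i)$ — no, simpler: take $v_i:=\rho_i\in\Hom_\cc(i^*\otimes i,C)$ and $w_i:=\rho_i'\in\Hom_\cc(C,i^*\otimes i)$ and observe $\sum$ does not obviously make sense. The cleanest route: the family $\{\rho_i'\}_{i\in I}\subset\bigoplus_{i\in I}\Hom_\cc(C,i^*\otimes i)$ together with $\{\rho_i\}$ shows $C$ contains $\bigoplus_{i\in I}(i^*\otimes i)$ as a direct summand (the infinite direct sum exists inside $C$ as a subobject via the idempotents $\rho_i\rho_i'$, which are orthogonal by the orthogonality relation, and split since — wait, $\cc$ need not have split idempotents). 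So instead I argue directly on Hom-spaces: apply $\Hom_\cc(-, j^*\otimes j)$ for a fixed $j$. The composites $\rho_j'\rho_i\colon i^*\otimes i\to j^*\otimes j$ vanish for $i\neq j$, which is not yet a contradiction. The actual obstruction — and this is the main point — is to find \emph{one} Hom-space of $\cc$ that is forced to have infinite rank when $I$ is infinite. That space is $\Hom_\cc(C,C)$, or better $\Hom_\cc(\un_\cc, C\otimes C^\vee)$-type; concretely one uses that the idempotents $e_i:=\rho_i\rho_i'\in\End_\cc(C)$ are nonzero, pairwise orthogonal ($e_ie_k=\rho_i(\rho_i'\rho_k)\rho_k'=\delta_{i,k}e_i$ since $\rho_i'\rho_k=\delta_{i,k}\id$), hence $\kk$-linearly independent in $\End_\cc(C)=\Hom_\cc(C,C)$. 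Since by hypothesis $\End_\cc(C)$ is a projective $\kk$-module of finite rank, it cannot contain infinitely many $\kk$-linearly independent elements, so $I$ is finite, i.e.\ $\dd$ is finite.

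\textbf{Main obstacle.} The delicate part is the construction of the retractions $\rho_i'$: one must verify that the natural "isotypic projection" $d^i=\{d^i_Y\}_{Y\in\dd}$ is genuinely dinatural as a family indexed by \emph{all} of $\dd$ (not merely on $I$), which requires split semisimplicity of $\dd$ to choose compatible $I$-partitions and the fact that morphisms between non-isomorphic simples vanish; fullness of $\dd$ in $\cc$ is what lets $d^i$ land honestly in $\cc$ and makes the orthogonality computation $\rho_i'\rho_k=\delta_{i,k}\id$ hold in $\cc$. The linear-independence-of-orthogonal-idempotents step is then routine: if $\sum_{i\in F}\lambda_i e_i=0$ for a finite $F\subset I$, multiplying by $e_k$ gives $\lambda_k e_k=0$, and $e_k\neq 0$ because $\rho_k'\rho_k=\id_{k^*\otimes k}\neq 0$ (as $k$ is a nonzero simple object, so $k^*\otimes k$ is nonzero), forcing $\lambda_k=0$. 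Finiteness of $\rank_\kk\End_\cc(C)$ then bounds $|I|$.
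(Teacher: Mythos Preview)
Your core construction is the same as the paper's: you build retractions $\rho_i'\co C\to i^*\otimes i$ by applying the universal property of the coend to the ``$i$-isotypic projection'' dinatural transformation, and observe $\rho_i'\rho_j=\delta_{i,j}\id$. The paper simply packages these for a finite $J\subset I$ into one retraction $p\co C\to A=\bigoplus_{i\in J}i^*\otimes i$. The only difference is the finishing step: the paper applies $\Hom_\cc(\un,-)$ to the split inclusion $A\hookrightarrow C$, obtaining a split inclusion $\kk^{|J|}\cong\Hom_\cc(\un,A)\hookrightarrow\Hom_\cc(\un,C)$ (via $\Hom_\cc(\un,i^*\otimes i)\cong\End_\cc(i)=\kk$), whence $|J|\le\rank_\kk\Hom_\cc(\un,C)$ by additivity of rank on direct summands. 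You instead look at orthogonal idempotents $e_i=\rho_i\rho_i'$ in $\End_\cc(C)$.

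Your route works, but there is a small gap in the last step. Over a commutative ring $\kk$ that is not a domain, ``$e_k\ne 0$ and $\lambda_k e_k=0$'' does \emph{not} by itself force $\lambda_k=0$. The fix is immediate with what you already have: from $\lambda_k e_k=0$ apply $\rho_k'(-)\rho_k$ to get $\lambda_k\,\id_{k^*\otimes k}=0$, hence $\lambda_k\rcoev_k=0$; since $\Hom_\cc(\un,k^*\otimes k)\cong\End_\cc(k)=\kk$ is free of rank~$1$ with $\rcoev_k$ corresponding to $\id_k$, this gives $\lambda_k=0$. The paper's choice of $\Hom_\cc(\un,C)$ rather than $\End_\cc(C)$ sidesteps this torsion issue entirely, which is the main payoff of its variant.
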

\begin{proof}
Let $C=\int^{Y \in \dd} Y^* \otimes Y$  be the coend  of the functor
$F(X,Y)=X^*\otimes Y$ with universal dinatural  transformation
$\rho=\{\rho_Y\co Y^* \otimes Y \to C\}_{Y \in \dd}$. Let $I$ be a representative set of isomorphism  classes of simple objects of $\dd$ and let $J\subset I$ be an arbitrary  finite
subset of $  I$. Set
$A=\oplus_{i \in J} \, i^* \otimes i \in \cc$.  For any $Y \in \dd$,   set $$d_Y=\sum_{\alpha, i_\alpha\in J} q^\ast_\alpha \otimes p_\alpha:
  Y^* \otimes Y \to A
$$  where
$(p_\alpha:Y\to i_\alpha,q_\alpha:i_\alpha\to Y )_{\alpha  }$ is an  $I$-partition of~$Y$.
 It
is easy to check   that $d_Y$ does not depend on the choice of  the $I$-partition and that the  family $\{d_Y\}_Y$ is  a dinatural
transformation from  $F $ to~$A$. Therefore there is a  morphism
$p\co C \to A$ such that $d_Y=p\rho_Y$ for all $Y \in \dd$. Set
$q=\sum_{i \in J} \rho_i \co A \to C$. Then $pq=\sum_{i \in J}
p\rho_i=\sum_{i \in J} d_i=\id_A$. Thus, the composition with
$q$ induces a split injection $\Hom_\cc(\un,A) \to
\Hom_\cc(\un,C)$. Hence
\begin{align*}
\mathrm{card}(J) &=\sum_{i \in J} \mathrm{rank}_\kk\bigl(\Hom_\cc(i,i)\bigr) =\sum_{i \in J} \mathrm{rank}_\kk\bigl(\Hom_\cc(\un,i^* \otimes i)\bigr)\\
& = \mathrm{rank}_\kk\bigl(\Hom_\cc(\un,A)\bigr)   \leq  \mathrm{rank}_\kk\bigl(\Hom_\cc(\un,C)\bigr).
\end{align*}
This bound  implies the claim of the lemma.
\end{proof}

\subsection{Lift of coends}
Let $T=((T,T_2,T_0), \mu, \eta)$ be  a Hopf monad  on a rigid category $\cc$ and let  $\dd$ be a subcategory   of $\cc$ such that $T(\dd)
\subset \dd$.   The functor $T$ restricts   to a monad on $\dd$, also
denoted~$T$, and the corresponding category of modules $\dd^T$ is a subcategory of $\cc^T$.
The following  lemma allows us to lift from $\cc$ to $\cc^T$ the coends of certain functors $ \dd^\opp \times \dd \to \cc$ associated with  endofunctors of $\cc^T$.

\begin{lem}\label{lem-HM-coend} Let $Q$ be an endofunctor of $\cc^T$ such that
  there exists a coend $$C=\int^{Y \in \dd}
\leftidx{^\vee}{(}{}Q\rtimes T)(Y)\otimes Y\in \cc.$$   Then
there exists a coend $\int^{M \in \dd^T} \leftidx{^\vee}{Q}{}(M)
\otimes M \in \cc^T$ carried by  the forgetful functor $
\cc^T \to \cc$  to $C$. More precisely, if
$$\rho=\{\rho_Y\co \leftidx{^\vee}{(}{}Q\rtimes T)(Y)\otimes Y \to
C\}_{Y \in \dd}$$ is the  universal dinatural transformation of $C$,
then  there is a unique morphism $r\co T(C) \to C$ in $\cc$ such that for all
$Y \in \dd$,
\begin{equation*} rT(\rho_Y)=
\rho_{T(Y)}\bigr(\leftidx{^\vee}{Q}{}(\mu_Y)s^l_{Q\rtimes T(Y)}
T(\leftidx{^\vee}{a}{_Y}) \otimes \id_{T(Y)}\bigl)\, T_2\bigr(
 \leftidx{^\vee}{(}{}Q \rtimes T)(Y),Y\bigr),
\end{equation*}
 where $a_Y $ is the $T$-action of   the $T$-module $QF_T(Y)$.
Then $r$ is an action of $T$ on $C$ and $(C,r)=\int^{M \in \dd^T}
\leftidx{^\vee}{Q}{}(M) \otimes M$  with universal dinatural
transformation
$$\varrho=\{\varrho_{(N,s)}=\rho_N (\leftidx{^\vee}{Q}{}(s) \otimes \id_N) \co
\leftidx{^\vee}{Q}{}(N,s) \otimes (N,s) \to (C,r)\}_{(N,s)\in\dd^T}.$$
\end{lem}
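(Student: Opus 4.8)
The plan is to transport the coend structure from $\cc$ to $\cc^T$ by exhibiting an action $r\co T(C)\to C$ making $(C,r)$ into a $T$-module, and then to verify directly that $(C,r)$ together with the proposed dinatural family $\varrho$ satisfies the universal property of the coend $\int^{M\in\dd^T}\leftidx{^\vee}{Q}{}(M)\otimes M$ in $\cc^T$. First I would observe that, since $T(\dd)\subset\dd$, the functor $\leftidx{^\vee}{(}{}Q\rtimes T)(?)\otimes ?\co\dd^\opp\times\dd\to\cc$ is well-defined and, by hypothesis, has the coend $(C,\rho)$; I also note that $\leftidx{^\vee}{Q}{}(N,s)\otimes(N,s)$ makes sense in $\cc^T$ for $(N,s)\in\dd^T$, using the rigid structure on $\cc^T$ coming from the Hopf monad $T$ (Section~\ref{sect-big-HM---}). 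The morphism $r$ is forced: composing the displayed right-hand side with the (iso) fusion-type data and using the universal property of $C$ shows there is a \emph{unique} $r\co T(C)\to C$ making all the squares $rT(\rho_Y)=(\cdots)\circ T_2(\cdots)$ commute, because the family indexed by $Y\in\dd$ on the right-hand side is itself dinatural in $Y$ (this dinaturality is a diagram chase using naturality of $\mu$, $T_2$, and $s^l$, together with the dinaturality of $\rho$).

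Next I would check that $r$ is an action: $rT(r)=r\mu_C$ and $r\eta_C=\id_C$. Both identities are proved by precomposing with $T^2(\rho_Y)$, resp.\ $T(\rho_Y)$ (which is legitimate since these are jointly epimorphic in the appropriate sense after applying $T$, by the uniqueness clause), and then reducing to the monad axioms for $T$, the compatibility of $s^l$ with $\mu$, and the left-antipode identities recalled in Section~\ref{sect-big-HM---} (the defining formula for $s^l_X$ in terms of $H^l$, plus the fact that $H^l$ is an isomorphism). Here the key bookkeeping device is the standard ``cross product'' identity $Q\rtimes T=U_TQF_T$ and the description of $(Q\rtimes T)_2$, $(Q\rtimes T)_0$; one repeatedly rewrites $\leftidx{^\vee}{(}{}Q\rtimes T)(Y)$ as $\leftidx{^\vee}{(}{}U_TQF_T)(Y)$ and uses that $\leftidx{^\vee}{Q}{}(s)$ for a $T$-module map $s$ is again a $T$-module map. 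Once $r$ is an action, one verifies that each $\varrho_{(N,s)}=\rho_N(\leftidx{^\vee}{Q}{}(s)\otimes\id_N)$ is a morphism \emph{in} $\cc^T$, i.e.\ commutes with the $T$-actions on source and target; this is where the defining relation for $r$ is used in its full strength, applied to $Y=N$ and combined with the fact that $s\co T(N)\to N$ intertwines $\mu_N$ with itself.

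Then I would check dinaturality of $\varrho$ over $\dd^T$: given a $T$-module map $f\co(N,s)\to(N',s')$, the required square reduces — after applying $\leftidx{^\vee}{Q}{}$ and using $\leftidx{^\vee}{Q}{}(f)$ is a module map — to the dinaturality square of $\rho$ at the underlying morphism $f\co N\to N'$ in $\dd$, together with the module-map equation $fs=s'T(f)$. Finally, the universality: given any dinatural transformation $d=\{d_{(N,s)}\co\leftidx{^\vee}{Q}{}(N,s)\otimes(N,s)\to(D,t)\}$ in $\cc^T$, I forget to $\cc$ and produce a dinatural transformation $\{d_N\circ(\id\otimes\text{something})\}$ from $\leftidx{^\vee}{(}{}Q\rtimes T)(?)\otimes?$ to $D$ — more precisely one uses $d$ at the \emph{free} modules $F_T(Y)$ and the counit $\varepsilon$ — obtaining by the universal property of $(C,\rho)$ a unique $\phi\co C\to D$ in $\cc$ with $\phi\rho_Y=(\text{that family})$; one then shows $\phi$ is a $T$-module map $(C,r)\to(D,t)$ (again using the defining relation for $r$ and the action $t$) and that $\phi\varrho_{(N,s)}=d_{(N,s)}$ for all $(N,s)$, by writing an arbitrary module as a retract of a free one is \emph{not} available in general, so instead one uses dinaturality of $d$ along the action map $s\co F_T(N)\to(N,s)$ to reduce the identity at $(N,s)$ to the identity at $F_T(N)$, which holds by construction. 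Uniqueness of $\phi$ follows from uniqueness in $\cc$.

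\emph{Main obstacle.} The delicate point is the very first step: showing that the right-hand side of the displayed formula, as a family indexed by $Y\in\dd$ (with target $C$ after absorbing the $\otimes\,\id_{T(Y)}$ and $T_2$), is dinatural in $Y$, so that $r$ exists and is unique. This is a genuine diagram chase mixing the comonoidal structure $T_2$, the product $\mu$, the left antipode $s^l$ (equivalently the left fusion isomorphism $H^l$), and the action $a_Y$ of the module $QF_T(Y)$; all subsequent verifications (that $r$ is an action, that $\varrho$ lands in $\cc^T$, universality) are then comparatively mechanical consequences of this one compatibility together with the Hopf-monad axioms. I expect the cleanest route is to recognize $r$ as the ``lift along $T$'' of $\rho$ in the sense of the monoidal functor $?\rtimes T\co\Endcm(\cc^T)\to\Endcm(\cc)$ of \eqref{rhorho+}, which packages exactly the needed coherences.
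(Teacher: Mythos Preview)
Your direct-verification plan is essentially correct and is, in fact, an unpacking of what the paper does by citation: the paper's entire proof is ``This is a direct corollary of Lemma~3.9 and Proposition~3.10 of~\cite{BV3}.'' Those results in \cite{BV3} establish precisely the lift-of-coends statement you are re-proving by hand, so your approach is not a different route so much as a spelling-out of the cited black box. Your final remark that the cleanest route is via the monoidal functor $?\rtimes T$ of~\eqref{rhorho+} is exactly right: that is the organizing principle behind the \cite{BV3} argument.

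There is, however, one genuine technical point you have elided. To define $r$ uniquely by the equation $rT(\rho_Y)=\cdots$, and later to verify the action axioms by precomposing with $T(\rho_Y)$ or $T^2(\rho_Y)$, you need that $(T(C),\{T(\rho_Y)\}_Y)$ is itself a coend of the functor $Y\mapsto T\bigl(\leftidx{^\vee}{(}{}Q\rtimes T)(Y)\otimes Y\bigr)$, i.e.\ that $T$ \emph{preserves} this coend. You invoke this (``jointly epimorphic in the appropriate sense after applying $T$, by the uniqueness clause''), but the uniqueness clause is the \emph{conclusion}, not the justification. The justification is that a Hopf monad on a rigid category has a right adjoint (see \cite[Corollary~3.12]{BV2}, used elsewhere in the paper), hence is a left adjoint, hence preserves all colimits and in particular coends; this is exactly \cite[Corollary~3.8]{BV3}, which the paper invokes explicitly in the appendix for the same purpose. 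Once you insert this one sentence, your plan goes through; the remaining steps (dinaturality of $\varrho$, $T$-linearity of $\varrho_{(N,s)}$, and universality via dinaturality along the action map $s\co F_T(N)\to (N,s)$) are correct as stated.
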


\begin{proof}
This is a direct corollary of Lemma 3.9 and Proposition 3.10
of~\cite{BV3}.
\end{proof}

\subsection{Centralizers of endofunctors}\label{sect-coend++}  Let $\cc$ be
a  rigid category and
$\dd$ be a  subcategory of $\cc$. An endofunctor ${{E}}$ of $ \cc $ is
\emph{$\dd$-centralizable} if for each $X\in \cc$, the functor $
\dd^\opp \times \dd \to \cc$  defined by $(Y,Y')\mapsto
\leftidx{^\vee}{{{E}}}{}(Y) \otimes X \otimes Y'$ has a  coend
\begin{equation*}
{{Z}}_{{E}}^\dd(X)=\int^{Y \in \dd} \leftidx{^\vee}{{{E}}}{}(Y) \otimes X \otimes Y \in \cc.
\end{equation*}
The correspondence $X\mapsto {{Z}}_{{E}}^\dd(X)$  extends to a  functor
${{Z}}_{{E}}^\dd\co \cc\to \cc$,  called the {\it $\dd$-centralizer} of ${{E}}$,
so that the associated universal dinatural transformation
\begin{equation}\label{rhorho}
\rho_{X,Y}\co  \leftidx{^\vee}{{{E}}}{}(Y) \otimes X \otimes Y \to {{Z}}_{{E}}^\dd(X)
\end{equation}
is natural in $X\in\cc$ and dinatural in~$Y\in\dd$. For $\dd=\cc$,
the notion of a centralizer of an endofunctor was introduced in
\cite{BV3}.

When the
identity endofunctor $1_\cc$  of $\cc$ is $\dd$-centralizable, we
 say that $\cc$ is \emph{$\dd$-centralizable}. For example, the  endofunctor   $1_\cc$ is  $\dd$-centralizable  if the category $\dd$ is finite split semisimple, see  Lemma~\ref{lem-finite-semi--}. Moreover, any   (finite) representative set $I$ of simple objects of
$\dd$ determines a
$\dd$-centralizer $Z :\cc\to\cc$ of $1_\cc$. The functor $Z$ carries any $X \in \cc$ to $Z(X)=\oplus_{i \in I}\, i^*
\otimes X \otimes i$ and carries any morphism $f$ in $\cc$ to $Z(f)=\oplus_{i \in I}\, \id_{i^*}\otimes f \otimes \id_i$.

\subsection{Relative centers and free objects}\label{sect-coend++M} Let $\cc$ be
a  rigid category and $\dd$ be  a  rigid subcategory of
$\cc$, i.e., $\dd$ is a monoidal subcategory of $\cc$ stable under both left and right
dualities.
Suppose  that  $\cc$ is $\dd$-centralizable. We construct a Hopf monad  ${{Z}}={{Z}}_\dd$ on
$\cc$ such that    the
relative center $\zz(\cc;\dd)$ is monoidally isomorphic to the category   $\cc^{{{Z}}}$.

 Let ${{{{{Z}}}}} \co \cc\to \cc$ be a
$\dd$-centralizer of $1_\cc$ with   universal dinatural
transformation  $$\rho=\{\rho_{X,Y}\co \leftidx{^\vee}{Y}{} \otimes
X \otimes Y \to {{{{{Z}}}}}(X)\}_{X \in \cc, Y \in \dd}.$$ For $X \in \cc$
and $Y \in \dd$, set
$$
\partial_{X,Y}=(\id_Y \otimes \rho_{X,Y})(\lcoev_Y \otimes \id_{X \otimes Y})\co X \otimes Y \to Y \otimes {{{{{Z}}}}}(X).
 $$
We depict the morphism $\partial_{X,Y}$ as follows:
$$
\partial_{X,Y}=
\psfrag{Y}[Bc][Bc]{\scalebox{.7}{$Y$}}
\psfrag{X}[Bc][Bc]{\scalebox{.7}{$X$}}
\psfrag{T}[Bc][Bc]{\scalebox{.7}{$Y$}}
\psfrag{Z}[Bc][Bc]{\scalebox{.7}{${{{{{Z}}}}}(X)$}}
\rsdraw{.45}{.9}{defpartial}\;.
$$
For any $X,X_1,X_2 \in \cc$,   the parameter theorem and the Fubini
theorem for coends (see \cite{ML1}) imply the existence of  (unique)
morphisms
\begin{gather*}
\mu_X\co {{{{{Z}}}}}({{{{{Z}}}}} (X)) \to {{{{{Z}}}}}(X), \; {{{{{Z}}}}}_2(X_1,X_2) \co {{{{{Z}}}}}(X_1 \otimes X_2) \to {{{{{Z}}}}}(X_1) \otimes {{{{{Z}}}}}(X_2), \\
{{{{{Z}}}}}_0\co {{{{{Z}}}}}(\un) \to \un, \quad s^l_X\co {{{{{Z}}}}}(\leftidx{^\vee}{{{{{Z}}}}}(X)) \to \leftidx{^\vee}{X}{}, \quad s^r_X\co {{{{{Z}}}}}({{{{{Z}}}}}(X)^\vee)
\to X^\vee,
\end{gather*}
such that the equalities of morphisms shown in
Figure~\ref{fig-def-Z} hold for all $Y,Y_1,Y_2 \in \dd$,   where the
trivalent vertex in the third picture  stands   for
$\partial_{\un,Y}\co Y \to Y \otimes Z(\un)$.

\begin{figure}[t]
\begin{center}
\psfrag{A}[Bc][Bc]{\scalebox{.7}{$Y_1$}}
\psfrag{B}[Bc][Bc]{\scalebox{.7}{$Y_2$}}
\psfrag{C}[Bc][Bc]{\scalebox{.7}{$Y_1$}}
\psfrag{R}[Bc][Bc]{\scalebox{.7}{$Y_2$}}
\psfrag{X}[Bc][Bc]{\scalebox{.7}{$X$}}
\psfrag{U}[Bc][Bc]{\scalebox{.7}{$Y_1 \otimes Y_2$}}
\psfrag{L}[Bc][Bc]{\scalebox{.7}{${{{{{Z}}}}}(X)$}}
\psfrag{r}[Bc][Bc]{\scalebox{.9}{$\mu_X$}}
\rsdraw{.45}{.9}{defm} \, $=$ \, \rsdraw{.45}{.9}{defm2},\\[1em]
\psfrag{A}[Bc][Bc]{\scalebox{.7}{${{{{{Z}}}}}(X_1)$}}
\psfrag{B}[Bc][Bc]{\scalebox{.7}{${{{{{Z}}}}}(X_2)$}}
\psfrag{C}[Bc][Bc]{\scalebox{.7}{$Y$}}
\psfrag{U}[Bc][Bc]{\scalebox{.7}{$Y$}}
\psfrag{X}[Bc][Bc]{\scalebox{.7}{$X_1 \otimes X_2$}}
\psfrag{E}[Bc][Bc]{\scalebox{.7}{$X_1$}}
\psfrag{H}[Bc][Bc]{\scalebox{.7}{$X_2$}}
\psfrag{T}[Bc][Bc]{\scalebox{.9}{${{{{{Z}}}}}_2(X_1,X_2)$}}
\psfrag{a}[Bc][Bc]{\scalebox{.7}{}}
\rsdraw{.45}{.9}{defdela} \, $=$ \, \rsdraw{.45}{.9}{defdel2},
\psfrag{a}[Bc][Bc]{\scalebox{.7}{}}
\psfrag{U}[Bc][Bc]{\scalebox{.7}{$Y$}}
\psfrag{X}[Bc][Bc]{\scalebox{.7}{$Y$}}
\psfrag{C}[Bc][Bc]{\scalebox{.7}{${{{{{Z}}}}}(\un)$}}
\psfrag{r}[Bc][Bc]{\scalebox{.9}{${{{{{Z}}}}}_0$}}
\qquad \quad \rsdraw{.45}{.9}{defZ0a} \, $=$ \, \rsdraw{.45}{.9}{defZ02}\;,\\[1em]
\psfrag{B}[Bc][Bc]{\scalebox{.7}{$Y$}}
\psfrag{X}[Bc][Bc]{\scalebox{.7}{$\leftidx{^\vee}{{{Z}}}{_\dd}(X)$}}
\psfrag{R}[Bc][Bc]{\scalebox{.7}{$Y$}}
\psfrag{e}[Bc][Bc]{\scalebox{.9}{$\lev_{{{{{{Z}}}}}(X)}$}}
\psfrag{a}[Bc][Bc]{\scalebox{.9}{$\rev_{Y}$}}
\psfrag{c}[Bc][Bc]{\scalebox{.9}{$\lcoev_{X}$}}
\psfrag{u}[Bc][Bc]{\scalebox{.9}{$\rcoev_{Y}$}}
\psfrag{L}[Bc][Bc]{\scalebox{.7}{$\leftidx{^\vee}{X}{}$}}
\psfrag{r}[Bc][Bc]{\scalebox{.9}{$s^l_X$}}
\rsdraw{.45}{.9}{defSl}  $=$  \rsdraw{.45}{.9}{defSl2box}\;,
\qquad \psfrag{B}[Bc][Bc]{\scalebox{.7}{$Y$}}
\psfrag{X}[B][Bc]{\scalebox{.7}{${{{{{Z}}}}}(X)^\vee$}}
\psfrag{R}[Bc][Bc]{\scalebox{.7}{$Y$}}
\psfrag{L}[Bl][Bl]{\scalebox{.7}{$X^\vee$}}
\psfrag{r}[Bc][Bc]{\scalebox{.9}{$s^r_X$}}
\psfrag{e}[Bc][Bc]{\scalebox{.9}{$\rev_{{{{{{Z}}}}}(X)}$}}
\psfrag{a}[Bc][Bc]{\scalebox{.9}{$\lev_{Y}$}}
\psfrag{c}[Bc][Bc]{\scalebox{.9}{$\rcoev_{X}$}}
\psfrag{u}[Bc][Bc]{\scalebox{.9}{$\lcoev_{Y}$}}
\rsdraw{.45}{.9}{defSr}  $=$  \;\rsdraw{.45}{.9}{defSr2box}\;.
\end{center}
\caption{Structural morphisms of ${{{{{Z}}}}}={{Z}}_\dd$}
\label{fig-def-Z}
\end{figure}
\begin{lem}\label{lem-central-HM}
\begin{enumerate}
\labela
\item Let $\eta=\{\eta_X\}_{X\in \cc}$ where
$\eta_X=\partial_{X,\un}\co X \to {{{{{Z}}}}}(X)$ for all $X\in \cc$. Then ${{{{{Z}}}}}=\bigl(({{{{{Z}}}}},{{{{{Z}}}}}_2,{{{{{Z}}}}}_0),\mu,\eta\bigr)$ is a Hopf monad on $\cc$ with left antipode $s^l=\{s^l_X\}_{X\in \cc}$ and right antipode $s^r=\{s^r_X\}_{X\in \cc}$.
\item Let $\Psi\co \cc^{{{{{{Z}}}}}} \to \zz(\cc;\dd)$ be the functor carrying any object $(M,r)\in \cc^{{{{{{Z}}}}}} $ to
 $(M, \sigma^r)$ with $
\sigma^r=\{ \sigma^r_Y=(\id_Y \otimes r) \partial_{M,Y}:M\otimes Y\to Y\otimes M\}_{Y \in \dd}
$
and carrying any morphism   to itself. Then $\Psi$ is a strict monoidal isomorphism, and  the composition of $\Psi$ with the forgetful functor $\zz(\cc;\dd)\to \cc$ is equal to the
  forgetful functor  $U_{{{{{{Z}}}}}}\co \cc^{{{{{{Z}}}}}} \to \cc$.
  \item If $\cc$ is $\kk$-additive, then     the categories
$\cc^{{{Z}}}$ and  $\zz(\cc;\dd)$ are $\kk$-additive  and the functors ${{Z}} $ and  $\Psi $  are    $\kk$-linear.
\end{enumerate} \end{lem}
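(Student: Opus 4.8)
The plan is to obtain part~(a) as an instance of the coend calculus of \cite[\S3]{BV3}, run over the subcategory $\dd$ instead of over all of $\cc$, and then to read off parts~(b) and~(c). First I would check that the equations of Figure~\ref{fig-def-Z} do determine morphisms $\mu$, $Z_2$, $Z_0$, $s^l$, $s^r$: for each of the five the right-hand side is a family of morphisms which one verifies to be (di)natural in the relevant variable(s) $Y\in\dd$ (resp.\ $Y_1,Y_2\in\dd$), using the naturality of the universal dinatural transformation $\rho$, the zig-zag identities, and---for $\mu$, $s^l$, $s^r$, which involve an iterated coend---the Fubini and parameter theorems \cite{ML1} together with the closure of $\dd$ under $\otimes$ and under duality; the universal property of $Z(X)=\int^{Y\in\dd}{}^\vee Y\otimes X\otimes Y$ then produces the five morphisms uniquely.

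Next I would verify, in this order: the monad axioms for $(\mu,\eta)$; the comonoidal-functor axioms for $(Z,Z_2,Z_0)$; the comonoidality of $\mu$ and $\eta$, so that $Z$ becomes a bimonad; and finally that $s^l$ and $s^r$ are a left and a right antipode, which exhibits $Z$ as a Hopf monad (Section~\ref{sect-big-HM---}). Each identity is proved by the same device: precompose both sides with $\rho$ (invoking Fubini where two copies of $\rho$ occur), whereupon it reduces to a short diagram chase using only isotopy invariance of Penrose diagrams, the zig-zag relations, and the defining equations of Figure~\ref{fig-def-Z}. Nothing here differs conceptually from the case $\dd=\cc$ of \cite{BV3}; one simply keeps the index of every coend restricted to $\dd$, and Lemma~\ref{lem-HM-coend} is the prototype of this sort of argument.

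For part~(b), I note first that since $\dd$ is rigid, any family $\{\sigma_Y\co A\otimes Y\to Y\otimes A\}_{Y\in\dd}$ which is natural in $Y$ and satisfies \eqref{axiom-half-braiding} automatically consists of isomorphisms---the inverse being assembled from $\sigma_{{}^\vee Y}$ and the duality morphisms, in the spirit of \eqref{eq-inv-sigma}---so that $\Psi$ genuinely takes values in $\zz(\cc;\dd)$. Naturality of $\sigma^r$ in $Y$ is the naturality of $\partial$; the axiom \eqref{axiom-half-braiding} for $\sigma^r$ follows by translating the defining equation of $\mu$ in Figure~\ref{fig-def-Z} into a compatibility relation for $\partial$ and combining it with the module identity $r\,Z(r)=r\mu_M$, while $\sigma^r_\un=\id_M$ is the identity $r\eta_M=\id_M$ (together with $\eta_M=\partial_{M,\un}$); the strict monoidality of $\Psi$ comes from the defining equations of $Z_2$ and $Z_0$ matched against the tensor product $(M,r)\otimes(N,s)=(M\otimes N,(r\otimes s)Z_2(M,N))$ and the unit $(\un,Z_0)$ of $\cc^Z$. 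To see that $\Psi$ is an isomorphism of categories I would exhibit its inverse $\Phi\co\zz(\cc;\dd)\to\cc^Z$: to $(A,\sigma)$ it assigns $(A,r_\sigma)$, where $\{(\lev_Y\otimes\id_A)(\id_{{}^\vee Y}\otimes\sigma_Y)\co{}^\vee Y\otimes A\otimes Y\to A\}_{Y\in\dd}$ is dinatural in $Y$ (by the naturality of $\sigma$ and the zig-zags) and hence factors uniquely as $r_\sigma\rho_{A,Y}$; one then checks $(A,r_\sigma)\in\cc^Z$ using \eqref{axiom-half-braiding}, and that $\Phi\Psi=\id$, $\Psi\Phi=\id$---both functors being the identity on morphisms, only the two object assignments need comparing, and that comparison collapses to the triangle identities. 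Finally $U_Z$ and the forgetful functor $\zz(\cc;\dd)\to\cc$ each send an object to its underlying object of $\cc$ and act trivially on morphisms, so their composites with $\Psi$ coincide.

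Part~(c) is then a formality: by the bilinearity of $\otimes$ and of composition, the universal property of the coend forces $Z$ to be a $\kk$-linear endofunctor of $\cc$; hence $\cc^Z$ is $\kk$-additive and $U_Z$, $F_Z$ are $\kk$-linear by Section~\ref{Monads}, $\zz(\cc;\dd)$ is $\kk$-additive by Section~\ref{sect-rel-center}, and $\Psi$---an isomorphism of categories which is the identity on morphisms and compatible with the forgetful functors to $\cc$---is $\kk$-linear. The step I expect to be the main obstacle is the bookkeeping in part~(a): writing down the correct dinatural families so that $\mu$, $Z_2$, $Z_0$, $s^l$, $s^r$ exist, and then pushing through the whole battery of monad, comonoidal, bimonad and antipode identities. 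Each verification is individually a short graphical computation once the right statement is isolated; the difficulty is entirely in their number and in organizing them---following \cite[\S3]{BV3}---so that every one collapses, after precomposition with $\rho$, to a single application of the relations in Figure~\ref{fig-def-Z}.
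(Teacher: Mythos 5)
Your approach coincides with the paper's: its proof of (a) and (b) simply invokes Theorems 5.6 and 5.12 of \cite{BV3}, replacing $Y\in\cc$ by $Y\in\dd$ (i.e.\ $\int^{Y\in\cc}$ by $\int^{Y\in\dd}$) throughout and declaring (c) obvious, which is exactly the restricted-coend rerun of the Brugui\`eres--Virelizier argument you outline. One small caution: your shortcut that rigidity of $\dd$ automatically makes $\sigma^r_Y$ invertible uses naturality of $\sigma^r$ against (co)evaluation morphisms, so it requires these to be morphisms of $\dd$ (automatic when $\dd$ is full, as in the application $\dd=\cc_1$); in general the invertibility is obtained, as in \cite{BV3}, via the antipodes $s^l,s^r$.
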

\begin{proof}
The proof of (a) and (b) is obtained from the proof of Theorems 5.6 and 5.12 in
\cite{BV3} by replacing $Y\in\cc$ with $Y\in\dd$ whenever necessary
and in particular by replacing  $ \int^{Y\in\cc}
\leftidx{^\vee}{Y}{} \otimes X \otimes Y$ with
$ \int^{Y\in\dd} \leftidx{^\vee}{Y}{} \otimes X \otimes Y$. Claim (c) is obvious.
\end{proof}

We use this lemma to define   free objects in $\zz(\cc;\dd)$. Recall from Section \ref{Monads} the free module functor $F_Z\co\cc \to \cc^Z$ is left adjoint to the forgetful functor $U_Z\cc^Z\to \cc$.
It is clear that the functor $\Psi F_Z\co\cc\to \zz(\cc;\dd)$ is left adjoint to the forgetful functor $\uu\co\zz(\cc;\dd) \to \cc$. An object of $\zz(\cc;\dd)$ is said to be {\it free} if it is isomorphic to $\Psi F_Z(X)$ for some $X\in \cc$.

\subsection{The case of $G$-centers}\label{Free objects} We shall  apply  Lemma~\ref{lem-central-HM} to study  the $G$-centers of $G$-graded categories. Consider a rigid $G$-graded category $\cc$ such that  $\cc$ is $\cc_1$-centralizable.    Lemma~\ref{lem-central-HM} provides an extension of any $\cc_1$-centralizer $Z:\cc\to \cc$  to a Hopf monad on $\cc$ and   a   $\kk$-linear strict monoidal isomorphism $\Psi\co \cc^{{{{{{Z}}}}}} \to \zz(\cc;\cc_1)=\zz_G(\cc)$.  We can always choose $Z$   so that $Z(\cc_\alpha)\subset \cc_\alpha$ for all $\alpha\in G$. Then $Z$ restricts to a monad on~$\cc_\alpha$, and the corresponding category of modules,  denoted   $ \cc^Z_\alpha$,   is a full subcategory of $\cc^Z$. This turns $\cc^Z$ into a $G$-graded category. It follows from the definitions that   $\Psi$
preserves the $G$-grading.   In the terminology above, an object of $\zz_G(\cc)$ is   \emph{free} if it is isomorphic to an object in the image of the functor
$ \Psi F_Z\co \cc \to \zz_G(\cc)$ left adjoint to the forgetful functor $\uu\co\zz_G(\cc) \to \cc$.

The $\cc_1$-centralizability condition on $\cc$ is satisfied, for example, if   $\cc_1$ is finite split semisimple.

%

\section{Proof of Lemmas~\ref*{lem-center-G-fusion} and~\ref*{thm-rel-center-semi}}\label{sect-bigproof-semiFULL}
 We begin with a fairly general lemma concerning morphisms
between indecomposable objects in abelian  categories. Next, we
formulate an extension of the   graphical calculus  allowing to
incorporate partitions of objects. Finally, we use these tools   and
the theory of Hopf monads  to prove
Lemmas~\ref{lem-center-G-fusion} and~\ref{thm-rel-center-semi}.

\subsection{Indecomposable objects in abelian categories}\label{sect-indeco}
We recall several standard definitions of the theory of categories. An object $X$ of an
additive category is \emph{indecomposable} if $X$ is non-zero and
whenever $X$ decomposes as $X=X_1 \oplus X_2$, we have
$X_1=\mathbf{0}$ or $X_2=\mathbf{0}$. For example,    all simple
objects of a $\kk$-additive category are indecomposable.   An
\emph{abelian category} is an additive category such that any
morphism $f$ has a kernel and a cokernel and
$\mathrm{coker}(\mathrm{ker}\, f) \simeq
\mathrm{ker}(\mathrm{coker}\, f)$.     A \emph{monomorphism} in a
category is a morphism $q\co X\to Y$ such that   any two morphisms
$f,g\co A\to X$ with $qf=qg$ must be equal. A \emph{retract} of a
morphism  $q\co X\to Y$ is a morphism $p \co Y \to X$ such that
$pq=\id_X$. Clearly, if $q$ has a retract, then $q$ is a
monomorphism.

\begin{lem}\label{lem-morphism-zero-or-iso}
Let $\aaa$ be an abelian  category in which any monomorphism has a
retract. Then any morphism between  indecomposable objects of
$\aaa$ is either zero or an isomorphism.
\end{lem}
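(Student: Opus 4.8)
The plan is to prove the contrapositive: if $f\co X\to Y$ is a morphism between indecomposable objects of $\aaa$ that is not an isomorphism, then $f=0$. Since $\aaa$ is abelian, $f$ factors canonically as a composition $X\twoheadrightarrow \Ima f \hookrightarrow Y$, where the second arrow $q\co \Ima f\to Y$ is a monomorphism. The strategy is to use the hypothesis that every monomorphism has a retract, together with the indecomposability of $Y$, to control $\Ima f$, and symmetrically to use an epi--mono analysis at the source.

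\textbf{Key steps.} First I would analyze the monomorphism $q\co \Ima f\to Y$. By hypothesis $q$ has a retract $p\co Y\to \Ima f$ with $pq=\id_{\Ima f}$; then $qp\co Y\to Y$ is an idempotent. In an abelian category an idempotent splits, so $Y\cong \Ima(qp)\oplus \ker(qp)$, and one checks $\Ima(qp)\cong \Ima f$. Since $Y$ is indecomposable, either $\Ima f=\mathbf 0$ (whence $f=0$ and we are done) or $\Ima f\cong Y$, i.e.\ $q$ is an isomorphism. So assume $q$ is an isomorphism; then $f$ is itself an epimorphism $X\twoheadrightarrow Y$ (up to the isomorphism $q$). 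Next I would run the dual analysis at the source: consider the short exact sequence $0\to \ker f\to X\xrightarrow{f} Y\to 0$. The point is to produce a monomorphism into $X$ that must split off a copy of $Y$. One clean way: since $f$ is epi, and we want to realize $Y$ as a retract of $X$ — form the canonical monomorphism $j\co \ker f\hookrightarrow X$; by hypothesis $j$ has a retract, so $X\cong \ker f\oplus (X/\ker f)\cong \ker f\oplus Y$. As $X$ is indecomposable and $Y\neq\mathbf 0$, we must have $\ker f=\mathbf 0$, so $f$ is also a monomorphism; an epi--mono in an abelian category is an isomorphism, contradicting our assumption. Hence $f=0$.

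\textbf{Main obstacle.} The delicate point is justifying the splitting statements "$q$ has a retract $\Rightarrow$ $Y$ decomposes as $\Ima f\oplus(\text{something})$'' and its dual. These rest on the standard fact that idempotents split in any abelian category, and that a morphism with a retract (a split monomorphism) identifies its source, up to isomorphism, with a direct summand of its target; dually a split epimorphism exhibits its target as a direct summand of its source. Care is needed to see that the hypothesis "every monomorphism has a retract'' applies to the relevant canonical monomorphisms ($\Ima f\hookrightarrow Y$ and $\ker f\hookrightarrow X$), which it does since these are genuine monomorphisms in $\aaa$. Once these splitting facts are in hand, the indecomposability of $X$ and $Y$ forces the dichotomy and the argument closes quickly; no nontrivial calculation is involved beyond bookkeeping with kernels, cokernels, and idempotents.
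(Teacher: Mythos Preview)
Your proof is correct and follows essentially the same strategy as the paper's: use the retract hypothesis to split off a subobject, then invoke indecomposability to force one summand to vanish, iterating once at the source and once at the target. The only difference is the order of operations---you analyze the image $\Ima f\hookrightarrow Y$ first and then the kernel $\ker f\hookrightarrow X$, whereas the paper starts with the kernel and then, having reduced to the case where $f$ itself is a monomorphism, applies the retract hypothesis directly to $f$; this is a minor dual reorganization, not a different idea.
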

\begin{proof}
Let  $f\co M \to P$ be a morphism in $\aaa$ where $M,P$ are
indecomposable objects. Let $q\co N \to M$ be the kernel of $f$.
 Then $q$ is a monomorphism in $\aaa$ and, by assumption, $q$
has a retract.  Since $\aaa$ is abelian, this implies that  $N$
is a direct summand of $M$. The object $M$ being indecomposable, we
 have $N=\mathbf{0}$ or   $  M=N\oplus \mathbf{0}   \simeq N  $. In the
latter case,   $q$ is an isomorphism, and so $f=0$ (because
$fq=0$). Assume that $N=\mathbf{0}$. Then $f$ is a monomorphism
(since it is a morphism with zero kernel in an abelian category). By
assumption, $f$ has a retract $g\co P\to M$. In particular
$gf=\id_{M}$ and so $e=\id_P-fg$ is an idempotent of $P$.  Since
$\aaa$ has split idempotents (because it is abelian), there exist an
object $A\in \aaa$ and morphisms $u\co A \to P$ and $v\co P \to A$
in $\aaa$ such that $vu=\id_{A}$ and $e=uv$. In particular,
$\id_P=fg+uv$ and so $P=M\oplus A$. Since $P$ is indecomposable,
  $M=\mathbf{0}$ or $A=\mathbf{0}$. If $M=\mathbf{0}$, then
$f=0$. If $A=\mathbf{0}$, then $uv=0$, and so $fg=\id_P$, which
implies that $f$ is an isomorphism (because $gf=\id_{M}$).
\end{proof}

\subsection{Extension of graphical calculus}\label{sect-graph-calc-in-pre-fusion}

 Let $\cc$ be a split semisimple pivotal category. Clearly, the
Hom spaces in   $\cc$ are   free $\kk$-modules of finite rank.
For $X\in\cc$ and a  simple object $i \in \cc$, the  modules
$\Hom_\cc(X,i)$ and $\Hom_\cc(i,X)$ have  the  same   rank
denoted $N^i_X $ and called the \emph{multiplicity number}.

An \emph{$i$-decomposition} of $X$ is a family of morphisms
$(p_\alpha\co X \to i, q_\alpha \co i \to X)_{\alpha \in A}$ such
that ${\mathrm{card}} (A)=N_X^i$  and $p_\alpha \,q_\beta =
\delta_{\alpha,\beta}\,\id_i$ for all $\alpha,\beta \in A$. Note
that if  $I$ is a representative set of simple objects of $\cc$ and
$( p_\alpha  , q_\alpha )_{\alpha \in \Lambda }$  is an
$I$-partition of $ X$  in the sense of Section~\ref{sec-ssss},
  then for each $i \in I$ the family $( p_\alpha  , q_\alpha
)_{\alpha \in \Lambda, \, i_\alpha=i }$ is an $i$-decomposition of
$X$. Conversely, the union of $i$-decompositions of $X$  over
all $i\in I$ is  an $I$-partition of $X$.

Let $i$ be a simple object of $\cc$ and  let $(p_\alpha\co
X \to i, q_\alpha \co i \to X)_{\alpha \in A}$ be an
$i$-decomposition of an object $X$ of $\cc$. Consider a sum
\begin{equation*}
\psfrag{p}[Bc][Bc]{\scalebox{.9}{$p_\alpha$}}
\psfrag{q}[Bc][Bc]{\scalebox{.9}{$q_\alpha$}}
\psfrag{X}[Bc][Bc]{\scalebox{.9}{$X$}}
\psfrag{i}[Br][Bc]{\scalebox{.9}{$i$}}
\sum_{\alpha \in A} \; \rsdraw{.45}{.9}{tensor0}
\end{equation*}
where the gray area contains an oriented planar graph
whose edges and vertices are labeled by objects and morphisms of
$\cc$ not
 involving $p_\alpha, q_\alpha$. By the graphical calculus of Section~\ref{sect-penrose}, this sum represents a morphism in $\cc$. Note that the tensor $
\sum_{\alpha \in A} p_\alpha \otimes q_\alpha \in \Hom_\cc(X, i)
\otimes_\kk \Hom_\cc(i,X)$ does not depend on the choice of the
$i$-decomposition of $X$.   Therefore the sum above also does not
depend on this choice.    We graphically present this sum  by
\begin{equation*}
\psfrag{X}[Bc][Bc]{\scalebox{.9}{$X$}}
\psfrag{i}[Br][Bc]{\scalebox{.9}{$i$}}
\phantom{\sum_{\alpha \in A}} \;\rsdraw{.45}{.9}{tensor1e}
\end{equation*}
where the   gray  area contains the same planar graph as before
and two curvilinear boxes are endowed  with one and the same color.
If several such pairs of boxes appear in a picture, they must
have different colors.



Note that   tensor products of objects may be depicted as
bunches of   strands. For example,
\begin{equation*}
\psfrag{Z}[Bl][Bl]{\scalebox{.9}{$X^* \otimes Y \otimes Z^*$}}
\psfrag{Y}[Bl][Bl]{\scalebox{.9}{$Y$}}
\psfrag{T}[Bl][Bl]{\scalebox{.9}{$Z$}}
\psfrag{X}[Br][Br]{\scalebox{.9}{$X$}}
\psfrag{i}[Bl][Bl]{\scalebox{.9}{$i$}}
\rsdraw{.45}{.9}{tensormult1} \qquad  \qquad \;= \; \rsdraw{.45}{.9}{tensormult2} \qquad \, \text{and} \, \qquad
\rsdraw{.45}{.9}{tensormult3} \qquad \qquad \; = \; \rsdraw{.45}{.9}{tensormult4}\;
\end{equation*}
where the equality sign means that the pictures represent the same
morphism of $\cc$.


To simplify the pictures, we will represent 
\begin{equation*}
\psfrag{e}[Br][Br]{\scalebox{.9}{$i$}}
\psfrag{R}[Br][Br]{\scalebox{.9}{$X$}}
\psfrag{X}[Bc][Bc]{\scalebox{.9}{$X$}}
\psfrag{i}[Bl][Bl]{\scalebox{.9}{$i$}}
\rsdraw{.45}{.9}{tensor2a10} \quad \text{by} \quad \rsdraw{.45}{.9}{tensor2b10}  \qquad \text{and} \qquad
\psfrag{e}[Br][Br]{\scalebox{.9}{$i$}}
\psfrag{R}[Bc][Bc]{\scalebox{.9}{$X$}}
\psfrag{X}[Bc][Bc]{\scalebox{.9}{$X$}}
\psfrag{i}[Bl][Bl]{\scalebox{.9}{$i$}}
\rsdraw{.45}{.9}{tensor2a20} \quad \text{by} \quad \rsdraw{.45}{.9}{tensor2b20}\;.
\end{equation*}

\subsection{Proof of  Lemma~\ref*{thm-rel-center-semi}}\label{sect-proof-thm-rel-center-semi-ppp}
 Fix a (finite) representative set $I$ of simple objects of
$\dd$ such that $\un \in I$. Consider the associated
$\dd$-centralizer $Z\co\cc\to\cc$ of $1_\cc$ as defined in Section
\ref{sect-coend++}. By Lemma~\ref{lem-central-HM}(a),  the functor
$Z $   extends to   a Hopf monad $((Z,Z_2,Z_0), \mu, \eta)$ on $\cc$
with structural morphisms   shown in Figure~\ref{fig-strucZ-fusion}.
\begin{figure}[t]
\begin{center}
          $\displaystyle Z_2(X,Y)=\sum_{i \in I}$\,
 \psfrag{i}[Br][Bc]{\scalebox{.85}{$i$}}
 \psfrag{X}[Bc][Bc]{\scalebox{.85}{$X$}}
 \psfrag{Y}[Bc][Bc]{\scalebox{.85}{$Y$}}
\rsdraw{.45}{.9}{Z-coprod}  $\co Z(X\otimes Y) \to Z(X)\otimes Z(Y)$, \\[1em] $\displaystyle Z_0=\sum_{i \in I}$\,
 \psfrag{i}[Br][Bc]{\scalebox{.85}{$i$}}
\rsdraw{.25}{.9}{Z-counit} $\co Z(\un) \to \un$, \\[.3em]
$\displaystyle \mu_X=\!\!\sum_{i,j,k \in I}$\;
 \psfrag{i}[Br][Bc]{\scalebox{.85}{$i$}}
 \psfrag{j}[Br][Bc]{\scalebox{.85}{$j$}}
 \psfrag{k}[Bc][Bc]{\scalebox{.85}{$k$}}
 \psfrag{X}[Bc][Bc]{\scalebox{.85}{$X$}}
 \psfrag{p}[c][c]{\scalebox{.9}{$(q^\alpha_{i \otimes j})^*$}}
 \psfrag{q}[c][c]{\scalebox{.9}{$p^\alpha_{i \otimes j}$}}
\rsdraw{.5}{.9}{Z-prod}   $\co Z^2(X ) \to Z(X) $, \\[1em]
 $\displaystyle \eta_X=\id_X \co X \to X=\un^* \otimes X \otimes \un \hookrightarrow Z(X)$, \\[.8em]
$\displaystyle s^l_X=s^r_X=\sum_{i,j \in I}$
 \psfrag{i}[Br][Bc]{\scalebox{.85}{$i$}}
 \psfrag{u}[Bc][Bc]{\scalebox{.85}{$i^*$}}
 \psfrag{j}[Br][Bc]{\scalebox{.85}{$j$}}
 \psfrag{X}[Bc][Bc]{\scalebox{.85}{$X$}}
 \psfrag{p}[c][c]{\scalebox{.9}{$q^\alpha_{j^*}\psi_i^{-1}$}}
 \psfrag{q}[c][c]{\scalebox{.9}{$p^\alpha_{j^*}$}}
 \,\rsdraw{.35}{.9}{Z-antip} $\co Z(Z(X)^*) \to X^*$.
\end{center}
\caption{Structural morphisms of the Hopf monad $Z$}
\label{fig-strucZ-fusion}
\end{figure}

For $X \in \cc$, set
$$
\gamma_X=\sum_{i,j \in I} \frac{\dim_r(i)}{\dim(\dd)} \; \,
 \psfrag{i}[Bc][Bl]{\scalebox{.85}{$i$}}
 \psfrag{j}[Bc][Bl]{\scalebox{.85}{$j$}}
 \psfrag{X}[Bl][Bl]{\scalebox{.85}{$X$}}
 \rsdraw{.35}{.9}{separable1} \co X \to Z^2(X)=Z(Z(X)).
$$
It is clear that $\gamma_X$ is natural in $X\in\cc$.
\begin{lem}\label{lem-Z-separable}
The natural transformation $\gamma=\{\gamma_X\co X \to Z^2(X)\}_{X\in\cc}$ satisfies
\begin{equation*}
 \mu_X \gamma_X= \eta_X  \quad \text{and} \quad Z(\mu_X) \gamma_{Z(X)}=\mu_{Z(X)} Z(\gamma_X)
\end{equation*} for any $X\in\cc$.
\end{lem}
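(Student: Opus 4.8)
The plan is to verify the two equalities by direct computation in the graphical calculus, substituting the explicit formulas for $\mu$, $\eta$, $Z_2$, $Z_0$ displayed in Figure~\ref{fig-strucZ-fusion} together with the definition of $\gamma$, and simplifying via the extension of the calculus to partitions given in Section~\ref{sect-graph-calc-in-pre-fusion}. The only non-formal inputs are: the orthogonality relations $p_\alpha q_\beta=\delta_{\alpha,\beta}\,\id$ and $\sum_\alpha q_\alpha p_\alpha=\id$ for $i$-decompositions; the evaluation of a loop coloured by a simple object as one of its two dimensions; the hypothesis $\dim(\dd)=\sum_{i\in I}\dim_l(i)\dim_r(i)\neq 0$, which is precisely what makes the scalar $\dim_r(i)/\dim(\dd)$ appearing in $\gamma_X$ meaningful; and isotopy invariance together with the naturality of all the transformations involved. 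Conceptually, the two equalities say that $\gamma$ is a separability structure on the Hopf monad $Z$, which is exactly what will later force $\cc^{Z}\simeq\zz(\cc;\dd)$ to be split semisimple.

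For the identity $\mu_X\gamma_X=\eta_X$ I would compose the diagram defining $\gamma_X$ with the diagram defining $\mu_X$. This yields a sum over $i,j,k\in I$ and over an $(i\otimes j)$-decomposition; the two layers of $Z$ produced by $\gamma_X$ are fused by $\mu_X$, and one of the coloured strands closes into a loop. Using the orthogonality of the $(i\otimes j)$-decomposition to annihilate every summand except the one in which the simple object $k$ is $\un$, one is left with $\sum_{i,j}\tfrac{\dim_r(i)}{\dim(\dd)}\,N^{\un}_{i\otimes j}\,(\text{loop})$ acting on the $\un$-summand of $Z(X)$. Since $N^{\un}_{i\otimes j}=1$ when $j\simeq i^{*}$ and $0$ otherwise, and the surviving loop supplies the complementary dimension factor so that each surviving term equals $\tfrac{\dim_l(i)\dim_r(i)}{\dim(\dd)}$ times the $\un$-summand, summing over $i\in I$ collapses the whole expression to the canonical inclusion $X=\un^{*}\otimes X\otimes\un\hookrightarrow Z(X)$, which is $\eta_X$.

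For the identity $Z(\mu_X)\gamma_{Z(X)}=\mu_{Z(X)}Z(\gamma_X)$ I would expand both sides, using the naturality of $\gamma$ (to move $\gamma$ past the summands of $Z$), the associativity $\mu_X Z(\mu_X)=\mu_X\mu_{Z(X)}$, the compatibility of $\mu$ with $Z_2$, and the same loop, orthogonality and dimension inputs. Both sides should reduce to one and the same diagram, namely $Z(X)$ carrying two handle loops attached symmetrically with matching scalar weights; identifying these two expressions amounts to a handle-slide move, licensed by the dinaturality of the universal dinatural transformation $\rho$ of the centralizer (equivalently, by the parameter and Fubini theorems for coends used to construct $\mu$). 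This step, together with the bookkeeping of all the dimension factors, is where the real work lies and is the main obstacle; a convenient way to organise it is to observe that it suffices to check the identity after composition with $\rho$, and then to reduce to the Hopf monad attached to $\dd$ itself, where everything becomes a finite computation with $(i\otimes j)$-decompositions.
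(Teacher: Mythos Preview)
Your treatment of the first equality $\mu_X\gamma_X=\eta_X$ matches the paper's argument: orthogonality forces $j\simeq i^*$ and then $k=\un$, a surviving loop contributes the missing dimension factor, and summing over $i$ against $\dim(\dd)^{-1}$ collapses everything to $\eta_X$.

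For the second equality your outline has a genuine gap. The paper does \emph{not} reduce the identity to dinaturality of $\rho$ or to any abstract coend property; checking after composition with $\rho$ simply reproduces the explicit sum you started with and does not by itself give the equality. What actually happens is that, after expanding both sides, one is left comparing two triple sums over $i,k,n\in I$ which differ by a ``sliding'' of the paired boxes from one strand to another. The key step is the identity
\[
\sum_{\alpha}\;(\text{$k$-decomposition of }n\otimes i^* \text{ slid to }k^*\otimes n)
\;=\;\frac{\dim_l(k)}{\dim_r(i)}\;\sum_{\beta}\;(\text{$i$-decomposition of }k^*\otimes n),
\]
(equation \eqref{eq-slidepict} in the paper), and this is \emph{not} a formal handle-slide: the dimension ratio $\dim_l(k)/\dim_r(i)$ is essential. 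The paper proves it by an explicit change of decomposition: if $(p_\alpha,q_\alpha)_\alpha$ is a $k$-decomposition of $n\otimes i^*$, then rotating the diagrams gives morphisms $P_\alpha\co k^*\otimes n\to i$ and $Q_\alpha\co i\to k^*\otimes n$ with $P_\alpha Q_\beta=\delta_{\alpha,\beta}\tfrac{\dim_l(k)}{\dim_r(i)}\id_i$, so that $\bigl(\tfrac{\dim_r(i)}{\dim_l(k)}P_\alpha,\,Q_\alpha\bigr)_\alpha$ is an $i$-decomposition of $k^*\otimes n$. Once this is in hand, the two sides of $Z(\mu_X)\gamma_{Z(X)}=\mu_{Z(X)}Z(\gamma_X)$ match term by term (after also using that $j\simeq k^*$ forces $\dim_l(k)=\dim_r(j)$). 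Your plan should be amended to include this explicit change-of-decomposition lemma; coend dinaturality alone will not produce the needed dimension factor.
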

In terminology of \cite[Section 6]{BV2},
Lemma~\ref{lem-Z-separable} may be reformulated by saying that the
Hopf monad $Z$ is separable.
\begin{proof}
For $X \in \cc$,
\begin{center}
$\displaystyle
\mu_X \gamma_X=\sum_{i,j,k \in I} \frac{\dim_r(i)}{\dim(\dd)} \; \;
 \psfrag{i}[Bl][Bl]{\scalebox{.85}{$i$}}
  \psfrag{k}[Bl][Bl]{\scalebox{.85}{$k$}}
 \psfrag{j}[Bc][Bl]{\scalebox{.85}{$j$}}
 \psfrag{X}[Bl][Bl]{\scalebox{.85}{$X$}}
 \rsdraw{.35}{.9}{separable2} \; \;\overset{(i)}{=}\; \sum_{i \in I} \frac{\dim_r(i)}{\dim(\dd)} \; \;
 \psfrag{i}[Bl][Bl]{\scalebox{.85}{$i$}}
   \psfrag{k}[Bl][Bl]{\scalebox{.85}{$k$}}
 \psfrag{X}[Bl][Bl]{\scalebox{.85}{$X$}}
 \rsdraw{.35}{.9}{separable3}
$\\[.8em]
$\displaystyle
\overset{(ii)}{=} \sum_{i \in I} \frac{\dim_r(i)}{\dim(\dd)} \; \;
 \psfrag{i}[Bl][Bl]{\scalebox{.85}{$i$}}
 \psfrag{X}[Bl][Bl]{\scalebox{.85}{$X$}}
 \rsdraw{.35}{.9}{separable4}
 \;\;\overset{(iii)}{=}\; \sum_{i \in I} \frac{\dim_r(i)}{\dim(\dd)\dim_l(i)} \; \;
 \psfrag{i}[Bl][Bl]{\scalebox{.85}{$i$}}
 \psfrag{X}[Bl][Bl]{\scalebox{.85}{$X$}}
 \rsdraw{.35}{.9}{separable5}
$\\[.8em]
$\displaystyle
=\; \sum_{i \in I} \frac{\dim_r(i)\dim_l(i)}{\dim(\dd)} \; \;
 \psfrag{i}[Bl][Bl]{\scalebox{.85}{$i$}}
 \psfrag{X}[Bl][Bl]{\scalebox{.85}{$X$}}
 \rsdraw{.35}{.9}{separable6}
 \;\;\overset{(iv)}{=}\;
 \psfrag{X}[Bl][Bl]{\scalebox{.85}{$X$}}
 \rsdraw{.35}{.9}{separable6}\;\; \overset{(v)}{=}\; \eta_X,
$
\end{center}
where the dotted lines represent $\id_\un$ and can be removed
without changing the morphisms (we depicted them in order to
remember which factor of~$Z(X)$ is concerned). In the above, the
equality $(i)$ follows from the fact that  there are no non-zero
morphisms between  non-isomorphic simple objects,
 and so
\begin{equation}\label{eq-derijd}
\psfrag{a}[Bc][Bc]{\scalebox{.9}{$i$}}
\psfrag{z}[Bc][Bc]{\scalebox{.9}{$j$}}
 \rsdraw{.45}{.9}{rab6} \; = \; \delta_{j,i^*} \; \psfrag{z}[Bc][Bc]{\scalebox{.9}{$i$}} \rsdraw{.45}{.9}{rab7}
\end{equation}
where $\delta_{j,i^*}=1$ if $j$ is isomorphic to $i^*$ and
$\delta_{j,i^*}=0$ otherwise. (If $j$ is isomorphic to
 $i^*$, then the right picture implicitly involves a box labeled with an isomorphism  $i^*\to j$ attached to the top of the left string and a box labeled with
 the inverse isomorphism
  $j\to i^*$  attached to the bottom of the right string.) The equality (ii) follows
from the fact that for $k \in I$, a morphism $\un \to k$ is zero
unless $k=\un$. The equality (iii) follows from the equality
\begin{equation}\label{eq-triangle-coev}
\psfrag{i}[Br][Br]{\scalebox{.9}{$i\,$}}
\psfrag{j}[Bl][Bl]{\scalebox{.9}{$i$}}
 \rsdraw{.45}{.9}{tensor8a}\, = \, \frac{1}{\dim_l (i)}\; \psfrag{i}[Bc][Bc]{\scalebox{.9}{$i$}}
\rsdraw{.45}{.9}{tensor8b} \;,
\end{equation}
which is a consequence of the duality and the fact that
$\Hom_\cc(i,i)=\kk$. Finally the equalities (iv) and (v) follow  from the definitions of $\dim(\dd)$ and $\eta_X$, respectively.

Let us prove the second equality of the lemma. We have:
\begin{center}
$\displaystyle
Z(\mu_X) \gamma_{Z(X)}=\sum_{i,j,k,n \in I} \frac{\dim_r(i)}{\dim(\dd)} \; \;
 \psfrag{i}[Bl][Bl]{\scalebox{.85}{$i$}}
  \psfrag{k}[Bl][Bl]{\scalebox{.85}{$k$}}
   \psfrag{n}[Bl][Bl]{\scalebox{.85}{$n$}}
 \psfrag{j}[Br][Bc]{\scalebox{.85}{$j$}}
 \psfrag{X}[Bl][Bl]{\scalebox{.85}{$X$}}
 \rsdraw{.35}{.9}{separable7}
$\\[.8em]
$\displaystyle
\overset{(i)}{=} \sum_{i,k,n \in I} \frac{\dim_r(i)}{\dim(\dd)} \; \;
 \psfrag{i}[Bl][Bl]{\scalebox{.85}{$i$}}
   \psfrag{k}[Bl][Bl]{\scalebox{.85}{$k$}}
   \psfrag{n}[Bl][Bl]{\scalebox{.85}{$n$}}
 \psfrag{X}[Bl][Bl]{\scalebox{.85}{$X$}}
 \rsdraw{.35}{.9}{separable8}$\\[.8em]
$\displaystyle
\overset{(ii)}{=}\;
 \sum_{i,k,n \in I} \frac{\dim_l(k)}{\dim(\dd)} \; \;
 \psfrag{i}[Bl][Bl]{\scalebox{.85}{$i$}}
   \psfrag{k}[Bl][Bl]{\scalebox{.85}{$k$}}
   \psfrag{n}[Bl][Bl]{\scalebox{.85}{$n$}}
 \psfrag{X}[Bl][Bl]{\scalebox{.85}{$X$}}
 \rsdraw{.35}{.9}{separable9}
$\\[.8em]
$\displaystyle
\overset{(iii)}{=}\;
 \sum_{i,j,k,n \in I} \frac{\dim_r(j)}{\dim(\dd)} \; \;
 \psfrag{i}[Bl][Bl]{\scalebox{.85}{$i$}}
   \psfrag{k}[Bl][Bl]{\scalebox{.85}{$k$}}
   \psfrag{n}[Bl][Bl]{\scalebox{.85}{$n$}}
    \psfrag{j}[Bl][Bl]{\scalebox{.85}{$j$}}
 \psfrag{X}[Bl][Bl]{\scalebox{.85}{$X$}}
 \rsdraw{.35}{.9}{separable10} \; \; = \; \mu_{Z(X)}Z(\gamma_X).
$
\end{center}
In the above, the equality $(i)$ follows from \eqref{eq-derijd}, $(iii)$ follows from \eqref{eq-derijd} and
 the fact that if $j \simeq k^*$ then $\dim_l(k)=\dim_l(j^*)=\dim_r(j)$, and $(ii)$ follows from the following equality:
 for  any  $i,k,n \in I$,
\begin{equation}\label{eq-slidepict}
 \psfrag{i}[Bl][Bl]{\scalebox{.85}{$i$}}
   \psfrag{k}[Br][Br]{\scalebox{.85}{$k$}}
   \psfrag{n}[Bl][Bl]{\scalebox{.85}{$n$}}
 \rsdraw{.35}{.9}{separable11}=\frac{\dim_l(k)}{\dim_r(i)} \;\rsdraw{.35}{.9}{separable12}\;.
\end{equation}
It remains to prove \eqref{eq-slidepict}. Let $(p_\alpha\co n \otimes i^* \to k,q_\alpha \co k \to n \otimes i^*)_{\alpha \in A}$ be a $k$-decomposition of $n \otimes i^*$. For $\alpha \in A$, set
$$
P_\alpha=
 \psfrag{i}[Bl][Bl]{\scalebox{.9}{$i$}}
   \psfrag{k}[Br][Br]{\scalebox{.9}{$k$}}
   \psfrag{n}[Bl][Bl]{\scalebox{.9}{$n$}}
    \psfrag{p}[Bc][Bc]{\scalebox{1}{$p_\alpha$}}
  \rsdraw{.35}{.9}{separable13}
\quad \text{and} \quad
Q_\alpha=
 \psfrag{i}[Bl][Bl]{\scalebox{.9}{$i$}}
   \psfrag{k}[Br][Br]{\scalebox{.9}{$k$}}
   \psfrag{n}[Bl][Bl]{\scalebox{.9}{$n$}}
    \psfrag{q}[Bc][Bc]{\scalebox{1}{$q_\alpha$}}
  \rsdraw{.35}{.9}{separable14}  \;.
$$
For $\alpha,\beta \in A$,
$$
P_\alpha Q_\beta=\frac{\tr_r(P_\alpha Q_\beta)}{\dim_r(i)} \id_i=\frac{\tr_l(p_\alpha q_\beta)}{\dim_r(i)} \id_i
= \frac{\tr_l(\delta_{\alpha,\beta} \, \id_k)}{\dim_r(i)} \id_i= \delta_{\alpha,\beta}\, \frac{\dim_l(k)}{\dim_r(i)} \id_i.
$$
Therefore, since ${\rm  {card}} (A)=N_{n \otimes i^*}^k=N_{k^* \otimes n}^i$, we obtain that the family
$$
\left ( \frac{\dim_r(i)}{\dim_l(k)}P_\alpha,Q_\alpha \right)_{\alpha \in A}
$$
is  an  $i$-decomposition of $k^* \otimes n$, from which we
deduce \eqref{eq-slidepict} and the lemma.
\end{proof}

\begin{lem}\label{lem-mono-direct-sum}
 Any monomorphism in   the category  $\cc^Z$   of $Z$-modules   has a retract.
\end{lem}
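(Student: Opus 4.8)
The plan is to deduce the statement from the separability of the Hopf monad $Z$ proved in Lemma~\ref{lem-Z-separable}. Let $q\co (M,r)\to (N,s)$ be a monomorphism in $\cc^Z$, with underlying morphism $\underline q=U_Z(q)\co M\to N$ in $\cc$. First I would note that $\underline q$ is a monomorphism in $\cc$: the forgetful functor $U_Z$ is right adjoint to the free module functor $F_Z$ (Section~\ref{Monads}), hence preserves monomorphisms (if $U_Z(q)u=U_Z(q)v$, transpose along the adjunction and use that $q$ is monic to conclude $u=v$). Since $\cc$ is split semisimple over a field, every monomorphism in $\cc$ splits (decompose source and target into simples and use Schur's lemma), so there is $g\co N\to M$ in $\cc$ with $g\,\underline q=\id_M$. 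It remains to upgrade $g$ to a retract of $q$ \emph{inside} $\cc^Z$.

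The key step is to build, from the separability transformation $\gamma=\{\gamma_X\co X\to Z^2(X)\}_{X\in\cc}$, a \emph{natural} section of the counit of the adjunction $F_Z\dashv U_Z$ in $\cc^Z$. For $(M,r)\in\cc^Z$ put $\iota_M=Z(r)\,\gamma_M\co M\to Z(M)$. Using the $Z$-module axioms, the naturality of $\gamma$ and of $\mu$, and the two identities $\mu_X\gamma_X=\eta_X$ and $Z(\mu_X)\gamma_{Z(X)}=\mu_{Z(X)}Z(\gamma_X)$ of Lemma~\ref{lem-Z-separable}, one checks: (i) $r\,\iota_M=\id_M$; (ii) $\iota_M$ is a morphism of $Z$-modules $(M,r)\to F_Z(M)=(Z(M),\mu_M)$ — both sides of the required identity $\iota_M\,r=\mu_M Z(\iota_M)$ reduce to $Z(r)\,Z(\mu_M)\,\gamma_{Z(M)}$; (iii) $\iota=\{\iota_M\}_{(M,r)\in\cc^Z}$ is natural, i.e.\ $\iota_{M'}\,\underline f=Z(\underline f)\,\iota_M$ for every $Z$-module morphism $f\co (M,r)\to(M',r')$. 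I expect this verification — especially (ii) — to be the main (though routine) computation of the proof; the point is that it uses only formal properties of Hopf monads together with Lemma~\ref{lem-Z-separable}.

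Granting this, the retract is obtained as a composite of morphisms already known to live in $\cc^Z$. Indeed $F_Z(g)=Z(g)\co F_Z(N)\to F_Z(M)$ is a morphism of $\cc^Z$, the action $r\co F_Z(M)\to (M,r)$ is a morphism of $\cc^Z$, and $\iota_N\co (N,s)\to F_Z(N)$ is one by (ii); hence
$$
p:=r\circ Z(g)\circ \iota_N\co (N,s)\to (M,r)
$$
is a morphism in $\cc^Z$ (on underlying objects $p=r\,Z(g)\,Z(s)\,\gamma_N$). Using naturality of $\iota$ applied to the module morphism $q$, followed by $g\,\underline q=\id_M$, one computes
$$
p\,q=r\,Z(g)\,\iota_N\,\underline q=r\,Z(g)\,Z(\underline q)\,\iota_M=r\,Z(g\,\underline q)\,\iota_M=r\,\iota_M=\id_{(M,r)},
$$
so $p$ is the desired retract. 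Presenting $p$ structurally as $r\circ F_Z(g)\circ\iota_N$, rather than manipulating the formula $r\,Z(g)\,Z(s)\,\gamma_N$ by hand, is what keeps the argument short.
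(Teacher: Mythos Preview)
Your proof is correct and follows essentially the same approach as the paper: split the underlying monomorphism in $\cc$, then use the separability transformation $\gamma$ of Lemma~\ref{lem-Z-separable} to upgrade this splitting to a retract in $\cc^Z$, yielding the very same formula (your $p=r\,Z(g)\,Z(s)\,\gamma_N$ is the paper's $p=sZ(vr)\gamma_M$ with the roles of $(M,r)$ and $(N,s)$ swapped). Your packaging via the natural section $\iota_M=Z(r)\gamma_M$ of the counit is a clean way to organize the verification that $p$ is $Z$-linear and that $pq=\id$, but the computations it encodes are the same ones the paper carries out directly.
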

\begin{proof}
Let $q\co (N,s) \to (M,r)$ be a monomorphism   in $\cc^Z$. The
forgetful functor  $  \cc^Z \to \cc$ is a right adjoint of a
functor $\cc \to \cc^Z$ and so carries monomorphisms in $\cc^Z$ to
monomorphisms in $\cc$. Thus, $q\co N \to M$ is a monomorphism
in~$\cc$. Expanding $N$ and $M$ as direct sums of simple objects of
$\cc$ we can represent $q$ by a matrix over the field $\kk$. Using
 standard arguments of linear algebra, we conclude that
there is a morphism $v \co M \to N$ in $\cc$ such that $vq=\id_N$.
Set $p=sZ(vr)\gamma_M\co M \to N$ where $\gamma$ is the natural
transformation of Lemma~\ref{lem-Z-separable}. Then
\begin{align*}
sZ(p)&=sZ(s)Z^2(vr)Z(\gamma_M)\\
&=s\mu_NZ^2(vr)Z(\gamma_M) \quad \text{since $s$ is a $Z$-action,}\\
&=sZ(vr)\mu_{Z(M)}Z(\gamma_M)\quad \text{by the naturality of $\mu$,}\\
&=sZ(vr\mu_M) \gamma_{Z(M)} \quad \text{by Lemma~\ref{lem-Z-separable},}\\
&=sZ(vr)Z^2(r) \gamma_{Z(M)}\quad \text{since $r$ is a $Z$-action,}\\
&=sZ(vr)\gamma_M r\quad \text{by the naturality of $\gamma$,}\\
&=pr.
\end{align*}
Thus, $p$ is a morphism $(M,r)\to (N,s)$ in $\cc^Z$. Also,
\begin{align*}
pq&=sZ(vr)\gamma_Mq\\
&=sZ(vrZ(q))\gamma_N \quad \text{by the naturality of $\gamma$,}\\
&=sZ(vqs)\gamma_N \quad \text{since $q$ is $Z$-linear,}\\
&=sZ(s)\gamma_N \quad \text{since $vq=\id_N$,}\\
&=s\mu_N\gamma_N \quad \text{since $s$ is a $Z$-action,}\\
&=s\eta_N \quad \text{by Lemma~\ref{lem-Z-separable},}\\
&=\id_N \quad \text{since $s$ is a $Z$-action.}
\end{align*}
Hence, $p$ is a retract of $q$.
\end{proof}

We can now complete the proof of
Lemma~\ref*{thm-rel-center-semi}. Since $\cc$ is a split semisimple
category over $\kk$   which is assumed to be a field,
$\cc$ is an abelian $\kk$-additive category with finite-dimensional
Hom-spaces. Since the Hopf monad $Z$ is $\kk$-linear and preserves
cokernels (because $Z$ has a right adjoint by
\cite[Corollary~3.12]{BV2}), we deduce that $\cc^Z$ is an abelian
$\kk$-additive category with finite-dimensional Hom-spaces.

Combining Lemmas~\ref{lem-morphism-zero-or-iso}
and~\ref{lem-mono-direct-sum}, we obtain that the Hom-spaces between
non-isomorphic indecomposable $Z$-modules are zero, and
  the  algebra of endomorphisms of an indecomposable $Z$-module is a  finite-dimensional division $\kk$-algebra.
 Since the field $\kk$ is algebraically closed, such an algebra is isomorphic to
 $\kk$. Thus, all indecomposable $Z$-modules are simple.  The finite-dimensionality of the   End-spaces   in
$\cc^Z$  implies that any $Z$-module is  a finite direct sum of
indecomposable $Z$-modules. Hence,   $\cc^Z$ is split semisimple. By
Lemma~\ref{lem-central-HM}(b),(c),
 the  $\kk$-additive categories  $\zz(\cc;\dd)$
and $ \cc^Z$ are isomorphic. Therefore, $\zz(\cc;\dd)$ is split
semisimple. This concludes the proof of
Lemma~\ref{thm-rel-center-semi}.

\subsection{Proof of Lemma~\ref*{lem-center-G-fusion}}\label{sect-bigproof-semi-e}
Since $\cc$ is a $G$-fusion category, it is split semisimple, its unit object $\un$ is simple,  and
each $\cc_\alpha$ with $\alpha\in G$ is finite  split semisimple.
   Lemma
~\ref{thm-rel-center-semi} applied to $\dd=\cc_1$ yields that
$\zz_G(\cc)=\zz(\cc;\cc_1)$ is split semisimple. The unit object of
$\zz_G(\cc)$ is simple because   $\un_\cc$ is simple.
   It remains to prove that the
set of isomorphism classes of simple objects of $\zz_\alpha(G)$ is
finite for all   $\alpha \in G$.
In the  notation of Section~\ref{Free objects}, it is enough to prove that
the set of isomorphism classes of simple objects of the category $\cc^Z_{\alpha}$
is finite.    By Lemma~\ref{lem-finite-semi}, it suffices to prove that
the coend $$\int^{(N,s)\in \cc^Z_{\alpha}} (N,s)^* \otimes (N,s)$$
exists in $\cc^Z$.   Lemma~\ref{lem-HM-coend} and the equality
$1_{\cc^Z}\rtimes Z=Z$ imply that it is enough to establish the
existence of a coend $\int^{Y\in\cc_\alpha} Z(Y)^* \otimes Y$
  in $\cc$.
  The latter  follows from Lemma~\ref{lem-finite-semi--} because $\cc_\alpha$ is a finite
  split
semisimple subcategory  of $\cc$.

\section{Crossing and $G$-braiding via   free functors}\label{sect-free-functor-big}

In this  section,   $\cc$ is a rigid $G$-graded category which is
\emph{$G$-centralizable} in the sense that $\cc$ is
$\cc_\alpha$-centralizable for all $\alpha \in G$  (see
Section~\ref{sect-coend}). By Section~\ref{Free objects},   the
forgetful functor $ \zz_G(\cc) \to \cc$ has a left adjoint $\ff\co
\cc \to \zz_G(\cc)$, and the   objects of $\zz_G(\cc)$ isomorphic to
  objects in the image of  $\ff$ are said to be free. We introduce here a larger class of $G$-free objects of
$\zz_G(\cc)$ and  compute for them the crossing and the $G$-braiding
in $\zz_G(\cc)$.

 \subsection{Free functors}\label{sect-def-free-hb}
By assumption,  for all $\alpha \in G$ and $X \in \cc$, the coend $Z_\alpha(X)=\int^{Y \in \cc_\alpha} \leftidx{^\vee}{}Y \otimes X \otimes Y$ exists in $\cc$.
Let
$Z_\alpha=Z^{\cc_\alpha}_{1_\cc}\colon \cc\to \cc$ be a $\cc_\alpha$-centralizer of $1_\cc$
with  universal dinatural transformation
\begin{equation}\label{eq-def-rho-of-Z}
\rho^\alpha=\{\rho^\alpha_{X,Y}\co \leftidx{^\vee}{Y}{} \otimes X
\otimes Y \to Z_\alpha(X)\}_{X\in \cc, Y \in \cc_\alpha}.
\end{equation}
If $X \in \cc_\beta $ with $\beta \in G$, then we
always   choose $Z_\alpha(X)$ in $ \cc_{\alpha^{-1} \beta
\alpha}$.

For any $X\in
\cc$ and $Y \in \cc_\alpha$, set
\begin{equation}\label{eq-def-partial}
\partial_{X,Y}^\alpha=(\id_Y \otimes \rho^\alpha_{X,Y})(\lcoev_Y \otimes \id_{X \otimes Y})\co X \otimes Y \to Y \otimes Z_\alpha(X),
\end{equation}
which we depict as
$$
\partial_{X,Y}^\alpha=
\psfrag{Y}[Bc][Bc]{\scalebox{.7}{$Y$}}
\psfrag{a}[Bc][Bc]{\scalebox{.7}{$\alpha$}}
\psfrag{X}[Bc][Bc]{\scalebox{.7}{$X$}}
\psfrag{T}[Bc][Bc]{\scalebox{.7}{$Y$}}
\psfrag{Z}[Bc][Bc]{\scalebox{.7}{$Z_\alpha(X)$}}
\rsdraw{.45}{.9}{defpartiala}\;.
$$
For any $ X_1,X_2 \in \cc$,   the parameter theorem and the Fubini
theorem for coends (see \cite{ML1}) imply the existence of  unique
morphisms
$$  (Z_\alpha)_2(X_1,X_2) \co Z_\alpha(X_1 \otimes X_2) \to Z_\alpha(X_1) \otimes
Z_\alpha(X_2), \quad (Z_\alpha)_0\co Z_\alpha(\un) \to \un$$ such
that the first two equalities of Figure~\ref{fig-def-Zgen} are
satisfied for all $Y  \in \cc_\alpha$.  Similarly, for any
$\alpha,\beta \in G$ and $X\in \cc$ there is a unique morphism $
Z_2(\alpha,\beta)_X\co Z_\alpha Z_\beta(X) \to Z_{\beta\alpha}(X)$
%
such that the third equality of Figure~\ref{fig-def-Zgen} is
satisfied for all $Y  \in \cc_\alpha$ and $Y' \in \cc_\beta$.
Finally, for all $X\in \cc$, set $  (Z_0)_X=\partial^1_{X,\un}\co X
\to Z_1(X)$.
\begin{figure}[t]
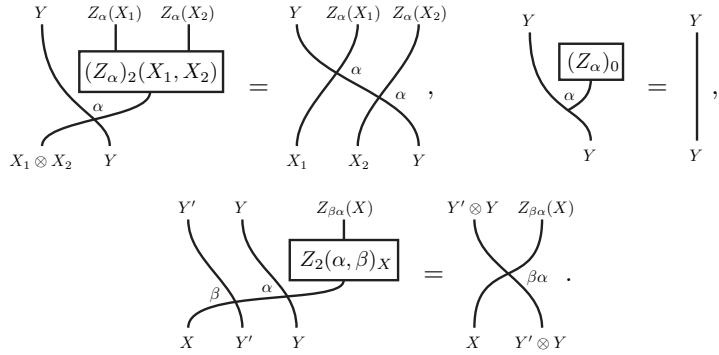

\begin{center}
\psfrag{A}[Bc][Bc]{\scalebox{.7}{$Z_\alpha(X_1)$}}
\psfrag{B}[Bc][Bc]{\scalebox{.7}{$Z_\alpha(X_2)$}}
\psfrag{C}[Bc][Bc]{\scalebox{.7}{$Y$}}
\psfrag{U}[Bc][Bc]{\scalebox{.7}{$Y$}}
\psfrag{X}[Bc][Bc]{\scalebox{.7}{$X_1 \otimes X_2$}}
\psfrag{E}[Bc][Bc]{\scalebox{.7}{$X_1$}}
\psfrag{H}[Bc][Bc]{\scalebox{.7}{$X_2$}}
\psfrag{T}[Bc][Bc]{\scalebox{.9}{$(Z_\alpha)_2(X_1,X_2)$}}
\psfrag{a}[Bc][Bc]{\scalebox{.7}{$\alpha$}}
\rsdraw{.45}{.9}{defdela} \, $=$ \, \rsdraw{.45}{.9}{defdel2a},
\psfrag{U}[Bc][Bc]{\scalebox{.7}{$Y$}}
\psfrag{X}[Bc][Bc]{\scalebox{.7}{$Y$}}
\psfrag{C}[Bc][Bc]{\scalebox{.7}{$Z_1(\un)$}}
\psfrag{r}[Bc][Bc]{\scalebox{.9}{$(Z_\alpha)_0$}}
\psfrag{a}[Bc][Bc]{\scalebox{.7}{$\alpha$}}
\qquad \quad \rsdraw{.45}{.9}{defZ0a} \, $=$ \, \rsdraw{.45}{.9}{defZ02a}\;,\\[1em]
\psfrag{A}[Bc][Bc]{\scalebox{.7}{$Y'$}}
\psfrag{B}[Bc][Bc]{\scalebox{.7}{$Y$}}
\psfrag{C}[Bc][Bc]{\scalebox{.7}{$Y'$}}
\psfrag{R}[Bc][Bc]{\scalebox{.7}{$Y$}}
\psfrag{X}[Bc][Bc]{\scalebox{.7}{$X$}}
\psfrag{U}[Bc][Bc]{\scalebox{.7}{$Y' \otimes Y$}}
\psfrag{L}[Bc][Bc]{\scalebox{.7}{$Z_{\beta\alpha}(X)$}}
\psfrag{r}[Bc][Bc]{\scalebox{.9}{$Z_2(\alpha,\beta)_X$}}
\psfrag{a}[Bc][Bc]{\scalebox{.7}{$\alpha$}}
\psfrag{b}[Bc][Bc]{\scalebox{.7}{$\beta$}}
\psfrag{k}[Bl][Bl]{\scalebox{.7}{$\beta\alpha$}}
\rsdraw{.45}{.9}{defma} \, $=$ \, \rsdraw{.45}{.9}{defm2a}\, .
\end{center}
\caption{The structural morphisms of $Z$}
\label{fig-def-Zgen}
\end{figure}
\begin{lem}\label{lem-Z-def-global}
The  endofunctor
$Z_\alpha=(Z_\alpha,(Z_\alpha)_2,(Z_\alpha)_0)$ of $\cc$ is
comonoidal for all $\alpha\in G$. The formula $\alpha \mapsto
 Z_\alpha $ defines a monoidal
 functor $$Z=(Z,Z_2,Z_0)\co \overline{G} \to
\Endcm(\cc)$$ such that $Z_\alpha(\cc_\beta)\subset\cc_{\alpha^{-1}\beta\alpha}$ for all $\alpha,\beta \in G$.
\end{lem}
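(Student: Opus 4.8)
All the assertions are verified by the standard coend technique: to show that two morphisms out of a coend $Z_\gamma(X)$---or out of a tensor product of such coends, which is again a coend since in a rigid category both $V\otimes-$ and $-\otimes V$ preserve colimits---coincide, it suffices to precompose with the twisted universal dinatural family $\partial^\gamma$ of \eqref{eq-def-partial} and to check the resulting identity, which is then a routine diagram chase inside $\cc$. A large part of what is needed repeats verbatim the proof of Lemma~\ref{lem-central-HM}(a) (equivalently, of Theorems~5.6 and~5.12 of \cite{BV3}) with the subcategory indexing the defining coend replaced by $\cc_\alpha$; the only genuinely new input is the family of natural transformations $Z_2(\alpha,\beta)\co Z_\alpha Z_\beta\to Z_{\beta\alpha}$ relating distinct centralizers, together with its compatibilities.

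First I would dispose of the grading statement. Fix $\alpha,\beta\in G$ and $X\in\cc_\beta$. For $Y,Y'\in\cc_\alpha$ one has ${}^\vee Y\in\cc_{\alpha^{-1}}$ by our choice of duals in a graded pivotal category, hence ${}^\vee Y\otimes X\otimes Y'\in\cc_{\alpha^{-1}\beta\alpha}$, so the functor $\cc_\alpha^\opp\times\cc_\alpha\to\cc$ whose coend is $Z_\alpha(X)$ is valued in $\cc_{\alpha^{-1}\beta\alpha}$. Decomposing the coend as $Z_\alpha(X)=\bigoplus_{\delta\in G}Z_\alpha(X)_\delta$ and using that $\Hom_\cc$ vanishes between objects lying in different components, every component $\rho^\alpha_{X,Y}$ of the universal dinatural transformation factors through the summand $Z_\alpha(X)_{\alpha^{-1}\beta\alpha}$; hence that summand, equipped with $\rho^\alpha$, is again a coend of the same functor. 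We may therefore choose $Z_\alpha(X)\in\cc_{\alpha^{-1}\beta\alpha}$, as was announced before the statement, and then $Z_\alpha$ respects the grading on morphisms as well.

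Next I would check that each triple $(Z_\alpha,(Z_\alpha)_2,(Z_\alpha)_0)$ is a comonoidal endofunctor of $\cc$. The coassociativity of $(Z_\alpha)_2$ and the counit condition for $(Z_\alpha)_0$ are obtained exactly as in the proof of Lemma~\ref{lem-central-HM}(a): both sides of each identity issue from $Z_\alpha(X_1\otimes X_2\otimes X_3)$, resp.\ $Z_\alpha(X)$, and agree after precomposition with $\partial^\alpha$ thanks to the first two defining equalities of Figure~\ref{fig-def-Zgen} and the (di)naturality of $\rho^\alpha$. The fact that $\cc_\alpha$ is \emph{not} a monoidal subcategory when $\alpha\neq1$ causes no difficulty here, since comonoidality of $Z_\alpha$ only refers to the monoidal structure of the ambient category $\cc$ and to the coend indexed by $\cc_\alpha$, never to a tensor product of objects of $\cc_\alpha$.

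It remains to prove that $Z=(Z,Z_2,Z_0)$ is a monoidal functor $\overline G\to\Endcm(\cc)$, and this is where the bulk of the work lies. One must keep in mind that the monoidal product of $\overline G$ is the opposite group law while that of $\Endcm(\cc)$ is composition of endofunctors, so $Z_2(\alpha,\beta)$ is to be read as a natural transformation $Z_\alpha Z_\beta\to Z_{\beta\alpha}$ and $Z_0$ as a natural transformation $1_\cc\to Z_1$. Three points must be checked, each by the coend argument above: (i) each $Z_2(\alpha,\beta)$ is a \emph{comonoidal} natural transformation, i.e.\ it is compatible with $(Z_\alpha)_2,(Z_\beta)_2,(Z_{\beta\alpha})_2$ and with the corresponding counit data---here one precomposes with the iterated family $\partial^\alpha$ then $\partial^\beta$ and uses the third equality of Figure~\ref{fig-def-Zgen}---and likewise $Z_0\co1_\cc\to Z_1$ is comonoidal, which follows by specializing the defining equalities to $Y=\un$ and using ${}^\vee\un\cong\un$; (ii) the associativity axiom \eqref{stmonoidal1} for $Z_2$, which for $\alpha,\beta,\gamma\in G$ and $X\in\cc$ unwinds to the equality of two morphisms $Z_\alpha Z_\beta Z_\gamma(X)\to Z_{\gamma\beta\alpha}(X)$ having exactly the shape of diagram \eqref{crossing5}; and (iii) the unit axiom \eqref{stmonoidal2} relating $Z_2(1,\alpha)$, $Z_2(\alpha,1)$ and $Z_0$, which is again obtained by specializing to $Y=\un$. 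The main obstacle is part (ii): to compare the two sides of the pentagon one has to match the two evident dinatural transformations issuing from ${}^\vee Y_1\otimes{}^\vee Y_2\otimes{}^\vee Y_3\otimes X\otimes Y_3\otimes Y_2\otimes Y_1$, and the order in which the three coends over $\cc_\alpha$, $\cc_\beta$, $\cc_\gamma$ are contracted via the Fubini theorem---which is precisely what distinguishes the two composites---must be tracked with care because of the opposite multiplication on $\overline G$. Granting these identities, the lemma follows.
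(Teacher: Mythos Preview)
Your proposal is correct and follows essentially the same approach as the paper: the paper's own proof is the single sentence ``obtained from that of Theorems~5.6 in \cite{BV3} by replacing $Y\in\cc$ with $Y\in\cc_\alpha$,'' and you have simply unpacked that reference, correctly identifying that $Z_2(\alpha,\beta)$ plays the role of the product $\mu$ in the Hopf-monad argument and that the associativity and unit axioms for $Z$ as a monoidal functor are the graded analogues of the monad axioms verified there. Your additional remarks on the grading and on why the non-monoidality of $\cc_\alpha$ for $\alpha\neq1$ causes no trouble for comonoidality are sound and make the argument more self-contained than the paper's version.
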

\begin{proof}
The proof is obtained from that of Theorems 5.6   in
\cite{BV3}, by replacing $Y\in\cc$ with $Y\in \cc_\alpha$   and in particular by replacing    $ \int^{Y\in\cc}
 $ with
$ \int^{Y\in\cc_\alpha} $.
\end{proof}

For $\alpha=1$, the functor $Z_\alpha=Z_1\co\cc\to \cc$, endowed with the product
$\mu=Z_2(1,1)$ and unit $\eta=Z_0$, is nothing but the Hopf monad
  produced by Lemma~\ref{lem-central-HM}(a) for
$\dd=\cc_1$. Let   $F_1=F_{Z_1}\co \cc \to \cc^{Z_1}$ be the free module functor, see Section~\ref{Monads}. Recall that   $F_1(X)=(Z_1(X),Z_2(1,1)_X)$ for any object $X $ of $ \cc$ and $F_1(f)=Z_1(f)$ for any morphism $f$ in $\cc$. The functor $F_1$ is comonoidal with comonoidal  structure    $(F_1)_2= (Z_1)_2$ and $(F_1)_0= (Z_1)_0$.
The forgetful functor $U_1=U_{Z_1}\co \cc^{Z_1} \to \cc$ is  strict monoidal and therefore is comonoidal in the canonical way. One can check that $U_1F_1=Z_1$ as comonoidal functors.
Our immediate aim is to introduce $\alpha$-analogues of $F_1$ for all $\alpha\in G$.

Pick any $\alpha \in G$. For all $X,Y \in \cc$ and any morphism $f$ in $\cc$, set
\begin{align*}
&F_\alpha(X)=(Z_\alpha(X),Z_2(1,\alpha)_X), & F_\alpha(f)=Z_\alpha(f),\\
&(F_\alpha)_2(X,Y)= (Z_\alpha)_2(X,Y), & (F_\alpha)_0= (Z_\alpha)_0.
\end{align*}
\begin{lem}\label{lem-Za-lifts}
$F_\alpha=(F_\alpha,(F_\alpha)_2 ,(F_\alpha)_0)\co \cc \to \cc^{Z_1}$ is a comonoidal functor such that $U_1F_\alpha=Z_\alpha$ as comonoidal functors.
\end{lem}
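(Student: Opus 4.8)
The plan is to verify Lemma~\ref{lem-Za-lifts} in two parts: first that each $F_\alpha(X)$ is a genuine $Z_1$-module and that $F_\alpha$ is a well-defined functor into $\cc^{Z_1}$, and second that the proposed comonoidal structure $((F_\alpha)_2,(F_\alpha)_0)$ consists of morphisms in $\cc^{Z_1}$ satisfying the comonoidal axioms, with $U_1 F_\alpha = Z_\alpha$ on the nose.

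For the first part, I would recall that $Z_1$ is a monad with product $\mu = Z_2(1,1)$ and unit $\eta = Z_0$ (by Lemma~\ref{lem-central-HM}(a) applied to $\dd = \cc_1$). To see that $(Z_\alpha(X), Z_2(1,\alpha)_X)$ is a $Z_1$-module, I must check $Z_2(1,\alpha)_X \cdot Z_1(Z_2(1,\alpha)_X) = Z_2(1,\alpha)_X \cdot \mu_{Z_\alpha(X)}$ and $Z_2(1,\alpha)_X \cdot \eta_{Z_\alpha(X)} = \id$. Both of these are instances of the monoidal-functor axioms for $Z = (Z, Z_2, Z_0)\co \overline{G} \to \Endcm(\cc)$ from Lemma~\ref{lem-Z-def-global}: the associativity square \eqref{stmonoidal1} for $Z_2$ evaluated at the triple $(1,1,\alpha)$ gives exactly the action-associativity identity, and the unit square \eqref{stmonoidal2} at $(1,\alpha)$ gives the action-unit identity. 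One must be slightly careful because $\overline{G}$ uses opposite multiplication, so $1 \otimes 1 \otimes \alpha$ in $\overline G$ corresponds to composing $Z_\alpha$ then $Z_1$ then $Z_1$; tracking this bookkeeping is the only subtlety here. Functoriality of $F_\alpha$ (i.e.\ that $Z_\alpha(f)$ is $Z_1$-linear for a morphism $f$ in $\cc$) follows from the naturality of $Z_2(1,\alpha)$.

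For the second part, that $(F_\alpha)_2(X,Y) = (Z_\alpha)_2(X,Y)$ is a morphism $F_\alpha(X\otimes Y)\to F_\alpha(X)\otimes F_\alpha(Y)$ in $\cc^{Z_1}$, I need to check compatibility with the $Z_1$-actions, where the action on the target tensor product is the one induced by the comonoidal structure of $Z_1$, namely $((r\otimes s)(Z_1)_2)$. This identity is precisely the commutativity of diagram \eqref{crossing3} (monoidality of the natural transformation $Z_2(\alpha,\beta)$) specialized to $(\alpha,\beta) = (1,\alpha)$, combined with the associativity square for $Z_2$; in the language of \cite{BV3} it is the statement that the structural morphisms defined via the coend are compatible, and it transfers verbatim from the proof of Theorem~5.6 of \cite{BV3} by replacing $\int^{Y\in\cc}$ with $\int^{Y\in\cc_\alpha}$ as in Lemma~\ref{lem-Z-def-global}. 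Similarly $(F_\alpha)_0 = (Z_\alpha)_0$ being $Z_1$-linear is diagram \eqref{crossing4} at $(1,\alpha)$. The comonoidal axioms for $F_\alpha$ (the coassociativity and counit identities for $(F_\alpha)_2, (F_\alpha)_0$) are literally the comonoidal axioms for the functor $Z_\alpha$ established in Lemma~\ref{lem-Z-def-global}, since the forgetful functor $U_1$ is strict monoidal and hence reflects these identities; and $U_1 F_\alpha = Z_\alpha$ as comonoidal functors is immediate from the definitions, since $U_1$ forgets the action and keeps $(Z_\alpha)_2, (Z_\alpha)_0$ unchanged, while $U_1$ being strict monoidal means $(U_1)_2, (U_1)_0$ are identities.

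The main obstacle I anticipate is purely organizational rather than conceptual: one must set up the coend bookkeeping carefully so that the defining equations in Figure~\ref{fig-def-Zgen} for $(Z_\alpha)_2$, $(Z_\alpha)_0$, and $Z_2(\alpha,\beta)$ can be fed into the parameter and Fubini theorems to deduce the needed identities between morphisms out of coends (uniqueness of factorizations through a universal dinatural transformation is what actually closes each verification). Concretely, to prove an equation between two morphisms with source $Z_{\beta\alpha}(X)$ or $Z_\alpha(X_1\otimes X_2)$, one precomposes with the relevant universal dinatural cone $\rho^{\beta\alpha}$ or $\rho^\alpha$ and checks equality of the two composites, which then reduces — after applying the defining triangle identities — to a diagram chase in $\cc$ that is formally identical to the one in \cite{BV3}. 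So the honest content is an appeal to \cite[Theorem 5.6]{BV3} with $\cc_\alpha$ in place of $\cc$; the task is to confirm that every step of that argument survives the restriction of the coend index category, which it does because nothing in those manipulations used that the index category was all of $\cc$.
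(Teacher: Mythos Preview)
Your proposal is correct and follows the same approach as the paper: everything is read off from the fact that $Z\co \overline{G}\to\Endcm(\cc)$ is a monoidal functor (Lemma~\ref{lem-Z-def-global}). The $Z_1$-module axioms for $F_\alpha(X)$ come from the associativity and unit squares for $Z_2$ at $(1,1,\alpha)$ and $(1,\alpha)$, functoriality from naturality of $Z_2(1,\alpha)$, and the $Z_1$-linearity of $(F_\alpha)_2,(F_\alpha)_0$ from the comonoidality of the natural transformation $Z_2(1,\alpha)$; the paper's proof records exactly these identities.

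Two small remarks. First, your references to diagrams \eqref{crossing3} and \eqref{crossing4} are by analogy only---those diagrams are literally for the crossing $\varphi$, not for $Z$; what you actually use is that $Z_2(1,\alpha)$ is a \emph{comonoidal} natural transformation (a morphism in $\Endcm(\cc)$), which is part of the content of Lemma~\ref{lem-Z-def-global}. Second, your final paragraph about reopening the coend bookkeeping and re-running the argument of \cite[Theorem~5.6]{BV3} is unnecessary: once Lemma~\ref{lem-Z-def-global} is established, all the required identities between the $Z_\alpha$, $(Z_\alpha)_2$, $(Z_\alpha)_0$, and $Z_2(\alpha,\beta)$ are already in hand and no further universal-property arguments are needed. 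The paper's proof is correspondingly short.
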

\begin{proof}
The monoidality of $Z$ implies that $Z_2(1,\alpha)_X (Z_0)_{Z_\alpha(X)}=\id_{Z_\alpha(X)}$ and
$$Z_2(1,\alpha)_XZ_1(Z_2(1,\alpha)_X)=Z_2(1,\alpha)_X Z_2(1,1)_{Z_\alpha(X)}.$$ 
Therefore $F_\alpha(X) \in \cc^{Z_1}$. The naturality of $Z_2(1,\alpha)$ implies that 
$$
Z_\alpha(f) Z_2(1,\alpha)_X =Z_2(1,\alpha)_YZ_1(Z_\alpha(f) ).
$$
This means that $Z_\alpha(f)$ is $Z_1$-linear. Hence $F_\alpha$ is a well-defined functor. Now the comonoidality of the natural transformation $Z_2(1,\alpha)$ gives
\begin{gather*}
Z_\alpha(X,Y)Z_2(1,\alpha)_{X \otimes Y}\\
=(Z_2(1,\alpha)_X \otimes Z_2(1,\alpha)_Y)(Z_1)_2(Z_\alpha(X),Z_\alpha(Y))Z_1((Z_\alpha)_2(X,Y))
\end{gather*}
and
$
(Z_\alpha)_0Z_2(1,\alpha)_\un=(Z_1)_0Z_1((Z_\alpha)_0).
$
Thus $(Z_\alpha)_2(X,Y)$  and $(Z_\alpha)_0$ are $Z_1$-linear. Hence $F_\alpha$ is comonoidal.
Clearly $U_1F_\alpha=Z_\alpha$ as comonoidal functors because $U_1$ is strict monoidal.
\end{proof}

By Section \ref{Free objects} (where $Z$ should be replaced with $Z_1$), the category
$\cc^{Z_1} $ is $G$-graded with $\cc_\alpha^{Z_1}=(\cc_\alpha)^{Z_1}$ for all $\alpha\in G$, and    we have
  a $\kk$-linear strict monoidal isomorphism of  $G$-graded categories $\Psi\co \cc^{Z_1}
\to  \zz_G(\cc)$.   For $\alpha \in G$, set $\ff_\alpha=\Psi F_\alpha\co \cc \to \zz_G(\cc)$.
By definition, for   $X \in \cc$,
$$
\ff_\alpha(X)=(Z_\alpha(X),\sigma^\alpha_X) \quad \text{with} \quad
\psfrag{Z}[Bl][Bl]{\scalebox{.8}{$Z_\alpha(X)$}}
\psfrag{T}[Br][Br]{\scalebox{.8}{$Z_\alpha(X)$}}
\psfrag{Y}[Bl][Bl]{\scalebox{.8}{$Y$}}
\psfrag{S}[Br][Br]{\scalebox{.8}{$Y$}}
\psfrag{r}[Bc][Bc]{\scalebox{.9}{$Z_2(1,\alpha)_X$}}
\psfrag{a}[Bc][Bc]{\scalebox{.8}{$1$}}
\sigma^\alpha_{X,Y}=\rsdraw{.45}{.9}{sigmaXY} \quad \text{for all $Y \in \cc_1$.}
$$
Clearly,  $\ff_\alpha(\cc_\beta) \subset \zz_\beta(\cc)$ for all $\beta \in G$. By Lemma~\ref{lem-Za-lifts}, $\ff_\alpha$ is a comonoidal functor and $\uu \ff_\alpha=Z_\alpha$ as comonoidal functors, where $\uu\co\zz_G(\cc) \to \cc$ is the (strict monoidal) forgetful functor.

We call $\ff_\alpha\co \cc\to \zz_G(\cc) $ the \emph{$\alpha$-free
functor}, and we call the objects of $\zz_G(\cc)$ isomorphic to
objects in the image of  $\ff_\alpha$ \emph{$\alpha$-free}. For
$\alpha=1$, this definition is equivalent to the definition of free
objects given in Section~\ref{Free objects}. Collectively, the
$\alpha$-free objects of $\zz_G(\cc)$ with $\alpha\in G$ are said to
be \emph{$G$-free}.

 The following lemma will be used in Section \ref{sect-objects-Cab}.

\begin{lem}\label{lem-Z2-comono}
 $Z_2(\alpha,\beta)$ is a comonoidal natural transformation from $\ff_\alpha Z_\beta$ to $\ff_{\beta\alpha}$ for all $\alpha,\beta  \in G$.
\end{lem}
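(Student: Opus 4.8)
The plan is to verify Lemma~\ref{lem-Z2-comono} by unwinding the definitions and reducing everything to the defining universal property of the coends $Z_\gamma(X)$. Recall that $Z_2(\alpha,\beta)$ is, for each $X\in\cc$, the morphism $Z_2(\alpha,\beta)_X\co Z_\alpha Z_\beta(X)\to Z_{\beta\alpha}(X)$ characterized by the third equality of Figure~\ref{fig-def-Zgen}, and that by Lemma~\ref{lem-Z-def-global} the family $Z_2=\{Z_2(\alpha,\beta)\}$ makes $(Z,Z_2,Z_0)$ a monoidal functor $\overline{G}\to\Endcm(\cc)$; in particular each $Z_2(\alpha,\beta)$ is already known to be a monoidal natural transformation from $Z_\alpha Z_\beta$ to $Z_{\beta\alpha}$ as \emph{comonoidal endofunctors of $\cc$}. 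The content of the present lemma is the refinement that, once we view $Z_\gamma=U_1 F_\gamma$ through the free functors $F_\gamma\co\cc\to\cc^{Z_1}$ of Lemma~\ref{lem-Za-lifts}, the \emph{same} morphisms $Z_2(\alpha,\beta)_X$ are $Z_1$-linear (so that $Z_2(\alpha,\beta)$ lifts to a natural transformation $\ff_\alpha Z_\beta\to\ff_{\beta\alpha}$, equivalently $F_\alpha Z_\beta\to F_{\beta\alpha}$ since $\Psi$ is a strict monoidal isomorphism), and that this lift is comonoidal.

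First I would check $Z_1$-linearity. The source $\ff_\alpha Z_\beta(X)$ carries the $Z_1$-action $Z_2(1,\alpha)_{Z_\beta(X)}$, and the target $\ff_{\beta\alpha}(X)$ carries the action $Z_2(1,\beta\alpha)_X$; so I must show
$$
Z_2(\alpha,\beta)_X\,Z_2(1,\alpha)_{Z_\beta(X)} = Z_2(1,\beta\alpha)_X\,Z_1\!\bigl(Z_2(\alpha,\beta)_X\bigr).
$$
This is precisely an instance of the associativity constraint \eqref{crossing5} (equivalently, the associativity of the monoidal functor $Z$) read with the three group elements $1,\alpha,\beta$: the monoidality of $Z$ forces the two composites $Z_1 Z_\alpha Z_\beta\to Z_{\beta\alpha}$ obtained by first contracting $(\alpha,\beta)$ then $(1,\beta\alpha)$, versus first contracting $(1,\alpha)$ then $(\alpha,\beta)$, to agree. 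So this step is essentially a citation of Lemma~\ref{lem-Z-def-global} applied to the pair of edges in $\overline G$, with no genuine computation — one just has to be careful about the opposite multiplication in $\overline G$ so that the indices line up ($1\cdot\alpha=\alpha$, $\alpha\cdot\beta=\beta\alpha$, etc.).

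Next I would verify comonoidality of the lifted transformation, i.e.\ the two identities
$$
(F_{\beta\alpha})_0\,Z_2(\alpha,\beta)_{\un} = (F_\alpha Z_\beta)_0
\quad\text{and}\quad
(F_{\beta\alpha})_2(X,Y)\,Z_2(\alpha,\beta)_{X\otimes Y}
= \bigl(Z_2(\alpha,\beta)_X\otimes Z_2(\alpha,\beta)_Y\bigr)\,(F_\alpha Z_\beta)_2(X,Y).
$$
Here one uses $(F_\gamma)_2=(Z_\gamma)_2$, $(F_\gamma)_0=(Z_\gamma)_0$, and the comonoidal composition formula $(GF)_2=\{G_2(F(X),F(Y))\,G(F_2(X,Y))\}$ from Section~\ref{sect-comonofunctor} to expand $(F_\alpha Z_\beta)_2$ and $(F_\alpha Z_\beta)_0$. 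But both identities are again already contained in the statement that $Z_2(\alpha,\beta)$ is comonoidal as a transformation of endofunctors of $\cc$ (Lemma~\ref{lem-Z-def-global}), since $U_1$ is strict monoidal and hence detects comonoidal equations: $U_1 F_\alpha Z_\beta=Z_\alpha Z_\beta$ and $U_1 F_{\beta\alpha}=Z_{\beta\alpha}$ as comonoidal functors by Lemma~\ref{lem-Za-lifts}, and a morphism of comonoidal functors into $\cc^{Z_1}$ is comonoidal iff its image under $U_1$ is. Thus the proof reduces to: (i) $Z_2(\alpha,\beta)_X$ is $Z_1$-linear, proved by \eqref{crossing5}/monoidality of $Z$; and (ii) once lifted, comonoidality is inherited from $\cc$ via the strict monoidal $U_1$. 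The only mild obstacle is bookkeeping — keeping the $\overline G$-indices and the direction of the opposite product straight, and confirming that the action carried by $F_\alpha Z_\beta(X)$ is indeed $Z_2(1,\alpha)_{Z_\beta(X)}$ (not $Z_2(1,\beta)$ or a composite), which follows because $F_\alpha$ by definition equips $Z_\alpha(-)$ with the $Z_1$-action $Z_2(1,\alpha)_{(-)}$ and we are applying $F_\alpha$ to the object $Z_\beta(X)$.
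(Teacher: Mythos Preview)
Your proof is correct and follows essentially the same route as the paper: reduce from $\ff_\alpha$ to $F_\alpha$ via the strict monoidal isomorphism $\Psi$, observe that comonoidality is detected by the faithful strict monoidal forgetful functor $U_1$ (so that it suffices to know $Z_2(\alpha,\beta)$ is comonoidal between $Z_\alpha Z_\beta$ and $Z_{\beta\alpha}$, which is Lemma~\ref{lem-Z-def-global}), and then check the single remaining condition of $Z_1$-linearity, which is the associativity of the monoidal functor $Z$ at the triple $(1,\alpha,\beta)$. One small remark: your citation of \eqref{crossing5} is slightly off, since that diagram is the associativity axiom for the crossing $\varphi$ rather than for $Z$; the relevant input is the monoidality of $Z$ asserted in Lemma~\ref{lem-Z-def-global}, as you yourself say in the parenthetical.
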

\begin{proof}
Since the isomorphism $\Psi\co \cc^{Z_1}
\to \zz_G(\cc)$   is  strict monoidal   and acts as the identity on morphisms, it is enough to prove that $Z_2(\alpha,\beta)$ is a comonoidal natural transformation from $F_\alpha Z_\beta$ to $F_{\beta\alpha}$.
Since $U_1 F_\gamma=Z_\gamma$ as comonoidal functors for all $\gamma \in G$ and $Z_2(\alpha,\beta)$ is a comonoidal natural transformation from $Z_\alpha Z_\beta$ to $Z_{\beta\alpha}$, we only need to check that for $X\in\cc$, $Z_2(\alpha,\beta)_X$ is a morphism in $C^{Z_1}$ from $F_\alpha Z_\beta(X)$ to $F_{\beta\alpha}(X)$, i.e., that
$$
Z_2(\alpha,\beta)_XZ_2(1,\alpha)_{Z_\beta(X)}=Z_2(1,\beta\alpha)_X Z_1(Z_2(\alpha,\beta)_X).
$$
This equality holds by the monoidality of $Z$ (see Lemma~\ref{lem-Z-def-global}).
\end{proof}

\subsection{Computations on $G$-free objects}
Assume that, as above,  $\cc$ is a $G$-centralizable rigid  $G$-graded category, and assume additionally  that $\cc$ is pivotal and non-singular. Then $\zz_G(\cc)$ is $G$-braided by Theorem~\ref{thm-G-center}.  The next theorem computes the action of the   crossing $\varphi \co
\overline{G} \to \Aut(\zz_G(\cc))$ of $\zz_G(\cc)$ on the $G$-free objects.
Note that each   auto-equivalence $\varphi_\alpha$ of $\zz_G(\cc)$, being strong monoidal, can be seen as a strong comonoidal endofunctor $(\varphi_\alpha,(\varphi_\alpha)_2^{-1},(\varphi_\alpha)_0^{-1})$ of $\zz_G(\cc)$.

\begin{thm}\label{lemu}
For $\alpha,\beta \in G$, there is a canonical comonoidal isomorphism
$$\omega^{\alpha,\beta}=\{\omega^{\alpha,\beta}_X \co \varphi_\alpha \ff_\beta(X) \to \ff_{\beta\alpha}(X)\}_{X  \in \cc}$$
from $\varphi_\alpha \ff_\beta$ to $\ff_{\beta\alpha}$. Moreover, the family $\omega=\{\omega^{\alpha,\beta}\}_{\alpha,\beta \in G}$ is compatible with the monoidal structure of the crossing $ \varphi$ in the following sense:  $\omega^{1,\alpha}_X=(\varphi_0)_{\ff_\alpha(X)}^{-1}$ and for all $\alpha,\beta,\gamma \in G$ and $X \in \cc$, the following diagram commutes
\begin{equation*}
\begin{split}
\xymatrix@R=1cm @C=2.8cm { \varphi_\alpha\varphi_\beta\ff_\gamma(X) \ar[d]_-{\varphi_\alpha(\omega_X^{\beta,\gamma})}\ar[r]^-{(\varphi_2)(\alpha,\beta)_{\ff_\gamma(X)}} &
\varphi_{\beta\alpha}\ff_\gamma(X)  \ar[d]^-{\omega_X^{ \beta\alpha,\gamma}}\\
\varphi_\alpha\ff_{\gamma\beta}(X) \ar[r]_-{\omega_X^{\alpha,\gamma \beta}} & \ff_{\beta\alpha}(X).
}
\end{split}
\end{equation*}
\end{thm}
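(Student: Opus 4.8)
The plan is to build $\omega^{\alpha,\beta}$ from its analogues attached to the individual functors $\varphi_V$ and then pass to the projective limit. By construction, $\varphi_\alpha=\varprojlim_{U,V\in\ee_\alpha}(\varphi_V,\delta^{U,V})$ is a limit of strong monoidal endofunctors in which all the transition isomorphisms $\delta^{U,V}$ are invertible, so the universal cone $\iota^\alpha_V\co\varphi_\alpha\iso\varphi_V$ is a monoidal natural isomorphism for every $V\in\ee_\alpha$. Hence it suffices to produce, for each $V\in\ee_\alpha$, a comonoidal natural isomorphism $\nu_V\co\varphi_V\ff_\beta\iso\ff_{\beta\alpha}$, to verify that the $\nu_V$ are compatible with the $\delta^{U,V}$ (that is, $\nu_U\circ\delta^{U,V}_{\ff_\beta(-)}=\nu_V$), and then to set $\omega^{\alpha,\beta}=\nu_V\circ(\iota^\alpha_V)_{\ff_\beta(-)}$; the compatibility makes this independent of the chosen $V$, and comonoidality is inherited because a monoidal natural isomorphism between strong monoidal functors is at the same time a comonoidal natural isomorphism between the associated strong comonoidal functors.

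To construct $\nu_V$, fix $V\in\ee_\alpha$ and $X\in\cc$, and put $A=Z_\beta(X)$, so that $\ff_\beta(X)=(A,\sigma^\beta_X)$ and $\varphi_V\ff_\beta(X)=(E^V_{(A,\sigma^\beta_X)},\gamma^V_{(A,\sigma^\beta_X)})$, where $E^V_{(A,\sigma^\beta_X)}$ is the image of the idempotent $\pi^V_{(A,\sigma^\beta_X)}$ on $V^*\otimes A\otimes V$ with structure maps $p^V,q^V$. Since the tensoring functors $(-)\otimes V$ and $V^*\otimes(-)$ have adjoints in the rigid category $\cc$, they preserve coends, so $V^*\otimes Z_\beta(X)\otimes V=\int^{Y\in\cc_\beta}V^*\otimes Y^*\otimes X\otimes Y\otimes V$ with dinatural family $\{\id_{V^*}\otimes\rho^\beta_{X,Y}\otimes\id_V\}_Y$. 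As $\cc$ is pivotal and $\cc_\beta\otimes\cc_\alpha\subset\cc_{\beta\alpha}$, each $V^*\otimes Y^*\otimes X\otimes Y\otimes V$ is canonically identified with $(Y\otimes V)^*\otimes X\otimes(Y\otimes V)$, and the family obtained by composing these identifications with $\rho^{\beta\alpha}_{X,Y\otimes V}$ is dinatural in $Y\in\cc_\beta$ (because $Y\mapsto Y\otimes V$ is a functor $\cc_\beta\to\cc_{\beta\alpha}$ and $\rho^{\beta\alpha}$ is dinatural). By the universal property of the coend this family factors uniquely through a morphism $\lambda_X\co V^*\otimes Z_\beta(X)\otimes V\to Z_{\beta\alpha}(X)$. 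A computation in the graphical calculus of Section~\ref{sect-proof-lems-defdouble} — unwinding $\sigma^\beta_X$, and with it $\pi^V_{(A,\sigma^\beta_X)}$, in terms of the structural morphisms $Z_2(1,\beta)$ and $\partial^1$, and exploiting the dinaturality of the $\rho^\gamma$ together with the monoidality of $Z$ (Lemma~\ref{lem-Z-def-global}) — shows that $\lambda_X$ is $\pi^V_{(A,\sigma^\beta_X)}$-invariant, so that $\lambda_X q^V\co E^V_{(A,\sigma^\beta_X)}\to Z_{\beta\alpha}(X)$ is well defined; running the argument in the opposite direction (assembling a dinatural family into $V^*\otimes Z_\beta(X)\otimes V$ from the universal family of $Z_{\beta\alpha}(X)$ via the dual identifications, then postcomposing with $p^V$) furnishes a two-sided inverse. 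A second computation of the same nature shows that $\lambda_X q^V$ intertwines $\gamma^V_{(A,\sigma^\beta_X)}$ with $\sigma^{\beta\alpha}_X$, hence defines a morphism $\varphi_V\ff_\beta(X)\to\ff_{\beta\alpha}(X)$ in $\zz_G(\cc)$ — an isomorphism, since $\uu$ is conservative. Naturality in $X$, comonoidality, and compatibility with the $\delta^{U,V}$ follow from the dinaturality and naturality of the $\rho^\gamma$, from the coend definitions of the $Z_2$'s, and from the formula for $\delta^{U,V}$ recalled in Section~\ref{sect-proof-lems-defdouble}. This produces $\nu_V$, and with it $\omega^{\alpha,\beta}$.

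For the equality $\omega^{1,\alpha}_X=(\varphi_0)^{-1}_{\ff_\alpha(X)}$, note that when the first index is $1$ the relevant functors are the $\varphi_R$ with $R\in\ee_1$, and $\varphi_1$ is identified with $1_{\zz_G(\cc)}$ through $\varphi_0$; comparing the defining coend property of $\nu_R$ with the description of $\eta^R$ in Lemma~\ref{lem-quotient-varphi+} shows that $\nu_R=(\eta^R_{\ff_\alpha(X)})^{-1}$, and since $\varphi_0$ is by construction the morphism with $\iota^1_R\,(\varphi_0)=\eta^R$, this gives $\omega^{1,\alpha}_X=\nu_R\circ(\iota^1_R)_{\ff_\alpha(X)}=(\varphi_0)^{-1}_{\ff_\alpha(X)}$. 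For the hexagonal square, recall that $\varphi_2(\alpha,\beta)$ is the isomorphism induced by the $\zeta^{U,V,W}\co\varphi_U\varphi_V\iso\varphi_W$ through the universal cones. Choosing $U\in\ee_\alpha$, $V\in\ee_\beta$, $W\in\ee_{\beta\alpha}$ and substituting the definitions, the square becomes an equality between two morphisms $E^U_{E^V_{\ff_\gamma(X)}}\to Z_{\gamma\beta\alpha}(X)$, namely the two ways of simplifying an iterated configuration of the maps $\lambda$ and $\zeta$; it follows by combining the cocycle identity for $\zeta$ (Lemma~\ref{lem-quotient-varphi}(b)) with the pentagon for the $Z_2(\cdot,\cdot)$ contained in the monoidality of $Z$ (Lemma~\ref{lem-Z-def-global}).

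The technical heart — and the step I expect to be the main obstacle — is the pair of graphical verifications in the second paragraph: the $\pi^V$-invariance of $\lambda_X$ and the fact that $\lambda_X q^V$ is a morphism in $\zz_G(\cc)$. Both of these, as well as the hexagon, reduce to one and the same family of Fubini-type interchange identities among the coends $Z_1,Z_\alpha,Z_\beta,Z_{\beta\alpha}$, which is exactly the content of the monoidality of $Z\co\overline G\to\Endcm(\cc)$; the work is to organize these identities so that the diagrams close, while keeping careful track of the dinaturality indices and of the invertible scalars $d_V$.
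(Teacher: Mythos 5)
Your proposal follows essentially the same route as the paper's proof: for each $V\in\ee_\alpha$ you identify $E^V_{\ff_\beta(X)}$ with $Z_{\beta\alpha}(X)$ inside $\zz_G(\cc)$, check compatibility with the transition isomorphisms $\delta^{U,V}$, pass to the projective limit, and then verify comonoidality, the unit condition via $\eta^R$, and the square via the cocycle property of $\zeta$ together with the monoidality of $Z$. Indeed, your $\lambda_X$ -- characterized by $\lambda_X(\id_{V^*}\otimes\rho^\beta_{X,Y}\otimes\id_V)=\rho^{\beta\alpha}_{X,Y\otimes V}$ up to the duality identification -- is exactly the morphism $Z_2(\alpha,\beta)_X\,\rho^\alpha_{Z_\beta(X),V}$ used in the paper, described by its universal property rather than by an explicit formula.

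One concrete correction: the paper's $a^{V,\beta}_X$ and $\omega^{V,\beta}_X$ of \eqref{eq-omegaVa} carry a normalization $d_V^{-1}$ which your $\lambda_X$ does not. With the unnormalized $\nu_V=\lambda_X\, q^V_{\ff_\beta(X)}$, the composite with your candidate inverse comes out as $d_V\,\id$ rather than $\id$, the asserted compatibility $\nu_U\circ\delta^{U,V}_{\ff_\beta(X)}=\nu_V$ is off by the factor $d_U d_V^{-1}$ (recall that $\delta^{U,V}$ itself contains a $d_V^{-1}$), and $\omega^{1,\alpha}_X=(\varphi_0)^{-1}_{\ff_\alpha(X)}$ only holds for the normalized family; so you must set $\nu_V=d_V^{-1}\lambda_X\, q^V_{\ff_\beta(X)}$, which is precisely $\omega^{V,\beta}_X$. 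Beyond that, the verifications you defer (that $\nu_V$ intertwines the half-braidings, naturality, comonoidality, and the two compatibility statements) are exactly the graphical computations forming the body of the paper's proof -- in particular the single identity \eqref{eq-omegaVb}, whose specializations $U=V$ and $Y=\un$ yield both parts of Lemma~\ref{lem-pptes-omegaVb} -- so your plan is sound, but those computations are where the real work lies.
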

\begin{proof}
Let $\alpha,\beta \in G$ and $X \in \cc$. Pick $V \in \ee_\alpha$,
where $\ee_\alpha$ denotes the class of objects of $\cc_\alpha$ with
invertible left dimension (recall that $\ee_\alpha \neq \emptyset$
since $\cc$ is non-singular). Set $d_V=\dim_l(V)$ and
$$
\psfrag{Z}[Bl][Bl]{\scalebox{.8}{$Z_{\beta\alpha}(X)$}}
\psfrag{T}[Br][Br]{\scalebox{.8}{$Z_\beta(X)$}}
\psfrag{Y}[Bl][Bl]{\scalebox{.8}{$V$}}
\psfrag{S}[Br][Br]{\scalebox{.8}{$V$}}
\psfrag{r}[Bc][Bc]{\scalebox{.9}{$Z_2(\alpha,\beta)_X$}}
\psfrag{a}[Bc][Bc]{\scalebox{.8}{$\alpha$}}
a^{V,\beta}_X=d_V^{-1}  \rsdraw{.45}{.9}{aVX} \; : V^* \otimes Z_\beta(X) \otimes V \to Z_{\beta\alpha}(X)
$$
and 
$$\psfrag{R}[Bl][Bl]{\scalebox{.8}{$Z_\beta(X)$}}
\psfrag{L}[Br][Br]{\scalebox{.8}{$Z_{\beta\alpha}(X)$}}
\psfrag{Y}[Bl][Bl]{\scalebox{.8}{$V$}}
\psfrag{S}[Br][Br]{\scalebox{.8}{$V$}}
\psfrag{c}[Bc][Bc]{\scalebox{.9}{$Z_2(\alpha^{-1},\beta\alpha)_X$}}
\psfrag{b}[Bc][Bc]{\scalebox{.8}{$\alpha^{-1}$}}
b^{V,\beta}_X=  \rsdraw{.45}{.9}{bVX} \; : Z_{\beta\alpha}(X) \to V^* \otimes Z_\beta(X) \otimes V.
$$
Observe that
\begin{center}
\vspace{.5em} $\ds \psfrag{Z}[Bl][Bl]{\scalebox{.8}{$Z_\alpha(X)$}}
\psfrag{T}[Br][Br]{\scalebox{.8}{$Z_\beta(X)$}}
\psfrag{X}[Br][Br]{\scalebox{.8}{$X$}}
\psfrag{Y}[Bl][Bl]{\scalebox{.8}{$V$}}
\psfrag{A}[Bl][Bl]{\scalebox{.8}{$Y$}}
\psfrag{Y}[Bl][Bl]{\scalebox{.8}{$V$}}
\psfrag{S}[Br][Br]{\scalebox{.8}{$V$}}
\psfrag{r}[Bc][Bc]{\scalebox{.9}{$Z_2(\alpha,\beta)_X$}}
\psfrag{a}[Bc][Bc]{\scalebox{.8}{$\alpha$}}
\psfrag{R}[Bl][Bl]{\scalebox{.8}{$Z_\beta(X)$}}
\psfrag{L}[Br][Br]{\scalebox{.8}{$Z_\alpha(X)$}}
\psfrag{Y}[Bl][Bl]{\scalebox{.8}{$V$}}
\psfrag{S}[Br][Br]{\scalebox{.8}{$V$}}
\psfrag{c}[Bc][Bc]{\scalebox{.9}{$Z_2(\alpha^{-1},\beta\alpha)_X$}}
\psfrag{b}[Bc][Bc]{\scalebox{.8}{$\alpha^{-1}$}}
b^{V,\beta}_Xa^{V,\beta}_X \, = \, d_V^{-1} \hspace{-1em}
\rsdraw{.45}{.9}{dem-ab-pi7}
 \psfrag{r}[Bc][Bc]{\scalebox{.9}{$Z_{\alpha^{-1}}\bigl(Z_2(\alpha,\beta)_X\bigr)$}}
\, \overset{(i)}{=} \, d_V^{-1} \hspace{-1em}
\rsdraw{.45}{.9}{dem-ab-pi87}$
\\[1em]
$\ds \psfrag{Z}[Bl][Bl]{\scalebox{.8}{$Z_\alpha(X)$}}
\psfrag{T}[Br][Br]{\scalebox{.8}{$Z_\beta(X)$}}
\psfrag{X}[Br][Br]{\scalebox{.8}{$X$}}
\psfrag{Y}[Bl][Bl]{\scalebox{.8}{$V$}}
\psfrag{A}[Bl][Bl]{\scalebox{.8}{$Y$}}
\psfrag{Y}[Bl][Bl]{\scalebox{.8}{$V$}}
\psfrag{S}[Br][Br]{\scalebox{.8}{$V$}}
\psfrag{r}[Bc][Bc]{\scalebox{.9}{$Z_2(\alpha^{-1},\alpha)_{Z_\beta(X)}$}}
\psfrag{a}[Bc][Bc]{\scalebox{.8}{$\alpha$}}
\psfrag{R}[Bl][Bl]{\scalebox{.8}{$Z_\beta(X)$}}
\psfrag{L}[Br][Br]{\scalebox{.8}{$Z_\alpha(X)$}}
\psfrag{Y}[Bl][Bl]{\scalebox{.8}{$V$}}
\psfrag{t}[Bc][Bc]{\scalebox{.8}{$1$}}
\psfrag{S}[Br][Br]{\scalebox{.8}{$V$}}
\psfrag{c}[Bc][Bc]{\scalebox{.9}{$Z_2(1,\beta)_X$}}
\psfrag{b}[Bc][Bc]{\scalebox{.8}{$\alpha^{-1}$}}
 \psfrag{i}[Bc][Bc]{\scalebox{.8}{$\id_{V^* \otimes V}$}}
\, \overset{(ii)}{=}\, d_V^{-1} \hspace{-1em}
\rsdraw{.45}{.9}{dem-ab-pi9}
\, \overset{(iii)}{=} \, d_V^{-1}   \rsdraw{.45}{.9}{dem-ab-pi10}$
\\[1em]
$\ds \psfrag{Z}[Bl][Bl]{\scalebox{.8}{$Z_\alpha(X)$}}
\psfrag{T}[Br][Br]{\scalebox{.8}{$Z_\beta(X)$}}
\psfrag{X}[Br][Br]{\scalebox{.8}{$X$}}
\psfrag{Y}[Bl][Bl]{\scalebox{.8}{$V$}}
\psfrag{A}[Bl][Bl]{\scalebox{.8}{$Y$}}
\psfrag{Y}[Bl][Bl]{\scalebox{.8}{$V$}}
\psfrag{S}[Br][Br]{\scalebox{.8}{$V$}}
\psfrag{r}[Bc][Bc]{\scalebox{.9}{$Z_2(\alpha^{-1},\alpha)_{Z_\beta(X)}$}}
\psfrag{a}[Bc][Bc]{\scalebox{.8}{$\alpha$}}
\psfrag{R}[Bl][Bl]{\scalebox{.8}{$Z_\beta(X)$}}
\psfrag{L}[Br][Br]{\scalebox{.8}{$Z_\alpha(X)$}}
\psfrag{Y}[Bl][Bl]{\scalebox{.8}{$V$}}
\psfrag{t}[Bc][Bc]{\scalebox{.8}{$1$}}
\psfrag{S}[Br][Br]{\scalebox{.8}{$V$}}
\psfrag{c}[Bc][Bc]{\scalebox{.9}{$\sigma^\beta_{X,V \otimes V^*}$}}
\psfrag{b}[Bc][Bc]{\scalebox{.8}{$\alpha^{-1}$}}
 \psfrag{i}[Bc][Bc]{\scalebox{.8}{$\id_{V^* \otimes V}$}}
\, \overset{(iv)}{=}\, d_V^{-1} \rsdraw{.45}{.9}{dem-ab-pi11} = \,
\pi^V_{\ff_\beta(X)}.$ \vspace{.5em}
\end{center}
Here,  $(i)$ follows from the naturality of
$\partial^{\alpha^{-1}}$,  $(ii)$ from the monoidality of $Z$,
 $(iii)$ from the definition of $Z_2(\alpha^{-1},\alpha)$, and $(iv)$ from the definition of $\sigma^\beta_{X,V \otimes V^*}$.

 We claim that
$ a^{V,\beta}_X b^{V,\beta}_X=\id_{Z_{\beta\alpha}(X)} $.  Indeed,
for any $Y \in \cc_{\beta\alpha}$,
\begin{center}
\vspace{.5em}
$\ds (\id_Y \otimes a^{V,\beta}_X b^{V,\beta}_X)\partial_{X,Y}^{\beta\alpha}
\psfrag{Z}[Bl][Bl]{\scalebox{.8}{$Z_{\beta\alpha}(X)$}}
\psfrag{T}[Br][Br]{\scalebox{.8}{$Z_\beta(X)$}}
\psfrag{X}[Br][Br]{\scalebox{.8}{$X$}}
\psfrag{Y}[Bl][Bl]{\scalebox{.8}{$V$}}
\psfrag{A}[Bl][Bl]{\scalebox{.8}{$Y$}}
\psfrag{Y}[Bl][Bl]{\scalebox{.8}{$V$}}
\psfrag{S}[Br][Br]{\scalebox{.8}{$V$}}
\psfrag{r}[Bc][Bc]{\scalebox{.9}{$Z_2(\alpha,\beta)_X$}}
\psfrag{a}[Bc][Bc]{\scalebox{.8}{$\alpha$}}
\psfrag{d}[Bc][Bc]{\scalebox{.8}{$\beta\alpha$}}
\psfrag{R}[Bl][Bl]{\scalebox{.8}{$Z_\beta(X)$}}
\psfrag{L}[Br][Br]{\scalebox{.8}{$Z_\alpha(X)$}}
\psfrag{Y}[Bl][Bl]{\scalebox{.8}{$V$}}
\psfrag{S}[Br][Br]{\scalebox{.8}{$V$}}
\psfrag{c}[Bc][Bc]{\scalebox{.9}{$Z_2(\alpha^{-1},\beta\alpha)_X$}}
\psfrag{b}[Bc][Bc]{\scalebox{.8}{$\alpha^{-1}$}}
 =  d_V^{-1}  \!\rsdraw{.45}{.9}{dem-ab-pi2}
 \psfrag{c}[Bc][Bc]{\scalebox{.9}{$Z_\alpha\bigl(Z_2(\alpha^{-1},\beta\alpha)_X\bigr)$}}
\, \overset{(i)}{=} \, d_V^{-1} \!\!\!\! \rsdraw{.45}{.9}{dem-ab-pi3}$
\\[1em]
$\ds \psfrag{Z}[Bl][Bl]{\scalebox{.8}{$Z_{\beta\alpha}(X)$}}
\psfrag{T}[Br][Br]{\scalebox{.8}{$Z_\beta(X)$}}
\psfrag{X}[Br][Br]{\scalebox{.8}{$X$}}
\psfrag{Y}[Bl][Bl]{\scalebox{.8}{$V$}}
\psfrag{A}[Bl][Bl]{\scalebox{.8}{$Y$}}
\psfrag{Y}[Bl][Bl]{\scalebox{.8}{$V$}}
\psfrag{S}[Br][Br]{\scalebox{.8}{$V$}}
\psfrag{r}[Bc][Bc]{\scalebox{.9}{$Z_2(1,\beta\alpha)_X$}}
\psfrag{a}[Bc][Bc]{\scalebox{.8}{$\alpha$}}
\psfrag{t}[Bc][Bc]{\scalebox{.8}{$1$}}
\psfrag{d}[Bc][Bc]{\scalebox{.8}{$\beta\alpha$}}
\psfrag{R}[Bl][Bl]{\scalebox{.8}{$Z_\beta(X)$}}
\psfrag{L}[Br][Br]{\scalebox{.8}{$Z_\alpha(X)$}}
\psfrag{Y}[Bl][Bl]{\scalebox{.8}{$V$}}
\psfrag{S}[Br][Br]{\scalebox{.8}{$V$}}
\psfrag{c}[Bc][Bc]{\scalebox{.9}{$Z_2(\alpha^{-1},\alpha)_X$}}
\psfrag{b}[Bc][Bc]{\scalebox{.8}{$\alpha^{-1}$}}
 \psfrag{c}[Bc][Bc]{\scalebox{.9}{$Z_2(\alpha,\alpha^{-1})_{Z_{\beta\alpha}(X)}$}}
 \psfrag{B}[Bl][Bl]{\scalebox{.8}{$V^*\otimes V$}}
 \psfrag{i}[Bc][Bc]{\scalebox{.8}{$\id_{V^* \otimes V}$}}
\,\overset{(ii)}{=}\, d_V^{-1}  \rsdraw{.45}{.9}{dem-ab-pi4}
\,\overset{(iii)}{=}\, d_V^{-1}  \rsdraw{.45}{.9}{dem-ab-pi5}$
\\[1em]
$\ds \psfrag{Z}[Bl][Bl]{\scalebox{.8}{$Z_{\beta\alpha}(X)$}}
\psfrag{T}[Br][Br]{\scalebox{.8}{$Z_\beta(X)$}}
\psfrag{X}[Br][Br]{\scalebox{.8}{$X$}}
\psfrag{Y}[Bl][Bl]{\scalebox{.8}{$V$}}
\psfrag{A}[Bl][Bl]{\scalebox{.8}{$Y$}}
\psfrag{Y}[Bl][Bl]{\scalebox{.8}{$V$}}
\psfrag{S}[Br][Br]{\scalebox{.8}{$V$}}
\psfrag{r}[Bc][Bc]{\scalebox{.9}{$Z_2(1,\beta\alpha)_X$}}
\psfrag{a}[Bc][Bc]{\scalebox{.8}{$\alpha$}}
\psfrag{t}[Bc][Bc]{\scalebox{.8}{$1$}}
\psfrag{d}[Bc][Bc]{\scalebox{.8}{$\beta\alpha$}}
\psfrag{R}[Bl][Bl]{\scalebox{.8}{$Z_\beta(X)$}}
\psfrag{L}[Br][Br]{\scalebox{.8}{$Z_\alpha(X)$}}
\psfrag{Y}[Bl][Bl]{\scalebox{.8}{$V$}}
\psfrag{S}[Br][Br]{\scalebox{.8}{$V$}}
\psfrag{c}[Bc][Bc]{\scalebox{.9}{$Z_2(\alpha^{-1},\alpha)_X$}}
\psfrag{s}[Bc][Bc]{\scalebox{.9}{$(Z_0)_{Z_{\beta\alpha}(X)}$}}
\psfrag{b}[Bc][Bc]{\scalebox{.8}{$\alpha^{-1}$}}
 \psfrag{c}[Bc][Bc]{\scalebox{.9}{$Z_2(\alpha^{-1},\alpha)_{Z_\alpha(X)}$}}
 \psfrag{B}[Bl][Bl]{\scalebox{.8}{$V^*\otimes V$}}
 \psfrag{i}[Bc][Bc]{\scalebox{.8}{$\id_{V^* \otimes V}$}}
\,\overset{(iv)}{=} \,d_V^{-1}  \;\,\rsdraw{.45}{.9}{dem-ab-pi6}
\,\overset{(v)}{=}\; \partial^{\beta\alpha}_{X,Y}.$
\vspace{.5em}
\end{center}
Here the equality $(i)$ follows from the naturality of $\partial^{\beta\alpha}$,
 $(ii)$ and  $(v)$ are consequences of the monoidality of $Z$,
  $(iii)$ follows from the definition of $Z_2(\alpha,\alpha^{-1})$, and
   $(iv)$ is a consequence of the naturality of $\partial^1$ and the definition of $Z_0$. Now,
    the universal property of
    $\rho^{\beta\alpha}_{X,Y}=(\lev_Y \otimes \id_{Z_{\beta\alpha}(X)})(\id_{Y^*} \otimes \partial^{\beta\alpha}_{X,Y})$ implies
     that $a^{V,\beta}_X b^{V,\beta}_X=\id_{Z_{\beta\alpha}(X)}$.

     The latter equality and the
formula  $ b^{V,\beta}_X a^{V,\beta}_X=\pi^V_{\ff_\beta(X)}$
mean that $Z_{\beta\alpha}(X)$ is the image of the idempotent
$\pi^V_{\ff_\beta(X)}$. Set
\begin{equation}\label{eq-omegaVa}
\ds \psfrag{Z}[Bl][Bl]{\scalebox{.8}{$Z_{\beta\alpha}(X)$}}
\psfrag{E}[Bl][Bl]{\scalebox{.8}{$E^V_{\ff_\beta(X)}$}}
\psfrag{T}[Br][Br]{\scalebox{.8}{$V$}}
\psfrag{V}[Bl][Bl]{\scalebox{.8}{$V$}}
\psfrag{a}[Bc][Bc]{\scalebox{.8}{$\alpha$}}
\psfrag{r}[Bc][Bc]{\scalebox{.9}{$Z_2(\alpha,\beta)_X$}}
\omega^{V,\beta}_X= a^{V,\beta}_X \,q^V_{\ff_\beta(X)}= \, d_V^{-1}  \; \rsdraw{.45}{.9}{def-omegaV} \co E^V_{\ff_\beta(X)} \to Z_{\beta\alpha}(X).
\end{equation}
By Section~\ref{sect-split-idem}, $\omega^{V,\beta}_X$ is the unique isomorphism $E^V_{\ff_\beta(X)} \to Z_{\beta\alpha}(X)$ such that
\begin{equation}\label{eq-ab-pq}
p^V_{\ff_\beta(X)}=\bigl(\omega^{V,\beta}_X\bigr)^{-1} a^{V,\beta}_X \quad \text{and} \quad q^V_{\ff_\beta(X)}=b^{V,\beta}_X \omega^{V,\beta}_X.
\end{equation}
Note that
$\bigl(\omega^{V,\beta}_X\bigr)^{-1}=p^V_{\ff_\beta(X)}\,b^{V,\beta}_X$.
We can now state the key lemma of the proof.

\begin{lem}\label{lem-pptes-omegaVb}
\begin{enumerate}
\labela
\item $\omega^{V,\beta}_X\co \varphi_V(\ff_\beta(X)) \to \ff_{\beta\alpha}(X)$ is an isomorphism in $\zz_G(\cc)$.
\item $\omega^U_X\delta^{U,V}_{\ff_1(X)}=\omega^V_X$ for all $U,V \in \ee_\alpha$,
\end{enumerate}
\end{lem}
\begin{proof}
We use the pictorial formalism of Section~\ref{sect-proof-lems-defdouble}. For $U,V \in \ee_\alpha$ and $Y\in \cc_1$, set
$$
 \psfrag{U}[Bl][Bl]{\scalebox{.8}{$U$}}
 \psfrag{V}[Br][Br]{\scalebox{.8}{$V$}}
 \psfrag{Y}[Bl][Bl]{\scalebox{.8}{$Y$}}
 \psfrag{E}[Bl][Bl]{\scalebox{.8}{$E_{\ff_\beta(X)}^V$}}
 \psfrag{F}[Bl][Bl]{\scalebox{.8}{$E_{\ff_\beta(X)}^U$}}
\lambda^{U,V,\beta}_{X,Y}= d_V^{-1}\;\rsdraw{.45}{.9}{def-lambda}.
$$
Then
\begin{equation}\label{eq-omegaVb}
(\id_Y \otimes \omega^{U,\beta}_X)\lambda^{U,V,\beta}_{X,Y}=\sigma^{\beta\alpha}_{X,Y}(\omega^{V,\beta}_X \otimes \id_Y).
\end{equation}
Indeed,
\begin{center}
\vspace{.5em}
$\ds (\id_Y \otimes \omega^{U,\beta}_X)\lambda^{U,V,\beta}_{X,Y}
\psfrag{Z}[Bl][Bl]{\scalebox{.8}{$Z_{\beta\alpha}(X)$}}
\psfrag{Y}[Bl][Bl]{\scalebox{.8}{$Y$}}
\psfrag{E}[Bl][Bl]{\scalebox{.8}{$E^V_{\ff_\beta(X)}$}}
\psfrag{G}[Bl][Bl]{\scalebox{.8}{$E^U_{\ff_\beta(X)}$}}
\psfrag{T}[Br][Br]{\scalebox{.8}{$V$}}
\psfrag{S}[Br][Br]{\scalebox{.8}{$U$}}
\psfrag{V}[Bl][Bl]{\scalebox{.8}{$V$}}
\psfrag{U}[Bl][Bl]{\scalebox{.8}{$U$}}
\psfrag{a}[Bc][Bc]{\scalebox{.8}{$\alpha$}}
\psfrag{t}[Bc][Bc]{\scalebox{.8}{$1$}}
\psfrag{r}[Bc][Bc]{\scalebox{.9}{$Z_2(\alpha,\beta)_X$}}
\psfrag{c}[Bc][Bc]{\scalebox{.9}{$Z_2(1,1)_X$}}
\psfrag{i}[Bc][Bc]{\scalebox{.8}{$\id$}}
\; =\, d_U^{-1}d_V^{-1}  \!\!\! \!\!\!\rsdraw{.45}{.9}{dem-ab-delta1}
\, \overset{(i)}{=}\, d_U^{-1}d_V^{-1}  \!\!\!\!\!\!\!\!\rsdraw{.45}{.9}{dem-ab-delta2}$
\\[1em]
$\ds \psfrag{Z}[Bl][Bl]{\scalebox{.8}{$Z_{\beta\alpha}(X)$}}
\psfrag{Y}[Bl][Bl]{\scalebox{.8}{$Y$}}
\psfrag{E}[Bl][Bl]{\scalebox{.8}{$E^V_{\ff_\beta(X)}$}}
\psfrag{T}[Br][Br]{\scalebox{.8}{$V$}}
\psfrag{S}[Br][Br]{\scalebox{.8}{$U$}}
\psfrag{V}[Bl][Bl]{\scalebox{.8}{$V$}}
\psfrag{U}[Bl][Bl]{\scalebox{.8}{$U$}}
\psfrag{a}[Bc][Bc]{\scalebox{.8}{$\alpha$}}
\psfrag{t}[Bc][Bc]{\scalebox{.8}{$1$}}
\psfrag{r}[Bc][Bc]{\scalebox{.9}{$Z_2(\alpha,\beta)_X$}}
\psfrag{c}[Bc][Bc]{\scalebox{.9}{$Z_2(1,\beta)_X$}}
\psfrag{i}[Bc][Bc]{\scalebox{.8}{$\id$}}
\, \overset{(ii)}{=}\, d_U^{-1}d_V^{-1}  \;\; \rsdraw{.45}{.9}{dem-ab-delta3}
\psfrag{c}[Bc][Bc]{\scalebox{.9}{$Z_\alpha(Z_2(1,\beta)_X)$}}
\, \overset{(iii)}{=}\, d_U^{-1}d_V^{-1}  \;\; \rsdraw{.45}{.9}{dem-ab-delta4}$
\\[1em]
$\ds \psfrag{Z}[Bl][Bl]{\scalebox{.8}{$Z_{\beta\alpha}(X)$}}
\psfrag{Y}[Bl][Bl]{\scalebox{.8}{$Y$}}
\psfrag{E}[Bl][Bl]{\scalebox{.8}{$E^V_{\ff_\beta(X)}$}}
\psfrag{T}[Br][Br]{\scalebox{.8}{$V$}}
\psfrag{S}[Br][Br]{\scalebox{.8}{$U$}}
\psfrag{V}[Bl][Bl]{\scalebox{.8}{$V$}}
\psfrag{U}[Bl][Bl]{\scalebox{.8}{$U$}}
\psfrag{a}[Bc][Bc]{\scalebox{.8}{$\alpha$}}
\psfrag{t}[Bc][Bc]{\scalebox{.8}{$1$}}
\psfrag{r}[Bc][Bc]{\scalebox{.9}{$Z_2(\alpha,\beta)_X$}}
\psfrag{c}[Bc][Bc]{\scalebox{.9}{$Z_2(\alpha,1)_{Z_\beta(X)}$}}
\psfrag{i}[Bc][Bc]{\scalebox{.8}{$\id$}}
\, \overset{(iv)}{=}\, d_U^{-1}d_V^{-1}  \;\; \rsdraw{.45}{.9}{dem-ab-delta5}
\, \overset{(v)}{=}\, d_U^{-1}d_V^{-1}  \;\; \rsdraw{.45}{.9}{dem-ab-delta6}$
\\[1em]
$\ds \psfrag{Z}[Bl][Bl]{\scalebox{.8}{$Z_{\beta\alpha}(X)$}}
\psfrag{Y}[Bl][Bl]{\scalebox{.8}{$Y$}}
\psfrag{E}[Bl][Bl]{\scalebox{.8}{$E^V_{\ff_\beta(X)}$}}
\psfrag{T}[Br][Br]{\scalebox{.8}{$V$}}
\psfrag{S}[Br][Br]{\scalebox{.8}{$U$}}
\psfrag{V}[Bl][Bl]{\scalebox{.8}{$V$}}
\psfrag{U}[Bl][Bl]{\scalebox{.8}{$U$}}
\psfrag{a}[Bc][Bc]{\scalebox{.8}{$\alpha$}}
\psfrag{t}[Bc][Bc]{\scalebox{.8}{$1$}}
\psfrag{r}[Bc][Bc]{\scalebox{.9}{$Z_2(\alpha,\beta)_X$}}
\psfrag{c}[Bc][Bc]{\scalebox{.9}{$Z_2(1,\alpha)_{Z_\beta(X)}$}}
\psfrag{i}[Bc][Bc]{\scalebox{.8}{$\id$}}
\, \overset{(vi)}{=}\, d_V^{-1} \;\,  \rsdraw{.45}{.9}{dem-ab-delta8}
\, \overset{(vii)}{=}\, d_V^{-1} \!\!\!\!\! \rsdraw{.45}{.9}{dem-ab-delta85}
\!\!\!=\,\; \sigma^{\beta\alpha}_{X,Y}(\omega^{V,\beta}_X \otimes \id_Y)$
\\[1em]
$\ds \psfrag{Z}[Bl][Bl]{\scalebox{.8}{$Z_{\beta\alpha}(X)$}}
\psfrag{Y}[Bl][Bl]{\scalebox{.8}{$Y$}}
\psfrag{E}[Bl][Bl]{\scalebox{.8}{$E^V_{\ff_\beta(X)}$}}
\psfrag{T}[Br][Br]{\scalebox{.8}{$V$}}
\psfrag{S}[Br][Br]{\scalebox{.8}{$U$}}
\psfrag{V}[Bl][Bl]{\scalebox{.8}{$V$}}
\psfrag{U}[Bl][Bl]{\scalebox{.8}{$U$}}
\psfrag{a}[Bc][Bc]{\scalebox{.8}{$\alpha$}}
\psfrag{t}[Bc][Bc]{\scalebox{.8}{$1$}}
\psfrag{r}[Bc][Bc]{\scalebox{.9}{$Z_2(1,\beta\alpha)_X$}}
\psfrag{c}[Bc][Bc]{\scalebox{.9}{$Z_1(Z_2(\alpha,\beta)_X)$}}
\psfrag{i}[Bc][Bc]{\scalebox{.8}{$\id$}}
\, \overset{(viii)}{=}\, d_V^{-1}  \rsdraw{.45}{.9}{dem-ab-delta9}
\psfrag{c}[Bc][Bc]{\scalebox{.9}{$Z_2(\alpha,\beta)_X$}}
\, \overset{(ix)}{=}\, d_V^{-1}  \rsdraw{.45}{.9}{dem-ab-delta10}
\,=\,\; \sigma^{\beta\alpha}_{X,Y}(\omega^{V,\beta}_X \otimes \id_Y).$ \vspace{.5em}
\end{center}
Here, the equality $(i)$ is obtained by applying \eqref{eq-Drel1} and then \eqref{eq-dim-sigma},  $(ii)$ follows from the definition of $\sigma^\beta_X$, $(iii)$ and $(vi)$ from the naturality of $\partial^\alpha$,  $(iv)$ and $(viii)$ from the monoidality of $Z$, $(v)$ from the definition of $Z_2(\alpha,1)$,  $(vii)$ from the definition of $Z_2(1,\alpha)$, and $(ix)$ from the naturality of~$\partial^1$.

We now prove the   claims of the lemma. Remark first that
$$
\lambda^{V,V,\beta}_{X,Y}=\gamma^V_{\ff_\beta(X),Y}, \quad \lambda^{U,V,\beta}_{X,\un}=\delta^{U,V}_{\ff_\beta(X)}, \quad \text{and} \quad \sigma^{\beta\alpha}_{X,\un}=\id_{Z_{\beta\alpha}(X)}.
$$
Thus \eqref{eq-omegaVb} for $U=V$ gives that $\omega^{U,\beta}_X$ is a morphism in $\zz_G(\cc)$.
 Since it is an isomorphism in $\cc$ and the forgetful functor $\uu\co \zz_G(\cc) \to \cc$ is conservative,
 we obtain that $\omega^{U,\beta}_X$  is a isomorphism in $\zz_G(\cc)$, which is claim (a).
 Finally, Equation~\eqref{eq-omegaVb} for $Y=\un$ is nothing but claim
 (b).  This completes the proof of
 Lemma~\ref{lem-pptes-omegaVb}.
\end{proof}
By Lemma~\ref{lem-pptes-omegaVb}, the system $(\omega^V_X)_{V \in \ee_\alpha}$ induces an isomorphism in $\zz_G(\cc)$,
$$
\omega_X^{\alpha,\beta} \co
\varphi_\alpha\ff_\beta(X) \to \ff_{\beta\alpha}(X),
$$
related to the universal cone   $\iota^\alpha$ of \eqref{univ-cones} by
\begin{equation*}
\begin{split}
\xymatrix@R=1cm @C=.7cm { \varphi_\alpha\ff_\beta(X) \ar[rd]_-{\omega_X^{\alpha,\beta}}\ar[rr]^-{(\iota^\alpha_V)_{\ff_\beta(X)}} && \varphi_V\ff_\beta(X) \ar[ld]^-{\omega_X^{V,\beta}}\\
& \ff_{\beta\alpha}(X) &
}
\end{split}
\end{equation*}
It remains to check that the family
$$
\omega^{\alpha,\beta}=\{\omega_X^{\alpha,\beta}\co \varphi_\alpha\ff_\beta(X) \to \ff_{\beta\alpha}(X)\}_{X \in \cc}
$$
is a comonoidal natural transformation and that the family $\{\omega^{\alpha,\beta}\}_{\alpha,\beta \in G}$ is compatible with the monoidal structure of $\varphi$ . We only need to verify that for all $\alpha,\beta,\gamma \in G$, $U \in \ee_\alpha$,  $V \in \ee_\beta$, $W \in \ee_{\beta\alpha}$, $R\in \ee_1$,  and any morphism $f\co X \to Y$ in $\cc$, the following equalities hold:
\begin{gather*}
Z_{\beta\alpha}(f) \omega_X^{U,\beta}=\omega_Y^{U,\beta}\varphi_UV(Z_\beta(f)),\\
(Z_{\beta\alpha})_2(X,Y)\omega_{X\otimes Y}^{U,\beta}= (\omega_{X}^{U,\beta} \otimes \omega_{Y}^{U,\beta})(\varphi_U)_2(Z_\beta(X),Z_\beta(Y))^{-1} \varphi_U((Z_\beta)_2(X,Y)),\\
(Z_{\beta\alpha})_0\omega_{\un}^{U,\beta}=(\varphi_U)_0^{-1} \varphi_U((Z_\beta)_0),\\
\omega^{W,\gamma}_X\xi^{U,V,W}_{\ff_\gamma(X)}=\omega^{U,\gamma\beta}_X \varphi_U(\omega^{V,\gamma}_X),\\
\omega^{R,1}_X=\eta_{\ff_\alpha(X)}^R.
\end{gather*}
These equalities are verified  via graphical computations
similar to those  above using the definitions of
$\varphi_V(f)$, $(\varphi_V)_2^{-1}$, $(\varphi_V)_0^{-1}$,
$\xi^{U,V,W}$,   $\eta^R$ given in Sections~\ref{sect-action of-G}
and~\ref{sect-proof-lems-defdouble}.
 \end{proof}

We next    compute   the enhanced $G$-braiding $\{\tau_{(A,\sigma),Y}\}_{(A,\sigma) \in \zz_G(\cc), Y \in \cch}$ of $\zz_G(\cc)$ on the $G$-free objects.

\begin{thm}\label{cor-calc-tau}
Let $\alpha,\beta \in G$,  $X \in \cc$, and $Y \in \cc_\beta$. Then
$$
\tau_{\ff_\alpha(X),Y}=\bigl(\id_Y \otimes \bigl(\omega^{\beta,\alpha}_X\bigr)^{-1} Z_2(\beta,\alpha)_X\bigr)\partial^\beta_{Z_\alpha(X),Y},
$$
where $\partial^\beta$ is defined in \eqref{eq-def-partial}, that is,
$$
\psfrag{Z}[Bl][Bl]{\scalebox{.8}{$Z_{\alpha\beta}(X)$}}
\psfrag{T}[Br][Br]{\scalebox{.8}{$Z_\alpha(X)$}}
\psfrag{Y}[Bl][Bl]{\scalebox{.8}{$Y$}}
\psfrag{S}[Br][Br]{\scalebox{.8}{$Y$}}
\psfrag{r}[Bc][Bc]{\scalebox{.9}{$Z_2(\beta,\alpha)_X$}}
\psfrag{a}[Bc][Bc]{\scalebox{.8}{$\beta$}}
\bigl(\id_Y \otimes  \omega^{\beta,\alpha}_X\bigr)\tau_{\ff_\alpha(X),Y}=\rsdraw{.45}{.9}{sigmaXY}  \;.
$$
\end{thm}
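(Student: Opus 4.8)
\emph{Strategy.} The plan is to determine $\tau_{\ff_\alpha(X),Y}$ from the relation characterising the enhanced $G$-braiding in Section~\ref{sect-braiding in-G}: for any $V\in\ee_\beta$ one has $\bigl(\id_Y\otimes(\iota^\beta_V)_{\ff_\alpha(X)}\bigr)\tau_{\ff_\alpha(X),Y}=\Gamma^V_{\ff_\alpha(X),Y}$. Such a $V$ exists since $\cc$ is non-singular; I fix one and write $d_V=\dim_l(V)$. The morphism $(\iota^\beta_V)_{\ff_\alpha(X)}$ is an isomorphism in $\cc$, and by the construction of $\omega^{\beta,\alpha}$ in the proof of Theorem~\ref{lemu} it satisfies $\omega^{\beta,\alpha}_X=\omega^{V,\alpha}_X\,(\iota^\beta_V)_{\ff_\alpha(X)}$. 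Composing the asserted formula with $\id_Y\otimes(\iota^\beta_V)_{\ff_\alpha(X)}$ and using $(\iota^\beta_V)_{\ff_\alpha(X)}\bigl(\omega^{\beta,\alpha}_X\bigr)^{-1}=\bigl(\omega^{V,\alpha}_X\bigr)^{-1}$, one sees that Theorem~\ref{cor-calc-tau} is equivalent to the single identity
\[
(\id_Y\otimes\omega^{V,\alpha}_X)\,\Gamma^V_{\ff_\alpha(X),Y}=(\id_Y\otimes Z_2(\beta,\alpha)_X)\,\partial^\beta_{Z_\alpha(X),Y}
\]
of morphisms $Z_\alpha(X)\otimes Y\to Y\otimes Z_{\alpha\beta}(X)$ in $\cc$, where $Z_2$ and $\partial^\beta$ are the structure morphisms of Section~\ref{sect-def-free-hb}.

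\emph{Reduction to a coend identity.} By its definition, $\Gamma^V_{\ff_\alpha(X),Y}$ permutes $Z_\alpha(X)$ past the object $Y\otimes V^*\in\cc_1$ via the half-braiding $\sigma^\alpha_{X,Y\otimes V^*}$ of $\ff_\alpha(X)$ (applied against an $\rcoev$-loop on $V$) and then applies $p^V_{\ff_\alpha(X)}$. Substituting $\omega^{V,\alpha}_X\,p^V_{\ff_\alpha(X)}=a^{V,\alpha}_X$ (from \eqref{eq-ab-pq}) and $a^{V,\alpha}_X=d_V^{-1}\,Z_2(\beta,\alpha)_X\,\rho^\beta_{Z_\alpha(X),V}$ (the definition of $a^{V,\alpha}_X$ in the proof of Theorem~\ref{lemu}, $\rho^\beta$ being the universal dinatural transformation \eqref{eq-def-rho-of-Z}), the common factor $\id_Y\otimes Z_2(\beta,\alpha)_X$ cancels from both sides and the identity above reduces to
\[
d_V^{-1}\,(\id_Y\otimes\rho^\beta_{Z_\alpha(X),V})\,(\sigma^\alpha_{X,Y\otimes V^*}\otimes\id_V)\,(\id_{Z_\alpha(X)\otimes Y}\otimes\rcoev_V)=\partial^\beta_{Z_\alpha(X),Y},
\]
which no longer refers to $\zz_G(\cc)$. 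To prove it, one unfolds $\sigma^\alpha_{X,W}=(\id_W\otimes Z_2(1,\alpha)_X)\,\partial^1_{Z_\alpha(X),W}$ for $W=Y\otimes V^*$ and expresses $\partial^1,\partial^\beta$ through $\rho^1,\rho^\beta$; the factor $Z_2(1,\alpha)_X$ gets absorbed using the naturality of $\rho^\beta$ together with the monoidality of the functor $Z\co\overline{G}\to\Endcm(\cc)$ (Lemma~\ref{lem-Z-def-global}), and the remaining $V$-loop is slid through the picture by repeated appeals to the naturality of $\rho^1$ and $\rho^\beta$, the defining relations of the structure morphisms of $Z$ in Figure~\ref{fig-def-Zgen}, and the zig-zag identities, until it collapses and only $\partial^\beta_{Z_\alpha(X),Y}$ remains.

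\emph{Where the work is.} Conceptually little happens: everything amounts to moving the universal (di)natural transformations $\rho^1,\rho^\beta$ and the comonoidal structure morphisms of $Z$ past one another by naturality and the Fubini theorem for coends. The one real difficulty is the graphical bookkeeping in the last displayed identity — keeping straight which coend map acts on which strand and sliding the $V$-loop in the correct order. This is entirely parallel to computations already carried out in the proof of Theorem~\ref{lemu} (notably the verification that $a^{V,\alpha}_X\,b^{V,\alpha}_X=\id$ and the proof of \eqref{eq-omegaVb}), and I expect it to present no obstacle beyond that length and care; the place to be most careful is the final step where the loop disappears.
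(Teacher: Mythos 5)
Your reduction is fine up to a point, and it is the same starting move as the paper's: from $(\id_Y\otimes(\iota^\beta_V)_{\ff_\alpha(X)})\tau_{\ff_\alpha(X),Y}=\Gamma^V_{\ff_\alpha(X),Y}$ and $\omega^{\beta,\alpha}_X=\omega^{V,\alpha}_X(\iota^\beta_V)_{\ff_\alpha(X)}$ the theorem is equivalent to $(\id_Y\otimes\omega^{V,\alpha}_X)\Gamma^V_{\ff_\alpha(X),Y}=(\id_Y\otimes Z_2(\beta,\alpha)_X)\partial^\beta_{Z_\alpha(X),Y}$, and the substitution $\omega^{V,\alpha}_X p^V_{\ff_\alpha(X)}=a^{V,\alpha}_X=d_V^{-1}Z_2(\beta,\alpha)_X\rho^\beta_{Z_\alpha(X),V}$ is correct. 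The gap is the next step: you cannot cancel the factor $\id_Y\otimes Z_2(\beta,\alpha)_X$ (it is not a monomorphism), and the ``stronger'' identity you then set out to prove, namely $d_V^{-1}(\id_Y\otimes\rho^\beta_{Z_\alpha(X),V})(\sigma^\alpha_{X,Y\otimes V^*}\otimes\id_V)(\id_{Z_\alpha(X)\otimes Y}\otimes\rcoev_V)=\partial^\beta_{Z_\alpha(X),Y}$, is simply false, so no amount of loop-sliding can establish it. For a counterexample take $G=\{1\}$ (so $\alpha=\beta=1$), $\cc$ a fusion category having a simple object $V=k_0$ not isomorphic to $\un$, and $Y=\un$. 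Then $\rho^1_{Z_1(X),k_0}$ is the inclusion of the direct summand $k_0^*\otimes Z_1(X)\otimes k_0$ of $Z_1(Z_1(X))=\oplus_{k\in I_1}k^*\otimes Z_1(X)\otimes k$, so composing your left-hand side with the projection onto the $\un$-labelled summand gives $0$; but $\partial^1_{Z_1(X),\un}=(Z_0)_{Z_1(X)}$ is precisely the inclusion of the $\un$-labelled summand, so the same projection gives a nonzero map. (The same test shows your left-hand side depends on the choice of $V$, which it could not do if the identity held.)

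The failure is structural rather than a matter of bookkeeping: unfolding $\sigma^\alpha_{X,Y\otimes V^*}=(\id\otimes Z_2(1,\alpha)_X)\partial^1_{Z_\alpha(X),Y\otimes V^*}$ and using naturality of $\rho^\beta$ puts the factor $Z_\beta(Z_2(1,\alpha)_X)$ into your composite, whereas reaching $\partial^\beta$ would require the other bracketing $Z_2(\beta,1)_{Z_\alpha(X)}$; the coherence of the monoidal functor $Z$ identifies these two morphisms $Z_\beta Z_1Z_\alpha(X)\to Z_\beta Z_\alpha(X)$ only after post-composition with $Z_2(\beta,\alpha)_X$. This is exactly why the paper never discards that factor: its proof keeps $(\omega^{V,\alpha}_X)^{-1}Z_2(\beta,\alpha)_X$ in the target and transforms the whole composite $\Gamma^V_{\ff_\alpha(X),Y}$, using the defining relation of $Z_2(\beta^{-1},\beta)$, the monoidality of $Z$, and the naturality of $\partial^{\beta^{-1}}$, into $(\id_Y\otimes(\omega^{V,\alpha}_X)^{-1}Z_2(\beta,\alpha)_X)\partial^\beta_{Z_\alpha(X),Y}$, composing with the cone $\iota^\beta_V$ only at the end. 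To repair your argument you must prove the identity with the factor $\id_Y\otimes Z_2(\beta,\alpha)_X$ kept on both sides, which in effect brings you back to the paper's graphical computation.
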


\begin{proof}\label{sect-dem-cor-calc-tau}
For $V \in \ee_\beta$, recall notation $\Gamma^V$ from Section~\ref{sect-braiding in-G}. We have
\begin{center}
\vspace{.5em}
$\ds
\psfrag{Y}[Bl][Bl]{\scalebox{.8}{$Y$}}
\psfrag{R}[Bl][Bl]{\scalebox{.8}{$E^V_{\ff_\alpha(X)}$}}
\psfrag{T}[Br][Br]{\scalebox{.8}{$Z_\alpha(X)$}}
\psfrag{S}[Br][Br]{\scalebox{.8}{$V$}}
\psfrag{V}[Bl][Bl]{\scalebox{.8}{$V$}}
\psfrag{b}[Bc][Bc]{\scalebox{.8}{$\beta^{-1}$}}
\psfrag{l}[Bc][Bc]{\scalebox{.8}{$\beta$}}
\psfrag{t}[Bc][Bc]{\scalebox{.8}{$1$}}
\psfrag{r}[Bc][Bc]{\scalebox{.9}{$Z_2(\beta^{-1},\beta)_{Z_\alpha(X)}$}}
\psfrag{e}[Bc][Bc]{\scalebox{.9}{$Z_{\beta^{-1}}(Z_2(\beta,\alpha)_X)$}}
\psfrag{c}[Bc][Bc]{\scalebox{.9}{$Z_2(1,\alpha)_X$}}
\psfrag{i}[Bc][Bc]{\scalebox{.8}{$\id$}}
\Gamma^V_{\ff_\alpha(X),Y}\, \overset{(i)}{=}\, \rsdraw{.45}{.9}{dem-pG1} \, \overset{(ii)}{=}\,\rsdraw{.45}{.9}{dem-pG2}$
\\[1em]
$\ds
\psfrag{Y}[Bl][Bl]{\scalebox{.8}{$Y$}}
\psfrag{R}[Bl][Bl]{\scalebox{.8}{$E^V_{\ff_\alpha(X)}$}}
\psfrag{T}[Br][Br]{\scalebox{.8}{$Z_\alpha(X)$}}
\psfrag{S}[Br][Br]{\scalebox{.8}{$V$}}
\psfrag{V}[Bl][Bl]{\scalebox{.8}{$V$}}
\psfrag{b}[Bc][Bc]{\scalebox{.8}{$\beta^{-1}$}}
\psfrag{l}[Bc][Bc]{\scalebox{.8}{$\beta$}}
\psfrag{t}[Bc][Bc]{\scalebox{.8}{$1$}}
\psfrag{r}[Bc][Bc]{\scalebox{.9}{$Z_{\beta^{-1}}(Z_2(\beta,\alpha)_X)$}}
\psfrag{c}[Bc][Bc]{\scalebox{.9}{$Z_2(\beta^{-1},\alpha\beta)_X$}}
\psfrag{i}[Bc][Bc]{\scalebox{.8}{$\id$}}
\, \overset{(iii)}{=}\,\rsdraw{.45}{.9}{dem-pG3}
\psfrag{r}[Bc][Bc]{\scalebox{.9}{$Z_2(\beta,\alpha)_X$}}
\, \overset{(iv)}{=}\,\rsdraw{.45}{.9}{dem-pG4}$
\\[1em]
$\ds \,=\, \bigl(\id_Y \otimes \bigl(\omega^{V,\alpha}_X\bigr)^{-1} Z_2(\beta,\alpha)_X\bigr)\partial^\beta_{Z_\alpha(X),Y}.$
\vspace{.5em}
\end{center}
Here, the equality $(i)$  follows from the definition of $\Gamma^V$, $(ii)$  from the definition of $Z_2(\beta^{-1},\beta)$,  $(iii)$   from the monoidality of $Z$, and  $(iv)$  from the  naturality of $\partial^{\beta^{-1}}$. Composing the above equality on the left with $(\id_Y \otimes (\iota^\beta_V)_{\ff_\alpha(X)})$, where $\iota^\beta$ is the universal cone \eqref{univ-cones}, we obtain the claim of the theorem.
\end{proof}

\subsection{Remark}\label{rem-nabla}
Let $\nabla$ be the following composition of monoidal
functors:
$$
\xymatrix@R=1cm @C=1.15cm
 { \Aut(\zz_G(\cc)) \ar[r]^-{\simeq} & \Aut(\cc^{Z_1}) \ar[r] & \Endcm(\cc^{Z_1})
\ar[r]^-{?\rtimes {Z_1}} & \Endcm(\cc).
}
$$
Here    the first arrow is the strict monoidal isomorphism induced by  $\Psi\co
\cc^{Z_1} \to \zz_G(\cc)$,
  the second arrow   is the strict monoidal functor
 acting as $(F,F_2,F_0) \mapsto (F,F^{-1}_2,F^{-1}_0)$ on the objects and as the identity on the
 morphisms, and the third arrow is
  the   functor \eqref{rhorho+}  with $T=Z_1$. Both  $\nabla \varphi$ and $Z$ are   monoidal functor $ \overline{G} \to
\Endcm(\cc)$, and   $\nabla\varphi(\alpha)=\uu\varphi_\alpha \ff_1$ for any $\alpha \in G$. The comonoidal isomorphism $\omega^{\alpha,1}\co \varphi_\alpha \ff_1 \to \ff_\alpha$ of Theorem~\ref{lemu} induces a  comonoidal isomorphism $ \uu(\omega^{\alpha,1})\co \nabla\varphi(\alpha)=\uu\varphi_\alpha \ff_1 \to \uu \ff_\alpha = Z_\alpha$. The  second statement of Theorem~\ref{lemu} implies that the family $ \{\uu(\omega^{\alpha,1})\}_{\alpha \in G}$ is  a monoidal natural isomorphism
$\nabla {\varphi} \simeq Z$.

\subsection{The case of $G$-fusion categories}\label{sect-ovni}
Let $\cc$ be a $G$-fusion category (over   $\kk$). It is clear that $\cc$ satisfies all the assumptions of this section. We give here explicit  formulas for the functor $Z$ of
Lemma~\ref{lem-Z-def-global}.

Fix a   $G$-representative set $I$ of simple
objects of $\cc$ such that $\un \in I$.  Note that $I=\amalg_{\alpha\in G} I_\alpha$ where $I_\alpha$ is the set of all elements of $I$ belonging to $\cc_\alpha$.
  By Lemma~\ref{lem-finite-semi--},
$$
Z_\alpha(X)=\bigoplus_{i \in I_\alpha} i^* \otimes X \otimes i,
$$
with   universal dinatural transformation
$$
\rho^\alpha_{X,Y}=\sum_{i \in I_\alpha} \,
 \psfrag{i}[Br][Bl]{\scalebox{.85}{$i$}}
 \psfrag{X}[Bc][Bl]{\scalebox{.85}{$X$}}
 \psfrag{Y}[Bc][Bc]{\scalebox{.85}{$Y$}}
\rsdraw{.45}{.9}{Phi-0} \;
\co Y^* \otimes X \otimes Y \to  Z_\alpha(X) \quad \text{for} \quad Y \in
\cc_\alpha.
$$
For any morphism $f$ in $\cc$, we have $Z_\alpha(f)=\sum_{i \in I_\alpha} \id_{i^*} \otimes f \otimes \id_i$.

The natural transformations $\partial^\alpha$ of \eqref{eq-def-partial} are given, for $X \in \cc$ and $Y \in \cc_\alpha$, by
$$
 \psfrag{A}[Bc][Bc]{\scalebox{.9}{$X$}}
 \psfrag{n}[Bc][Bc]{\scalebox{.9}{$i$}}
 \psfrag{X}[Bc][Bc]{\scalebox{.9}{$Y$}}
\partial^\alpha_{X,Y}= \sum_{i  \in I_\alpha} \; \rsdraw{.50}{.9}{rab3} \;.
$$
Then we deduce from Figure~\ref{fig-def-Zgen} that the comonoidal structure of $ Z_\alpha$ and the monoidal structure of $ Z$ are given, for $\alpha,\beta \in G$ and $X,Y \in \cc$, by
\begin{align*}
&\displaystyle ( Z_\alpha)_2(X,Y)=\sum_{i \in I_\alpha}\,
 \psfrag{i}[Br][Bc]{\scalebox{.85}{$i$}}
 \psfrag{X}[Bc][Bc]{\scalebox{.85}{$X$}}
 \psfrag{Y}[Bc][Bc]{\scalebox{.85}{$Y$}}
\rsdraw{.45}{.9}{Z-coprod}  :  Z_\alpha(X\otimes Y) \to  Z_\alpha(X)\otimes  Z_\alpha(Y), \\[1em]
&\displaystyle ( Z_\alpha)_0=\sum_{i \in I_\alpha}\,
 \psfrag{i}[Br][Bc]{\scalebox{.85}{$i$}}
\rsdraw{.25}{.9}{Z-counit} \, :  Z_\alpha(\un) \to \un, \\[.3em]
& \displaystyle  Z_2(\alpha,\beta)_X=\!\!\sum_{\substack{i \in I_\alpha\\ j\in I_\beta\\k \in I_{\beta\alpha}}}\;
 \psfrag{i}[Br][Bc]{\scalebox{.85}{$j$}}
 \psfrag{j}[Br][Bc]{\scalebox{.85}{$i$}}
 \psfrag{k}[Bc][Bc]{\scalebox{.85}{$k$}}
 \psfrag{X}[Bc][Bc]{\scalebox{.85}{$X$}}
\rsdraw{.6}{.9}{Z-prod}   :  Z_\alpha Z_\beta(X ) \to  Z_{\beta\alpha}(X) ,\\[.5em]
& \displaystyle ( Z_0)_X=\id_X \co X \to \un^* \otimes X \otimes \un \hookrightarrow  Z_1(X).
\end{align*}
From these formulas, we deduce that for $\alpha \in G$, the $\alpha$-free functor $\ff_\alpha \co \cc  \to \zz_G(\cc)$ carries any $X \in \cc$ to
$\ff_\alpha(X)=(Z_\alpha(X),\sigma^\alpha_X)$ where for $Y \in \cc_1$,
$$
\sigma^\alpha_{X,Y} = \sum_{i,j \in I_\alpha}\;
 \psfrag{i}[Br][Bc]{\scalebox{.85}{$i$}}
 \psfrag{j}[Br][Bc]{\scalebox{.85}{$j$}}
 \psfrag{k}[Bc][Bc]{\scalebox{.85}{$k$}}
 \psfrag{X}[Bc][Bc]{\scalebox{.85}{$X$}}
  \psfrag{Y}[Bc][Bc]{\scalebox{.85}{$Y$}}
 \rsdraw{.6}{.9}{a-free}   :  Z_\alpha(X) \otimes Y \to Y \otimes Z_\alpha(X).
$$
The  functor $\ff_\alpha$ carries any morphism $f$ in $\cc$ to $\ff_\alpha(f)=Z_\alpha(f)$.

Let $\alpha,\beta\in G$ and $X \in \cc$. For any $V \in \cc_\alpha$ with invertible left dimension, we always may choose $E_{\ff_\alpha(X)}^V$ to be the object $Z_{\alpha\beta}(X)$ and the morphisms $p_{\ff_\alpha(X)}^V$, $q_{\ff_\alpha(X)}^V$ of \eqref{eq-idemp-split} to be
\begin{gather*}
 \psfrag{i}[Br][Bc]{\scalebox{.85}{$i$}}
 \psfrag{j}[Br][Bc]{\scalebox{.85}{$j$}}
 \psfrag{k}[Bc][Bc]{\scalebox{.85}{$k$}}
 \psfrag{X}[Bc][Bc]{\scalebox{.85}{$X$}}
  \psfrag{Y}[Bc][Bc]{\scalebox{.85}{$V$}}
p_{\ff_\alpha(X)}^V=d_V^{-1}\sum_{\substack{i \in I_\alpha\\ j \in I_{\alpha\beta}}}\; \rsdraw{.6}{.9}{b-free} \;,\\[.2em]
 \psfrag{i}[Br][Bc]{\scalebox{.85}{$j$}}
 \psfrag{j}[Br][Bc]{\scalebox{.85}{$i$}}
 \psfrag{k}[Bc][Bc]{\scalebox{.85}{$k$}}
 \psfrag{X}[Bc][Bc]{\scalebox{.85}{$X$}}
  \psfrag{Y}[Bc][Bc]{\scalebox{.85}{$V$}}
q_{\ff_\alpha(X)}^V=\sum_{\substack{i \in I_\alpha\\ j \in I_{\alpha\beta}}}\; \rsdraw{.6}{.9}{c-free}\;.
\end{gather*}
This can be verified   using \eqref{eq-triangle-coev} and the
following equality: for all $U,V \in \cc$ and $i \in I$,
\begin{equation}\label{eq-assoc-tri}
\sum_{j \in I} \;\, \psfrag{X}[Br][Br]{\scalebox{.9}{$U$}}
\psfrag{Y}[Bl][Bl]{\scalebox{.9}{$V$}}
\psfrag{i}[Br][Br]{\scalebox{.9}{$j\,$}}
\psfrag{j}[Bl][Bl]{\scalebox{.9}{$i$}} \rsdraw{.45}{.9}{tensor6a2} \; = \;
\psfrag{X}[Br][Br]{\scalebox{.9}{$U$}}
\psfrag{Y}[Bl][Bl]{\scalebox{.9}{$V$}}
\psfrag{i}[Br][Br]{\scalebox{.9}{$j$}}
\psfrag{j}[Bl][Bl]{\scalebox{.9}{$i$}}
\rsdraw{.45}{.9}{tensor6b} \;,
\end{equation}
which graphically reflects  the fact that
$$
\Hom_\cc(U \otimes V,i)=\bigoplus_{j \in I} \Hom_\cc(X,j) \otimes_\kk \Hom_\cc(j \otimes
V,i).
$$
With the above choices, we obtain that
$\varphi_\beta\ff_\alpha=\ff_{\alpha\beta}$ as comonoidal functors.
Also, the isomorphism $\omega^{\beta,\alpha}_{X}$ of
Theorem~\ref{lemu} is the identity, and for any $Y \in \cc_\beta$,
$$
\tau_{\ff_\alpha(X),Y} = \sum_{\substack{i \in I_\alpha\\ j \in I_{\alpha\beta}}}\;
 \psfrag{i}[Br][Bc]{\scalebox{.85}{$i$}}
 \psfrag{j}[Br][Bc]{\scalebox{.85}{$j$}}
 \psfrag{k}[Bc][Bc]{\scalebox{.85}{$k$}}
 \psfrag{X}[Bc][Bc]{\scalebox{.85}{$X$}}
  \psfrag{Y}[Bc][Bc]{\scalebox{.85}{$Y$}}
 \rsdraw{.6}{.9}{a-free}   \,.
$$

\appendix

\section{The object $(C_{\alpha,\beta},\sigma^{\alpha,\beta})$}\label{sect-objects-Cab}

Let $\cc$ be a non-singular $G$-centralizable
$G$-graded pivotal category. Let $\varphi \co
\overline{G} \to
\Aut(\zz_G(\cc))$ be the crossing provided by
Theorem~\ref{thm-G-center} and $Z\co \overline{G} \to
\Endcm(\cc)$ be the monoidal
 functor of
Lemma~\ref{lem-Z-def-global}.  Consider
  $\alpha, \beta \in G$ such that the following coend exists in $\cc$:
$$
C_{\alpha,\beta}=\int^{Y \in \cc_\beta} Z_\alpha(Y)^* \otimes Y.
$$
Here we   lift   $C_{\alpha,\beta}$ to an object of $\zz_G(\cc)$ in a canonical way. The latter object     plays an important role in   3-dimensional HQFT, as will be discussed elsewhere.

Let $\varrho^{\alpha,\beta}=\{\varrho^{\alpha,\beta}_Y
\co Z_\alpha(Y)^* \otimes Y \to C_{\alpha,\beta}\}_{Y \in
\cc_\beta}$ be the universal
dinatural transformation associated with   $C_{\alpha,\beta}$. By \cite[Corollary 3.8]{BV3}, since $Z_1$ is a Hopf monad,   the dinatural transformation $Z_1(\varrho^{\alpha,\beta})$ is universal. Therefore there is a unique morphism
 $r_{\alpha,\beta}
\co Z_1(C_{\alpha,\beta}) \to C_{\alpha,\beta}$ such that for all
$Y\in\cc_\beta$,
\begin{equation}\label{eq-def-rab}
\begin{gathered}
r_{\alpha,\beta}Z_1(\varrho^{\alpha,\beta}_Y)= \\
 \varrho^{\alpha,\beta}_{Z_1(Y)}
 \bigl ( Z_2(\alpha,1)_Y^*s^l_{Z_\alpha(Y)} Z_1(Z_2(1,\alpha)^*_Y) \otimes \id_{Z_1(Y)} \bigr)(Z_1)_2(Z_\alpha(Y)^* ,Y),
\end{gathered}
\end{equation}
where $s^l$ is the left antipode of $Z_1$ (given by Lemma~\ref{lem-central-HM}(a) for $\dd=\cc_1$).
Define $\sigma^{\alpha,\beta}=\{\sigma^{\alpha,\beta}_X\co C_{\alpha,\beta} \otimes X \to X \otimes C_{\alpha,\beta}\}_{X \in \cc_1}$ by
$$
\psfrag{R}[Br][Br]{\scalebox{.8}{$C_{\alpha,\beta}$}}
\psfrag{C}[Bl][Bl]{\scalebox{.8}{$C_{\alpha,\beta}$}}
\psfrag{X}[Bl][Bl]{\scalebox{.8}{$X$}}
\psfrag{S}[Br][Br]{\scalebox{.8}{$X$}}
\psfrag{r}[Bc][Bc]{\scalebox{.9}{$r_{\alpha,\beta}$}}
\psfrag{t}[Bc][Bc]{\scalebox{.8}{$1$}}
\sigma^{\alpha,\beta}_{X}= (\id_X \otimes r_{\alpha,\beta})\partial^1_{C_{\alpha,\beta},X}\, =\rsdraw{.45}{.9}{def-sigma-ab} \;,
$$
where $\partial^1$ is defined in \eqref{eq-def-partial}.

 \begin{thm}\label{cor-coend-Cab}
$(C_{\alpha,\beta},\sigma^{\alpha,\beta})$ is an object of
$\zz_G(\cc)$ lying in $\zz_{\alpha^{-1}\beta^{-1}\alpha\beta} (\cc)$
and
$$(C_{\alpha,\beta},\sigma^{\alpha,\beta})=\int^{(A,\sigma) \in
\zz_\beta(\cc)} (\varphi_\alpha(A,\sigma))^* \otimes (A,\sigma).$$
\end{thm}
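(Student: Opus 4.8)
The statement has two parts: first, that $(C_{\alpha,\beta},\sigma^{\alpha,\beta})$ is a well-defined object of $\zz_G(\cc)$ and that it is homogeneous of degree $\alpha^{-1}\beta^{-1}\alpha\beta$; second, that it is the coend $\int^{(A,\sigma)\in\zz_\beta(\cc)}(\varphi_\alpha(A,\sigma))^*\otimes(A,\sigma)$ in $\zz_G(\cc)$. The plan is to deduce both from the lift-of-coends machinery (Lemma~\ref{lem-HM-coend}) applied to the Hopf monad $Z=Z_1$ on $\cc$, together with the description of $\zz_G(\cc)$ as $\cc^{Z_1}$ via the isomorphism $\Psi$ (Lemma~\ref{lem-central-HM}), the computation of the crossing on free objects (Theorem~\ref{lemu}), and the comonoidality of $Z_2(\alpha,\beta)$ (Lemma~\ref{lem-Z2-comono}).

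\textbf{Step 1: identify the relevant endofunctor of $\cc^{Z_1}$.} Under $\Psi\co\cc^{Z_1}\iso\zz_G(\cc)$, the subcategory $\zz_\beta(\cc)$ corresponds to $\cc^{Z_1}_\beta=(\cc_\beta)^{Z_1}$. I would define $Q$ to be the composite endofunctor of $\cc^{Z_1}$ corresponding under $\Psi$ to $(A,\sigma)\mapsto\varphi_\alpha(A,\sigma)$; by Remark~\ref{rem-nabla} and Theorem~\ref{lemu}, $Q\rtimes Z_1=U_{Z_1}QF_{Z_1}$ is canonically identified (as a comonoidal functor) with $Z_\alpha\co\cc\to\cc$, via the monoidal natural isomorphism $\nabla\varphi\simeq Z$. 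The crucial input is then that $Q\rtimes Z_1=Z_\alpha$ carries $\cc_\beta$ into $\cc_{\alpha^{-1}\beta\alpha}$ (Lemma~\ref{lem-Z-def-global}), so $\leftidx{^\vee}{(}{}Q\rtimes Z_1)(Y)\otimes Y\in\cc_{(\alpha^{-1}\beta\alpha)^{-1}\beta}=\cc_{\alpha^{-1}\beta^{-1}\alpha\beta}$ for $Y\in\cc_\beta$. Hence the coend $C_{\alpha,\beta}=\int^{Y\in\cc_\beta}Z_\alpha(Y)^*\otimes Y$ lies in $\cc_{\alpha^{-1}\beta^{-1}\alpha\beta}$, which accounts for the claimed degree. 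Here I should check that $Z_\alpha(Y)^*\otimes Y$ really is $\leftidx{^\vee}{(}{}Q\rtimes Z_1)(Y)\otimes Y$ up to the pivotal identification $\leftidx{^\vee}{W}{}=W^*$, so that the coend in the theorem statement and the one produced by Lemma~\ref{lem-HM-coend} agree.

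\textbf{Step 2: apply Lemma~\ref{lem-HM-coend}.} Since $C_{\alpha,\beta}=\int^{Y\in\cc_\beta}\leftidx{^\vee}{(}{}Q\rtimes Z_1)(Y)\otimes Y$ exists in $\cc$ by hypothesis, Lemma~\ref{lem-HM-coend} (with $T=Z_1$, $\dd=\cc_\beta$) produces a unique $Z_1$-action $r\co Z_1(C_{\alpha,\beta})\to C_{\alpha,\beta}$ making $(C_{\alpha,\beta},r)$ the coend $\int^{M\in\cc^{Z_1}_\beta}\leftidx{^\vee}{Q}{}(M)\otimes M$ in $\cc^{Z_1}$, with universal dinatural transformation $\varrho_{(N,s)}=\rho_N(\leftidx{^\vee}{Q}{}(s)\otimes\id_N)$. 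Pushing this through $\Psi$ and using $\Psi(\cc^{Z_1}_\beta)=\zz_\beta(\cc)$ and $\Psi\leftidx{^\vee}{Q}{}=\varphi_\alpha^\vee(\text{as comonoidal functor})\cong(\varphi_\alpha(-))^*$ (the pivotal version of the left dual of the strong monoidal $\varphi_\alpha$, recalling that $\varphi_\alpha$ is pivotal by Theorem~\ref{thm-G-center}), we get $\Psi(C_{\alpha,\beta},r)=\int^{(A,\sigma)\in\zz_\beta(\cc)}(\varphi_\alpha(A,\sigma))^*\otimes(A,\sigma)$ in $\zz_G(\cc)$. It then remains to identify $\Psi(C_{\alpha,\beta},r)$ with $(C_{\alpha,\beta},\sigma^{\alpha,\beta})$, i.e.\ to check that the half-braiding $\sigma^r=\{(\id_X\otimes r)\partial^1_{C_{\alpha,\beta},X}\}$ attached to $(C_{\alpha,\beta},r)$ by $\Psi$ coincides with $\sigma^{\alpha,\beta}$ as defined in the text. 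But $\sigma^{\alpha,\beta}_X$ is \emph{defined} to be $(\id_X\otimes r_{\alpha,\beta})\partial^1_{C_{\alpha,\beta},X}$, so this reduces to the identity $r=r_{\alpha,\beta}$, i.e.\ to matching the defining formula \eqref{eq-def-rab} for $r_{\alpha,\beta}$ with the formula for $r$ in Lemma~\ref{lem-HM-coend}.

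\textbf{Step 3: the main obstacle --- matching the two formulas for the action.} The genuine work is verifying that the morphism $r_{\alpha,\beta}$ specified by \eqref{eq-def-rab} equals the morphism $r$ characterized in Lemma~\ref{lem-HM-coend}. Both are pinned down by a universal property (applied after precomposing with $Z_1(\varrho^{\alpha,\beta}_Y)$ and $Z_1(\rho_Y)$ respectively), so it suffices to compare the two right-hand sides under the translation $Q\rtimes Z_1=Z_\alpha$. This is where one must carefully track: (i) the comonoidal-functor isomorphism $\nabla\varphi(\alpha)\cong Z_\alpha$ and the resulting identification of $\mu_Y$-type data with $Z_2(\alpha,1)$ and $Z_2(1,\alpha)$; (ii) the left antipode $s^l$ of $Z_1$ playing the role of $s^l_{Q\rtimes T(Y)}$; (iii) the $T$-action $a_Y$ of the module $QF_{Z_1}(Y)$, which unwinds to $Z_2(1,\alpha)_Y$ via the definition of $F_\alpha$ in Section~\ref{sect-def-free-hb}. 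The duals $Z_2(\alpha,1)_Y^*$ and $Z_1(Z_2(1,\alpha)_Y^*)$ appearing in \eqref{eq-def-rab} should correspond exactly to $\leftidx{^\vee}{Q}{}(\mu_Y)$ and $s^l$-conjugation of $T(\leftidx{^\vee}{a}{_Y})$ in Lemma~\ref{lem-HM-coend}, once one passes from left duals $\leftidx{^\vee}{(-)}{}$ to the pivotal duals $(-)^*$ using the monoidality constraints $\varphi_\alpha^1$ of the pivotal crossing. This is a bookkeeping computation of moderate length; no new idea is needed beyond Lemmas~\ref{lem-central-HM}, \ref{lem-HM-coend}, \ref{lem-Z-def-global}, \ref{lem-Z2-comono} and Theorem~\ref{lemu}, but it is the step where all the structural identifications must be carried out consistently, and it is the natural place for sign/orientation slips. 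I would organize it as a single diagram chase in $\cc$, reducing everything to the already-established compatibilities of the structural morphisms of $Z$ in Figure~\ref{fig-def-Zgen}.
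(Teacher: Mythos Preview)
Your proposal is correct and follows essentially the same approach as the paper: define $Q=\Psi^{-1}\varphi_\alpha\Psi$, identify $Q\rtimes Z_1\cong Z_\alpha$ via $\omega^{\alpha,1}$ from Theorem~\ref{lemu}, apply Lemma~\ref{lem-HM-coend}, and then verify that the resulting $Z_1$-action $c$ on $C_{\alpha,\beta}$ agrees with $r_{\alpha,\beta}$. One point to sharpen in your Step~3: the $T$-action $a_Y$ of $QF_{Z_1}(Y)$ is \emph{not} literally $Z_2(1,\alpha)_Y$ but rather its conjugate $(\omega^{\alpha,1}_Y)^{-1}Z_2(1,\alpha)_Y Z_1(\omega^{\alpha,1}_Y)$, since $QF_{Z_1}(Y)$ and $F_\alpha(Y)$ are distinct (isomorphic) $Z_1$-modules; likewise $Q(\mu_Y)=Q(Z_2(1,1)_Y)$ only becomes $Z_2(\alpha,1)_Y$ after transport by $\omega^{\alpha,1}$, and establishing this identity (the paper's computation $\Delta=Z_2(\alpha,1)_Y$) is the actual content of the bookkeeping you anticipate.
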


\begin{proof}
Let $F_\alpha$, $\Psi$, and $\ff_\alpha=\Psi F_\alpha$ be as in Section~\ref{sect-def-free-hb}. Denote by $U_1\co \cc^{Z_1} \to \cc$ and $\uu\co \zz_G(\cc) \to \cc$ the forgetful functors. Set $$Q=\Psi^{-1}\varphi_\alpha\Psi\co \cc^{Z_1} \to \cc^{Z_1}.$$ Observe that
$
Q\rtimes Z_1=U_1\Psi^{-1}\varphi_\alpha\Psi F_1=\uu\varphi_\alpha\ff_1
$. Theorem~\ref{lemu} provides a comonoidal natural isomorphism
$
\omega^{\alpha,1}=\{\omega^{\alpha,1}_Y \co \varphi_\alpha \ff_1(Y) \to \ff_{\alpha}(Y)\}_{Y  \in \cc}.
$
Since $$\{\omega^{\alpha,1}_Y=\uu(\omega^{\alpha,1}_Y)\co (Q\rtimes Z_1)(Y) \to Z_\alpha(Y)\}_{Y \in \cc_\beta}$$ is then a natural isomorphism,
$$
C_{\alpha,\beta}=\int^{Y \in \cc_\beta} Z_\alpha(Y)^* \otimes Y=\int^{Y \in \cc_\beta} (Q\rtimes Z_1)(Y)^* \otimes Y,
$$
with associated dinatural transformation $j=\{j_Y\co (Q\rtimes Z_1)(Y)^* \otimes Y \to C_{\alpha,\beta}\}_{Y \in \cc_\beta}$ given by
$$
j_Y=\varrho^{\alpha,\beta}_Y\bigl( \bigl((\omega^{\alpha,1}_Y)^{-1}\bigr)^* \otimes \id_Y \bigr).
$$
Therefore, by Lemma~\ref{lem-HM-coend} applied to $\dd=\cc_\beta$ and $T=Z_1$, we deduce that
$$
(C_{\alpha,\beta},c)=\int^{(A,r) \in\cc_\beta^{Z_1}} (Q(A,r))^* \otimes (A,r),
$$
where $c$ is some $Z_1$-action on $ C_{\alpha,\beta}$.
Since $\Psi\co \cc^{Z_1}
\to  \zz_G(\cc)$ is a grading-preserving strict monoidal isomorphism of $G$-graded categories,\begin{gather*}
\int^{(A,\sigma) \in
\zz_\beta(\cc)} (\varphi_\alpha(A,\sigma))^* \otimes (A,\sigma)=
\Psi\left ( \int^{(A,r) \in
\cc_\beta^{Z_1}} (\Psi^{-1}\varphi_\alpha\Psi(A,r))^* \otimes (A,r)\right )\\
=\Psi\left ( \int^{(A,r) \in
\cc_\beta^{Z_1}} (Q(A,r))^* \otimes (A,r)\right )
= \Psi(C_{\alpha,\beta},c).
\end{gather*}
Since $\Psi(C_{\alpha,\beta},r_{\alpha,\beta})=(C_{\alpha,\beta},\sigma^{\alpha,\beta})$, we only need to prove that $c=r_{\alpha,\beta}$. The dinatural transformation $Z_1(\varrho^{\alpha,\beta})$ being universal, this is equivalent
to proving that  $cZ_1(\varrho^{\alpha,\beta}_Y)=r_{\alpha,\beta}Z_1(\varrho^{\alpha,\beta}_Y)$
for all $Y \in\cc_\beta$.  Fix $Y\in \cc_\beta$. Let $a_Y$ is the $Z_1$-action of $QF_1(Y)=\Psi^{-1}\varphi_\alpha\Psi F_1(Y)=\Psi^{-1}\varphi_\alpha\ff_1(Y)$. Since
$\omega^{\alpha,1}_Y$ is a morphism is $\zz_G(\cc)$, the morphism $\Psi^{-1}(\omega^{\alpha,1}_Y)=\omega^{\alpha,1}_Y \co QF_1(Y) \to F_\alpha(Y)$ is a morphism in $\cc^{Z_1}$, that is,
\begin{equation*}
 \omega^{\alpha,1}_Y a_Y= Z_2(1,\alpha)_Y Z_1(\omega^{\alpha,1}_Y), \quad \text{and so} \quad a_Y = \bigl(\omega^{\alpha,1}_Y \bigr)^{-1}Z_2(1,\alpha)_Y Z_1(\omega^{\alpha,1}_Y) .
\end{equation*}
By Lemma~\ref{lem-HM-coend},  the $Z_1$-action $c\co Z_1(C_{\alpha,\beta}) \to C_{\alpha,\beta}$ satisfies
$$
cZ_1(j_Y)=
\iota_{Z_1(Y)}\bigr(Q(Z_2(1,1)_Y)^* s^l_{Q\rtimes Z_1(Y)}
Z_1(a_Y^*) \otimes \id_{Z_1(Y)}\bigl)\, Z_2((Q \rtimes T)(Y)^*,Y).
$$
Composing this equality on the right with $Z_1((\omega^{\alpha,1}_Y)^* \otimes \id_Y)$, using the above expression of $a_Y$  and   the naturality of $s^l$, we obtain that
$$
cZ_1(\varrho^{\alpha,\beta}_Y)=\varrho^{\alpha,\beta}_{Z_1(Y)} \bigr(\Delta^*s^l_{Z_\alpha(Y)}
Z_1(Z_2(1,\alpha)^*_Y) \otimes \id_{Z_1(Y)} \bigr)(Z_1)_2(Z_\alpha(Y)^* ,Y)
$$
where $\Delta= \omega^{\alpha,1}_Y  Q(Z_2(1,1)_Y) \bigl(\omega^{\alpha,1}_{Z_1(Y)} \bigr)^{-1}$. Pick $V \in \ee_\alpha$. Since $\Psi$ acts as the identity on morphisms and using that $ \omega^{\alpha,1}= (\iota^\alpha)^{-1}\omega^{V,1}$, and $\varphi_\alpha=(\iota^\alpha)^{-1} \varphi_V\,\iota^\alpha$, where
$\iota^\alpha$ is the universal cone \eqref{univ-cones}, we obtain
$$
\Delta= \omega^{\alpha,1}_Y  \varphi_\alpha(Z_2(1,1)_Y) \bigl(\omega^{\alpha,1}_{Z_1(Y)} \bigr)^{-1}
=\omega^{V,1}_Y  \varphi_V(Z_2(1,1)_Y) \bigl(\omega^{V,1}_{Z_1(Y)} \bigr)^{-1}.
$$
In view of the definition of $r_{\alpha,\beta}$ given in \eqref{eq-def-rab}, to prove that $cZ_1(\varrho^{\alpha,\beta}_Y)=r_{\alpha,\beta}Z_1(\varrho^{\alpha,\beta}_Y)$, it is enough to prove that $\Delta=Z_2(\alpha,1)_Y$, i.e., that $ \omega^{V,1}_Y  \varphi_V(Z_2(1,1)_Y)=Z_2(\alpha,1)_Y\omega^{V,1}_{Z_1(Y)}$. Denote by $\rho^\alpha$  the universal dinatural transformation \eqref{eq-def-rho-of-Z} and let $\pi^V$, $p^V$, $q^V$ be as in \eqref{eq-idemp-split}.
Recall from \eqref{eq-omegaVa} that $\omega^{V,1}_X=d_V^{-1} Z_2(\alpha,1)_X\rho^\alpha_{Z_1(X),V}q^V_{\ff_1(X)}$  for any $X \in \cc$. Now, by Lemma~\ref{lem-Z2-comono}, $Z_2(1,1)_Y$ is a morphism in $\zz_G(\cc)$ from $\ff_1(Z_1(Y))$ to $\ff_1(Y)$. This implies that
\begin{equation}\label{eq-piVZ}
\pi^V_{\ff_1(Y)}(\id_{V^*} \otimes Z_2(1,1)_Y \id_Y)=(\id_{V^*} \otimes Z_2(1,1)_Y \id_Y) \pi^V_{\ff_1(Z_1(Y))}.
\end{equation}
Then
\begin{align*}
 \omega^{V,1}_Y &  \varphi_V(Z_2(1,1)_Y) \\
 &\overset{(i)}{=}  d_V^{-1} Z_2(\alpha,1)_Y\rho^\alpha_{Z_1(Y),V}q^V_{\ff_1(Y)}  p^V_{\ff_1(Y)} (\id_{V^*} \otimes Z_2(1,1)_Y \id_Y) q^V_{\ff_1(Z_1(Y))} \\
  &\overset{(ii)}{=}  d_V^{-1} Z_2(\alpha,1)_Y\rho^\alpha_{Z_1(Y),V}\pi^V_{\ff_1(Y)}  (\id_{V^*} \otimes Z_2(1,1)_Y \id_Y) q^V_{\ff_1(Z_1(Y))}\\
  &\overset{(iii)}{=}  d_V^{-1} Z_2(\alpha,1)_Y\rho^\alpha_{Z_1(Y),V}  (\id_{V^*} \otimes Z_2(1,1)_Y \id_Y)
      \pi^V_{\ff_1(Z_1(Y))}q^V_{\ff_1(Z_1(Y))}\\
  &\overset{(iv)}{=}  d_V^{-1} Z_2(\alpha,1)_Y Z_\alpha(Z_2(1,1)_Y)\rho^\alpha_{Z_1^2(Y),V} q^V_{\ff_1(Z_1(Y))}\\
    &\overset{(v)}{=}  d_V^{-1} Z_2(\alpha,1)_Y Z_2(\alpha,1)_{Z_1(Y)}\rho^\alpha_{Z_1^2(Y),V} q^V_{\ff_1(Z_1(Y))}\\
    &= Z_2(\alpha,1)_Y \omega^{V,1}_{Z_1(Y)}.
\end{align*}
Here, the equality $(i)$  follows from the definition of $\varphi_V$ (see section~\ref{sect-action of-G}), $(ii)$  from \eqref{eq-idemp-split},   $(iii)$  from \eqref{eq-piVZ}, $(iv)$  from \eqref{eq-idemp-split} and the naturality of $\rho^\alpha$, and $(v)$ from the monoidality of $Z$.
This completes the proof of the theorem.
\end{proof}

We can explicitly  compute    $(C_{\alpha,\beta},\sigma^{\alpha,\beta}) $ in the case where $\cc$ is a $G$-fusion category. Fix  a   $G$-representative set $I=\amalg_{\alpha\in G} I_\alpha$ of simple
objects of $\cc$.    By Lemma~\ref{lem-finite-semi--},
\begin{equation}\label{eq-Cab}
C_{\alpha,\beta}=\bigoplus_{\substack{i \in I_\alpha\\ j \in I_\beta}} i^* \otimes j^* \otimes i \otimes j
\end{equation}
with universal dinatural transformation given, for $Y \in
\cc_\beta$, by
$$
\varrho^{\alpha,\beta}_Y=\sum_{\substack{i \in I_\alpha\\ j \in I_\beta}} \,
 \psfrag{i}[Br][Bc]{\scalebox{.85}{$i$}}
 \psfrag{a}[Bc][Bc]{\scalebox{.85}{$i^*$}}
 \psfrag{x}[Bc][Bc]{\scalebox{.85}{$\phi_i^{-1}\,$}}
 \psfrag{j}[Br][Bc]{\scalebox{.85}{$j$}}
 \psfrag{Y}[Bc][Bc]{\scalebox{.85}{$Y$}}
\rsdraw{.45}{.9}{varrho-ab2} \;
\co  Z_\alpha(Y)^* \otimes Y \to C_{\alpha,\beta},
$$
where $\phi=\{\phi_X\co X \to X^{**}\}_{x \in \cc}$ is the pivotal structure of $\cc$, see \eqref{pivotal-struct}. For $X \in \cc$, set
$$
\zeta_X^\alpha=\sum_{i \in I_\alpha} \id_{i^* \otimes X^*} \otimes \phi_i \co Z_\alpha(X^*)  \to Z_\alpha(X)^*,
$$
so that $$\id_{C_{\alpha,\beta}}=\sum_{j \in
I_\beta}\varrho^{\alpha,\beta}_j(\zeta^\alpha_j \otimes \id_j).$$
Then using \eqref{eq-def-rab}, the above description of
$Z$, and the description of $s^l$ given in
Figure~\ref{fig-strucZ-fusion}, we obtain that
\begin{center}
\vspace{.5em}
$\ds  r_{\alpha,\beta}=r_{\alpha,\beta}Z_1(\id_{C_{\alpha,\beta}}) =\sum_{j \in I_\beta} r_{\alpha,\beta}Z_1(\varrho^{\alpha,\beta}_j)Z_1(\zeta^\alpha_j \otimes \id_j) $
\\[.15em]
$\ds = \sum_{j \in I_\beta} \varrho^{\alpha,\beta}_{Z_1(j)}
 \bigl ( Z_2(\alpha,1)_j^*s^l_{Z_\alpha(j)} Z_1(Z_2(1,\alpha)^*_j) \otimes \id_{Z_1(j)} \bigr)(Z_1)_2(Z_\alpha(j)^* ,j)Z_1(\zeta^\alpha_j \otimes \id_j)$
\\[.2em]
$\ds
\psfrag{i}[Bc][Bc]{\scalebox{.9}{$i$}}
 \psfrag{j}[Bc][Bc]{\scalebox{.9}{$j$}}
 \psfrag{k}[Bc][Bc]{\scalebox{.9}{$k$}}
 \psfrag{l}[Bc][Bc]{\scalebox{.9}{$l$}}
 \psfrag{n}[Bc][Bc]{\scalebox{.9}{$z$}}
  \psfrag{a}[Bc][Bc]{\scalebox{.9}{$a$}}
   \psfrag{b}[Bc][Bc]{\scalebox{.9}{$b$}}
 =\!\!\!\sum_{\substack{i,k,b\in I_\alpha \\ j,l \in I_\beta\\ z,a \in I_1}} \;\; \rsdraw{.50}{.9}{rab1} .
$
\vspace{.5em}
\end{center}
Finally, using \eqref{eq-derijd} and \eqref{eq-assoc-tri}, we obtain
$$
\psfrag{i}[Bc][Bc]{\scalebox{.9}{$i$}}
 \psfrag{j}[Bc][Bc]{\scalebox{.9}{$j$}}
 \psfrag{k}[Bc][Bc]{\scalebox{.9}{$k$}}
 \psfrag{l}[Bc][Bc]{\scalebox{.9}{$l$}}
 \psfrag{n}[Bc][Bc]{\scalebox{.9}{$z$}}
r_{\alpha,\beta}=\!\!\sum_{\substack{i,k,\in I_\alpha \\ j,l \in I_\beta\\ z \in I_1}} \;\; \rsdraw{.50}{.9}{rab2}.
$$
Hence $
\sigma^{\alpha,\beta}=\{\sigma^{\alpha,\beta}_X=(\id_X \otimes r_{\alpha,\beta})\partial^1_{C_{\alpha,\beta},X}\co
C_{\alpha,\beta} \otimes X \to X \otimes C_{\alpha,\beta}\}_{X \in
\cc_1}$
is given by
$$
\psfrag{i}[Bc][Bc]{\scalebox{.9}{$i$}}
 \psfrag{j}[Bc][Bc]{\scalebox{.9}{$j$}}
 \psfrag{k}[Bc][Bc]{\scalebox{.9}{$k$}}
 \psfrag{l}[Bc][Bc]{\scalebox{.9}{$l$}}
 \psfrag{n}[Bc][Bc]{\scalebox{.9}{$z$}}
 \psfrag{X}[Bc][Bc]{\scalebox{.9}{$X$}}
 \psfrag{a}[cc][cc]{\scalebox{.9}{$p^\beta_{z^* \otimes j \otimes z}$}}
 \psfrag{s}[cc][cc]{\scalebox{.9}{$q^\beta_{z^* \otimes j \otimes z}$}}
 \psfrag{c}[cc][cc]{\scalebox{.9}{$q_{z \otimes k \otimes z^*}^\alpha$}}
 \psfrag{r}[cc][cc]{\scalebox{.9}{$p_{z \otimes k \otimes z^*}^\alpha$}}
  \psfrag{v}[cc][cc]{\scalebox{.9}{$p_X^\gamma$}}
   \psfrag{u}[cc][cc]{\scalebox{.9}{$q_X^\gamma$}}
\sigma^{\alpha,\beta}_X= \!\!\!\sum_{\substack{i,k\in I_\alpha \\ j,l \in I_\beta\\ z \in I_1}} \; \rsdraw{.50}{.9}{sigma-ab}.
$$

\end{document}